\documentclass[11pt]{amsart}
\usepackage{tikz}
\usetikzlibrary{arrows.meta}
\usepackage{esint}
\usepackage{verbatim}
\usepackage{graphicx}
\usepackage{subfigure}
\usepackage{amsfonts}
\usepackage{amssymb}
\usepackage{amsmath}
\usepackage{amsthm}
\usepackage{latexsym}
\usepackage{amscd}
\usepackage{mathrsfs}
\usepackage{verbatim}
\usepackage{mathrsfs}
\usepackage{enumitem}
\usepackage{braket}
\usepackage{float}
\usepackage[margin=1.3in]{geometry}
\usepackage{comment}
\usepackage{bm}
\usepackage{xcolor}
\usepackage{imakeidx}
\makeindex[title=Index of Notation]

\usepackage{hyperref}
\hypersetup{
	colorlinks=true,
	linkcolor=black,
	urlcolor=black,
	citecolor=black
}
\usepackage{caption}
\usepackage{array}

\usepackage{amssymb}
\usepackage{pdfpages}
\usepackage{verbatim}

\usepackage{appendix}
\usepackage{xspace}

\newtheorem{theorem}{Theorem}[section]

\newtheorem{lemma}[theorem]{Lemma}
\newtheorem{proposition}[theorem]{Proposition}
\newtheorem{corollary}[theorem]{Corollary}
\theoremstyle{definition}
\newtheorem{definition}[theorem]{Definition}

\theoremstyle{remark}

 \makeatletter
 \def\ExtendSymbol#1#2#3#4#5{\ext@arrow 0099{\arrowfill@#1#2#3}{#4}{#5}}
 
 \makeatother

 \makeatletter
 \def\ExtendSymbol#1#2#3#4#5{\ext@arrow 0099{\arrowfill@#1#2#3}{#4}{#5}}
 \newcommand\longright[2][]{\ExtendSymbol{-}{-}{\rightarrow}{#1}{#2}}
 \makeatother

\makeatletter

\newcommand{\Rmnum}[1]{\expandafter\@slowromancap\romannumeral #1@}
\makeatother

\newcommand{\ID}{\mathbb{D}}

\newcommand{\IN}{\mathbb{N}}
\newcommand{\IR}{\mathbb{R}}

\newcommand{\CK}{\mathcal{K}}

\numberwithin{equation}{section}

\makeatletter
\@namedef{subjclassname@2020}{\textup{2020} Mathematics Subject Classification}
\makeatother

\subjclass[2020]{53E20 42B20 58J35 46E35}

\begin{document}
	\title{The Calder\'on-Zygmund inequalities on evolving Riemannian manifolds}
	
	\author{Yongheng Han}
	\address{School of Mathematical Science, University of Science and Technology of China,  Hefei City,  Anhui Province 230026}
	
	\email{hyh2804@mail.ustc.edu.cn}

	\author{Bing Wang}
	\address{ Institute of Geometry and Physics, and School of Mathematical Sciences, University of Science and Technology of China, Hefei 230026, China; Hefei National Laboratory, Hefei 230088, China}
	
	\email{topspin@ustc.edu.cn}
    
	\date{\today}

	\keywords{Singular integral, Calder\'on-Zygmund inequalities, heat kernel estimates.}
	
	\begin{abstract}
         The Calder\'on–Zygmund inequality is a cornerstone of harmonic analysis and partial differential equations.  In this article, we establish various Calder\'on-Zygmund inequalities on evolving Riemannian manifolds with bounded curvature.  We also provide concrete applications of such inequalities.  
	\end{abstract}	
	
	\maketitle

\tableofcontents

\section{Introduction}
\label{sec1}

This article is concerned with Calder\'on-Zygmund (CZ) inequalities on Riemannian manifolds and Ricci flows. The history of Calder\'on-Zygmund inequalities originates from the 1950s with the seminal work of Alberto Calder\'on and Antoni Zygmund, who developed the theory of singular integrals and their associated inequalities (cf. \cite{CZ52}).

Let $T$ be a linear operator on $\mathbb{R}^n$ and it can be written as
\begin{equation*}
    T f(x) := \lim\limits_{\varepsilon\to 0} \int_{\mathbb{R}^n}\chi_{|x-y|>\varepsilon} K(x,y) f(y) dy
\end{equation*}
for some measurable function $K(x,y)$.
We call $T$ a CZ operator if $T$ is a bounded operator on $L^2(\mathbb{R}^n)$ and  $K(x,y)$ satisfies the following conditions for some uniform constants $C=C(n)$ and $\delta=\delta(n)$. 
\begin{itemize}
\item $ |K(x,y)| \leq C |x-y|^{-n}$.
\item $\displaystyle |K(x,y) - K(x',y)| + |K(y,x) - K(y,x')| 
\leq C|x-y|^{-n} \cdot \left( \frac{|x-x'|}{|x-y|}\right)^{\delta}$. 
\end{itemize}
Calder\'on and Zygmund showed that for all $p \in (1, \infty)$ and $f \in L^p(\mathbb{R}^n)$, one has
\begin{equation}
    \|Tf\|_{L^p} \leq C(n,p) \cdot \|f\|_{L^p}.
\label{eqn:HB13_2}    
\end{equation}
The CZ inequality (\ref{eqn:HB13_2}) plays a central role in the theory of partial differential equations, particularly in the regularity analysis of elliptic equations.
Let $G$ be the Newton potential. Then a solution of the Poisson equation $-\Delta u=f$ can be represented as: 
\begin{equation*}
    u = (-\Delta)^{-1} f = G \ast f. 
\end{equation*}
Thus, we have
\begin{equation*}
    D^2 u = D^2 (-\Delta)^{-1} f = (D^2 G) \ast f.
\end{equation*}
Since the convolution with $D^2 G$ is a CZ operator, we can apply inequality (\ref{eqn:HB13_2}) to obtain:
\begin{equation}
\label{eq15}
    \|D^2 u\|_{L^p} \leq C(\|u\|_{L^p} + \|f\|_{L^p}).
\end{equation}

It is natural to ask whether can we generalize such an important inequality to  Riemannian manifolds $(M,g)$. 
In other words, whether the following inequality holds:
\begin{equation}
\label{eq0.1}
    \|\mathrm{Hess}(u)\|_{L^p} \leq C(\|u\|_{L^p} + \|\Delta u\|_{L^p}).
\end{equation}
Before we do this generalization, let us recall how the CZ inequality is proved in Euclidean space. 
The classical proof of the CZ inequality proceeds as follows: 
\begin{itemize}
\item Prove (\ref{eqn:HB13_2}) for $p=2$, by integration by parts.
\item Prove a weak $L^1$-estimate, by CZ deomoposition.
\item Prove (\ref{eqn:HB13_2}) for $1<p<2$, by Marcinkiewicz interpolation. 
\item Prove (\ref{eqn:HB13_2}) for $2<p<\infty$, by duality. 
\end{itemize}
Note that both the CZ decomposition and the duality depends heavily on the Euclidean structure. 
Thus, the CZ inequality (\ref{eq0.1}) may fail on some wierd manifolds.
In fact, Marini and Veronelli \cite{MV21} constructed a complete non-compact $n$-dimensional Riemannian manifold of positive sectional curvature which does not support any CZ inequality for $p > n$ (see also \cite{Li19}). It is worth mentioning that their example even shows that the duality fails, since the CZ inequality is still valid there for $1 < p < 2$, only fails for $p > n$.

 What are the natural geometric conditions for (\ref{eq0.1}) to hold?
 In~\cite{GP15}, G\"uneysu and Pigola obtained Calder\'on-Zygmund inequalities on manifolds with bounded Ricci curvature and injectivity radius. In a survey article, Pigola claimed (cf. Theorem 10.1 of~\cite{Pig20}) that \eqref{eq0.1} holds for $p > \max(2, \frac{n}{2})$ when $\|Rm\|$ is bounded.
 Cao, Cheng, and Thalmaier~\cite{CCT21} showed that the inequality~\eqref{eq0.1} holds if $1 < p \leq 2$ when $M$ has a lower Ricci bound, or if $2 < p < \infty$ and $\|\mathrm{Rm}\|+\|\nabla \mathrm{Rc}\|$ are in Kato class. Recently, Cheng, Thalmaier, and Wang \cite{CTW25} provided a new proof via the Riesz transform. 
 
 In the collapsing case, the standard approach of using local coordinates to invoke classical $W_p^{2}$
estimates is no longer viable, which constitutes a fundamental limitation of the classical approach. 

Caffarelli and Peral \cite{CP98} used a method of approximation to obtain $W_p^{1}$ estimates for elliptic equations in divergence form. Wang \cite{Wan03} gave a new proof of the classical CZ estimates (the inequality \eqref{eq15}). He used the method of approximation, the Vitali covering lemma, and the Hardy–Littlewood maximal function. 
In \cite{LW06}, Li and Wang generalized Wang's methods and gave a new proof for the estimates of CZ type singular integrals. In this paper, we adapt Li and Wang's methods and generalize them to Riemannian manifolds. \\

We put the study of CZ inequality in a more general setting. Previously, the background for the CZ inequality are complete Riemannian manifolds. Now we replace them by evolving Riemannian manifolds $\{(M, g(t)), -1 \leq t \leq 0\}$.    If $g(t) \equiv g$, this background is called static and we return to the classical study.  However, we can also consider more general $g(t)$. Among them, the Ricci flow is a very important case.    In this paper, we shall only study the evolving Riemannian manifolds $\{(M, g(t)), -1 \leq t \leq 0\}$ such that one of the following conditions are satisfied. 
\begin{itemize}
    \item  $g(t) \equiv g$;
    \item  $g(t)$ evolves by the Ricci flow.
\end{itemize}
In both cases, we assume 
\begin{align}
    \sup_{x \in M, t \in [-1, 0]} |Rm|(x,t) \leq \Lambda_0. 
\label{eqn:HA02_1}    
\end{align}
For simplicity of notation, we denote
\begin{align}
&\mathcal{M} :=M \times [-1, 0], \quad \mathcal{M}' :=M \times [-\frac14, 0],\label{eqn:HA11_0}\\ 
&(\mathcal{M} \times \mathcal{M})^+ :=\left\{ (x,t;y,s)|x, y\in M, \; -1 \leq s <t \leq 0 \right\}.    
\label{eqn:HA11_0A}
\end{align}
Fix a point $q \in M$ and denote
 \begin{align}
     \mathcal{Q}:=B_{\frac14}(q,0) \times [-\frac{1}{16}, 0], 
     \quad 
     \mathcal{Q}':=B_{\frac18}(q,0) \times [-\frac{1}{64}, 0].
 \label{eqn:HA24_7}    
 \end{align}
We use $\dot{}$ to denote the time derivative.

We study the CZ inequality in a more general background. Let $\mathcal{E},\mathcal{F}$ be smooth vector bundles on $M$. 
Let
$\mathcal{K}: (\mathcal{M} \times \mathcal{M})^+ \to \mathcal{E} \boxtimes \mathcal{F}^*$ be a kernel section and the convolution operator $\mathcal{T} :=\mathcal{K}*$.  
The operator $\mathcal{T}$ is called a $(C_1,C_2)$-CZ operator if the kernel satisfies an appropriate $C_1$-bound and $\|\mathcal{T}\|_{2,2}$ is uniformly bounded by $C_2$ (see Definition \ref{def2.1} for details).  Our main result is the following theorem.

\begin{theorem}[\textbf{Main result}]
\label{thm:HB09_1}
  Suppose $\{(M,g(t)), -1 \leq t \leq 0\}$ is an evolving manifold satisfying (\ref{eqn:HA02_1}).
  Suppose $\mathcal{T}$ is a $(C_1,C_2)$-Calder\'on-Zygmund integral operator. 

  For each $p \in [2, \infty)$, there is a positive constant $C=C(n,p,\Lambda_0,C_1,C_2)$ such that
      \begin{align}
      \|\mathcal{T}f\|_{L^p(\mathcal{M}')}\leq C\|f\|_{L^p(\mathcal{M})} 
      \label{eqn:HB11_1}    
       \end{align}
   for every $f \in C^{\infty}(\mathcal{M}, \mathcal{F})$.     In particular, if $f$ is supported in $\mathcal{Q}'$, then 
    \begin{align}
      \|\mathcal{T}f\|_{L^p(\mathcal{M})}\leq C\|f\|_{L^p(\mathcal{Q}')}. 
      \label{eqn:HB13_3}    
    \end{align}
\end{theorem}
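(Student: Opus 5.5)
The plan is to follow the Li--Wang approximation method \cite{LW06}, adapted to parabolic balls on the evolving manifold, rather than CZ decomposition plus duality. The case $p=2$ is exactly the assumption $\|\mathcal{T}\|_{2,2}\le C_2$. For $p>2$ I will compare the distribution function of $M(|\mathcal{T}f|^2)$ with that of $M(|f|^2)$, where $M$ is a parabolic Hardy--Littlewood maximal function over the cylinders $Q_r(x,t)=B_r(x,t)\times[t-r^2,t]$ with $r\le r_0(n,\Lambda_0)$. The first step is geometric. Because $|Rm|\le\Lambda_0$ and, in the Ricci flow case, the metrics $g(s)$ for $s\in[-1,0]$ are uniformly equivalent on small balls, the volume of such cylinders is doubling at scales $r\le r_0$. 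This holds even when the manifold is collapsed, since Bishop--Gromov only needs a lower Ricci bound. Doubling gives a Vitali covering lemma and the weak $(1,1)$ and strong $(q,q)$ bounds for $M$, with constants depending only on $n,\Lambda_0$.

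The key step is a good-$\lambda$ estimate. I claim there are $N=N(n,\Lambda_0,C_1,C_2)$ and, for every $\epsilon>0$, a $\delta>0$ with the following property. Suppose $Q=Q_r(z)\subset\mathcal{M}'$ satisfies
\begin{equation*}
|\{M(|\mathcal{T}f|^2)>N^2\}\cap Q|>\epsilon|Q|.
\end{equation*}
Then $Q$ is contained in $\{M(|\mathcal{T}f|^2)>1\}\cup\{M(|f|^2)>\delta^2\}$.

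I will prove this by contradiction. Suppose some point $z'\in Q$ has $M(|\mathcal{T}f|^2)(z')\le 1$ and $M(|f|^2)(z')\le\delta^2$. Split $f=f_1+f_2$ with $f_1=f\chi_{4Q}$.
\begin{itemize}
\item For the local part, the $L^2$ bound gives $\|\mathcal{T}f_1\|_{L^2}^2\le C_2^2\|f_1\|_{L^2}^2\le C\delta^2|Q|$, using doubling to compare $|4Q|$ with $|Q|$.
\item For the far part, on $2Q$ the function $\mathcal{T}f_2$ is close to its value at a reference point. The kernel regularity from Definition \ref{def2.1}, summed over dyadic annuli $2^{k+1}Q\setminus 2^kQ$, gives
\begin{equation*}
|\mathcal{T}f_2(w)-\mathcal{T}f_2(w')|\le C\sum_k 2^{-k\delta_0}\Big(\fint_{2^kQ}|f|^2\Big)^{1/2}\le C\delta .
\end{equation*}
Combined with $M(|\mathcal{T}f|^2)(z')\le1$, this yields $M(|\mathcal{T}f_2\chi_{2Q}|^2)\le N_0^2$ on $Q$.
\end{itemize}
The weak $(1,1)$ bound for $M$ then gives
\begin{equation*}
|\{M(|\mathcal{T}f|^2)>N^2\}\cap Q|\le C\delta^2|Q|<\epsilon|Q|
\end{equation*}
for $\delta$ small, which is the contradiction. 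The dyadic sum must be handled differently at large scales $2^kr\gtrsim r_0$: there I will use only the size bound on $\mathcal{K}$ together with heat-kernel-type decay, since the time interval is bounded and $\mathcal{T}f$ only needs to be controlled on $\mathcal{M}'$.

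From the good-$\lambda$ estimate, the Vitali covering lemma yields
\begin{equation*}
|\{M|\mathcal{T}f|^2>N^{2k}\}|\le \epsilon_1^k|\{M|\mathcal{T}f|^2>1\}|+\sum_{i=1}^k\epsilon_1^i|\{M|f|^2>\delta^2N^{2(k-i)}\}|
\end{equation*}
with $\epsilon_1=C\epsilon$. The starting assumption is verified by normalizing $\|f\|_{L^p}$ small, via the $L^2$ bound and a localization. Now multiply by $N^{pk}$ and sum over $k$. Since $p/2>1$, the strong $(p/2,p/2)$ bound for $M$ gives $\sum_k N^{pk}|\{M|f|^2>\delta^2N^{2k}\}|\le C\|f\|_p^p$. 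Choosing $\epsilon$ with $\epsilon_1N^p<1$ then gives $\|\mathcal{T}f\|_{L^p(\mathcal{M}')}\le C\|f\|_{L^p(\mathcal{M})}$. Inequality \eqref{eqn:HB13_3} follows by applying this on cylinders centered at points of $\mathcal{Q}'$ and using kernel decay off the support, or directly by translating the time window.

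The main obstacle will be the off-diagonal comparison for $\mathcal{T}f_2$ on the evolving background. Distances and volumes at different times must be compared, and the sections of $\mathcal{E}\boxtimes\mathcal{F}^*$ at nearby points must be identified via parallel transport. Both need the $\Lambda_0$-bound (and, for Ricci flow, the Shi-type equivalence of metrics over time) to keep the Hölder-type kernel estimate uniform at scales up to $r_0$, without any noncollapsing assumption.
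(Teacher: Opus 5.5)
Your architecture is the same as the paper's: the Li--Wang approximation with $f=f_1+f_2$, the $L^2$ bound for the near part, kernel regularity summed over annuli for the far part, a Vitali argument giving geometric decay of the level sets of $M(|\mathcal{T}f|^2)$, and a final summation. But the key density lemma fails as you set it up. You average over backward cylinders $Q_\rho(x,t)=B_\rho(x,t)\times[t-\rho^2,t]$ at the point itself, so $M(\cdot)(z')$ sees only the past of $z'$. Meanwhile $\mathcal{T}f$ on $Q$ depends on $f$ in the time slab between $z'$ and the top of $Q$.

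Already on flat $\mathbb{R}^n$ this breaks. Let $z'=(x_z,t_z-r^2)$ be the bottom point of $Q=Q_r(z)$. Let $\psi$ be supported in $B_{r/2}(x_z)\times[t_z-\frac{r^2}{2},t_z]$ and vanish near its initial time. Set $f=(\partial_t-\Delta)(A\psi)$, and let $\mathcal{T}$ be the heat-kernel Hessian, which is a Calder\'on--Zygmund operator by Proposition~\ref{prn:HA25_3}.
\begin{itemize}
\item By uniqueness, $\mathcal{T}f=A\nabla^2\psi$.
\item By causality, $M(|f|^2)(z')=M(|\mathcal{T}f|^2)(z')=0$, so $z'$ is a good point.
\item Yet $|\{M(|\mathcal{T}f|^2)>N^2\}\cap Q|\ge\kappa|Q|$ for $A$ large, with $\kappa>0$ independent of $r$.
\end{itemize}
So the claim fails for every $\epsilon<\kappa$. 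The same defect breaks your bounds $\fint_{4Q}|f|^2\lesssim\delta^2$ and $\fint_{2^kQ}|f|^2\le\delta^2$, and your control of $M(|\mathcal{T}f|^2)$ at points of $Q$ above $z'$. The paper avoids this by using two-sided truncated cylinders $P_r(x,t)=B_r(x)\times([t-r^2,t+r^2]\cap[-1,0])$: averages at any point of $Q$ then see all of $2^kQ$, up to a doubling constant. An uncentered maximal function would also work, at the cost of a time-shifted enlargement in the Vitali lemma.

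Second, the step you cover with ``a localization'' is where the noncompact, possibly collapsed geometry enters, and it is missing. With $M$ truncated at $r_0$, the starting density hypothesis must hold in every top-scale cylinder. Global smallness of $\|f\|_{L^p}$ cannot give this, because $|Q_{r_0}(x)|$ may tend to $0$ along a collapsing end. Likewise, your large-scale far-field terms involve averages of $|f|$ over regions larger than the top scale, which $M(|f|^2)(z')$ does not control.

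The paper therefore first proves $\|\mathcal{T}f\|_{L^p(\mathcal{Q})}\le C\|f\|_{L^p(\mathcal{Q}')}$ for $f$ supported in $\mathcal{Q}'$. There $\mathcal{Q}\subset P_{1/2}(X)$ for every $X\in\mathcal{Q}$, so the top scale of $\mathbf{M}$ sees the whole support of $f$. The normalization $\|f\|_{L^2}^2=\varepsilon|\mathcal{Q}|$ then yields the starting density relative to $\mathcal{Q}$. The paper globalizes with the following ingredients:
\begin{itemize}
\item a time cutoff $\eta$: the part of $f$ with $s\le-\frac12$ contributes only through the size bound on $M\times[-\frac14,0]$, which is why the estimate lives on $\mathcal{M}'$;
\item Gaussian decay of $\mathcal{K}$ on the unit balls of Lemma~\ref{lm7.1}, together with the $e^{Cm}$ bound on the number of balls at distance $m$;
\item the volume-ratio bound $V_{x_0}(1)/V_{x_i}(1)\le e^{Cd_i}$, which is absorbed by $e^{-d_i^2/C}$.
\end{itemize}
Finally, \eqref{eqn:HB13_3} needs none of your suggested arguments. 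By causality, $\mathcal{T}f$ vanishes for $t<-\frac{1}{64}$ when $f$ is supported in $\mathcal{Q}'$, so \eqref{eqn:HB13_3} is just \eqref{eqn:HB11_1}.
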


Let $\mathcal{E}=\mathcal{F}=M\times \mathbb{R}$ and $\mathcal{T}$ be the convolution with $\nabla_x^2 H(x,t;y,s)$, the Hessian of the heat kernel. Then we obtain the following Corollary.

\begin{corollary}
     \label{cly:HA02_2}
      Let $\{(M,g(t)), -1 \leq t \leq 0\}$ be an evolving manifold that satisfies (\ref{eqn:HA02_1}).
      Let $u$ be a smooth function such that $(\partial_t -\Delta)u=f$. 
      Then for any $p\in [2,\infty)$, there exists a positive constant $C=C(n,p,\Lambda_0)$ 
      such that 
    \begin{align}
    \label{eqn:HB13_1}
            \begin{split}
                \|\dot{u}\|_{L^p(\mathcal{M}')}+\|\mathrm{Hess}(u)\|_{L^p(\mathcal{M}')}
                \leq C \left\{ \|u\|_{L^p(\mathcal{M})}+\|f\|_{L^p(\mathcal{M})} \right\}
            \end{split}
	\end{align}
	for any  $u\in C^\infty_c(\mathcal{M})$.
 	In particular, if $g(t) \equiv g$ satisfies (\ref{eqn:HA02_1}), and $p \in [2, \infty)$, then 
		\begin{align}
                \label{eqn:HA02_3}
			\|\mathrm{Hess}(u)\|_{L^p(M)}\leq C \left\{\|u\|_{L^p(M)}+\|\Delta u\|_{L^p(M)} \right\}
		\end{align}
		for any  $u\in C^\infty_c(M)$.
\end{corollary}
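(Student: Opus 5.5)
The plan is to derive the corollary from Theorem \ref{thm:HB09_1}, applied to the operator $\mathcal{T}f := \int \nabla_x^2 H(x,t;y,s) f(y,s)\,dy\,ds$, where $H$ is the heat kernel of $\partial_t-\Delta$ along the evolving metric. First I localize. Let $\phi$ be a cutoff equal to $1$ on $\mathcal{M}'$ and supported in $M\times(-1,0]$, depending only on time. Set $v=\phi u$. Then $(\partial_t-\Delta)v=\phi f+\dot\phi u=:F$ and $v$ vanishes near $t=-1$. Duhamel's formula gives
\begin{align*}
v(x,t)=\int_{-1}^t\int_M H(x,t;y,s)F(y,s)\,d\mathrm{vol}_{g(s)}(y)\,ds.
\end{align*}
Here $u$ is compactly supported in space, so $F$ is too, and the representation is justified by uniqueness of bounded solutions under (\ref{eqn:HA02_1}). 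Differentiating twice in $x$ formally gives $\mathrm{Hess}(v)=\mathcal{T}F$. The term $\dot v$ then follows from $\dot v=\Delta v+F$, with $|\Delta v|\le\sqrt n|\mathrm{Hess}(v)|$.

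\textbf{Kernel bounds.} The main work is checking that $\mathcal{T}$ is a $(C_1,C_2)$-CZ operator in the sense of Definition \ref{def2.1}, with constants depending only on $n,\Lambda_0$.

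For the $C_1$-bounds on $\nabla^2H$ and its regularity in $x$, $t$, $y$, $s$, I would use Gaussian heat kernel derivative estimates under bounded curvature on the parabolic scale $\sqrt{t-s}\le1$. These come from Shi-type or Bernstein local gradient estimates applied to $H(\cdot,\cdot;y,s)$ on parabolic cylinders of size $\sqrt{t-s}$, combined with Gaussian upper bounds. In the collapsed case I would first lift to the tangent space via $\exp_x$ on a ball of radius $\sim\Lambda_0^{-1/2}$. There the pulled-back metric has bounded geometry, and the local heat kernel estimates hold uniformly. The volume factor $|B_{\sqrt{t-s}}(x)|^{-1}$ replaces $|x-y|^{-n}$, and Bishop--Gromov gives the required doubling.

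The Ricci flow case adds time-derivative terms of the metric that are bounded by $\Lambda_0$. These need the standard comparison of distances and volumes across times $t\in[-1,0]$.

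\textbf{$L^2$ bound.} To get $\|\mathcal{T}\|_{2,2}\le C_2$, I integrate by parts with the Bochner formula. In the static case, $\int|\mathrm{Hess}\,v|^2=\int(\Delta v)^2-\int\mathrm{Rc}(\nabla v,\nabla v)$. Integrating over time with $\frac{d}{dt}\int|\nabla v|^2=-2\int\langle\nabla v,\nabla(\Delta v+F)\rangle$ gives
\begin{align*}
\|\mathrm{Hess}\,v\|_{L^2}+\|\dot v\|_{L^2}\le C(\|F\|_{L^2}+\|\nabla v\|_{L^2}).
\end{align*}
The gradient term is controlled by the energy estimate $\|\nabla v\|_{L^2}\le C\|F\|_{L^2}$. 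Under Ricci flow there are extra terms $\int \mathrm{Rc}*\nabla v*\nabla v$ and volume-form variations, which are bounded by $\Lambda_0$ and absorbed by Gronwall.

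\textbf{Conclusion.} Applying (\ref{eqn:HB11_1}) gives $\|\mathrm{Hess}\,u\|_{L^p(\mathcal{M}')}\le C\|F\|_{L^p(\mathcal{M})}\le C(\|u\|_{L^p}+\|f\|_{L^p})$. For the static statement (\ref{eqn:HA02_3}), I take $U(x,t)=u(x)$. Then $(\partial_t-\Delta)U=-\Delta u$ is constant in $t$, all $L^p$ norms over $\mathcal{M}$ and $\mathcal{M}'$ are multiples of the spatial norms, and (\ref{eqn:HB13_1}) yields the claim.

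\textbf{Main obstacle.} I expect the main obstacle to be the kernel regularity, especially Hölder continuity of $\nabla^2H$ in the backward variables $(y,s)$. This requires estimates on the conjugate heat kernel, which under Ricci flow solves $(-\partial_s-\Delta_y+R)H=0$. I would obtain these by applying the same local estimates to the adjoint equation, using $|R|\le C\Lambda_0$.
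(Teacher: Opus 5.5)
Your architecture matches the paper's: a time cutoff, the Duhamel representation $\mathrm{Hess}(u)=\mathcal{T}(\dot\eta u+\eta f)$ on $\mathcal{M}'$, a check that $\nabla_x^2H$ is a $(C_1,C_2)$-CZ kernel, Theorem~\ref{thm:HB09_1}, $\dot u$ recovered from the equation, and a time-independent extension for (\ref{eqn:HA02_3}). Your $U=u(x)$ works as well as the paper's $\eta(t)u(x)$. Your $L^2$ argument (energy estimate, $\iint(\Delta v)^2\le\iint F^2$, Bochner) is also the paper's (Proposition~\ref{prn:HA25_3}).

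The gap is in the kernel bounds, which is where the real content of this corollary lies. In the static case, (\ref{eqn:HA02_1}) gives only $|\mathrm{Rm}|\le\Lambda_0$. Shi's estimates are then unavailable, since there is no Ricci flow. A Bernstein/maximum-principle argument for $|\nabla^2H|$ does not close: $(\partial_t-\Delta)|\nabla^2H|^2$ contains a term $\nabla\mathrm{Rm}*\nabla H*\nabla^2H$ that you cannot control. The Hölder seminorm $[\nabla^2_xH]^{(\rho_\theta)}_\alpha$ is even further out of reach. Lifting by $\exp_x$ removes collapsing, but it does not give "bounded geometry" in the sense of curvature-derivative bounds: the pulled-back metric still has only $|\mathrm{Rm}|$ bounded. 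As written, your plan covers at best the Ricci flow case, not the static case that contains (\ref{eqn:HA02_3}).

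The missing idea is harmonic coordinates on the lifted ball.
\begin{enumerate}
\item After rescaling to $|\mathrm{Rm}|\le\xi^2$ and lifting, the $W^2_p$-harmonic radius at the origin is bounded below (Lemma~\ref{lma:HB13_5}). So $g_{ij}$ is uniformly $W^{2,p}\subset C^{1,\alpha}$.
\item Since $g^{ij}\Gamma^k_{ij}=0$, the heat equation becomes $\partial_tu=g^{ij}\partial_i\partial_ju$, with no first-order term.
\item Applying parabolic $W^{2,1}_p$ estimates to $u$, $\partial_ku$, $\partial_k\partial_lu$ and $\dot u$, then parabolic Sobolev embedding, gives uniform $C^{2+\alpha,\frac{2+\alpha}{2}}$ bounds (Proposition~\ref{prn:HA04_1}).
\item Writing $\nabla^2u=\partial^2u-\Gamma\,\partial u$ with $\Gamma\in C^\alpha$, and controlling parallel transport by Gronwall, gives $|\nabla^2H|$, $[\nabla^2H]_\alpha$ and $|\partial_t\nabla^2H|$ with the Gaussian factor (Corollary~\ref{cly:HA03_6}, Theorem~\ref{thm3.2}).
\end{enumerate}
Because $\partial^2g$ is only in $L^p$, nothing better than a Hölder modulus with $\alpha<1$ is available. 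This is why Definition~\ref{def2.1} asks for exactly that.

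Conversely, the step you single out as the main obstacle is not needed. Definition~\ref{def2.1} involves only the Hölder seminorm in $x$ and the derivative in $t$, the forward variables. The maximal-function argument behind Theorem~\ref{thm:HB09_1} only ever varies the forward point (Proposition~\ref{prn:SK27_1}), which is consistent with its yielding only $p\ge2$. No estimates for the conjugate heat kernel in $(y,s)$ are required.
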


Note that inequality (\ref{eqn:HA02_3}) in Corollary~\ref{cly:HA02_2} establishes the Calder\'on-Zygmund for smooth functions on complete Riemannian manifolds with only bounded sectional curvature (\ref{eqn:HA02_1}).
The key new ingredient is that we have neither the assumption of non-collapsing nor the requirement of $\|\nabla Rc\|$. 

For the purpose of investigating the Ricci flow,  we apply Theorem~\ref{thm:HB09_1} to the situation that $\mathcal{E}=\mathcal{F}=Sym^2(T^*M)$.  Then (\ref{eqn:HB13_1}) holds if $g(t)$ solves the Ricci flow solution and we regard $u$ as a smooth $(0,2)$-tensor field. More precise statement and information can be found in Theorem~\ref{thm:HA02_3}.  We also have the following estimates, which look different from (\ref{eqn:HB13_1}).

Let $\Psi$ be the heat kernel of the Lichnerowicz Laplacian for the $(0,2)$-tensor field. 
Let $\mathcal{T}=\mathcal{K}*$ and $\mathcal{K}=\nabla_x \nabla_y \Psi(x,t;y,s)$.
In this case, the application of (\ref{thm:HB09_1}) implies the following Corollary. 

\begin{corollary}
    \label{cly:HB09_2}
    Suppose $\{(M,g(t)), -1 \leq t \leq 0\}$ is an evolving manifold that satisfies (\ref{eqn:HA02_1}). 
    Suppose $u \in C^{\infty}(\mathrm{Sym}^2(T^*M))$ is a solution of $(\partial_t -\Delta_L) u=\nabla^*f$ 
    for some $f\in C^{\infty} (\mathrm{Sym}^2(T^*M) \otimes T^*M)$. Here, $\Delta_L$ is the Lichnerowicz Laplacian and $\nabla^*$ is the formal adjoint operator of $\nabla$. Then for each $p \in [2, \infty)$,  there exists a positive constant 
        $C=C(n,p,\Lambda_0)$ such that  
		\begin{align}
            \label{eqn:HB09_3}
			 \|\nabla u\|_{L^p(\mathcal{M}')}
            \leq C \left\{ \|u\|_{L^p(\mathcal{M})}+\|f\|_{L^p(\mathcal{M})} \right\}. 
		\end{align}
    In particular, if $f$ is supported in $\mathcal{Q}'$, then 
    \begin{align}
      \|\nabla u\|_{L^p(\mathcal{M})}
            \leq C  \|f\|_{L^p(\mathcal{Q}')}.  
      \label{eqn:HB13_4}    
    \end{align}
 \end{corollary}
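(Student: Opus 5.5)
The plan is to deduce Corollary~\ref{cly:HB09_2} from Theorem~\ref{thm:HB09_1}, applied with $\mathcal{E}=\mathrm{Sym}^2(T^*M)\otimes T^*M$, $\mathcal{F}=\mathrm{Sym}^2(T^*M)\otimes T^*M$ and kernel $\mathcal{K}(x,t;y,s)=\nabla_x\nabla_y\Psi(x,t;y,s)$.

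\textbf{Localization.} Choose a cutoff $\phi(x,t)$ adapted to the parabolic geometry. It should equal $1$ on $\mathcal{M}'$, vanish near $t=-1$, and have $|\nabla\phi|+|\nabla^2\phi|+|\partial_t\phi|\le C(n,\Lambda_0)$. For the bounds on $\nabla\phi$ and $\nabla^2\phi$ I would use the bounded-curvature assumption (\ref{eqn:HA02_1}), either through a distance-like function with controlled Hessian or simply a cutoff in time if the statement is global in space. Set $w=\phi u$. Then
\begin{align*}
(\partial_t-\Delta_L)w=\nabla^*(\phi f)+f*\nabla\phi+(\partial_t\phi-\Delta\phi)u-2\nabla\phi*\nabla u,
\end{align*}
where $*$ denotes contractions. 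Rewrite $\nabla\phi*\nabla u=\nabla^*(\nabla\phi* u)+\nabla^2\phi*u$, so that the right-hand side becomes $\nabla^*F+G$ with $|F|\le C(|f|+|u|)$ and $|G|\le C(|f|+|u|)$.

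\textbf{Representation.} Since $w$ vanishes at the initial time, Duhamel's formula for the Lichnerowicz heat kernel gives
\begin{align*}
w(x,t)=\int_{-1}^t\!\int_M \Psi(x,t;y,s)\big(\nabla^*F+G\big)(y,s)\,dy\,ds.
\end{align*}
When $g(t)$ is a Ricci flow, $\Psi$ is the adjoint-compatible kernel, and the time-dependent volume form is handled as in the paper's setup. Integrating by parts in $y$ moves $\nabla^*$ onto $\Psi$. Differentiating in $x$ then gives
\begin{align*}
\nabla w=\mathcal{K}*F+(\nabla_x\Psi)*G .
\end{align*}
The first term is controlled by Theorem~\ref{thm:HB09_1}, provided $\mathcal{K}$ is a $(C_1,C_2)$-CZ kernel. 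The second term is controlled by Young/Schur-type bounds: Gaussian estimates give $|\nabla_x\Psi|\lesssim (t-s)^{-(n+1)/2}e^{-d^2/C(t-s)}$, which is integrable against a time of length $1$ uniformly in $x$ and in $y$. This yields $\|\nabla u\|_{L^p(\mathcal{M}')}\le\|\nabla w\|_{L^p}\le C(\|u\|_{L^p}+\|f\|_{L^p})$.

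For (\ref{eqn:HB13_4}), when $f$ is supported in $\mathcal{Q}'$ and $u$ is the Duhamel solution, there is no need to localize. Apply (\ref{eqn:HB13_3}) directly to $\nabla u=\mathcal{K}*f$.

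\textbf{Main obstacle.} The difficulty is verifying that $\nabla_x\nabla_y\Psi$ is a $(C_1,C_2)$-CZ kernel in the sense of Definition~\ref{def2.1}, with constants depending only on $n$ and $\Lambda_0$, and without any non-collapsing assumption. This splits into two parts.

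First, the $L^2$ bound. This comes from an energy identity: test the equation against $w$ and integrate over space-time. This gives $\int|\nabla w|^2\le C\int(|F|^2+|w|^2)$, with curvature terms from $\Delta_L$ and from $\partial_t g$ bounded by $\Lambda_0$.

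Second, the pointwise kernel bound and the H\"older regularity of $\mathcal{K}$. I would obtain these from Gaussian upper bounds for $\Psi$ and its derivatives, measured in volumes $|B_{\sqrt{t-s}}|$ rather than $(t-s)^{n/2}$. To get them, I would lift locally to the tangent space via the exponential map, where the injectivity radius is bounded below in terms of $\Lambda_0$, and use Shi-type derivative estimates there. Then I would push back down, since heat kernels pass to quotients by the local fundamental pseudogroup.
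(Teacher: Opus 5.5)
Your reduction is the paper's. You cut off in time and use Duhamel plus one integration by parts in $y$ to write $\nabla(\eta u)=\mathcal{K}*(\eta f)+(\nabla_x\Psi)*(\dot\eta u)$ with $\mathcal{K}=\nabla_x\nabla_y\Psi$. You then invoke Theorem~\ref{thm:HB09_1}, with the CZ property coming from an energy estimate (Proposition~\ref{prn:HA25_9}) and Gaussian kernel bounds (Theorem~\ref{thm3.3}). Your explicit Schur bound for the non-divergence term $\dot\eta u$ is more careful than the paper's ``verbatim'' remark. Reading (\ref{eqn:HB13_4}) as a statement about the Duhamel solution is also correct.

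The gap is in how you obtain the pointwise and H\"older bounds on $\nabla_x\nabla_y\Psi$. The statement covers static metrics $g(t)\equiv g$ with only $|Rm|\le\Lambda_0$, and there ``Shi-type derivative estimates'' are unavailable in any sense. The metric does not evolve, so nothing controls $\nabla Rm$. A Bernstein/maximum-principle estimate for a tensor solution of the Lichnerowicz heat equation already fails at the first derivative, since $(\partial_t-\Delta)\nabla u=Rm*\nabla u+\nabla Rm*u$. H\"older control of $\nabla_x\nabla_y\Psi$ would need even more. Lifting to $T_xM$ cures collapsing, not this lack of regularity, and avoiding any $\nabla Rm$ hypothesis is precisely the point of Sections~\ref{sec3}--\ref{sec4}.

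The missing idea runs as follows:
\begin{enumerate}
\item After rescaling to $|Rm|\le\xi^2$, the lifted metric has $W_p^{2}$-harmonic radius bounded below by curvature alone, so $g_{ij}\in W_p^2\subset C^{1+\alpha}$.
\item In these coordinates the tensor equation is a parabolic system with $C^{\alpha}$ first-order coefficients. Its zeroth-order coefficients contain $\partial\Gamma$, which is only in $L^p$.
\item $W_p^{2,1}$ estimates and parabolic Sobolev embedding then give $|\nabla u|+[\nabla u]_{\alpha}+|\dot u|\le C\|u\|_{L^\infty}$ (Theorem~\ref{thm:HA03_3}). This is exactly one H\"older-controlled derivative and no more.
\item Apply this in $y$ to $\Psi$, in $x$ to $\nabla_y\Psi$, and to $\dot\Psi$. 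The last is needed for the $\theta^2|\dot{\mathcal{K}}|$ term of Definition~\ref{def2.1}, which you did not mention. The $L^\infty$ input comes from Kato's inequality and Li--Yau on $M$.
\end{enumerate}
This is also why the kernel must be $\nabla_x\nabla_y\Psi$ rather than $\nabla_x^2\Psi$ in the static tensor case. No quotient by a pseudogroup is needed. You pull the kernel on $M$ back as a local solution, and the interior estimates at the center descend because $\mathrm{Exp}_x$ is a local isometry.

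Two smaller slips:
\begin{itemize}
\item The bound you quote, $|\nabla_x\Psi|\lesssim(t-s)^{-(n+1)/2}e^{-d^2/C(t-s)}$, is false under collapsing. It contradicts your own correct remark that the normalization must be $V_x(\sqrt{t-s})^{-1}$. The Schur bound still holds with the correct factor, by volume comparison.
\item Your energy inequality keeps $\int|w|^2$ on the right. You must remove it by a Gronwall argument to get an honest $L^2\to L^2$ bound; the paper does this by rescaling and stepping in time.
\end{itemize}
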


 Corollary~\ref{cly:HB09_2} plays an important role in the study of Ricci flow on a complete manifold with bounded $\|Rm\|$.  Using (\ref{eqn:HB13_4}), one can uniformize the proof of the short-time existence, uniqueness, and continuous dependence of the Ricci flow.  
 Interested readers are referred to  Cai-Wang~\cite{CaiWang} for more details. \\

We now briefly discuss the key points of the proof of our results. 

\begin{proof}[Outline of proof of Theorem~\ref{thm:HB09_1}:]
 Our starting points are the Gaussian kernel estimate of $\mathcal{K}$ 
 and the $L_2$-estimate of $\mathcal{T}$.  We apply the stability of maximal function and transfer the bound of $\|\mathcal{T}f\|_{L^p}$ to $\|\mathbf{M}(\mathcal{T}f)\|_{L^p}$. 
 Since $\mathbf{M}(\mathcal{T}f)$ behaves much more stable than $\mathcal{T} f$, we are able to show that $|\{\mathbf{M}(\mathcal{T}f)>\lambda\}|$ decreases very fast when $\lambda$ goes to $\infty$.   Combined with the $L^2$-estimate, this decreasing speed is enough for the desired $L^p$-estimate if $p>2$.   The stability of maximal function $\mathbf{M}(\mathcal{T}f)$ can be deduced from the proper kernel estimate of $\mathcal{K}$.  In our proof, the locally doubling property, which is guaranteed by the Ricci lower bound, plays an important role in the maximal function argument.  The decreasing speed estimate is firstly proved locally and then upgraded to a global one via covering argument. 
\end{proof} 

 \begin{proof}[Outline of proof of Corollary~\ref{cly:HA02_2}:]
 The key point is to derive better heat kernel estimate under the curvature assumption (\ref{eqn:HA02_1}). 
 The classical Li-Yau estimate provides an initial $C^0$-bound of the heat kernel $H$.  In order to apply Theorem~\ref{thm:HB09_1}, we need local $C^{2+\alpha}$-estimate of $H$. 
 The direct method of maximum principle requires the  curvature derivatives condition, which is not available here.  Alternatively, 
 we pullback the metric locally to a ball on the tangent space of a base point. 
 This pullback metric is automatically non-collapsing and has a $W_p^2$-harmonic radius uniformly bounded from below.  By Sobolev embedding, the metric is uniformly $C^{1+\alpha}$ and the heat solution in this coordinate has a uniform $C^{3+\alpha}$-estimate, which implies the desired $C^{2+\alpha}$-estimate of the heat kernel. 
 \end{proof}

 \begin{proof}[Outline of proof of Corollary~\ref{cly:HB09_2}:]
 For the sake of Theorem~\ref{thm:HB09_1}, we need heat kernel estimate and an $L^2$-estimate.  The method of obtaining the proper heat kernel estimate is similar to the one in the proof of Corollary~\ref{cly:HA02_2}: we apply the gradient estimate for tensor-valued heat solutions in Euclidean space with pullback metric. 
 The $L^2$-estimate here is more delicate. However, it is still the application of integration by parts and Bochner formula. 
 \end{proof}

\textbf{Structure of this paper:}

 In section~\ref{sec2}, we collect some facts and fix the notation for the rest of the paper.
 In section~\ref{sec3}, we study the regularity estimate for heat solutions in a model domain.
 In section~\ref{sec4}, we use an exponential map to pull back the metric on Riemannian manifold to Euclidean balls. Thus, we can localize the study of the heat kernel in a Euclidean ball and derive the desired gradient estimates for the heat kernels.
 In section~\ref{sec5}, we use the heat kernel estimates to study the stability of Hardy-Littlewood maximal functions.  In section~\ref{sec6}, we apply the stability of maximal functions to estimate the decreasing speed of the measure of level set, which is sufficient to obtain the CZ-inequality (\ref{eqn:HB13_2}). 
 In section~\ref{sec7}, we discuss various situations where the CZ-inequality can be applied. \\

\textbf{List of important notation:}

\begin{itemize}
\item We use $\dot{}$ to denote the time derivative. For example, $\dot{u}=\partial_t u$.
\item $\mathcal{K}$: Calder\'on-Zygmund kernel (cf. Definition~\ref{def2.1}). 
\item $\mathcal{M} := M \times [-1, 0], \; \mathcal{M}' :=M \times [-\frac14, 0]$. 
\item $\mathbf{M}(f)$: local version maximal function (cf. (\ref{eqn:HC08_1})). 
\item $P_r(x,t)$: parabolic ball (cf. (\ref{eqn:HA24_11})).
\item $\mathcal{Q}:=B_{\frac14}(q,0) \times [-\frac{1}{16}, 0]$, $\mathcal{Q}':=B_{\frac18}(q,0) \times [-\frac{1}{64}, 0]$. 
\item $\mathcal{T}$: Calder\'on-Zygmund operator (cf. Definition~\ref{def2.1}). 
\item $dyds :=d\mu(y,s) ds$. 
\item $\eta$: cutoff function with time variable.
\item $\xi=\xi(n)<\frac{1}{100n\pi}$ is a small positive constant (cf. Definition~\ref{dfn:HC07_2}). \\
\end{itemize}

\textbf{Acknowledgments}

    The authors are supported by the Stable Support Project for Youth Teams in
the Basic Research Field,  Chinese Academy of Sciences (YSBR-001), the National Natural Science Foundation of China (NSFC-12431003) and a research fund from Hefei National Laboratory.

	\section{Preliminary}
    \label{sec2}
    In this section, we collect some well-known results in Riemannian geometry and harmonic analysis  which will be used later.

    Denote by $B_r(x)$ the geodesic ball of the center $x\in M$ with radius $r>0$ and by $|B_r(x)|$ or $V_x(r)$  the volume $\mu(B_r(x))$.

    \begin{theorem}[Bishop-Gromov]\label{thm1.1}
        Suppose $(M, g)$ is a complete n-dimensional Riemannian manifold with $Rc\geq -(n-1) \Lambda_0$,  and $q\in M$ is an arbitrary point. Then for any $0<r_1<r_2 <\infty$, we have
        \begin{align}
            \frac{|B_{r_2}(q)|}{|B_{r_1}(q)|}\leq \frac{|B_{r_2}^{\Lambda_0}|}{|B_{r_1}^{\Lambda_0}|}
        \label{eqn:HA24_8}    
        \end{align}
        where $B_r^{\Lambda_0}$ is a geodesic ball of radius $r$ is the space form of the sectional curvature $-\Lambda_0$. 
        In particular, we have 
        \begin{align}
            \frac{|B_{r_2}(q)|}{|B_{r_1}(q)|}\leq  C(n, \Lambda_0) \cdot \left( \frac{r_2}{r_1}\right)^n
        \label{eqn:HA24_9}    
        \end{align}
        if $0<r_1<r_2 \leq 1$. 
    \end{theorem}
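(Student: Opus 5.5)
The plan is to prove the comparison (\ref{eqn:HA24_8}) by the standard route through the Laplacian comparison theorem and polar coordinates, and then to read off (\ref{eqn:HA24_9}) from explicit bounds on the volume of balls in the model space. Write the volume form in geodesic polar coordinates about $q$ as $d\mu = \mathcal{A}(r,\theta)\, dr\, d\theta$ for $\theta\in S^{n-1}\subset T_qM$. Set $\mathcal{A}\equiv 0$ beyond the cut point along each direction; the cut locus has measure zero, so this loses nothing. The model density is $\mathcal{A}_{\Lambda_0}(r)=\big(\mathrm{sn}_{\Lambda_0}(r)\big)^{n-1}$ with $\mathrm{sn}_{\Lambda_0}(r)=\sinh(\sqrt{\Lambda_0}\,r)/\sqrt{\Lambda_0}$.

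The first step is to show that $r\mapsto \mathcal{A}(r,\theta)/\mathcal{A}_{\Lambda_0}(r)$ is nonincreasing inside the cut locus. Let $m=\partial_r\log\mathcal{A}=\Delta r$ be the mean curvature of the distance sphere. The Bochner formula applied to $r$ gives the Riccati inequality $m'+\frac{m^2}{n-1}\le -Rc(\partial_r,\partial_r)\le (n-1)\Lambda_0$. Comparing with the model solution $m_{\Lambda_0}=(n-1)\,\mathrm{sn}'/\mathrm{sn}$, and using that both sides behave like $\frac{n-1}{r}$ as $r\to0$, we obtain $m\le m_{\Lambda_0}$. Integrating this inequality gives the monotonicity of the ratio. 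Passing the cut point keeps the ratio monotone, since the density simply drops to zero there.

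The second step turns this pointwise monotonicity into the ratio of volumes. We integrate over $\theta$ to get that $f(r)=\int_{S^{n-1}}\mathcal{A}(r,\theta)\,d\theta$ satisfies the same property, namely $f/\mathcal{A}_{\Lambda_0}$ is nonincreasing. Then we apply the elementary lemma: if $f/g$ is nonincreasing with $f,g\ge0$, then $\int_0^{r}f\big/\int_0^{r}g$ is also nonincreasing in $r$. This lemma is proved by cross-multiplying and comparing $\int_0^{r_1}\int_{r_1}^{r_2}$ terms. It yields
$$\frac{|B_{r_2}(q)|}{|B^{\Lambda_0}_{r_2}|}\le\frac{|B_{r_1}(q)|}{|B^{\Lambda_0}_{r_1}|},$$
which is exactly (\ref{eqn:HA24_8}).

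For (\ref{eqn:HA24_9}), on $0<r\le 1$ we have the two-sided bound $r\le \mathrm{sn}_{\Lambda_0}(r)\le \frac{\sinh\sqrt{\Lambda_0}}{\sqrt{\Lambda_0}}\,r$. Hence $c(n)r^n\le |B^{\Lambda_0}_r|\le c(n)\, e^{(n-1)\sqrt{\Lambda_0}}\,r^n$, and taking the quotient gives the constant $C(n,\Lambda_0)$.

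The main obstacle is the rigorous handling of the Riccati comparison near $r=0$ and across the cut locus. I would treat the cut locus through the zero-extension convention described above rather than a barrier argument. Near $r=0$ I would use the asymptotic $m=\frac{n-1}{r}+O(r)$ to initialize the comparison.
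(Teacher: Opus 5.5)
The paper gives no proof of this statement; it quotes it as the classical Bishop--Gromov theorem. Your argument is the standard textbook proof it implicitly refers to, and it is correct: the Riccati comparison $m\le m_{\Lambda_0}$ for $m=\partial_r\log\mathcal{A}$, the monotonicity of $\mathcal{A}/\mathcal{A}_{\Lambda_0}$ with zero extension past the cut locus, and then the ratio-of-integrals lemma. For (\ref{eqn:HA24_9}), the bounds $r\le \mathrm{sn}_{\Lambda_0}(r)\le \frac{\sinh\sqrt{\Lambda_0}}{\sqrt{\Lambda_0}}\,r$ on $(0,1]$ are correct and give the explicit constant $C(n,\Lambda_0)=e^{(n-1)\sqrt{\Lambda_0}}$. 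To close the comparison at $r=0$ cleanly, use $\bigl(\mathrm{sn}_{\Lambda_0}^2(m-m_{\Lambda_0})\bigr)'\le -\frac{1}{n-1}\mathrm{sn}_{\Lambda_0}^2(m-m_{\Lambda_0})^2\le 0$ together with $\mathrm{sn}_{\Lambda_0}^2(m-m_{\Lambda_0})\to 0$ as $r\to 0^+$.
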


Using volume comparison, we have the following covering lemma.  
\begin{lemma}
\label{lm7.1}
Suppose $(M, g)$ is a complete n-dimensional Riemannian manifold with $Rc\geq -(n-1) \Lambda_0$. Then there is a sequence of points 
    $\{x_i\}\subset M$ and a constant $C=C(n,\Lambda_0)$ such that 
    \begin{itemize}
        \item $B_{\frac15}(x_i)\cap B_{\frac15}(x_j)=\emptyset$ for all $i,j\in \IN^{+}$ with  $i\neq j$;
        \item $\cup_{i=1}^\infty B_{1}(x_i)=M$;
        \item $\#\{j|m-1\leq d(x_i,x_j)\leq m\}\leq e^{Cm}$, for any $i\in \IN^{+}$.
    \end{itemize}
    \end{lemma}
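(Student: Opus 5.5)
We take $\{x_i\}$ to be a maximal $\frac25$-separated subset of $M$, i.e. $d(x_i,x_j)\geq \frac25$ for $i\neq j$, and maximal with respect to inclusion. Such a set exists by Zorn's lemma. It is countable because $M$ is separable and the balls $B_{\frac15}(x_i)$ are pairwise disjoint open sets. The first two bullets then come directly from the construction.

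\emph{Disjointness.} The first bullet is immediate from the triangle inequality. If $z\in B_{\frac15}(x_i)\cap B_{\frac15}(x_j)$, then $d(x_i,x_j)<\frac25$, which is a contradiction.

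\emph{Covering.} For the second bullet we use maximality. If some $z\in M$ satisfied $d(z,x_i)\geq \frac25$ for all $i$, we could add $z$ to the family, contradicting maximality. Hence every $z$ lies within $\frac25<1$ of some $x_i$, so $\cup_i B_1(x_i)=M$.

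\emph{Counting.} The third bullet is a volume-packing argument based on Theorem~\ref{thm1.1}. Fix $i$ and $m\geq 1$, and let $J_m=\{j: m-1\leq d(x_i,x_j)\leq m\}$. For each $j\in J_m$:
\begin{itemize}
\item the ball $B_{\frac15}(x_j)$ is contained in $B_{m+1}(x_i)$;
\item these balls are pairwise disjoint;
\item $B_{m+1}(x_i)\subset B_{2m+1}(x_j)$, since $d(x_i,x_j)\leq m$.
\end{itemize}
By (\ref{eqn:HA24_8}), applied at the center $x_j$ with $r_1=\frac15$ and $r_2=2m+1$, we get
\begin{align*}
|B_{m+1}(x_i)|\leq |B_{2m+1}(x_j)|\leq \frac{|B^{\Lambda_0}_{2m+1}|}{|B^{\Lambda_0}_{1/5}|}\,|B_{\frac15}(x_j)|.
\end{align*}
Summing over $j\in J_m$ and using disjointness gives
\begin{align*}
\# J_m\cdot |B_{m+1}(x_i)|\leq \frac{|B^{\Lambda_0}_{2m+1}|}{|B^{\Lambda_0}_{1/5}|}\sum_{j\in J_m}|B_{\frac15}(x_j)|\leq \frac{|B^{\Lambda_0}_{2m+1}|}{|B^{\Lambda_0}_{1/5}|}\,|B_{m+1}(x_i)|.
\end{align*}
Hence $\# J_m\leq |B^{\Lambda_0}_{2m+1}|/|B^{\Lambda_0}_{1/5}|$.

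\emph{Exponential bound on the model ratio.} In the space form of curvature $-\Lambda_0$,
\begin{align*}
|B^{\Lambda_0}_r|=\omega_{n-1}\int_0^r\Big(\tfrac{\sinh(\sqrt{\Lambda_0}s)}{\sqrt{\Lambda_0}}\Big)^{n-1}ds\leq C(n,\Lambda_0)\,e^{(n-1)\sqrt{\Lambda_0}\,r}\quad (r\geq 1),
\end{align*}
while $|B^{\Lambda_0}_{1/5}|\geq c(n)>0$. If $\Lambda_0=0$ the ratio is polynomial in $m$, which is still bounded by $e^{Cm}$. Therefore
\begin{align*}
\# J_m\leq C'e^{2(n-1)\sqrt{\Lambda_0}\,m}\leq e^{Cm}
\end{align*}
for $m\geq1$, after enlarging $C$ so that it also absorbs the constant prefactor (using $m\geq 1$).

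The only real obstacle is the third bullet: we must compare the ball around $x_j$ with the larger ball around $x_i$ while keeping every center inside Bishop–Gromov. This is why we compare at the center $x_j$ with radius $2m+1$, rather than trying to compare balls centered at different points directly.
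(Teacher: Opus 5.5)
Your proof is correct and follows the same volume-packing argument as the paper. The paper applies Bishop--Gromov at the center $x_j$ to compare a small ball around $x_j$ with a ball that contains a fixed ball around $x_i$, then sums over the small balls inside $B_{m+1}(x_i)$; it goes through $|B_1(x_i)|$ with two comparisons, where you use a single comparison against $B_{m+1}(x_i)\subset B_{2m+1}(x_j)$. You also supply the maximal $\frac25$-separated construction for the first two bullets, which the paper skips, and your use of the balls $B_{\frac15}(x_j)$ repairs the paper's packing step, which sums $|B_{\frac12}(x_j)|$ even though only the $\frac15$-balls are known to be disjoint.
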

    
\begin{proof}
We only prove the last property. Define
    \begin{align*}
        A^i_m := \{j|m-1\leq  d(x_j,x_i)\leq m\}.
    \end{align*}
 By volume comparison theorem, we have 
 \begin{equation}
 \label{eq7.1}
     \begin{split}
        \sum_{j\in A^i_m}|B_{\frac12}(x_j)|\leq |B_{m+1}(x_i)|\leq e^{Cm}|B_1(x_i)|.
     \end{split}
 \end{equation}
 On the other hand,
 \begin{align*}
     |B_{\frac12}(x_j)|\geq e^{-Cm}|B_{m+1}(x_j)|\geq e^{-Cm}|B_{1}(x_i)|.
 \end{align*}
 Plugging it into~\eqref{eq7.1} yields
 \begin{equation*}
     \begin{split}
        \sum_{j\in A^i_m}e^{-Cm}|B_{1}(x_i)|\leq |B_{m+1}(x_i)|\leq e^{Cm}|B_1(x_i)|.
     \end{split}
 \end{equation*}
 which implies $\#A^i_m\leq e^{Cm}$. 
\end{proof}

  In this subsection, we consider the metric measure space $\mathcal{M} :=M\times [-1, 0]$. 
  Given $0<r\leq 1$ and $(x,t)\in M\times [-1,0]$, denote 
    \begin{align}
        P_r(x,t) :=B_r(x)\times \{[t-r^2, t+r^2] \cap [-1, 0]\}. 
    \label{eqn:HA24_11}    
    \end{align}
    \begin{lemma}[Vitali covering]
        Let $I$ be a family of parabolic balls. 
        Then there exists a disjoint subcollection $\{P_{r_k}(X_k)\}$ such that
        \begin{align}
            \cup_{\lambda \in I}P_{\lambda} \subset \cup_k P_{5r_k}(X_k).
        \label{eqn:HA24_10}    
        \end{align}
    \end{lemma}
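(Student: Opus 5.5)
The plan is to run the classical Vitali greedy selection, adapted to parabolic balls; the only input specific to this setting is the triangle-type inequality for parabolic balls in the product space $\mathcal{M}=M\times[-1,0]$. As in the classical statement, I will assume that the radii of the balls in $I$ are uniformly bounded. Here they lie in $(0,1]$ by the definition (\ref{eqn:HA24_11}), so $R:=\sup_{\lambda\in I} r_\lambda\le 1$.

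\emph{Engulfing property.} First I check that if $P_r(X)\cap P_s(Y)\neq\emptyset$ with $s\le 2r$, then $P_s(Y)\subset P_{5r}(X)$. Write $X=(x,t)$ and $Y=(y,\tau)$, and take a common point $(z,\sigma)$.
\begin{itemize}
\item Spatially, for $w\in B_s(y)$, the triangle inequality gives $d(w,x)\le d(w,y)+d(y,z)+d(z,x)<2s+r\le 5r$.
\item Temporally, for $\theta\in[\tau-s^2,\tau+s^2]$, we get $|\theta-t|\le|\theta-\tau|+|\tau-\sigma|+|\sigma-t|\le 2s^2+r^2\le 9r^2$. Since $9r^2\le 25r^2$, the time slab is contained in $[t-25r^2,t+25r^2]$.
\end{itemize}
Intersecting with $[-1,0]$ only shrinks both sides, so the truncation does no harm. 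One small point: if $5r>1$, the symbol $P_{5r}$ should still be read via the formula (\ref{eqn:HA24_11}). Nothing in the argument uses $r\le 1$ except through $R$.

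\emph{Selection.} Partition $I$ into generations $I_j=\{P_r(X)\in I: 2^{-j-1}R<r\le 2^{-j}R\}$ for $j\ge 0$. Inductively choose $\mathcal{G}_j$ to be a maximal pairwise-disjoint subfamily of those balls in $I_j$ that are disjoint from every ball in $\mathcal{G}_0\cup\dots\cup\mathcal{G}_{j-1}$; maximal subfamilies exist by Zorn's lemma. Set $\mathcal{G}=\bigcup_j\mathcal{G}_j$, which is disjoint by construction.

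\emph{Covering.} Let $P_s(Y)\in I_j$. By maximality, it meets some selected $P_r(X)$ in $\mathcal{G}_0\cup\dots\cup\mathcal{G}_j$, and then $r>2^{-j-1}R\ge s/2$, i.e.\ $s<2r$. The engulfing property gives $P_s(Y)\subset P_{5r}(X)$, which proves (\ref{eqn:HA24_10}).

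\emph{Countability.} The statement indexes the subcollection by $k$, so I also want $\mathcal{G}$ to be countable. I would get this from the measure: the balls are disjoint and have positive measure $|B_r(x)|\cdot(\text{time length})$. On each compact set $\overline{B_N(q)}\times[-1,0]$, Bishop–Gromov (Theorem \ref{thm1.1}) gives a lower volume bound for balls of radius at least $2^{-j-1}R$ centered nearby. Hence only finitely many disjoint balls of each generation meet such a compact set, and $\mathcal{G}$ is countable.

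The main obstacle is this last step. It is needed only for the indexing, and the time truncation requires a bit of care: near $t=-1$ or $t=0$ the time length is at least $r^2$, not $2r^2$, which is still bounded below. The core covering argument itself is routine.
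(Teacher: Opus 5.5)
Your argument is correct. The paper states this lemma without proof and treats it as the classical Vitali covering lemma; your argument is exactly the standard proof of that lemma. It selects greedily by dyadic generations and then uses the engulfing estimate: $s<2r$ gives spatial radius $2s+r<5r$ and time spread $2s^2+r^2<9r^2\le 25r^2$, the truncation to $[-1,0]$ is harmless, and the bound $r\le 1$ built into (\ref{eqn:HA24_11}) supplies the bounded-radii hypothesis. Your countability step is correct but more than needed: any disjoint family of sets of positive measure in the $\sigma$-finite space $M\times[-1,0]$ is automatically countable, so Bishop--Gromov is not required there.
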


    We define  local version centered Hardy–Littlewood maximal functions by
	\begin{align}
		\mathbf{M}(f)(x,t):=\sup_{0<r<\frac12}\frac{1}{|P_r(x,t)|}\int_{P_r(x,t)}|f| dzds=\sup_{0<r<\frac12}\fint_{P_r(x,t)}|f|dzds.
    \label{eqn:HC08_1}    
	\end{align}
	Since $M$ is locally doubling, we have the Hardy–Littlewood maximal inequality and the Lebesgue differentiation theorem.
   
	\begin{lemma}\cite{Ste70,Med25}\label{thm1.4}
		Suppose $(M, g)$ is a complete n-dimensional Riemannian manifold with $Rc\geq -(n-1) \Lambda_0$. 
        For each $p \in (1, \infty]$, there exists a positive constant $C=C(n,\Lambda_0,p)$ such that 
		\begin{equation}
			\begin{split}
				\|\mathbf{M}(f)\|_{L^p}\leq C\|f\|_{L^p},
			\end{split}
		\end{equation}
		and 
		\begin{equation}
			\begin{split}
				|\{(x,t)\in M\times [-1,0]:\mathbf{M}f(x,t)\geq \lambda\}|\leq \frac{C}{\lambda}\|f\|_{L^1}.
			\end{split}
		\end{equation}
	\end{lemma}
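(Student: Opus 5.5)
The plan is to treat $(\mathcal{M}, d_P, dzds)$ as a space of homogeneous type at scales $\le \frac12$, with $d_P$ the parabolic distance whose balls are the $P_r$. Then I would follow the classical Stein argument: a Vitali covering gives the weak-type $(1,1)$ bound, the $L^\infty$ bound is trivial, and Marcinkiewicz interpolation gives the strong $L^p$ bound for $1<p<\infty$.

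\textbf{Step 1: local doubling of parabolic balls.} For $0<r\le 2$ we have $|P_{2r}(x,t)| \le C(n,\Lambda_0)|P_r(x,t)|$. The spatial factor is controlled by (\ref{eqn:HA24_9}) of Theorem~\ref{thm1.1}, which gives $|B_{2r}(x)|\le C 2^n|B_r(x)|$. The time factor is $|[t-4r^2,t+4r^2]\cap[-1,0]| \le 8\,|[t-r^2,t+r^2]\cap[-1,0]|$, because an interval of length $2r^2\le$ (something $\lesssim 1$) intersected with $[-1,0]$ keeps at least a fixed fraction of its length once we restrict to $r\le \frac12$. Iterating, $|P_{5r}|\le C|P_r|$ for $r<\frac12$. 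Here the measure is $d\mu_{g(t)}\,dt$, and in the Ricci-flow case the volume forms for different $t$ are uniformly comparable by (\ref{eqn:HA02_1}). I would therefore either fix the reference measure at time $0$ or simply absorb the factor $e^{C\Lambda_0}$ into the constant. I expect this to be the only mildly delicate point; the boundary truncation in time is harmless for the same reason.

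\textbf{Step 2: weak $(1,1)$.} Let $E_\lambda=\{\mathbf{M}f>\lambda\}$. First restrict to a compact subset $K\subset E_\lambda$; the case of $\ge\lambda$ follows by approximation. Each $X\in K$ has a radius $r_X<\frac12$ with $\int_{P_{r_X}(X)}|f|>\lambda|P_{r_X}(X)|$. Because the radii are bounded, Vitali covering (\ref{eqn:HA24_10}) applies, and the family is finite after using compactness. It yields disjoint $P_{r_k}(X_k)$ with $K\subset\cup P_{5r_k}(X_k)$, so
\[
|K|\le \sum_k |P_{5r_k}(X_k)| \le C\sum_k|P_{r_k}(X_k)|\le \frac{C}{\lambda}\sum_k\int_{P_{r_k}(X_k)}|f|\le \frac{C}{\lambda}\|f\|_{L^1}.
\]
Taking the supremum over $K$ (inner regularity) gives the second inequality.

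\textbf{Step 3: interpolation.} Since $\mathbf{M}$ is sublinear and $\|\mathbf{M}f\|_{L^\infty}\le\|f\|_{L^\infty}$, the Marcinkiewicz interpolation theorem gives $\|\mathbf{M}f\|_{L^p}\le C(n,\Lambda_0,p)\|f\|_{L^p}$ for $1<p<\infty$. The case $p=\infty$ is the trivial bound. This can be done directly by splitting $f=f\chi_{|f|>\lambda/2}+f\chi_{|f|\le\lambda/2}$ and integrating the weak bound against $p\lambda^{p-1}d\lambda$. No completeness beyond Bishop–Gromov is needed. The Lebesgue differentiation theorem follows from the weak-type bound by the usual density argument.
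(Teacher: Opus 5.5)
Your proposal is correct and is essentially the argument the paper relies on: the paper gives no proof of its own, cites Stein and Medina, and notes only that local doubling suffices, and your route through local doubling of $P_r$ via Bishop--Gromov, Vitali covering for the weak $(1,1)$ bound, and Marcinkiewicz interpolation is exactly that standard argument. Two cosmetic fixes: for radii up to $4$ cite (\ref{eqn:HA24_8}) rather than (\ref{eqn:HA24_9}), which is only stated for radii at most $1$; and in the compactness step cover $K$ by the relative interiors of the $P_{r_X}(X)$, since these sets are closed in the time direction (alternatively, use the countable Vitali lemma and skip compactness).
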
 

\begin{lemma}\label{lebesgue}
   Suppose $(M, g)$ is a complete n-dimensional Riemannian manifold with $Rc\geq -(n-1) \Lambda_0$. 
   Then for every $f\in L^1_{\mathrm{loc}}(M\times [-1, 0])$ we have 
   for almost every $(x,t)\in M \times [-1,0]$:
    \begin{equation}
        \lim\limits_{r\to 0^{+}}\frac{1}{|P_r(x,t)|}\int_{P_r(x,t)}|f|dzds =f(x,t).
    \end{equation}
\end{lemma}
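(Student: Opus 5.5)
The plan is the classical density argument: reduce to integrable functions, check the statement directly for continuous ones, and pass to the limit using the weak type $(1,1)$ bound for $\mathbf{M}$ from Lemma~\ref{thm1.4}. We prove that $\fint_{P_r(x,t)} f\,dzds \to f(x,t)$ for almost every $(x,t)$. Applying this to $|f|\in L^1_{\mathrm{loc}}$ gives the displayed statement with $|f|$ on both sides, which is how we read it.

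\textbf{Localization.} Exhaust $M$ by an increasing sequence of geodesic balls $B_{R}(q)$, $R\in\mathbb{N}$, and set $f_R:=f\chi_{B_{R+1}(q)\times[-1,0]}\in L^1(M\times[-1,0])$. For $(x,t)$ with $x\in B_R(q)$ and $r<1$, the ball $P_r(x,t)$ lies in $B_{R+1}(q)\times[-1,0]$. Hence the averages of $f$ and of $f_R$ over $P_r(x,t)$ coincide. A countable union of null sets is null, so it suffices to treat $f\in L^1(M\times[-1,0])$.

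\textbf{Continuous case.} Let $g\in C_c(M\times[-1,0])$. The set $P_r(x,t)$ is contained in the product-metric ball of radius $r+r^2$ about $(x,t)$ in $M\times[-1,0]$. It has positive measure even when $t$ is near the endpoints $-1$ or $0$, because at least a time interval of length $r^2$ survives the truncation, so $|P_r(x,t)|\ge r^2|B_r(x)|>0$. Uniform continuity of $g$ then gives
\[
\Big|\fint_{P_r(x,t)}g\,dzds-g(x,t)\Big|\le\sup_{P_r(x,t)}|g-g(x,t)|\to 0
\]
for every $(x,t)$.

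\textbf{Approximation.} For $f\in L^1$ define
\[
\Omega f(x,t):=\limsup_{r\to0^+}\Big|\fint_{P_r(x,t)}f\,dzds-f(x,t)\Big|.
\]
Since $\Omega g\equiv0$ and $\Omega$ is subadditive, for every $g\in C_c$ we get
\[
\Omega f\le\Omega(f-g)\le \mathbf{M}(f-g)+|f-g|,
\]
where we used that only $r<\frac12$ matters in the limsup. Fix $\lambda>0$. The weak type inequality of Lemma~\ref{thm1.4} together with Chebyshev's inequality yields
\[
|\{\Omega f>\lambda\}|\le|\{\mathbf{M}(f-g)>\lambda/2\}|+|\{|f-g|>\lambda/2\}|\le \frac{C(n,\Lambda_0)}{\lambda}\|f-g\|_{L^1}.
\]
The measure $d\mu\,ds$ is a Radon measure on the locally compact separable metric space $M\times[-1,0]$, so $C_c$ is dense in $L^1$ and we may let $\|f-g\|_{L^1}\to0$. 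This gives $|\{\Omega f>\lambda\}|=0$ for every $\lambda>0$. Taking $\lambda=1/k$ with $k\in\mathbb{N}$ shows $\Omega f=0$ almost everywhere, which is the claim.

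The only substantive input is the weak $(1,1)$ maximal inequality. It rests on the Vitali covering lemma and the local doubling property (\ref{eqn:HA24_9}), and we take it from Lemma~\ref{thm1.4}. Near the time boundary the truncation at $-1$ and $0$ changes $|P_r|$ by at most a factor of $2$, so doubling still holds there. The rest of the argument is routine.
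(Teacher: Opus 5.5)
Your proof is correct and is the standard deduction the paper has in mind: the paper gives no written proof, only the remark that local doubling yields both the maximal inequality of Lemma~\ref{thm1.4} and the differentiation theorem. Your localization to $L^1$, the check for $C_c$ functions, and the passage to the limit via the weak $(1,1)$ bound carry out exactly that remark. Your reading of the display, with $|f|$ (or $f$) on both sides, is also the right fix, since as literally written it fails unless $f\ge 0$.
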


The harmonic coordinate plays an important role in the study of Riemannian geometry. 
We first recall the definition of related concepts.
	   
\begin{definition}($W_p^{k}$-harmonic radius) 
Suppose $(M^n, g)$ is a Riemannian manifold, $n<p<\infty$, and $x \in M$. 
The $W_p^{k}$-harmonic radius at $x$ is the supremum of all $R>0$ such that  there exists a coordinate chart $\phi:B_R(x)\to \IR^n$ satisfying 
\begin{itemize}
     \item $\Delta \phi^j=0$ on $B_{R}(x)$  and $\phi^j(x)=0$ for each $j$;
    \item $\frac12 \delta_{ij}\leq g_{ij}\leq 2 \delta_{ij}$,  in $B_{R}(x)$ as symmetric bilinear forms;
    \item $\sum_{1\leq |J|\leq k}R^{|J|-\frac{n}{p}}\|\partial^J g_{ij}\|_{L^p(B_R(x))}\leq 1$.
\end{itemize}
We denote the $W_{p}^{k}$-harmonic radius at $x$ by $r_{k,p}(x)$.   
\label{dfn:HB05_1}
\end{definition}

For simplicity of notation, we fix
    \begin{align}
    \label{eqn:HB07_4}
        p=n+4.  
    \end{align}

\begin{lemma}
\label{lma:HB13_5}
    There is a constant $\xi_a=\xi_a(n) \in (0, \frac{1}{100n\pi})$ with the following property.

    Suppose $(M, g)$ is a Riemannian manifold satisfying $|Rm| \leq \xi^2$.  Suppose $x \in M$ and
    $B_{100n\pi}(0) \subset T_x M$ is equipped with the pull-back metric $\tilde{g}=Exp_x^{*}g$.
    Then the $W_p^2$-harmonic radius of $0$ is at least $100$.
\end{lemma}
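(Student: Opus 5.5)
The plan is a compactness and contradiction argument, following Anderson's theory of harmonic radius. The constant $\xi_a$ only has to exist, so it suffices to show the following. If $\xi_k\to 0$ and $(M_k,g_k,x_k)$ satisfy $|Rm|\le \xi_k^2$, then for $k$ large the $W_p^2$-harmonic radius of $0$ for $\tilde g_k=Exp_{x_k}^*g_k$ on $B_{100n\pi}(0)\subset T_{x_k}M_k\cong\IR^n$ is at least $100$. This would contradict any sequence for which the radius stays below $100$.

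\textbf{Step 1: uniform control of $\tilde g_k$ in normal coordinates.} Since $\xi_k\cdot 100n\pi<1$, there are no conjugate points along radial geodesics of length $100n\pi$. Hence $Exp_{x_k}$ is a local diffeomorphism on the ball, and $\tilde g_k$ is a smooth metric there with $|Rm_{\tilde g_k}|\le \xi_k^2$. By Rauch comparison (Jacobi field estimates), in these normal coordinates we have
\[
\Bigl(\tfrac{\sin(\xi_k r)}{\xi_k r}\Bigr)^2\delta\le \tilde g_k\le \Bigl(\tfrac{\sinh(\xi_k r)}{\xi_k r}\Bigr)^2\delta .
\]
Therefore $\tilde g_k\to\delta$ in $C^0$ on the whole ball. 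In particular, for all $y$ in, say, $B_{90n\pi}(0)$, the $\tilde g_k$-volume of $B_1(y)$ is bounded below by a constant $c(n)>0$.

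\textbf{Step 2: injectivity radius and harmonic coordinates.} The curvature bound together with this volume lower bound lets us apply the Cheeger--Gromov--Taylor local injectivity radius estimate. It gives $\mathrm{inj}_{\tilde g_k}(y)\ge i_0(n)>0$ uniformly for $y\in B_{80n\pi}(0)$. With $|Rc|\le (n-1)\xi_k^2$ and this injectivity bound, Anderson's harmonic radius theorem provides harmonic coordinates of uniform size around every such $y$, with $C^{1,\alpha}$ control of the metric coefficients.

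To upgrade to $W^{2,p}$, I would use the identity valid in harmonic coordinates,
\[
\tfrac12 g^{ab}\partial_a\partial_b g_{ij}=-Rc_{ij}+Q(g,\partial g).
\]
Since $Rc$ is bounded in $L^\infty$ and $g\in C^{1,\alpha}$, interior $L^p$ elliptic estimates give uniform $W^{2,p}$ bounds. Consequently, after passing to a subsequence, $\tilde g_k\to g_\infty$ in $C^{1,\alpha}$ and weakly in $W^{2,p}$ on compact subsets of the ball. The limit $g_\infty$ is flat (since $|Rm|\to0$ and $Rc_{ij}\to 0$ in the equation above) and equals $\delta$ by Step 1.

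\textbf{Step 3: continuity of the harmonic radius and the conclusion.} For the flat limit $(B_{100n\pi}(0),\delta)$, the linear coordinates are harmonic, $g_{ij}=\delta_{ij}$, and all derivatives vanish. So the $W_p^2$-harmonic radius at $0$ is at least the distance to the boundary, which is far above $100$.

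The main obstacle is to transfer this to $\tilde g_k$, i.e.\ to show that the harmonic radius is (lower semi-)continuous under $C^{1,\alpha}$ plus $W^{2,p}$ convergence. For this I would solve the Dirichlet problems $\Delta_{\tilde g_k}\phi_k^j=0$ on $B_{101}(0)$ with $\phi^j_k=x^j$ on the boundary, and then subtract the constants $\phi^j_k(0)$.

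Elliptic $W^{3,p}$-type estimates for $\phi_k-x$ bound it by norms of $\tilde g_k-\delta$. These bounds show that $\phi_k\to x$ in $C^{2,\alpha}$. Hence $\phi_k$ is a chart, and the metric coefficients in the new chart converge to $\delta$ in $C^{1,\alpha}$.

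The remaining point is the $W^{2,p}$ smallness of the new coefficients. I would get it from the harmonic-coordinate equation above with $\|Rc\|_{L^\infty}\to0$, together with $\|\partial g\|_{C^\alpha}\to0$. These give $\sum_{1\le|J|\le 2}100^{|J|-n/p}\|\partial^J g_{ij}\|_{L^p(B_{100})}\le 1$ for large $k$. This contradicts the assumed failure and yields $\xi_a(n)$.
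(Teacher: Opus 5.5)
The paper gives no proof of this lemma (it is called standard and omitted), so your argument has to stand on its own. The architecture is the standard one: pull back by $Exp_x$ so that only the conjugate radius matters, obtain harmonic charts from the curvature bound, upgrade to $W^{2,p}$ with the harmonic-coordinate Ricci equation, and argue by contradiction with a flat limit. Steps 1 and 2 are fine, granting local (incomplete) versions of the Cheeger--Gromov--Taylor and Anderson estimates.

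\textbf{The gap.} It lies in the passage from Step 2 to Step 3. You treat the $C^{1,\alpha}$ and weak $W^{2,p}$ convergence as convergence of the coefficients of $\tilde g_k$ in the \emph{fixed exponential chart} of $B_{100n\pi}(0)$. That is what lets you say $g_\infty=\delta$ ``by Step 1'' and then estimate $\phi_k-x$ ``by norms of $\tilde g_k-\delta$''.

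In normal coordinates, however, a bound on $|Rm|$ controls the metric only in $C^0$; that is exactly what Rauch comparison gives you. First derivatives of $\tilde g_k$ in that chart come from differentiating Jacobi fields in the initial direction, which brings in $\nabla Rm$. This is the classical loss of derivatives of normal versus harmonic coordinates. These derivatives can be arbitrarily large when the curvature oscillates, even with $|Rm|\le \xi_k^2$. The $C^{1,\alpha}$ and $W^{2,p}$ bounds of Step 2 hold only in the harmonic charts, which move with $k$.

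Consequences:
\begin{itemize}
\item Written in the $x$-chart, $\Delta_{\tilde g_k}$ has first-order coefficients $\tilde g^{ab}\tilde\Gamma^j_{ab}$ that are not controlled.
\item There is no Schauder or $W^{3,p}$ estimate for $\phi_k-x$ in that chart.
\item The claim $\phi_k\to x$ in $C^{2,\alpha}$ is false in general. A $C^{2,\alpha}$-small change of chart preserves the $C^{1,\alpha}$ class of the coefficients, so the uniform bounds of the harmonic chart $\phi_k$ would pass back to $\tilde g_k$ in normal coordinates.
\end{itemize}

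\textbf{What is missing.} You need an identification of the limit that does not go through the exponential chart.
\begin{itemize}
\item Formulate Step 2 as Cheeger--Gromov convergence through the harmonic atlases, with diffeomorphisms $F_k$ from exhausting domains of the limit.
\item By Step 1, the identity maps $(B_{100n\pi}(0),\tilde g_k)\to (B_{100n\pi}(0),\delta)$ are $(1+o(1))$-bi-Lipschitz. Hence $\mathrm{id}\circ F_k$ subconverges uniformly to a distance-preserving map from the limit into the Euclidean ball. By Calabi--Hartman it is a regular isometry, so $g_\infty$ is Euclidean.
\item Let $c$ be the Cartesian chart of the limit and set $y_k:=c\circ F_k^{-1}$ on $B_{101}(0)$. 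Since $F_k^*\tilde g_k\to g_\infty$ in $C^{1,\alpha}$, the coefficients of $\tilde g_k$ in the $y_k$-chart converge to $\delta$ in $C^{1,\alpha}$. In particular $\Delta_{\tilde g_k} y_k^j\to 0$ in $C^\alpha$.
\item Now solve your Dirichlet problem and do every estimate in the $y_k$-chart. You may keep the boundary data $x^j$, since $x-y_k\to 0$ in $C^0$ and the maximum principle absorbs this. The maximum principle gives $\phi_k-y_k\to 0$ in $C^0(B_{101})$. Interior Schauder estimates then give $\phi_k-y_k\to 0$ in $C^{2,\alpha}$ on a neighborhood of $\overline{B_{100}(0)}$.
\end{itemize}
From there your final paragraph goes through. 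The coefficients in the $\phi_k$-chart are $C^{1,\alpha}$-close to $\delta$. The harmonic-coordinate Ricci equation with $\|Rc\|_{L^\infty}\to 0$, together with interior $L^p$ estimates, gives the strong $W^{2,p}$ smallness, hence $W_p^2$-harmonic radius at least $100$.
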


In the Ricci flow case, due to the Shi-type estimate (cf.~\cite{Shi1}, ~\cite{Shi2}),  
the covariant derivatives of the curvature tensor can be bounded by the curvature bound. Therefore, we have the following Lemma.

\begin{lemma}
\label{lma:HC07_1}
    There is a constant $\xi_b=\xi_b(n) \in (0, \frac{1}{100n\pi})$ with the following property.

    Suppose $\{(M, g(t)), -1 \leq t \leq 0\}$ is a Ricci flow solution satisfying $|Rm| \leq \xi^2$.  Suppose $x \in M$ and
    $B_{100n\pi}(0) \subset T_x M$ is equipped with the pull-back metric $\tilde{g}(t)=Exp_x^{*}g(t)$, where $Exp$ is the exponential map with respect to $g(0)$.
    Then for each $t \in [-\frac12, 0]$, 
    the $W_p^4$-harmonic radius of $0$ with respect to $g(t)$ is at least $100$.
\end{lemma}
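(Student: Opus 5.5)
The plan is to show that, for $\xi$ small, every pulled-back metric $\tilde g(t)$, $t\in[-\frac12,0]$, is $C^{5}$-close to the Euclidean metric on a fixed Euclidean ball in $T_xM$. Harmonic coordinates can then be built by a perturbation argument, and the $W^4_p$ bounds at scale $100$ follow by choosing $\xi_b$ small. Throughout we assume $|Rm|\le \xi^2$ on $M\times[-1,0]$ with $\xi\le \xi_b$, where $\xi_b$ will be fixed at the end.

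\textbf{Step 1: curvature derivative bounds.} By Shi's estimates on $[-1,0]$, for each $k\le 6$ we have $|\nabla^k_{g(t)} Rm|\le C(n,k)\,\xi^2\,(1+(t+1)^{-k/2})$. For $t\in[-\frac12,0]$ this gives $|\nabla^k Rm|\le C(n)\xi^2$. The point is that every bound is linear in $\xi^2$, hence small.

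\textbf{Step 2: the pulled-back metrics are close to flat.} Since $\xi<\frac1{100n\pi}$, the conjugate radius of $g(0)$ is at least $\pi/\xi>100n\pi$. Hence $Exp_x$ is a local diffeomorphism on $B_{100n\pi}(0)$, and $\tilde g(0)$ is a smooth metric there, written in normal coordinates. Two estimates follow.
\begin{itemize}
\item For $\tilde g(0)$: the Jacobi field equation along radial geodesics, differentiated repeatedly, together with the bounds $|\nabla^k Rm(0)|\le C\xi^2$, gives
\[
\|\tilde g_{ij}(0)-\delta_{ij}\|_{C^5(B_{100n\pi}(0))}\le C(n)\xi^2 .
\]
Here Euclidean derivatives and covariant derivatives differ by Christoffel symbols, which are themselves $O(\xi^2)$.
\item For $\tilde g(t)$: write $\tilde g(t)=\tilde g(0)-2\int_t^0 \widetilde{Rc}(s)\,ds$. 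Converting the Shi bounds on $\nabla^k_{g(s)}Rc$ into Euclidean derivatives requires controlling $\Gamma(s)-\Gamma(0)$, which uses $\partial_s\Gamma=\nabla Rc=O(\xi^2)$. The result is
\[
\|\tilde g_{ij}(t)-\delta_{ij}\|_{C^5(B_{100n\pi}(0))}\le C(n)\xi^2, \qquad t\in[-\tfrac12,0].
\]
\end{itemize}
I expect this step to be the main technical point. It is the quantitative comparison of Euclidean and covariant derivatives in normal coordinates, done uniformly in $t$. It is standard but bookkeeping-heavy, and I would carry it out by induction on the order $k$.

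\textbf{Step 3: harmonic coordinates by perturbation.} Let $D=B_{200}(0)$ be the Euclidean ball; it lies inside $B_{100n\pi}(0)$ since $100n\pi\ge 314$. For each $t$, solve $\Delta_{\tilde g(t)}\phi^j=0$ in $D$ with $\phi^j=x^j$ on $\partial D$. Then $w^j=\phi^j-x^j$ satisfies $\Delta_{\tilde g(t)}w^j=-\Delta_{\tilde g(t)}x^j$, whose right-hand side is $O(\xi^2)$ in $C^4$. Elliptic $W^{6,p}$ estimates, with coefficients uniformly $C^5$-close to $\delta$, give $\|w\|_{W^{6,p}(D)}\le C(n)\xi^2$. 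By Sobolev embedding $w$ is $C^{4}$-small. Therefore $\phi$ is a diffeomorphism, $C^1$-close to the identity, on the smaller ball. After subtracting the constant $\phi(0)$, the condition $\phi(0)=0$ also holds.

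\textbf{Step 4: the harmonic radius bound.} In the coordinates $\phi$, the metric coefficients are $\tilde g(t)$ composed with $\phi^{-1}$ and $d\phi^{-1}$. Hence $\|g_{ij}-\delta_{ij}\|_{W^{4,p}}\le C(n)\xi^2$ and $|g_{ij}-\delta_{ij}|\le C\xi^2$. Since $\tilde g(t)$ is $(1+C\xi^2)$-bi-Lipschitz to Euclidean, the $\tilde g(t)$-ball $B_{100}(0)$ lies inside the Euclidean ball of radius $101$, and so inside the domain of $\phi$. Taking $\xi_b(n)$ small enough gives two things:
\begin{itemize}
\item $\frac12\delta\le g\le 2\delta$ on that ball;
\item $\sum_{1\le|J|\le 4}100^{|J|-n/p}\|\partial^J g_{ij}\|_{L^p}\le C(n)\xi^2\le 1$, where the fixed volume and scale factors are absorbed into $C(n)$.
\end{itemize}
This shows that the $W^4_p$-harmonic radius at $0$ with respect to $\tilde g(t)$ is at least $100$ for every $t\in[-\frac12,0]$.
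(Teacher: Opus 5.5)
Your proposal is correct and follows the standard route the paper has in mind. The paper omits this proof and only cites Shi's estimates; your Steps 2--4 supply the routine details, namely controlling $\tilde g(t)$ in the fixed $\tilde g(0)$-normal coordinates by integrating the flow in time (the same bookkeeping as in the proof of Lemma~\ref{lma:SL30_1}) and then perturbing to harmonic coordinates.

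Two small fixes. Under $\partial_t g=-2Rc$ one has $\tilde g(t)=\tilde g(0)+2\int_t^0\widetilde{Rc}(s)\,ds$, a harmless sign slip. In Step 3 you should use $W^{6,p}\subset C^{5,\alpha}$ rather than merely ``$C^4$-small'': controlling the new metric coefficients in $W^{4,p}$ requires five derivatives of $\phi^{-1}$.
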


The proofs of Lemma~\ref{lma:HB13_5} and Lemma~\ref{lma:HC07_1} are standard. Thus, we omit it here.  

\begin{definition}
    We define
    \begin{align}
        \xi := \min\{\xi_a, \xi_b\} \in \left(0, \frac{1}{100n\pi} \right), 
    \label{eqn:HC07_3}    
    \end{align}
    where $\xi_a$ and $\xi_b$ are the constants in Lemma~\ref{lma:HB13_5} and Lemma~\ref{lma:HC07_1} respectively. 
\label{dfn:HC07_2}    
\end{definition}

\begin{definition}
\label{dfn:HB14_1}
Suppose $0<\alpha<1$, $r>0$, and $l$ is a nonnegative integer. 
Suppose $s$ is a smooth section of a vector bundle $\mathcal{E} \to M$. 
Define
\begin{align}
\label{eqn:HB12_1}
    [s]_{l+\alpha}^{(r)}(x)
    :=\sup_{d(x,x')<r}
     \sup_{\gamma\in \Xi(x,x')} 
     \frac{|\nabla^l s(x)-\tau_\gamma(x,x')\nabla^l s(x')|}{d^\alpha(x,x')}
     <\infty,  
\end{align}
where 
\begin{align}
\Xi(x,x') :=\{\textrm{the shortest geodesic segment connecting}  \;x \; \textrm{and} \;x'\}, 
\end{align}
and $\tau_{\gamma}$ is the parallel transport along $\gamma$. 
\end{definition}

\section{Heat equations in harmonic atalas}
\label{sec3}

  In this section, we study fundamental estimates of heat solutions in harmonic coordinates.

By comparing the geodesic curve connecting a boundary point to the origin, it is not hard to see from the second condition of Definition~\ref{dfn:HB05_1} that 
\begin{align}
   B_{0.5R}(0) \subset  B_{\frac{\sqrt{2}}{2}R}(0) \subset \phi_R (B_R(x)) \subset B_{\sqrt{2}R}(0)
\label{eqn:SL09_1}   
\end{align}
for every $R<r_{k,p}(x)$.  
Therefore, the Euclidean ball $B_{0.5R}(0) \subset \mathbb{R}^n$ is equipped with two metrics, the Euclidean metric $g_E$ and the push-forward metric $\phi_R^* g$.   For simplicity of notation, we still denote $\phi_R^* g$ by $g$.  In other words, we can regard the identity map from $(B_{0.5R}(0), g)$ to $(B_{0.5R}(0), g_E)$ as a harmonic map.   
Thus, we have
\begin{align}
    0=\Delta_g x^k=g^{ij} \left( \frac{\partial^2 x^k}{\partial x^i \partial x^j} -\Gamma_{ij}^l \frac{\partial x^k}{\partial x^l} \right)=-g^{ij}\Gamma_{ij}^k. 
\label{eqn:SL09_2}    
\end{align}
The tensor $g_{ij}$ is now a matrix-valued function. In light of the relationship (\ref{eqn:SL09_1}), it follows from the definition of the harmonic radius that the following inequalities hold. 
\begin{align}
  &\frac12 \sum_{i=1}^n (V^i)^2 \leq   g_{ij}V^i V^j \leq 2\sum_{i=1}^n (V^i)^2, \quad \forall \; V \in \IR^n; \label{eqn:SL09_3} \\
  &\sum_{1\leq |J|\leq k}R^{|J|-\frac{1}{4}}\|\partial^J g_{ij}\|_{L^p(\ID)}\leq 1. \label{eqn:SL09_4} 
\end{align}

Suppose $R \geq 100$. Then $B_{0.5R}(0) \supset B_2(0)$. 
Define
\begin{align}
\begin{cases}
    &B:= B_2(0), \quad B' :=B_1(0); \\
    &\Omega := B \times [-4, 0], \quad \Omega' :=B' \times [-1, 0]. 
\end{cases}
\label{eqn:SL09_10}    
\end{align}

\begin{definition}
A local model space-time is a smooth family of metrics $g(t)$ defined on $\Omega$ such that
\begin{align}
    &\Delta_{g(0)} x^i=0 \; \textrm{on} \; B, \quad \forall \; i \in \{1, 2, \cdots, n\};\\
    &\sup_{\Omega} |Rm|(x,t) \leq \xi^2. 
\end{align}
Here, $\xi$ is the small positive constant defined in Definition~\ref{dfn:HC07_2}.
It is chosen in this way for later purposes (cf. (\ref{cdn:HB07_3}) and the discussion above it). 

Furthermore, the following estimates are satisfied. 
\begin{itemize}
    \item  If $g(t) \equiv g$, we require that $g|_{B}$ satisfies (\ref{eqn:SL09_3}) and (\ref{eqn:SL09_4}) with $k=2$. 
    \item  If $g(t)$ solves the Ricci flow equation, we require that $g(0)|_{B}$ satisfies
 (\ref{eqn:SL09_3}) and (\ref{eqn:SL09_4}) with $k=4$. 
\end{itemize}
\label{dfn:HA10_1}
\end{definition}

We shall study the behavior of the heat solutions in local model space-time. 

The following definitions of the parabolic Sobolev and H\"older norms are well known.

    \begin{align}
       &\|u\|_{W_p^{2m,m}(\Omega)} 
       :=\sum_{|J|+2k\leq  2m} \|D^J \partial_t^ku\|_{L^p}, \label{eq3.2}\\
       &\|u\|_{C^{m+\alpha, \frac{m+\alpha}{2}}(\Omega)}
       :=\sum_{|J|+2k\leq m} \left(\|D^J\partial_t^ku\|_{C^0}+r^{\alpha}[D^J \partial_t^ku]_{\alpha,\frac{\alpha}{2}} \right),   \label{eq3.3}   
    \end{align}
    where $J$ is a multi-index.  We have the following Sobolev embedding properties. 

\begin{lemma}
\cite[Theorem 10.4.10]{Kry24}
\label{lma:SL02_1}
    For any $m \in \IN$, $n+2<p<\infty$ and $\alpha=1-(n+2)/p$, we have 
    \begin{align}
         W_p^{2m,m}(\Omega) \hookrightarrow C^{2m-1+\alpha,\frac{2m-1+\alpha}{2}}(\Omega).
    \label{eqn:SL22_1}     
    \end{align}
    In particular, there is a constant $C=C(n,m,p)>0$ such that 
    \begin{align}
         \|u\|_{C^{2m-1+\alpha,\frac{2m-1+\alpha}{2}}}
         \leq C  \|u\|_{W_p^{2m,m}(\Omega)}.
    \label{eqn:SL22_2}     
    \end{align}
\end{lemma}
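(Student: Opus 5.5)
The plan is to reduce Lemma~\ref{lma:SL02_1} to the classical anisotropic Sobolev embedding on the parabolic cylinder $\Omega=B_2(0)\times[-4,0]$. Since $\Omega$ is a fixed Lipschitz (indeed piecewise smooth) cylinder, I will first extend $u$. Concretely, I will take a parabolic extension operator $E:W_p^{2m,m}(\Omega)\to W_p^{2m,m}(\mathbb{R}^{n+1})$ with $\|Eu\|_{W_p^{2m,m}}\le C\|u\|_{W_p^{2m,m}(\Omega)}$. In space it is built from a higher-order reflection across $\partial B$ after flattening the boundary by a fixed smooth chart. In time it is a Hestenes/Seeley reflection $u(x,-t)\mapsto\sum_j c_j u(x,\lambda_j t)$, with the $c_j$ chosen so that the time derivatives up to order $m$ match at $t=0$ and $t=-4$. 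The extension is then multiplied by a fixed cutoff. Because the domain is fixed, the constant depends only on $n,m,p$.

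Next I will reduce to $m=1$ by induction. If $u\in W_p^{2m,m}$, then every $v=D^J\partial_t^k u$ with $|J|+2k\le 2m-2$ lies in $W_p^{2,1}$ with norm bounded by $\|u\|_{W_p^{2m,m}}$. Applying the $m=1$ statement to each such $v$ gives $v\in C^{1+\alpha,\frac{1+\alpha}{2}}$. This accounts for all derivatives $D^J\partial_t^k u$ with $|J|+2k\le 2m-1$, together with their Hölder seminorms. The only mixed cases to check are the terms with $|J|+2k=2m-1$: these are $D^{J'}v$ with $|J'|=1$, which the $m=1$ case covers.

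For $m=1$ and a compactly supported $w=Eu$, I will use the representation through the heat kernel $\Gamma$: $w=\Gamma*(\partial_t-\Delta)w$ and $Dw=D\Gamma*f$ with $f\in L^p(\mathbb{R}^{n+1})$. In the parabolic metric $\rho((x,t),(y,s))=|x-y|+|t-s|^{1/2}$ the homogeneous dimension is $n+2$. The kernel bounds are $|D\Gamma|\le C\rho^{-(n+1)}$ and $|D\Gamma(X)-D\Gamma(Y)|\le C\rho(X,Y)\rho(X)^{-(n+2)}$ on the region $\rho(X)\ge 2\rho(X,Y)$. I then run Morrey's argument with Hölder's inequality. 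Write $\delta=\rho(X,Y)$ and split the integral into the near region $\rho\le 2\delta$ and the far region. On the near region, $\|D\Gamma\|_{L^{p'}(\rho\le 2\delta)}\sim\delta^{-(n+1)+(n+2)/p'}=\delta^{1-(n+2)/p}$; this integrability holds precisely because $p>n+2$. On the far region, the Lipschitz bound gives $\delta\,\|\rho^{-(n+2)}\|_{L^{p'}(\rho>2\delta)}\sim\delta^{1-(n+2)/p}$. This yields the bound $[Dw]_{\alpha,\alpha/2}\le C\|f\|_{L^p}$ with $\alpha=1-(n+2)/p$. The same scaling, with one fewer derivative on the kernel, gives the Hölder bound for $w$ in $t$ of order $\frac{1+\alpha}{2}$ and the $C^0$ bounds, using compact support. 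Restricting back to $\Omega$ gives \eqref{eqn:SL22_2}.

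I expect the main obstacle to be the extension step. It has to control mixed derivatives simultaneously in the anisotropic scaling, and it has to handle the corners $\partial B\times\{-4\}$. I would first try a product construction: do the spatial reflection, then the temporal reflection, and note that the two commute with the relevant derivatives. Alternatively, one can simply cite Krylov's result, as the paper does.
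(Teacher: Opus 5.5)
Your proof is correct, and it takes a different route from the paper, because the paper gives no proof of Lemma~\ref{lma:SL02_1} at all: it simply imports the result from Krylov's Theorem 10.4.10.

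Your three ingredients all check out.
\begin{enumerate}
\item The $W_p^{2,1}$ extension of $\Omega$ to $\mathbb{R}^{n+1}$.
\item The representation $w=\Gamma*(\partial_t-\Delta)w$, combined with Morrey's near/far splitting in the parabolic metric of homogeneous dimension $n+2$. Your Lipschitz bound for $D\Gamma$ on the far region is right, since the time increment contributes $\rho(X,Y)^2\rho(X)^{-(n+3)}$, which is dominated by $\rho(X,Y)\rho(X)^{-(n+2)}$ there.
\item The reduction of general $m$ to $m=1$ via $v=D^J\partial_t^k u$ with $|J|+2k\le 2m-2$. This works because $|J|$ is odd, hence $|J|\ge 1$, whenever $|J|+2k=2m-1$.
\end{enumerate}

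The citation buys brevity. Your route buys transparency in three ways. It shows exactly where $p>n+2$ enters, namely the local $L^{p'}$-integrability of $D\Gamma$, and why $\alpha=1-(n+2)/p$. It makes explicit how the higher-order case actually used in Proposition~\ref{prn:HA04_1}, with $m=2$, follows from the basic $W_p^{2,1}$ embedding. It also gives the $\frac{1+\alpha}{2}$-H\"older continuity in $t$ of the derivatives of order $2m-2$, which the standard parabolic H\"older norm requires but the paper's norm \eqref{eq3.3} does not.

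Two simplifications are available.
\begin{enumerate}
\item Your reduction to $m=1$ happens on $\Omega$ itself, so you never need the $W_p^{2m,m}$ extension, only a $W_p^{2,1}$ one. For that, an even reflection in $t$ and a two-term reflection in the normal direction suffice.
\item The corners $\partial B\times\{-4\}$ and $\partial B\times\{0\}$ cause no trouble. The spatial extension uses time-independent charts, so it commutes with $\partial_t$, and the temporal reflection commutes with $D_x$. You can therefore just compose the two extensions.
\end{enumerate}

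Finally, compact support is genuinely needed for your $C^0$ bounds. For $p>n+2$ one has $D\Gamma\notin L^{p'}(\mathbb{R}^{n+1})$, because $\int_1^\infty t^{\frac{n}{2}-\frac{(n+1)p'}{2}}\,dt=\infty$. If you want to avoid tracking supports, you can instead use the damped kernel and write $w=(e^{-t}\Gamma)*(\partial_t w-\Delta w+w)$. Both this kernel and its gradient lie in $L^{p'}(\mathbb{R}^{n+1})$.
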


Let $L$ be a linear operator of the form
\begin{align}
 Lu=a^{ij}\partial^2_{ij}u+b^i\partial_i u+cu, \quad \; i,j \in \{1,2,\cdots,n\}, 
 \label{eqn:SL22_7}           
\end{align}
which satisfies the following conditions for some positive constant $\Theta$: 
\begin{align}
\begin{cases}
    &a^{ij}\in C^0(\Omega); \\
    &\Theta^{-1}|V|^2\leq a^{ij}V_iV_j\leq \Theta |V|^2,\quad \forall V \in \IR^n; \\
    &|a^{ij}|+|b^i|+|c| \leq \Theta.    
\end{cases}    
\label{eqn:SL22_3}
\end{align}
We shall consider the following parabolic equation
\begin{align}
    (\partial_t  -L) u =f.
\label{eqn:SL22_6}
\end{align}

\begin{lemma}\cite[Theorem 6.4.2]{Kry24}
\label{lma:SL08_1}
 Suppose that the coefficients of $L$ satisfy (\ref{eqn:SL22_3}). 
 Suppose $u$ is a smooth solution of~\eqref{eqn:SL22_6}.
 Then 
\begin{align}
    \|u\|_{W_p^{2,1}(\Omega')}\leq C \left\{\|u\|_{L^p(\Omega)}+\|f\|_{L^p(\Omega)} \right\}
\label{eqn:HB04_3}    
\end{align}
for some positive constant $C=C(n,p,\Theta)$. 
\end{lemma}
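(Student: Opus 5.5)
The plan is the standard freezing-of-coefficients argument for the interior $W_p^{2,1}$ estimate, combined with localization and interpolation. Throughout, the constant should depend on the modulus of continuity of $a^{ij}$ on $\Omega$ as well as on $n,p,\Theta$. In every application in this paper $a^{ij}=g^{ij}$ is uniformly $C^\alpha$ by Lemma~\ref{lma:SL02_1} and (\ref{eqn:SL09_4}), so that modulus is itself controlled by $n$ and $p$.

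\emph{Step 1: constant coefficients.} First treat the model operator $\partial_t - a^{ij}_0\partial^2_{ij}$ with a constant matrix $a_0$ satisfying the ellipticity bound in (\ref{eqn:SL22_3}). After a linear change of variables this is the heat operator. For $w\in C_c^\infty(\mathbb{R}^n\times\mathbb{R})$, the second derivatives $D^2 w$ and $\partial_t w$ are given by parabolic Calder\'on--Zygmund singular integrals applied to $(\partial_t-a_0^{ij}\partial_{ij}^2)w$. The Euclidean theory (\ref{eqn:HB13_2}), in its parabolic-homogeneity version, therefore gives
\[
\|D^2w\|_{L^p}+\|\partial_t w\|_{L^p}\le C(n,p,\Theta)\,\|(\partial_t-a_0^{ij}\partial^2_{ij})w\|_{L^p}.
\]

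\emph{Step 2: small oscillation and localization.} Choose $\rho>0$ so small that $|a^{ij}(X)-a^{ij}(Y)|\le \varepsilon$ whenever the parabolic distance between $X$ and $Y$ is less than $2\rho$. Here $\varepsilon=\varepsilon(n,p,\Theta)$ is chosen later. Take a cutoff $\zeta$ supported in a parabolic cylinder $P$ of size $\rho$ centred at $X_0$, and set $w=\zeta u$. Then
\[
(\partial_t-a^{ij}(X_0)\partial^2_{ij})w=\zeta f+(a^{ij}-a^{ij}(X_0))\partial^2_{ij}w+\zeta(b^i\partial_iu+cu)+(\text{terms with }\partial\zeta,\partial^2\zeta,\partial_t\zeta).
\]
Applying Step 1, the frozen-coefficient error is bounded by $\varepsilon\|D^2w\|_{L^p}$, which we absorb once $C\varepsilon<\frac12$. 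This yields
\[
\|D^2 u\|_{L^p(P/2)}+\|\partial_tu\|_{L^p(P/2)}\le C\big(\|f\|_{L^p(P)}+\rho^{-1}\|Du\|_{L^p(P)}+\rho^{-2}\|u\|_{L^p(P)}\big).
\]

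\emph{Step 3: removing the gradient and globalizing.} This is the main technical obstacle, since the right-hand side still contains $\|Du\|_{L^p}$ on a larger set. We handle it with the interpolation inequality
\[
\|Du\|_{L^p}\le \delta\|D^2u\|_{L^p}+C\delta^{-1}\|u\|_{L^p}
\]
and a nested family of cylinders between $\Omega'$ and $\Omega$. Cover each intermediate cylinder by finitely many cylinders of size $\rho$; their number depends only on $\rho$, hence on $n,p,\Theta$. Summing, we define
\[
\Phi_k=\sup_{0<s<1}(1-s)^k\|D^2u\|_{L^p(\Omega_s)},
\]
where $\Omega_s$ interpolates between $\Omega'$ and $\Omega$. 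The standard absorption argument for weighted seminorms of this type then gives $\Phi_2\le C(\|u\|_{L^p(\Omega)}+\|f\|_{L^p(\Omega)})$. Finally, $\partial_tu=Lu+f$ bounds the time derivative, and the lower-order terms are controlled by interpolation again. Together these give (\ref{eqn:HB04_3}).
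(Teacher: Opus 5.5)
Your route is correct but genuinely different from the paper's. The paper gives no argument for this lemma: it imports it from Krylov, whose Sobolev-space theory is organized around mean-oscillation (sharp-function) estimates and allows even VMO leading coefficients. You instead give the classical Calder\'on--Zygmund perturbation proof: a parabolic singular-integral bound for frozen coefficients, absorption on cylinders where $a^{ij}$ oscillates by less than $\varepsilon$, then covering and the weighted-seminorm/interpolation absorption as in Gilbarg--Trudinger, Theorem 9.11. As a sketch this is correct. Two routine points should be added:
\begin{itemize}
\item Centred cylinders near the top slice $t=0$ leave $\Omega$. Use backward cylinders there, and apply Step 1 on $\mathbb{R}^n\times(-\infty,0)$ to functions vanishing at early times. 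This follows from the whole-space estimate by causality, after extending the right-hand side by zero for $t>0$.
\item The absorption in Step 3 uses $\Phi_2<\infty$, which holds because $u$ is smooth on $\Omega$.
\end{itemize}

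What your argument buys over the citation is an explicit account of the constant. The radius $\rho$, hence the number of cylinders and $C$, depends on the modulus of continuity of $a^{ij}$. You are right that this dependence cannot be dropped. The hypotheses (\ref{eqn:SL22_3}) are the parabolic analogue of those in Gilbarg--Trudinger 9.11, whose constant depends on that modulus. A constant depending only on $(n,p,\Theta)$ would pass, by approximation, to merely bounded measurable leading coefficients, and for those $W^{2}_p$ bounds fail for large $p$.

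So the lemma as stated is imprecise, but harmlessly so. In every use (Proposition~\ref{prn:HA04_1}, (\ref{eqn:SL14_11}), Theorem~\ref{thm:HA03_3}) one has $a^{ij}=g^{ij}$ with $\|g\|_{C^{1+\alpha,\frac{1+\alpha}{2}}(\Omega)}\le C$. The right reference for that bound is Lemma~\ref{lma:SL30_1}, not Lemma~\ref{lma:SL02_1} together with (\ref{eqn:SL09_4}). In the Ricci flow case (\ref{eqn:SL09_4}) only controls $g(0)$, and both the time modulus and the ellipticity of $g(t)$ come from Lemma~\ref{lma:SL30_1}.

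Finally, note that (\ref{eqn:SL14_11}) applies the lemma to a system rather than a scalar equation. Your argument still covers this, because the principal part $g^{kl}\partial_k\partial_l$ acts diagonally and all coupling is of lower order.
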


\begin{lemma}
\cite[Theorem 8.12.1]{Kry96}
\label{lma:SL31_12}
 Suppose that the coefficients of $L$ satisfy (\ref{eqn:SL22_3}) and the following bounds 
		\begin{equation}
			\begin{split}
			 \|a_{ij}\|_{C^{2m-2+\alpha, \frac{2m-2+\alpha}{2}}(\Omega)}+
				\|b_{i}\|_{C^{2m-2+\alpha, \frac{2m-2+\alpha}{2}}(\Omega)} 
                +\|c\|_{C^{2m-2+\alpha, \frac{2m-2+\alpha}{2}}(\Omega)}<\Theta
			\end{split}
		\end{equation}
        for some $m\geq 1$ and $\alpha\in (0,1)$. 
        Suppose $u\in C^{2+\alpha, \frac{2+\alpha}{2}}(\Omega)$ satisfies the equation \eqref{eqn:SL22_6}.
		Then 
		\begin{equation}
			\|u\|_{C^{2m+\alpha, \frac{2m+\alpha}{2}}(\Omega')}
            <C \left\{ \|u\|_{L^\infty(\Omega)}+\|f\|_{C^{2m-2+\alpha, \frac{2m-2+\alpha}{2}}(\Omega)} \right\}
		\end{equation}
        for some positive constant $C=C(n, m, \alpha, \Theta)$.  
\end{lemma}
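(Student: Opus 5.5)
This is the classical interior parabolic Schauder estimate, and I would prove it by induction on $m$. The base case $m=1$ is the standard $C^{2+\alpha,\frac{2+\alpha}{2}}$ interior estimate. Higher orders then follow by differentiating the equation and reapplying the base case on a nested family of cylinders between $\Omega'$ and $\Omega$.

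\emph{Base case $m=1$.} Here the coefficients are only in $C^{\alpha,\frac{\alpha}{2}}(\Omega)$ with norm at most $\Theta$.
\begin{enumerate}
\item Freeze coefficients. For the constant-coefficient operator $\partial_t - a^{ij}(X_0)\partial_{ij}$, after a linear change of variables it becomes the heat operator. For that operator I would derive the Campanato-type decay
\[
\fint_{P_\rho}|D^2u-(D^2u)_\rho|^2 \le C(\rho/r)^{2}\fint_{P_r}|D^2u-(D^2u)_r|^2
\]
for caloric $u$. This comes from the interior derivative bounds for caloric functions.
\item Write $(\partial_t-a^{ij}(X_0)\partial_{ij})u = f + (a^{ij}-a^{ij}(X_0))\partial_{ij}u + b^i\partial_iu + cu$. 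Compare $u$ on $P_r(X_0)$ with the caloric replacement.
\item The perturbation is $O(r^\alpha)$ times $\|D^2u\|_{C^0}$ plus lower-order terms. Campanato's characterization of H\"older spaces then gives
\[
[D^2u]_{\alpha}+[\partial_tu]_\alpha \le C\left(\|f\|_{C^\alpha}+\|u\|_{C^{1}}+\|D^2u\|_{C^0}\right)
\]
on a slightly smaller cylinder.
\item Make the estimate work on variable cylinders by using the weighted seminorms $\sup_\delta \delta^{2+\alpha}[\cdot]$, where $\delta$ is the parabolic distance to the boundary.
\item Absorb the intermediate terms $\|D^2u\|_{C^0}$ and $\|Du\|_{C^0}$ via the interpolation inequality $\|D^ju\|\le\varepsilon[D^2u]_\alpha+C_\varepsilon\|u\|_{L^\infty}$. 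This yields the bound with only $\|u\|_{L^\infty(\Omega)}$ and $\|f\|_{C^{\alpha}}$ on the right.
\end{enumerate}

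\emph{Inductive step.} Assume the estimate holds up to order $2m$, and let the coefficients lie in $C^{2m+\alpha}$.
\begin{enumerate}
\item For a spatial direction $e_k$, apply the difference quotient $\delta_h^k u$. It satisfies
\[
(\partial_t-L)\delta^k_hu=\delta_h^kf+(\delta_h^ka^{ij})\partial_{ij}u(\cdot+he_k)+(\delta_h^kb^i)\partial_iu(\cdot+he_k)+(\delta_h^kc)\,u(\cdot+he_k).
\]
\item The right side is bounded in $C^{2m-2+\alpha}$ uniformly in $h$, using the inductive bound on $u$ in $C^{2m+\alpha}$ on an intermediate cylinder.
\item Apply the base case (or the inductive hypothesis) to $\delta_h^ku$ and let $h\to0$. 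This shows $\partial_ku\in C^{2m+\alpha}$, and repeating once more gives $D^2u\in C^{2m+\alpha}$.
\item Obtain the time derivatives from the equation, $\partial_tu=Lu+f$. Each application of $\partial_t$ costs two spatial orders, which matches the parabolic scaling in \eqref{eq3.3}.
\item Chain the estimates over $2m$ nested cylinders shrinking from $\Omega$ to $\Omega'$. The constant depends only on $n,m,\alpha,\Theta$.
\end{enumerate}
The hypothesis $u\in C^{2+\alpha,\frac{2+\alpha}{2}}$ guarantees that the base case applies a priori. The difference-quotient argument then upgrades regularity inductively, so no qualitative smoothness beyond that is needed.

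I expect the main obstacle to be the base case. In particular, the absorption argument requires the weighted-norm bookkeeping, so that the $\varepsilon$-interpolation constants close up independently of the cylinder size. The inductive step is routine once the base case and interpolation inequalities are in hand.
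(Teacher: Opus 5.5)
The paper does not prove this lemma. It quotes it from Krylov's book and uses it as a black box, and it invokes it explicitly only with $m=1$ (Case~2 of Proposition~\ref{prn:HA04_1} and Theorem~\ref{thm:HA03_2}). So for the paper's purposes your base case is all that is needed.

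Your outline is a correct reconstruction of the standard argument: a freezing/Campanato proof of the interior $C^{2+\alpha,\frac{2+\alpha}{2}}$ estimate, then a bootstrap by spatial difference quotients, with time derivatives recovered from $\partial_t u=Lu+f$. The bookkeeping in your inductive step is right. With $a^{ij},b^i,c,f\in C^{2m+\alpha,\frac{2m+\alpha}{2}}$ and $u$ already bounded in $C^{2m+\alpha,\frac{2m+\alpha}{2}}$ on an intermediate cylinder, the right-hand side of the equation for $\delta_h^k u$ is bounded in $C^{2m-2+\alpha,\frac{2m-2+\alpha}{2}}$ uniformly in $h$. Two such passes give $Du, D^2u\in C^{2m+\alpha,\frac{2m+\alpha}{2}}$, and the remaining pure time derivative $\partial_t^{m+1}u$ comes from the equation. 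In your Step~3 you must apply the inductive hypothesis to $\delta_h^k u$, not the base case, since the base case gains only one order.

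When you write out the base case, make three things explicit.

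\begin{enumerate}
\item \emph{Subtract the value at $X_0$.} The lower-order part $f+b^i\partial_iu+cu$ is not $O(r^\alpha)$ on $P_r(X_0)$; only its oscillation is. Let $F=f+(a^{ij}-a^{ij}(X_0))\partial_{ij}u+b^i\partial_iu+cu$ be the full forcing of the frozen equation. You must compare $u-F(X_0)(t-t_0)$ with its caloric replacement $w$, so that the forcing becomes $\tilde F=F-F(X_0)$. Otherwise the forcing term does not decay at the rate $r^{n+2+2\alpha}$ the Campanato iteration requires.
\item \emph{Quantify the comparison.} You need a bound on $v=u-F(X_0)(t-t_0)-w$, for instance $\int_{P_r}(|\partial_tv|^2+|D^2v|^2)\le C\int_{P_r}|\tilde F|^2$ for zero parabolic boundary data. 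This follows from the energy identity for $\int(\partial_tv-\Delta v)^2$ together with $\|D^2v\|_{L^2}\le\|\Delta v\|_{L^2}$ on convex cross-sections.
\item \emph{Use the parabolic boundary in the weight.} Here $\Omega'=B_1\times[-1,0]$ contains the top face $\{t=0\}$ of $\Omega$. So the weight $\delta$ in Step~4 must be the parabolic distance to the parabolic boundary of $\Omega$ (the lateral side and the bottom $\{t=-4\}$), not to all of $\partial\Omega$. Otherwise the weighted estimate degenerates exactly on $\Omega'\cap\{t=0\}$.
\end{enumerate}

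With these in place your argument proves the lemma.
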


\begin{lemma}\cite[Theorem 5.1]{Sch}
\label{lma:HA10_1}
    Let $u\in C^{2,1}(\Omega)$ be a classical solution of the following parabolic system
\begin{align*}
\begin{cases}
&u_t^{\mu}-A_{ij}^{\mu\nu}D_{ij}u^\nu +B_i^{\mu\nu}D_iu^{\nu}+C^{\mu\nu}u^\nu = f^\mu,  
\; \text{in} \; \Omega;\\
&u = 0,  \; \text{on} \; \partial \Omega.
\end{cases}
\end{align*}
Suppose that the following conditions hold
\begin{align}
\begin{cases}
    &A_{ij}^{\mu\nu}\in C^0(\Omega); \\
    &\Theta^{-1}|V|^2\leq a^{\mu\nu}_{ij}V^i_\mu V^j_\nu\leq \Theta |V|^2,\quad \forall \; V \in \mathbb{R}^n;\\
    &\|A_{ij}^{\mu\nu}\|_{C^{\alpha,\frac{\alpha}{2}}}+\|B_i^{\mu\nu}\|_{C^{\alpha,\frac{\alpha}{2}}}+\|C^{\mu\nu}\|_{C^{\alpha,\frac{\alpha}{2}}} 
    \leq \Theta. 
\end{cases}    
\end{align}
Then $D^2u,u_t\in C^{\alpha,\frac{\alpha}{2}}$ and 
\begin{equation}
    \begin{split}
        \|u\|_{C^{2+\alpha,\frac{1+\alpha}{2}}(\Omega)}\leq C \left\{\|u\|_{C^0(\Omega)}+\|f\|_{C^{\alpha,\frac{\alpha}{2}}(\Omega)} \right\}, 
    \end{split}
\end{equation}
where  $C=C(n,\alpha,\Theta)$. 
\end{lemma}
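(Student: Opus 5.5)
This is the classical global Schauder estimate for linear parabolic systems with zero Dirichlet data. I would prove it by Campanato's method: establish decay estimates for constant-coefficient systems, transfer them to variable coefficients by freezing, and absorb the lower-order terms by interpolation.

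\textbf{Step 1: constant coefficients.} Take $A_{ij}^{\mu\nu}$ constant, $B=C=0$, and $f=0$ on a parabolic cylinder $Q_R$. The strong ellipticity condition gives energy (Caccioppoli) inequalities for $u$. These hold for all $x$- and $t$-derivatives, because derivatives solve the same system. Combining them with the Sobolev embedding of Lemma~\ref{lma:SL02_1} yields interior bounds of the form
\begin{align*}
\sup_{Q_{R/2}}|D^2u|+|u_t|\leq CR^{-n-2}\int_{Q_R}|D^2u|^2+|u_t|^2,
\end{align*}
and hence the Campanato decay
\begin{align*}
\int_{Q_\rho}\bigl|D^2u-(D^2u)_{\rho}\bigr|^2\leq C\Bigl(\frac{\rho}{R}\Bigr)^{n+4}\int_{Q_R}\bigl|D^2u-(D^2u)_{R}\bigr|^2 .
\end{align*}
The same decay holds for $u_t$. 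Near the flat lateral boundary $\{x_n=0\}$ with $u=0$ there, tangential derivatives and time derivatives still vanish on the boundary. The normal second derivatives $D_{nn}u$ are recovered algebraically from the system using $A_{nn}^{\mu\nu}$, which is invertible by ellipticity. This gives the boundary analogue. The initial bottom face is treated the same way with $t$ playing the role of the normal variable.

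\textbf{Step 2: variable coefficients.} Freeze the coefficients at the center $X_0$ and write
\begin{align*}
u_t-A(X_0)D^2u=f-\bigl(A-A(X_0)\bigr)D^2u-BDu-Cu .
\end{align*}
Split $u=v+w$ on $Q_R$, where $v$ solves the homogeneous frozen system with the boundary values of $u$, and $w$ has zero data and the right-hand side above. The energy estimate for $w$ bounds its $L^2$ Hessian by
\begin{align*}
R^{2\alpha}\|D^2u\|_{L^\infty}^2R^{n+2}+\text{(the $C^\alpha$ seminorms of $f$ and of the lower-order terms)}.
\end{align*}
Inserting this into Step 1 gives the Campanato inequality with exponent $n+2+2\alpha$ plus a small multiple of $[D^2u]_\alpha$. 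Campanato's characterization of parabolic H\"older spaces then gives $[D^2u]_{\alpha}+[u_t]_{\alpha}\leq \varepsilon [D^2u]_\alpha+C(\|D^2u\|_{C^0}+\|f\|_{C^\alpha}+\|u\|_{C^1})$.

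\textbf{Step 3: absorption.} Use the interpolation inequalities
\begin{align*}
\|Du\|_{C^\alpha}+\|D^2u\|_{C^0}\leq \varepsilon[D^2u]_\alpha+C_\varepsilon\|u\|_{C^0}
\end{align*}
to absorb everything into the left side. A partition of unity together with local flattening of $\partial\Omega$ globalizes the estimate, since $\partial\Omega$ here is a smooth cylinder boundary.

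\textbf{Main obstacle.} The hardest step is the boundary and corner analysis of Step 1 for systems. There is no maximum principle for systems, so everything must go through $L^2$ energy and Campanato arguments. Recovering $D_{nn}u$ and $u_t$ at the lateral boundary and at the corner where it meets the initial time requires the zero data to satisfy compatibility; this holds here because $u=0$ on all of $\partial\Omega$. I would first prove the half-space and quarter-space decay estimates carefully, since everything else is standard perturbation.
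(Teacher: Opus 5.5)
The paper gives no proof of this lemma. It quotes it from \cite[Theorem 5.1]{Sch}, where it is established by Campanato's method: constant-coefficient decay estimates in the interior and at the boundary, freezing of coefficients, Campanato's characterization of parabolic H\"older spaces, and interpolation. Your proposal follows that route and is correct in outline. Three points need care when you write out Step~2:
\begin{itemize}
\item With $F:=f-(A-A(X_0))D^2u-BDu-Cu$, the comparison function $w$ must carry $F-F(X_0)$ rather than $F$, since otherwise $\|D^2w\|_{L^2(Q_R)}^2$ is only of order $R^{n+2}$. To arrange this, first subtract a polynomial that solves the frozen system with right-hand side $F(X_0)$ and vanishes on the relevant boundary piece. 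At a lateral point, for example, take $\tfrac12 c\,x_n^2$ with $A^{\mu\nu}_{nn}(X_0)c^\nu=-F^\mu(X_0)$.
\item The bound on $\|D^2w\|_{L^2}+\|w_t\|_{L^2}$ is the $W^{2,1}_2$ estimate for the frozen Dirichlet problem, not the basic energy inequality, which controls only $Dw$.
\item Keep $\|D^2u\|_{L^\infty}$ (finite for $u\in C^{2,1}$) on the right-hand side instead of $\varepsilon[D^2u]_\alpha$. That is what lets the argument prove the regularity assertion, not only an a priori bound.
\end{itemize}
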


\begin{lemma}\cite[Theorem 7.2]{Sch}
\label{lma:HA10_2}
    Let $u\in C^{2,1}(\Omega)$ be a classical solution of the following parabolic system
    \begin{align*}
    \begin{cases}
     &u_t^{\mu}-A_{ij}^{\mu\nu}D_{ij}u^\nu +B_i^{\mu\nu}D_iu^{\nu}+C^{\mu\nu}u^\nu=f^\mu, \quad \text{in} \; \Omega;  \\
     &u=0, \quad \text{on} \; \partial \Omega.    
    \end{cases}
   \end{align*}
 Suppose that the following conditions hold
\begin{align*}
\begin{cases}
    &A_{ij}^{\mu\nu}\in C^0(\Omega); \\
    &\Theta^{-1}|V|^2\leq a^{\mu\nu}_{ij}V^i_\mu V^j_\nu\leq \Theta |V|^2,\quad \forall \; V \in \mathbb{R}^n;  \\
    &\|A_{ij}^{\mu\nu}\|_{L^\infty(\Omega)}+\|B_i^{\mu\nu}\|_{L^\infty(\Omega
    )}+\|C^{\mu\nu}\|_{L^\infty(\Omega)} \leq \Theta.
\end{cases}    
\end{align*}
Then
\begin{equation}
    \begin{split}
        \|u\|_{W_p^{2,1}(\Omega)}
        \leq C \left\{\|u\|_{L^p(\Omega)}+\|f\|_{L^p(\Omega)} \right\}, 
    \end{split}
\end{equation}
where $C=C(n,p,\Theta)$. 
\end{lemma}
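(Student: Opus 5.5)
The plan is the classical freezing-of-coefficients argument, reducing to the constant-coefficient $L^p$ theory for parabolic systems satisfying the Legendre condition. One caveat first. In the estimate, the dependence on $A^{\mu\nu}_{ij}$ through $C^0(\Omega)$ has to be read as dependence on a modulus of continuity $\omega$ of $A$ on $\overline{\Omega}$, which is compact, so $\omega$ exists. This is what governs the size of the localization radius below. I will therefore prove the bound with $C=C(n,p,\Theta,\omega)$. In the applications of this paper the coefficients are in fact uniformly Hölder by Lemma~\ref{lma:SL02_1}, so this is harmless.

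\textbf{Step 1 (constant coefficients).} For $A^{\mu\nu}_{ij}$ constant and $B=C=0$, consider the system $u^\mu_t-A^{\mu\nu}_{ij}D_{ij}u^\nu=f^\mu$, on $\IR^n\times\IR$ or on the half-space $\IR^n_+\times\IR$ with zero Dirichlet data. I want
\begin{align*}
\|D^2u\|_{L^p}+\|u_t\|_{L^p}\le C(n,p,\Theta)\|f\|_{L^p}.
\end{align*}
On the whole space, the symbol $(i\tau+A_{ij}\zeta_i\zeta_j)^{-1}\zeta_k\zeta_l$ is a parabolic Mikhlin multiplier; the Legendre condition makes $i\tau+A(\zeta)$ invertible with the correct homogeneity. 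So the claim follows from the parabolic Calder\'on--Zygmund theorem, the Euclidean version of (\ref{eqn:HB13_2}). On the half-space, I would use the explicit Poisson-type kernel of the Dirichlet problem obtained from the Lopatinskii condition, which holds for strongly elliptic systems with Dirichlet data. That kernel again satisfies Calder\'on--Zygmund bounds.

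\textbf{Step 2 (localization).} Choose $\rho>0$ so small that $\omega(\rho)\,C(n,p,\Theta)\le \tfrac12$. Cover $\overline\Omega$ by parabolic cylinders of size $\rho$ and take a subordinate partition of unity $\{\zeta_k\}$. On each piece, set $v=\zeta_k u$ and write
\begin{align*}
v_t-A_{ij}(X_k)D_{ij}v=\zeta_k f+\big(A_{ij}-A_{ij}(X_k)\big)D_{ij}v+R_k ,
\end{align*}
where $R_k$ contains $B\,Du$, $C u$ and the commutator terms involving $D\zeta_k\, Du$, $D^2\zeta_k\, u$ and $\partial_t\zeta_k\, u$. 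Near the lateral boundary $\partial B\times[-4,0]$, I flatten the sphere by a smooth map with bounds depending only on $n$, and use the half-space estimate; the condition $u=0$ is preserved. Near the initial slice, $u=0$ on the parabolic boundary means $v$ extends by zero to earlier times. Step 1 then gives
\begin{align*}
\|v\|_{W^{2,1}_p}\le C\|f\|_{L^p}+\tfrac12\|D^2v\|_{L^p}+C_\rho\left(\|Du\|_{L^p}+\|u\|_{L^p}\right).
\end{align*}
The middle term is absorbed into the left side.

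\textbf{Step 3 (summing and interpolation).} Summing over the finitely many pieces, whose number depends only on $\rho$, gives
\begin{align*}
\|u\|_{W^{2,1}_p(\Omega)}\le C\left(\|f\|_{L^p}+\|Du\|_{L^p}+\|u\|_{L^p}\right).
\end{align*}
The Gagliardo--Nirenberg interpolation $\|Du\|_{L^p}\le \varepsilon\|D^2u\|_{L^p}+C_\varepsilon\|u\|_{L^p}$ on the Lipschitz domain $B$, applied slicewise in $t$, lets me absorb the gradient term. This yields the stated inequality.

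\textbf{Main obstacle.} I expect the main difficulty to be the boundary piece of Step 1 for systems rather than scalar equations. There the maximum principle is unavailable and the Dirichlet half-space kernel must be built from the Legendre structure. I would first try to avoid explicit kernels. The idea is to prove the half-space $L^2$ estimate by energy and Fourier transform in the tangential and time variables. I would then obtain $L^p$ through the same Calder\'on--Zygmund machinery: a H\"ormander-type kernel bound derived from local $W^{2,1}_2$ and $C^{\alpha}$ regularity of the constant-coefficient system, followed by interpolation and duality, which are valid since everything in this step is Euclidean.
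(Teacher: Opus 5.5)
Your proof is correct, and so is your caveat. With merely continuous $A^{\mu\nu}_{ij}$, the constant must also depend on a modulus of continuity $\omega$ of the leading coefficients, as in any freezing argument. So $C(n,p,\Theta)$ in the statement has to be read as $C(n,p,\Theta,\omega)$. In the paper this is harmless, because the coefficients are uniformly H\"older by Lemma~\ref{lma:SL30_1}; that lemma, not the embedding Lemma~\ref{lma:SL02_1} you cite, is the right reference.

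The paper gives no argument of its own and imports the estimate from \cite[Theorem 7.2]{Sch}, which to my understanding works in Campanato spaces. There, energy (Caccioppoli-type) inequalities for the frozen system, valid up to flat Dirichlet boundaries, give $L^2$ and $\mathcal{L}^{2,n+2}$ (BMO-type) control of $D^2u$ and $u_t$. The $L^p$ bound then follows by interpolation, and continuous coefficients are treated by the same freezing and absorption as in your Steps 2--3.

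So the real difference is your Step 1, which you do by Fourier analysis: Mikhlin multipliers on the whole space, and Solonnikov-type Poisson kernels on the half-space. On the half-space, what is actually needed is the parabolic, parameter-dependent complementing condition for $\mathrm{Re}\,\lambda\ge 0$. Dirichlet data satisfy it under the Legendre condition by a short energy argument.

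Each route buys something different. Yours makes the whole-space case immediate, but it rests the boundary case for systems on heavy potential theory that you only sketch. The energy route needs no kernels and handles systems and boundary points uniformly. The same source also derives the Schauder estimate of Lemma~\ref{lma:HA10_1} (its Theorem 5.1). Your fallback plan is in effect a hybrid of the two.

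One small omission: you do not treat cylinders meeting the top slice $t=0$, which is not part of the parabolic boundary. By causality you may extend the right-hand side by zero for $t>0$ before applying Step 1.
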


Then we start to analyze the regularity of the metrics and PDE solutions on $\Omega$.    

\begin{lemma}
\label{lma:SL30_1}
 Based on the choice of $\Omega$ and $g$, the following estimates hold.
\begin{itemize}
    \item In the static metric case, we have 
    \begin{align}
       \|g\|_{C^{1+\alpha}(B_2)} \leq C\|g\|_{W_p^{2}(B_2)} \leq C. 
    \label{eqn:SL30_1}   
    \end{align}
    \item  In the Ricci flow case, we have
    \begin{align}
        \|g\|_{C^{2+\alpha, \frac{2+\alpha}{2}}(\Omega)} \leq C. 
    \label{eqn:SL30_2}    
    \end{align}
\end{itemize}    
In short, we always have
\begin{align}
    \|g\|_{C^{1+\alpha, \frac{1+\alpha}{2}}(\Omega)} \leq C. 
\label{eqn:SL30_6}    
\end{align}
\end{lemma}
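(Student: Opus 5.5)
The plan is to treat the two cases separately and then note that \eqref{eqn:SL30_6} follows from either.

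\emph{Static case.} By Definition~\ref{dfn:HA10_1}, $g|_B$ satisfies \eqref{eqn:SL09_3} and \eqref{eqn:SL09_4} with $k=2$. Since $R\geq 100$, the weights $R^{|J|-\frac14}$ are at least $1$, so \eqref{eqn:SL09_4} gives $\|\partial g_{ij}\|_{L^p(B)}+\|\partial^2 g_{ij}\|_{L^p(B)}\leq 1$. Together with $|g_{ij}|\leq 2$ from \eqref{eqn:SL09_3}, this bounds $\|g\|_{W^2_p(B_2)}$ by a dimensional constant. Because $p=n+4>n$, the elliptic Morrey--Sobolev embedding $W^2_p(B_2)\hookrightarrow C^{1+\alpha}(B_2)$, with $\alpha=1-n/p$, holds on the Lipschitz domain $B_2$. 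This gives \eqref{eqn:SL30_1}. Since $g$ is time independent, the parabolic $C^{1+\alpha,\frac{1+\alpha}{2}}(\Omega)$ norm reduces to the spatial $C^{1+\alpha}$ norm, so \eqref{eqn:SL30_6} holds with the same $\alpha$ (or any smaller one).

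\emph{Ricci flow case.} First, the same argument with $k=4$ bounds $\|g(0)\|_{W^4_p(B)}$, hence $\|g(0)\|_{C^{3+\alpha}(B)}$. Next, control $g(t)$ on $\Omega$ through the evolution equation $\partial_t g_{ij}=-2R_{ij}$. Write $\partial_t g=-2\mathrm{Rc}$ and estimate the coordinate derivatives $\partial^J \mathrm{Rc}$ in the fixed coordinates. The covariant derivatives $\nabla^m Rm$ are bounded on the smaller time interval by Shi's estimates (this is the input behind Lemma~\ref{lma:HC07_1}). Converting them to coordinate derivatives requires bounds on $\partial^l\Gamma$ at time $t$, and those require bounds on $\partial^{l+1}g(t)$.

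The plan is to close this with a Gronwall/bootstrap argument in $l$:
\[
\partial_t \partial^{l} g = -2\,\partial^l \mathrm{Rc} = \sum (\nabla^{a} Rm) * (\partial^{b_1} g)*\cdots ,
\]
where the right-hand side involves $\partial^{\le l} g$ and inverse metrics. Integrating in time from $t=0$ gives uniform bounds on $\|\partial^l g(t)\|_{C^0}$ for $l\leq 3$, and also on $\partial_t g$ and $\partial_t\partial g$. Time H\"older regularity then comes from bounded time derivatives, and spatial H\"older regularity from bounded next-order derivatives.

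Alternatively, and more cleanly, Lemma~\ref{lma:HC07_1} can be invoked directly. It asserts that the $W^4_p$-harmonic radius at each time is at least $100$, which yields spatial $C^{3+\alpha}$ bounds for each $g(t)$, although in $t$-dependent harmonic coordinates. Uniform $C^0$ bounds on $\partial_t g=-2\mathrm{Rc}$, $\partial_t\partial g$ and similar quantities then give the parabolic H\"older seminorms in time, and together these yield \eqref{eqn:SL30_2}.

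\emph{Main obstacle.} The hard step is the Ricci flow case. Shi's estimates need a backward time buffer, and the coordinates are harmonic only at $t=0$. One must check that the harmonic-coordinate bounds from Lemma~\ref{lma:HC07_1} at each time can be transferred to the fixed $g(0)$-coordinates. The transition maps are harmonic with respect to $g(t)$ and have controlled derivatives, since $g(t)$ and $g(0)$ are uniformly equivalent with controlled derivative differences. If this transfer is too delicate, the plan falls back on the ODE bootstrap above in the fixed coordinates. There the needed inputs are only $|\nabla^m Rm|\leq C$ for $m\leq 2$ on $\Omega$, a rescaling of the time interval, and the initial $C^{3+\alpha}$ control of $g(0)$.
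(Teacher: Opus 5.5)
Your primary argument is the paper's. In the static case you apply the Sobolev embedding $W^2_p\hookrightarrow C^{1+\alpha}$ to the harmonic-radius bounds. In the Ricci flow case you start from $\|g(0)\|_{C^{3+\alpha}}\le C$ and integrate the evolution equations of $g$, $\Gamma$, $\partial g$, $\partial\Gamma,\dots$ in the fixed $g(0)$-harmonic coordinates, using Shi-type bounds on $\nabla^m Rm$, so your detour through time-dependent harmonic coordinates is not needed. One small correction: bounding $\partial^3 g$ through $\partial_t\partial^3 g=-2\partial^3\mathrm{Rc}$, which you rely on for the spatial H\"older seminorm of $\partial^2 g$, needs $|\nabla^3 Rm|\le C$ and not only $m\le 2$, but Shi's estimates supply this as well.
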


\begin{proof}
  It is clear that (\ref{eqn:SL30_6}) follows from the combination of (\ref{eqn:SL30_1}) and (\ref{eqn:SL30_2}).  

  In light of the $W_p^2$-harmonic radius assumption and the elliptic Sobolev embedding, we obtain  (\ref{eqn:SL30_1})  directly. Thus, it suffices to show (\ref{eqn:SL30_2}).
  
    As the metric $g(0)$ satisfies the uniform $W_p^4(B_2)$-norm bound, similar to (\ref{eqn:SL30_1}), the elliptic Sobolev embedding implies
  \begin{align}
      \|g(0)\|_{C^{3+\alpha}(B_2)} \leq C.  
  \label{eqn:SL30_4}    
  \end{align}
  Then we can take both time and space derivatives of $g$. Each time derivative of $g_{ij}$ is a tensor consisting of a curvature tensor and their covariant derivatives, which are bounded by the Shi-type estimate. 
  Thus, we have
  \begin{align}
      \sum_{j=0}^3 \|\partial_t^j g\|_{L^{\infty}(\Omega)} \leq C. 
  \label{eqn:SL30_3}    
  \end{align}
  The space partial derivatives can also be bounded, after we take care of the Levi-Civita connection terms. 
  To be more precise, we have
\begin{align*}
    &\partial_t g_{ij}=-2R_{ij}, \Rightarrow \; C^{-1}g_{ij}(0) \leq g_{ij}(t) \leq C g_{ij}(0), \\
    &\partial_t \Gamma_{ij}^k=-g^{kl}(R_{li,j}+R_{lj,i}-R_{ij,l}), \Rightarrow |\Gamma| \leq C,\\
    &\partial_t \partial_k g_{ij}=-2 \partial_k R_{ij}
    =-2(R_{ij,k}+\Gamma_{ki}^qR_{qj} + \Gamma_{kj}^qR_{qi}), \Rightarrow |\partial g| \leq C, \\
    &\partial_t \partial_l \Gamma_{ij}^k=\partial_l \partial_t \Gamma_{ij}^k=g^{-1}*g^{-1}*\partial g* (\nabla Rc)+g*\nabla \nabla Rc + g* \nabla Rc * \Gamma \Rightarrow |\partial \Gamma| \leq C.
\end{align*}
Then we continue to estimate $\partial^2 g$, $\partial^2 \Gamma$, $\partial^3 g$ by taking their time derivatives and comparing them with their values at time $t=0$.  Since all the covariant derivatives of the curvature are uniformly bounded, it follows from (\ref{eqn:SL30_4}) that
\begin{align}
    \sum_{|J| \leq 3} \|\partial_J g\|_{L^{\infty}(\Omega)} \leq C. 
\label{eqn:SL30_5}    
\end{align}
Therefore, (\ref{eqn:SL30_2}) follows from the combination of (\ref{eqn:SL30_3}) and (\ref{eqn:SL30_5}). 
\end{proof}

\begin{proposition}
\label{prn:HA04_1}
    Suppose $u$ solves the heat equation $(\partial_t - \Delta_g) u=0$ on $\Omega$. Then
    \begin{align}
        \|u\|_{C^{2+\alpha,\frac{2+\alpha}{2}}(\Omega')} \leq C(n) \|u\|_{L^{\infty}(\Omega)}. 
    \label{eqn:HA03_4}    
    \end{align}
\end{proposition}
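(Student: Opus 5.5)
In the coordinates of the local model space-time (Definition~\ref{dfn:HA10_1}) the heat equation is a nondivergence-form parabolic equation
\begin{align*}
\partial_t u - g^{ij}(x,t)\partial^2_{ij}u + g^{ij}\Gamma^k_{ij}\partial_k u = 0 .
\end{align*}
Its coefficients are $a^{ij}=g^{ij}$, $b^k=-g^{ij}\Gamma^k_{ij}$ and $c=0$. By (\ref{eqn:SL09_3}), $a^{ij}$ is uniformly elliptic with $\Theta$ depending only on $n$. By Lemma~\ref{lma:SL30_1}, specifically (\ref{eqn:SL30_6}), we have $\|g\|_{C^{1+\alpha,\frac{1+\alpha}{2}}(\Omega)}\le C$. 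Hence $a^{ij}\in C^{1+\alpha,\frac{1+\alpha}{2}}$, and $b^k$, which involves one spatial derivative of $g$, lies in $C^{\alpha,\frac{\alpha}{2}}$ with bounds depending only on $n$. Here $\alpha=1-(n+2)/p$ with $p=n+4$, so $\alpha$ also depends only on $n$. In the static case $b^k=0$ at $t=0$ by (\ref{eqn:SL09_2}), but this is not needed.

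The plan is a two-step bootstrap through nested cylinders $\Omega'\subset\Omega_1\subset\Omega_2\subset\Omega$, obtained by shrinking radius and time interval by fixed factors. Lemmas~\ref{lma:SL08_1} and~\ref{lma:SL31_12} are stated for $\Omega,\Omega'$, but by scaling and translation they hold for any such nested pair with constants depending on the ratio.

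\textbf{Step 1.} Apply Lemma~\ref{lma:SL08_1} on $\Omega_2\subset\Omega$ with $f=0$. This gives $\|u\|_{W_p^{2,1}(\Omega_2)}\le C\|u\|_{L^p(\Omega)}\le C\|u\|_{L^\infty(\Omega)}$, since $|\Omega|$ is bounded. Lemma~\ref{lma:SL02_1} with $m=1$ then gives $\|u\|_{C^{1+\alpha,\frac{1+\alpha}{2}}(\Omega_2)}\le C\|u\|_{L^\infty(\Omega)}$. We do not actually need this bound for the next step, but it guarantees the qualitative regularity $u\in C^{2+\alpha,\frac{2+\alpha}{2}}$ on interior cylinders required as a hypothesis in Lemma~\ref{lma:SL31_12}. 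This regularity holds anyway because $u$ is smooth.

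\textbf{Step 2.} Apply Lemma~\ref{lma:SL31_12} with $m=1$ on $\Omega'\subset\Omega_1$. The hypothesis requires only $C^{\alpha,\frac{\alpha}{2}}$ bounds on $a^{ij}$, $b^i$ and $c$, which were verified above. Since $f=0$, we get $\|u\|_{C^{2+\alpha,\frac{2+\alpha}{2}}(\Omega')}\le C\|u\|_{L^\infty(\Omega_1)}\le C(n)\|u\|_{L^\infty(\Omega)}$, which is (\ref{eqn:HA03_4}).

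\textbf{Main obstacle.} The delicate point is the Hölder regularity of the first-order coefficient $b^k=-g^{ij}\Gamma^k_{ij}$ in the static case. There we only know $g\in W^2_p\subset C^{1+\alpha}$ in space, so $\Gamma\in C^{\alpha}$, and this is exactly the minimal regularity Schauder theory needs. Time-regularity is trivial in the static case. In the Ricci flow case, (\ref{eqn:SL30_2}) gives far more than is needed. If the first-order term turned out to be a problem, I would first try the divergence form $\Delta_g u=|g|^{-1/2}\partial_i(|g|^{1/2}g^{ij}\partial_j u)$. Failing that, I would use the harmonic-coordinate identity (\ref{eqn:SL09_2}) to kill $b$ at $t=0$ in the static case, where it reduces the equation to $\partial_t u=g^{ij}\partial^2_{ij}u$ with $C^{1+\alpha}$ coefficients.
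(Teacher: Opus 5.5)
Your argument is correct. In the Ricci flow case it is the paper's argument, but in the static case you take a different route.

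\textbf{How the paper handles the static case.} It uses the harmonic-coordinate cancellation $g^{ij}\Gamma^k_{ij}=0$ to reduce the equation to $(\partial_t-g^{ij}\partial_i\partial_j)u=0$. It then bootstraps using only the $L^p$ estimate of Lemma~\ref{lma:SL08_1}: it differentiates the equation once and twice in space and once in time. Along the way it uses De Giorgi--Nash--Moser for $\dot u$, and $\partial^2 g\in L^p$ to handle the term $\partial_k\partial_l g^{ij}\partial_{ij}u$. This yields a $W^{4,2}_p$ bound, and Lemma~\ref{lma:SL02_1} with $m=2$ finishes. In the Ricci flow case the paper applies Lemma~\ref{lma:SL31_12} with $m=1$, exactly as you do.

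\textbf{Comparison.} The paper's static route gives one derivative more than stated, namely control in $C^{3+\alpha,\frac{3+\alpha}{2}}$. It needs no Schauder theory, only $L^p$ theory with continuous leading coefficients plus $\partial g\in L^\infty$ and $\partial^2 g\in L^p$. It does depend essentially on $b\equiv 0$: with a first-order term $b\sim\partial g$, differentiating twice would produce $\partial^3 g$, which $W^2_p$ does not control.

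Your observation that (\ref{eqn:SL30_6}) alone puts $a^{ij}$ and $b^k$ in $C^{\alpha,\frac{\alpha}{2}}$ in both cases is right. So a single Schauder application treats both cases uniformly, needs no cancellation, and gives exactly the stated $C^{2+\alpha,\frac{2+\alpha}{2}}$ bound. That is all Lemma~\ref{lma:SL13_1} and Corollary~\ref{cly:HA03_6} use. Your route cannot be pushed further in the static case, since $g$ is only $C^{1+\alpha}$ there.

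\textbf{Two small inaccuracies,} neither affecting the argument. First, in the static case $b^k$ vanishes for all $t$, not only at $t=0$. Second, the $C^{1+\alpha}$ bound from your Step~1 does not by itself give the qualitative $C^{2+\alpha}$ regularity required in Lemma~\ref{lma:SL31_12}. Smoothness of $u$ does, so Step~1 can simply be dropped.
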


\begin{proof}
We should prove (\ref{eqn:HA03_4}) case by case. \\

\textit{Case 1. $g$ is static.}\\

By the Sobolev embedding, it suffices to show 
    \begin{align}
        \|u\|_{W_p^{4,2}(\Omega')} \leq C(n) \|u\|_{L^{\infty}(\Omega)}. 
    \label{eqn:SL09_05}    
    \end{align}
In light of the harmonic condition (\ref{eqn:SL09_2}), we can write down the heat equation as
\begin{align}
    (\partial_t - g^{ij} \partial_i \partial_j ) u=0. 
\label{eqn:SL09_5}    
\end{align}
As $g^{ij}$ is independent of time, the Sobolev embedding implies $g^{ij}\in C^{1+\alpha}(B)$. Thus, (\ref{eqn:SL09_05}) follows from
the standard parabolic equation theory.  We provide more details for the convenience of the readers. In each step, it is possible that we need to shrink the domain slightly. For simplicity of notation, we ignore this and do the domain change just in the last step.

By Lemma~\ref{lma:SL08_1} we have 
\begin{align}
  \|u\|_{W_p^{2,1}(\Omega)} \leq C \|u\|_{L^p(\Omega)} \leq  C\|u\|_{L^\infty(\Omega)}.
\label{eqn:SL14_1}  
\end{align}
Let $\dot{u}=\partial_t u$. Then (\ref{eqn:SL14_1}) actually means that
\begin{align}
    \|\dot{u}\|_{L^p(\Omega)} + \|\partial u\|_{L^p(\Omega)} +\|\partial \partial u\|_{L^p(\Omega)}
    \leq C\|u\|_{L^\infty(\Omega)}.
\label{eqn:SL14_3}    
\end{align}
Here by $\partial$ without subscripts, we mean the derivative in the space direction. 
Taking the space derivative of (\ref{eqn:SL09_5}) yields the 
\begin{align}
    \left(\partial_t - g^{ij}\partial_i \partial_j \right) \partial_ku=\partial_k g^{ij}\partial_{i} \partial_j u.
\label{eqn:SL09_6}    
\end{align}
Applying Lemma~\ref{lma:SL08_1} on (\ref{eqn:SL09_6}), we obtain 
\begin{align}
    \|\partial_k u\|_{W_p^{2,1}(\Omega)}
    \leq C (\|\partial_k u\|_{L^p(\Omega)}+\|\partial_k g^{ij}\partial^2_{ij}u\|_{_{L^p(\Omega)}}).
\label{eqn:SL14_2}    
\end{align}
Since $ \partial_k g^{ij}=-g^{iq}g^{jl} \partial_k g_{lq}$ is uniformly bounded on $\Omega$,  it follows from (\ref{eqn:SL14_1}) and (\ref{eqn:SL14_2}) that 
\begin{align}
 \|\partial^3 u\|_{L^{p}(\Omega)} \leq C \| u\|_{W_p^{2,1}(\Omega)}\leq C \|u\|_{L^\infty(\Omega)}.
\label{eqn:SL09_8} 
\end{align}

Taking the time derivative of (\ref{eqn:SL09_5}), we obtain $\left(\partial_t - g^{ij}\partial_i \partial_j \right) \dot{u}=0$. By De Giorgi-Nash-Moser iteration and (\ref{eqn:SL14_3}), we have 
\begin{align*}
    \|\dot{u}\|_{L^{\infty}(\Omega)} \leq C \|\dot{u}\|_{L^p(\Omega)} \leq C \|u\|_{L^{\infty}(\Omega)}. 
\end{align*}
Thus, similar argument as in (\ref{eqn:SL14_3}) implies that
\begin{align}
    \|\partial_t \dot{u}\|_{L^p(\Omega)} + \|\partial \dot{u}\|_{L^p(\Omega)} 
    +\|\partial \partial \dot{u}\|_{L^p(\Omega)}
    \leq C\|\dot{u}\|_{L^\infty(\Omega)} \leq C \|u\|_{L^{\infty}(\Omega)}, 
\label{eqn:SL14_4}    
\end{align}
which implies
\begin{align}
    \|\partial_t \partial \partial u\|_{L^p(\Omega)}
    \leq C\|u\|_{L^\infty(\Omega)}. 
    \label{eqn:SL14_5}
\end{align}
By (\ref{eqn:SL09_8}), (\ref{eqn:SL14_5}), and the Sobolev embedding, we know 
\begin{align}
    \|\partial \partial u\|_{L^{\infty}(\Omega)} \leq  \|\partial \partial u\|_{C^{\alpha}(\Omega)}
    \leq C \|u\|_{L^\infty(\Omega)}. 
\label{eqn:SL09_7}    
\end{align}

Taking the second space derivative of (\ref{eqn:SL09_5}) yields
\begin{equation}
\begin{split}
    \left(\partial_t-g^{ij}\partial_i \partial_j \right)\partial_k \partial_l u
    =\partial_k g^{ij}\partial_i \partial_j \partial_lu+\partial_l g^{ij}\partial_i \partial_j \partial_ku+\partial_k \partial_l g^{ij}\partial_i \partial_j u.
    \end{split}
\end{equation}
Note that $\partial g^{ij}$ is uniformly bounded and $\partial_i \partial_j u$ 
is dominated by (\ref{eqn:SL09_7}). 
Using Lemma~\ref{lma:SL08_1} and (\ref{eqn:SL09_8}), we have 
\begin{align}
    \|\partial^4 u\|_{L^p(\Omega)}
    \leq C \left\{ \|\partial^3 u\|_{L^p(\Omega)} + \|\partial \partial g\|_{L^{p}(\Omega)} \|u\|_{L^{\infty}(\Omega)}\right\}
    \leq C \|u\|_{L^\infty(\Omega)}. 
\label{eqn:SL27_1}    
\end{align}
Recalling that from (\ref{eqn:SL14_4}) we have
\begin{align}
    \|\partial_t \dot{u}\|_{L^p(\Omega)} \leq C \|u\|_{L^\infty(\Omega)}. 
\label{eqn:SL27_2}    
\end{align}
Recall the definition of the $W_p^{4,2}$-norm, it is straightforward that (\ref{eqn:SL09_05}) follows from (\ref{eqn:SL14_3}), (\ref{eqn:SL09_8}), (\ref{eqn:SL14_4}), (\ref{eqn:SL27_1}) and (\ref{eqn:SL27_2}). \\

\textit{Case 2. $g$ solves the Ricci flow equation.}\\

Now, the chart is only harmonic with respect to $g(0)$. 
Compared with (\ref{eqn:SL09_5}), the heat equation is slightly more complicated
\begin{align}
    \dot{u} -g^{ij} \partial_i \partial_j  u 
    +g^{ij}\Gamma_{ij}^k \partial_k u=0. 
\label{eqn:HA03_5}    
\end{align}
In light of the metric regularity (\ref{eqn:SL30_2}), we can set $m=1$ and apply Lemma~\ref{lma:SL31_12} to obtain 
\begin{align*}
    \|u\|_{C^{2+\alpha, \frac{2+\alpha}{2}}(\Omega')}
    \leq C \|u\|_{L^{\infty}(\Omega)},
\end{align*}
which is exactly (\ref{eqn:HA03_4}). 
\end{proof}

\begin{lemma}
    Suppose $u$ is a smooth function on $B$.  Then 
\begin{align}
        \sup_{x \in B'} \left\{ |\nabla u|_g(x) + |\nabla \nabla u|_g(x) +[\nabla \nabla u]_{g,\alpha}^{(\frac12)}(x) \right\} \leq C(n) \|u\|_{C^{2+\alpha}(B)}. 
    \label{eqn:SL11_1}  
\end{align}
Notice that (cf. Definition~\ref{dfn:HB14_1}) 
 \begin{align}
   [\nabla \nabla u]_{g,\alpha}^{(\frac12)}(x) := \sup_{y \in B_{\frac12}(x)}  \frac{|u_{ij}(x)-\tau_{\gamma}(x,y)u_{ij}(y)|}{|\gamma|^{\alpha}},   
 \label{eqn:HB07_1}  
 \end{align}
 where $\gamma$ is any shortest geodesic connecting $x$ and $y$ under the metric $g$, $\tau_{\gamma}(x,y)$ is the parallel transportation from $y$ to $x$ along the geodesic $\gamma$. 
\label{lma:SL13_1}
\end{lemma}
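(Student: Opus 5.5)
The plan is to express every covariant quantity on the left of \eqref{eqn:SL11_1} through coordinate partial derivatives of $u$ and Christoffel symbols of $g$. Lemma~\ref{lma:SL30_1} gives $\|g\|_{C^{1+\alpha}(B)}\le C$, and \eqref{eqn:SL09_3} gives uniform ellipticity. Together these bound $g^{-1}$ in $C^{1+\alpha}$ and $\Gamma$ in $C^{\alpha}$, since $\Gamma$ is $g^{-1}*\partial g$.

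\textbf{First-order terms.} In coordinates, $\nabla_i u=\partial_i u$ and $\nabla_i\nabla_j u=\partial_i\partial_j u-\Gamma_{ij}^k\partial_k u$. The norms are $|\nabla u|_g^2=g^{ij}\partial_iu\partial_ju$ and $|\nabla\nabla u|_g^2=g^{ik}g^{jl}u_{ij}u_{kl}$. Both are therefore controlled by $C(\|\partial u\|_{C^0}+\|\partial^2 u\|_{C^0})\le C\|u\|_{C^{2+\alpha}(B)}$, using the bound on $|\Gamma|$ and \eqref{eqn:SL09_3}.

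\textbf{Hölder seminorm.} Fix $x\in B'$ and $y\in B_{\frac12}(x)$, where the ball is taken in the $g$-metric. By \eqref{eqn:SL09_3}, $y$ lies in the Euclidean ball of radius $\frac{\sqrt2}{2}$ around $x$, so $y\in B$. Moreover, any $g$-shortest geodesic $\gamma$ from $x$ to $y$ has $g$-length less than $\frac12$ and hence Euclidean length less than $\frac{\sqrt2}{2}$. It follows that $\gamma$ stays inside $B_{1+\frac{\sqrt2}{2}}(0)\subset B$. This containment is the first thing to check, since it is what makes the coordinate computation legitimate along all of $\gamma$.

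Set $T:=\nabla\nabla u$. I split
\[
T(x)-\tau_\gamma T(y)=\bigl(T(x)-T(y)\bigr)+\bigl(T(y)-\tau_\gamma T(y)\bigr),
\]
with components compared in the fixed coordinate frame. The first bracket is bounded by $[\partial^2 u]_\alpha|x-y|^\alpha+[\Gamma\partial u]_\alpha|x-y|^\alpha$. Here $\Gamma\in C^\alpha$ and $\partial u\in C^1$, and $|x-y|\le \sqrt2\, d_g(x,y)=\sqrt2|\gamma|$, so this bracket is at most $C\|u\|_{C^{2+\alpha}}|\gamma|^\alpha$.

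For the second bracket, parallel transport along $\gamma$ solves the linear ODE $\frac{d}{ds}V=\Gamma(\gamma(s))*\dot\gamma*V$ in coordinates. Grönwall with $|\Gamma|\le C$ gives $|\tau_\gamma V-V|\le C|\gamma|\,|V|$ in Euclidean components, for $|\gamma|\le\frac12$. Hence this bracket is at most $C|\gamma|\,|T(y)|\le C|\gamma|^{\alpha}\|u\|_{C^{2+\alpha}}$, because $|\gamma|\le 1$.

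\textbf{Conclusion.} Finally I convert Euclidean component norms to $g$-norms using \eqref{eqn:SL09_3}, at the cost of a factor $C(n)$, and take the supremum over $x\in B'$. This yields \eqref{eqn:SL11_1}. The only delicate step is the parallel transport estimate. It needs just the $C^0$ bound on $\Gamma$ together with the containment of $\gamma$ in the chart, and I expect no real obstacle there.
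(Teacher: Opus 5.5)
Your proposal is correct and follows essentially the same route as the paper. You bound the pointwise terms in coordinates via $\Gamma=g^{-1}*\partial g$, and you use the same splitting $T(x)-\tau_\gamma T(y)=\bigl(T(x)-T(y)\bigr)+\bigl(T(y)-\tau_\gamma T(y)\bigr)$: the first piece by coordinate H\"older estimates, the second by Gr\"onwall on the parallel-transport ODE. Your continuity argument from \eqref{eqn:SL09_3}, showing that $\gamma$ stays in $B$, is a cleaner justification than the paper's appeal to small curvature.
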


\begin{proof}
 In $B'$, it is clear that
 \begin{align}
   \frac12 |\partial u|^2  
   \leq  |\nabla u|_g^2= g^{ij}u_i u_j \leq 2 |\partial u|^2 
 \label{eqn:SL11_2}  
 \end{align}
 holds pointwise. 
 Note that
 \begin{align*}
     |\nabla \nabla u|_g^2&=g^{ij}g^{kl} u_{ik}u_{jl}
     =g^{ij}g^{kl} (\partial_i \partial_k u -\Gamma_{ik}^q \partial_q u) 
     (\partial_j \partial_l u - \Gamma_{jl}^m \partial_m u)\\
     &\leq C \left\{ |\partial \partial u|^2 + |\Gamma|^2 |\partial u|^2 \right\}
      \leq C \left\{ |\partial \partial u|^2 + |\partial g|^2 |\partial u|^2 \right\}. 
 \end{align*}
 Since $\partial g$ is uniformly bounded by the Sobolev embedding, we have 
 \begin{align}
     \sup_{x \in B'} |\nabla \nabla u|_g(x) \leq C \|u\|_{C^2(B)} 
     \leq C \|u\|_{C^{2+\alpha}(B)}. 
 \label{eqn:SL11_3}    
 \end{align}
 
 Now fix $x\in B'$ and $y \in B_{\frac12}(x)$.  
 Let $\gamma$ be a shortest unit-speed geodesic connecting $x$ and $y$ such that $\gamma(0)=x$ and $\gamma(L)=y$.  Under the assumption of small curvature, we know $\gamma \subset B$. 
 We want to show
 \begin{align}
    |u_{ij}(x)-\tau_{\gamma}(x,y)u_{ij}(y)| \leq C \|u\|_{C^{2+\alpha}(B)} \cdot L^{\alpha}. 
 \label{eqn:SL11_5}   
 \end{align}
 For simplicity of notation, we define
   \begin{equation}
    \Upsilon(\gamma(\tau)):=\tau_\gamma(\gamma(\tau),y) \nabla^2 u(y)=\Upsilon_{ij}(\gamma(\tau))dx^i\otimes dx^j.
   \end{equation}
 From the definition, it is clear that $\Upsilon_{ij}(y)=u_{ij}(y)$. 
 Then we have 
   \begin{equation}
   \begin{split}
    &\quad |u_{ij}(x)-\tau_{\gamma}(x,y)u_{ij}(y)|\\
    &=|u_{ij}(x)-\Upsilon_{ij}(x)|
       \leq |u_{ij}(x)-\Upsilon_{ij}(y)| + |\Upsilon_{ij}(y)-\Upsilon_{ij}(x)|\\
    &=\underbrace{|u_{ij}(x)-u_{ij}(y)|}_{I} + \underbrace{|\Upsilon_{ij}(y)-\Upsilon_{ij}(x)|}_{II}. 
   \end{split}
   \label{eqn:SL11_7}
   \end{equation}
  For part $I$, we know 
  \begin{equation*}
  \begin{split}
      I&=\left|\partial_i \partial_j u(x)-\partial_i \partial_j u(y) - (\Gamma_{ij}^k(x) \partial_k u(x) -\Gamma_{ij}^k(y) \partial_k u(y)) \right|\\
      &=\left| \left\{\partial_i \partial_j u(x)-\partial_i \partial_j u(y) \right\}
      -\Gamma_{ij}^k(x)  \left\{\partial_k u(x)-\partial_k u(y) \right\} 
      +\left\{ \Gamma_{ij}^k(y) -\Gamma_{ij}^k(x)  \right\}\partial_k u(y) \right|\\
      &\leq C L^{\alpha} \left\{ \|\partial^2 u\|_{C^{\alpha}(B)} + \|\partial g\|_{C^0(B)}
      \|\partial u\|_{C^{\alpha}(B)} + \|\partial u\|_{C^0(B)}
      \|\partial g\|_{C^{\alpha}(B)} \right\}. 
  \end{split}    
  \end{equation*}
  In short, we have
  \begin{align}
      I \leq CL^{\alpha} \|u\|_{C^{2+\alpha}(B)}. 
      \label{eqn:SL11_8}
  \end{align}
  For part II, we note that $\Upsilon$ satisfies the following equation of parallel transportation
    \begin{align*}
    \frac{d\Upsilon_{j_1 j_2}}{d\tau}-(\Gamma^{j}_{j_1k}\Upsilon_{jj_2}+\Gamma^{j}_{j_2k}\Upsilon_{j_1j})\frac{dx^k(\gamma(\tau))}{d\tau}=0.
    \end{align*}
  This can be written as 
  \begin{align}
    \frac{d\Upsilon_{j_1 j_2}}{d\tau}-\left\{ \left(\Gamma^{j}_{j_1k}\frac{dx^k(\gamma(\tau))}{d\tau} \right)\Upsilon_{jj_2}+ \left(\Gamma^{j}_{j_2k} \frac{dx^k(\gamma(\tau))}{d\tau} \right) \Upsilon_{j_1j} \right\} =0.
  \label{eqn:HB06_3}  
  \end{align}
  Note that 
  \begin{align*}
      \left\|\Gamma^{j}_{l k}\frac{dx^k(\gamma(\tau))}{d\tau} \right\|_{C^0(\gamma)} 
      \leq C \|\partial g\|_{C^0(B)}
      \leq C. 
  \end{align*}
  Applying Gronwall's inequality (cf. (\ref{eqn:HB05_4}) in Lemma~\ref{lma:HB05_2}) on the ODE system (\ref{eqn:HB06_3}), we obtain
  \begin{align}
      \|\Upsilon\|_{C^0(\gamma)} \leq C |\Upsilon(y)|  
      \leq C \|u\|_{C^0(B)} \leq C \|u\|_{C^{2+\alpha}(B)}. 
  \end{align}
  Thus, the standard ODE estimates (cf. (\ref{eqn:HB06_1}) in Lemma~\ref{lma:HB05_2}) imply that 
  \begin{align}
      II=|\Upsilon_{ij}(x)-\Upsilon_{ij}(y)| \leq C \|\Upsilon\|_{C^0(\gamma)} \cdot L
      \leq C \|u\|_{C^{2+\alpha}(B)} \cdot L^{\alpha}, 
      \label{eqn:SL11_6}
  \end{align}
  where we used the facts $L \in (0, 2)$ and $\alpha \in (0,1)$.
  
     Plugging (\ref{eqn:SL11_8}) and (\ref{eqn:SL11_6}) into (\ref{eqn:SL11_7}), we obtain (\ref{eqn:SL11_5}). 
  Combining (\ref{eqn:SL11_5}) with (\ref{eqn:SL11_2}) and (\ref{eqn:SL11_3}), we arrive at (\ref{eqn:SL11_1}). 
\end{proof}

  \begin{lemma}
  \label{lma:HB05_2}
      Let $A\in C([0,L],\IR^{n^2\times n^2})$ and $x(\tau) \in C^1([0,L],\IR^{n^2})$ satisfy
      \begin{align}
          \dot{x}(\tau)=A(\tau)x(\tau),\quad x(0)=x_0.
      \label{eqn:HB05_3}    
      \end{align}
      Then the following estimate holds:
      \begin{align}
      &|x(\tau)| \leq \exp\bigg(\int_{0}^{\tau}|A(s)|ds\bigg) \cdot |x_0|, \label{eqn:HB05_4} \\
      &|x(\tau)-x(0)| \leq  \|A\|_{C^0[0,L]} \cdot \|x\|_{C^0[0,L]} \cdot \tau. \label{eqn:HB06_1}
      \end{align}
  \end{lemma}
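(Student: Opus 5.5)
Both estimates are standard linear ODE facts, and I would prove them directly from the integral form of \eqref{eqn:HB05_3}. Throughout, $|\cdot|$ denotes the Euclidean norm on $\IR^{n^2}$ and the induced operator norm on matrices (any norm with $|Ax|\leq |A|\,|x|$ suffices).

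For \eqref{eqn:HB05_4}, integrate to get $x(\tau)=x_0+\int_0^\tau A(s)x(s)\,ds$. Then
\begin{align*}
|x(\tau)|\leq |x_0|+\int_0^\tau |A(s)|\,|x(s)|\,ds .
\end{align*}
Set $\phi(\tau):=|x_0|+\int_0^\tau |A(s)||x(s)|ds$. Since $A$ is continuous and $x$ is $C^1$, $\phi$ is $C^1$ with $\phi'(\tau)=|A(\tau)||x(\tau)|\leq |A(\tau)|\phi(\tau)$. It follows that
\begin{align*}
\frac{d}{d\tau}\Bigl(\phi(\tau)e^{-\int_0^\tau|A|}\Bigr)\leq 0 ,
\end{align*}
so $\phi(\tau)\leq e^{\int_0^\tau |A|}\phi(0)=e^{\int_0^\tau|A|}|x_0|$. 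Because $|x(\tau)|\leq\phi(\tau)$, this gives \eqref{eqn:HB05_4}. This is the integral form of Gronwall's inequality.

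For \eqref{eqn:HB06_1}, use the same integral identity:
\begin{align*}
|x(\tau)-x(0)|\leq\int_0^\tau|A(s)||x(s)|\,ds\leq \|A\|_{C^0[0,L]}\,\|x\|_{C^0[0,L]}\,\tau .
\end{align*}
Equivalently, this is the mean value inequality applied to $\dot x=Ax$.

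There is no real obstacle. The only point needing care is differentiating $\phi$, and that is justified by the continuity of $A$ and $x$. One could also differentiate $|x|^2$ directly, but that route needs extra care where $x=0$, so the integral form is cleaner.
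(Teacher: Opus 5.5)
Your proposal is correct and follows the paper's proof: both rewrite the ODE as $x(\tau)=x_0+\int_0^\tau A(s)x(s)\,ds$, apply Gronwall's inequality to get \eqref{eqn:HB05_4}, and read off \eqref{eqn:HB06_1} directly from the integral identity. The only difference is that you prove the Gronwall step explicitly via the monotonicity of $\phi(\tau)e^{-\int_0^\tau|A|}$, which the paper simply cites.
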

  
\begin{proof}
    The differential equation (\ref{eqn:HB05_3}) can be rewritten as the integral equation
    \begin{align}
        x(\tau) =x_0+\int_{0}^{\tau}A(s)x(s)ds, 
    \label{eqn:HB06_2}    
    \end{align}
    which implies
    \begin{align*}
        |x(\tau)|\leq |x_0|+\int_{0}^{\tau} |A(s)|\cdot |x(s)| ds.
    \end{align*}
    Applying Gronwall's inequality to the above inequality, we obtain (\ref{eqn:HB05_4}).
    It is clear that (\ref{eqn:HB06_1}) follows from (\ref{eqn:HB06_2}). 
\end{proof}

Combining Proposition~\ref{prn:HA04_1} and Lemma~\ref{lma:SL13_1}, we obtain the following Corollary. 

\begin{corollary}
    Suppose $u$ satisfies the heat equation. 
    Then we have 
     \begin{align}
        \sup_{(x,t) \in \Omega'} \left\{ |\nabla u|_g + |\nabla \nabla u|_g
        +[\nabla \nabla u]_{g,\alpha}^{(\frac12)}
        +|\dot{u}|\right\} \leq
         C(n)  \|u\|_{L^{\infty}(\Omega)}. 
    \label{eqn:HA03_7}    
    \end{align}
\label{cly:HA03_6}    
\end{corollary}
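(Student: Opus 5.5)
The estimate (\ref{eqn:HA03_7}) follows by applying Proposition~\ref{prn:HA04_1} and then Lemma~\ref{lma:SL13_1} on each time slice. First, Proposition~\ref{prn:HA04_1} gives
\begin{align*}
\|u\|_{C^{2+\alpha,\frac{2+\alpha}{2}}(\Omega')} \leq C(n)\|u\|_{L^\infty(\Omega)}.
\end{align*}
To have room for the spatial H\"older estimate near the boundary of $B'$, I would apply the proposition on a slightly larger intermediate cylinder, for instance $B_{3/2}(0)\times[-2,0]$. The proof of the proposition only shrinks the domain by a fixed amount, so the same bound holds there with a constant depending only on $n$. By the definition (\ref{eq3.3}) of the parabolic H\"older norm, this bound contains $\|\dot u\|_{C^0}$. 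This already takes care of the term $|\dot u|$. It also gives $\sup_t\|u(\cdot,t)\|_{C^{2+\alpha}}\le C\|u\|_{L^\infty(\Omega)}$ uniformly in $t$ on the intermediate cylinder.

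Second, fix $t\in[-1,0]$ and apply Lemma~\ref{lma:SL13_1} to the function $u(\cdot,t)$ with the metric $g(t)$. The proof of that lemma uses only three facts:
\begin{itemize}
\item the uniform equivalence (\ref{eqn:SL09_3});
\item the bound $\|\partial g\|_{C^\alpha(B)}\le C$;
\item the fact that shortest $g$-geodesics between points of $B'$ at distance less than $\frac12$ stay in the domain.
\end{itemize}
In the static case all three are given directly by Lemma~\ref{lma:SL30_1}. In the Ricci flow case the chart is harmonic only for $g(0)$. However, (\ref{eqn:SL30_6}) bounds $\partial g(t)$ uniformly in $C^{\alpha}$, and $\partial_t g=-2Rc$ with $|Rm|\le\xi^2$ keeps $g(t)$ uniformly equivalent to $g(0)$. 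The lemma therefore applies at every time slice with a uniform constant, possibly after replacing the factor $2$ in (\ref{eqn:SL09_3}) by a fixed constant. The resulting bound is
\begin{align*}
|\nabla u|_g+|\nabla\nabla u|_g+[\nabla\nabla u]^{(\frac12)}_{g,\alpha}\le C\|u(\cdot,t)\|_{C^{2+\alpha}}\le C\|u\|_{L^\infty(\Omega)}
\end{align*}
at every point of $B'$. Taking the supremum over $t$ and adding the $\dot u$ bound gives (\ref{eqn:HA03_7}).

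The main obstacle is the domain bookkeeping in the second step. For $x\in B'$ and $d_g(x,y)<\frac12$, the geodesic $\gamma$ from $x$ to $y$ must lie in the region where the $C^{2+\alpha}$ bound on $u$ has been established, not merely in $B$. Because $g$ is $2$-equivalent to the Euclidean metric, the geodesic has Euclidean length at most $\sqrt2\cdot\frac12<1$. Hence $\gamma\subset B_{2}(0)$, but it need not lie in $B_{3/2}(0)$. I would handle this in one of two ways:
\begin{itemize}
\item run Proposition~\ref{prn:HA04_1} on a cylinder with spatial radius strictly between $1+\frac{\sqrt2}{2}$ and $2$, using that its domain shrinking is arbitrary but fixed; or
\item note that the smallness of curvature makes $g$ close to Euclidean near the chart center, so geodesics are nearly straight and remain well inside.
\end{itemize}
Either way, the constant still depends only on $n$.
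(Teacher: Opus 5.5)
Your proposal is correct and follows the paper's argument: the paper also proves the corollary by combining Proposition~\ref{prn:HA04_1} (the $C^{2+\alpha,\frac{2+\alpha}{2}}$ bound, which already contains $\dot u$) with Lemma~\ref{lma:SL13_1} applied on each time slice. Your extra bookkeeping fills in details the paper leaves implicit. Running the proposition on an intermediate cylinder of spatial radius in $(1+\frac{\sqrt2}{2},2)$ keeps the geodesics in the H\"older seminorm inside the region where the $C^{2+\alpha}$ bound holds. You also check the uniform equivalence of $g(t)$ with $g(0)$ in the Ricci flow case.
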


Then we deal with the tensor case. 

\begin{proposition}
    Suppose $u=u_{ij}dx^i\otimes dx^j$ satisfies
    \begin{align}
        (\partial_t -\Delta_g) u=h, \quad \textrm{on} \quad \Omega. 
    \label{eqn:SL12_1}    
    \end{align}
    Then 
    \begin{align}
        \|u\|_{W_p^{2,1}(\Omega')} \leq C(n) \left\{ \|u\|_{L^{\infty}(\Omega)} + \|h\|_{L^p(\Omega)} \right\}. 
        \label{eqn:SL14_11}
    \end{align}    
\end{proposition}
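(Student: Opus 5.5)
The plan is to write the tensor heat equation (\ref{eqn:SL12_1}) in the coordinates of the local model space-time as a linear parabolic system for the component functions $u_{ij}$. Then we apply the $W_p^{2,1}$ interior estimate for systems (Lemma~\ref{lma:HA10_2}, or its scalar analogue Lemma~\ref{lma:SL08_1} applied componentwise). Expanding the rough Laplacian,
\begin{align*}
(\Delta_g u)_{ij} = g^{kl}\partial_k\partial_l u_{ij} - g^{kl}\Gamma_{kl}^q\partial_q u_{ij} + g^{kl}\,\Gamma*\partial u + g^{kl}\,(\partial\Gamma + \Gamma*\Gamma)* u ,
\end{align*}
where $*$ denotes contractions in the indices $i,j$. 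If the operator in question is the Lichnerowicz Laplacian rather than the rough one, there is an additional zeroth-order term $Rm*u$, which is bounded since $|Rm|\le \xi^2$.

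The resulting system is diagonal in its principal part, $A^{\mu\nu}_{kl}=g^{kl}\delta^{\mu\nu}$. By (\ref{eqn:SL09_3}) it is uniformly elliptic with constant $\Theta=\Theta(n)$, and by Lemma~\ref{lma:SL30_1} the coefficient $g^{kl}$ is continuous, indeed $C^{1+\alpha,\frac{1+\alpha}{2}}$. The first-order coefficients are of type $g^{-1}*\Gamma$, and $|\Gamma|\le C|\partial g|\le C$ by (\ref{eqn:SL30_6}). So the top-order and first-order parts satisfy the $L^\infty$ hypotheses of Lemma~\ref{lma:HA10_2}.

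The one delicate term is the zeroth-order coefficient, which contains $\partial\Gamma\sim\partial^2 g$.
\begin{itemize}
\item In the Ricci flow case, (\ref{eqn:SL30_2}) gives $\|g\|_{C^{2+\alpha}}\le C$, so this coefficient is bounded and Lemma~\ref{lma:HA10_2} applies directly.
\item In the static case we only know $\partial^2 g\in L^p$ with $p=n+4$ by (\ref{eqn:SL09_4}). This is the main obstacle.
\end{itemize}
To handle it, we move the term $(\partial\Gamma)*u$ to the right-hand side and regard it as part of the source. The new source is $\tilde h = h + g^{-1}*\partial\Gamma* u$, and
\begin{align*}
\|\tilde h\|_{L^p(\Omega)} \le \|h\|_{L^p(\Omega)} + C\|\partial^2 g\|_{L^p(B)}\|u\|_{L^\infty(\Omega)} \le \|h\|_{L^p(\Omega)} + C\|u\|_{L^\infty(\Omega)} .
\end{align*}
This is exactly why the right side of (\ref{eqn:SL14_11}) carries $\|u\|_{L^\infty}$ rather than $\|u\|_{L^p}$. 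The remaining zeroth-order coefficient $\Gamma*\Gamma$ is bounded.

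Lemma~\ref{lma:HA10_2} is stated with zero boundary values, so we localize. Take a smooth cutoff $\eta$ supported in $\Omega$, equal to $1$ on $\Omega'$, and vanishing on the parabolic boundary; alternatively, use an interior version as in Lemma~\ref{lma:SL08_1}. The function $\eta u$ solves the same system with source
\begin{align*}
\eta\tilde h + (\partial_t\eta)u - 2g^{kl}\partial_k\eta\,\partial_l u - (g^{kl}\partial_k\partial_l\eta)u + (\text{first-order coefficient})*(\partial\eta)\,u .
\end{align*}
The only term not yet controlled is the gradient term $\partial u$. We absorb it with the interpolation inequality $\|\partial u\|_{L^p}\le \epsilon\|\partial^2 u\|_{L^p}+C_\epsilon\|u\|_{L^p}$ on a slightly larger intermediate cylinder, combined with a standard iteration over a family of nested cylinders. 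This yields
\begin{align*}
\|u\|_{W^{2,1}_p(\Omega')}\le C\bigl(\|u\|_{L^p(\Omega)}+\|\tilde h\|_{L^p(\Omega)}\bigr).
\end{align*}
Finally, since $|\Omega|$ is fixed, $\|u\|_{L^p}\le C\|u\|_{L^\infty}$, and (\ref{eqn:SL14_11}) follows with $C=C(n)$ (recall $p=n+4$).
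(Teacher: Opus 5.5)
Your proposal is correct and follows essentially the same route as the paper: rewrite the tensor equation in the harmonic coordinates as a system with principal part $g^{kl}\partial_k\partial_l$, move the $\partial\Gamma * u$ term into the source, bound it by $C\|\partial^2 g\|_{L^p}\|u\|_{L^\infty}$ (which is where $\|u\|_{L^\infty}$ enters in the static case), and then apply an interior $W^{2,1}_p$ estimate. Your treatment is, if anything, more careful than the paper's appeal to the scalar Lemma~\ref{lma:SL08_1}, since you localize explicitly and keep the first-order coupling terms $\Gamma*\partial u$ in the operator as bounded coefficients rather than placing them in the source.
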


\begin{proof}
  In the given $W_p^{k}$-harmonic coordinate system of $g(0)$,  equation (\ref{eqn:SL12_1}) can be written as 
 \begin{align}
      (\partial_t-g^{kl}\partial_l \partial_k) u_{ij} 
      +\left\{ \partial_l u_{ja} \Gamma_{ki}^a +\partial_l u_{ia} \Gamma_{kj}^a+\partial_k u_{ja} \Gamma_{li}^a +\partial_k u_{ia} \Gamma_{lj}^a\right\}g^{kl} 
     ={f_{ij}}+h_{ij},
 \label{eqn:SL31_1}    
 \end{align}
 where
 \begin{align*}
     f_{ij}&=-u_{ja} \left(\partial_l \Gamma_{ki}^a - \Gamma_{kb}^a\Gamma_{li}^b \right)g^{kl}
    -u_{ia}\left(\partial_l \Gamma_{kj}^a - \Gamma_{kb}^a\Gamma_{lj}^b \right)g^{kl}
    + u_{ab} \left( \Gamma_{li}^a \Gamma_{kj}^b +\Gamma_{ki}^a \Gamma_{lj}^b\right)g^{kl}\\
    &\quad -\left(\partial_a u_{ij} -u_{jb}\Gamma_{ai}^b -u_{ib}\Gamma_{aj}^b\right) \Gamma_{lk}^a. 
 \end{align*}
 Note that $\frac12 \delta_{kl} \leq g_{kl} \leq 2 \delta_{kl}$ in $B$, which guaranties the uniform parabolic condition. 
 Also note that $\|\Gamma\|_{L^{\infty}(\Omega)} \leq C(n)$ by (\ref{eqn:SL30_6}). 
 Furthermore, the right hand side of (\ref{eqn:SL31_1}) can be bounded as
 \begin{align*}
     \|f\|_{L^p(\Omega)} 
     \leq C \|u\|_{L^{\infty}(\Omega)} \cdot \left\{1+\|\partial \partial g\|_{L^p(\Omega)}\right\}
     \leq C \|u\|_{L^{\infty}(\Omega)}. 
 \end{align*}
 Therefore, by the $W_p^{2,1}$ estimate for parabolic systems (cf. Lemma~\ref{lma:SL08_1}), we have
\begin{align*}
     \|u\|_{W_p^{2,1}(\Omega')} 
    &\leq C \left\{ \|u\|_{L^p(\Omega)} + \|f+h\|_{L^p(\Omega)}  \right\}
    \leq C \left\{ \|u\|_{L^{\infty}(\Omega)} + \|f\|_{L^p(\Omega)} +\|h\|_{L^p(\Omega)}  \right\}\\
   &\leq C \left\{ \|u\|_{L^{\infty}(\Omega)} + \|h\|_{L^p(\Omega)} \right\},  
 \end{align*}
 which is exactly (\ref{eqn:SL14_11}). 
 \end{proof}

\begin{theorem}
    Suppose $u=u_{ij}dx^i\otimes dx^j$ satisfies one of the following equations on $\Omega$ with a static metric $g$:
    \begin{itemize}
        \item[(1).] The heat equation:
        \begin{align}
        (\partial_t-\Delta_g)u=0.
        \label{eqn:SL13_4}
        \end{align}
        \item[(2).] The Lichnerowicz heat equation:
        \begin{align}
        (\partial_t- \Delta_L)u=0.
        \label{eqn:SL13_5}
        \end{align}
    \end{itemize}
    Then we have 
     \begin{align}
        \sup_{(x,t) \in \Omega'} \left\{ |\nabla u|_g +[\nabla u]_{g,\alpha}^{(\frac12)}
        +|\dot{u}|\right\} \leq
         C(n)  \|u\|_{L^{\infty}(\Omega)}. 
    \label{eqn:SL13_6}    
    \end{align}
\label{thm:HA03_3}    
\end{theorem}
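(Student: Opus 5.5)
We prove Theorem~\ref{thm:HA03_3} by showing that, in the harmonic chart, $u$ lies in $W_p^{2,1}$ on a slightly smaller domain, with norm controlled by $\|u\|_{L^\infty(\Omega)}$. Sobolev embedding then gives $C^{1+\alpha}$ control of $u_{ij}$, and the argument of Lemma~\ref{lma:SL13_1}, one derivative lower, converts this into the covariant bounds in (\ref{eqn:SL13_6}). The main concern is regularity: in the static case we only know $g\in W_p^2$, so $\Gamma\in W_p^1\subset C^\alpha$ and $\partial\Gamma\in L^p$. That is enough for $W_p^{2,1}$ estimates, but not for the Schauder estimate Lemma~\ref{lma:SL31_12}.

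\textbf{Step 1: the $W_p^{2,1}$ bound.} For case (1) we apply the preceding proposition with $h=0$, which gives
$$\|u\|_{W_p^{2,1}(\Omega')}\le C\|u\|_{L^\infty(\Omega)}.$$
For case (2) we write $\Delta_L u=\Delta_g u+\mathrm{Rm}*u$. Since $|\mathrm{Rm}|\le\xi^2$, the extra term is a zeroth-order term with bounded coefficient. We treat it as $h=\mathrm{Rm}*u$, so $\|h\|_{L^p(\Omega)}\le C\|u\|_{L^\infty(\Omega)}$, and the same proposition gives the same bound. This is the standard $W^{2,1}_p$ theory for weakly coupled systems: the principal part $g^{kl}\partial_k\partial_l$ is diagonal, the first-order coefficients $\Gamma$ are bounded, and the zeroth-order terms $\partial\Gamma*u$ lie in $L^p$.

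\textbf{Step 2: H\"older control and time derivative.} Since $p=n+4>n+2$, Lemma~\ref{lma:SL02_1} with $m=1$ gives
$$\|u\|_{C^{1+\alpha,\frac{1+\alpha}{2}}(\Omega')}\le C\|u\|_{L^\infty(\Omega)}$$
after shrinking the domain, so $\partial u_{ij}$ is bounded and H\"older. For $|\dot u|$ we use that $g$ is static, so $\dot u$ solves the same (Lichnerowicz) heat equation. Step 1 applied to $\dot u$ on nested domains needs $\|\dot u\|_{L^\infty}$, which we do not yet have. To get it we bootstrap, exactly as in Case 1 of Proposition~\ref{prn:HA04_1}. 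We already have $\dot u\in L^p$. For systems we cannot use De Giorgi--Nash--Moser, so instead we apply the $L^p$ estimate Lemma~\ref{lma:SL08_1} (its system analogue, Lemma~\ref{lma:HA10_2}) with a cutoff to $\dot u$; the coupling enters only through zeroth- and first-order terms. This gives $\dot u\in W_p^{2,1}$ locally, and embedding then gives $\dot u\in L^\infty$. If the cutoff introduces $\partial\dot u$ terms, we absorb them by interpolation. This gives $|\dot u|\le C\|u\|_{L^\infty(\Omega)}$ on $\Omega'$.

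\textbf{Step 3: covariant quantities.} We have $\nabla_k u_{ij}=\partial_k u_{ij}-\Gamma*u$, and $\Gamma$ is bounded and $C^\alpha$ by Lemma~\ref{lma:SL30_1}. Hence $|\nabla u|_g$ is bounded, and $\nabla u$ is $C^\alpha$ in coordinates. For $[\nabla u]^{(1/2)}_{g,\alpha}$ we repeat the proof of Lemma~\ref{lma:SL13_1} for the $(0,3)$-tensor $\nabla u$. We split the difference into term $I$, the coordinate H\"older difference, bounded by $\|\partial u\|_{C^\alpha}$, $\|\Gamma\|_{C^\alpha}$ and $\|u\|_{C^\alpha}$. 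The remaining term $II$ is the drift of parallel transport, controlled by Lemma~\ref{lma:HB05_2} using $|\Gamma|\le C$ and $L<2$.

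The step I expect to be the main obstacle is Step 2: making the $L^\infty$ bound on $\dot u$ rigorous for the coupled tensor system without higher metric regularity. This is where we rely on the $W_p^{2,1}$-plus-embedding bootstrap rather than on Schauder theory.
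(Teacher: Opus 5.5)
Your proposal is correct and follows the paper's proof essentially step for step. The shared steps are: a $W_p^{2,1}$ bound for $u$, with the curvature terms of $\Delta_L$ treated as a bounded zeroth-order source; the observation that $\dot u$ solves the same equation because $g$ is static; an $L^p\to W_p^{2,1}$ estimate plus parabolic Sobolev embedding to bound $\dot u$ in $L^\infty$; and a one-derivative-lower version of Lemma~\ref{lma:SL13_1} for $|\nabla u|_g$ and $[\nabla u]_{g,\alpha}^{(\frac12)}$.

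The worry you flag in Step 2, which the paper passes over, closes easily. Even if you treat the coefficient $\partial\Gamma$ as merely $L^p$, the source $\partial\Gamma*\dot u$ lies in $L^{p/2}$, and $W^{2,1}_{p/2}\hookrightarrow L^\infty$ since $p/2=\frac{n+4}{2}>\frac{n+2}{2}$. In fact, in harmonic coordinates the contraction $g^{kl}\partial_l\Gamma^a_{ki}$ that actually appears is bounded by $C(|Rc|+|\partial g|^2)$.
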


\begin{proof}

In both cases, we can write $u$ as a solution of
\begin{align}
    (\partial_t - \Delta_g)u=A(u) 
    \label{eqn:SL19_4}
\end{align}
for some $A \in C^{\infty}(End(Sym^2T^*M))$ satisfying
\begin{align}
    |A| \leq \xi.
    \label{eqn:HB04_2}
\end{align}
In the case of the heat equation, $A \equiv 0$. In the case of the Lichnerowicz heat equation, we have 
\begin{align*}
   A(u)_{ij}=R_{ipqj}u_{kl}g^{pk}g^{ql}-R_{ip}u_{kj}g^{pk}-R_{jp}u_{ki}g^{pk}.  
\end{align*}
Therefore, it suffices to show (\ref{eqn:SL13_6}) under the condition (\ref{eqn:SL19_4}). \\

First, as $u$ solves (\ref{eqn:SL19_4}), then the standard $W_p^{2,1}$-estimate of the parabolic equation system implies that 
 \begin{align}
 \label{eqn:HB04_4}
 \begin{split}
   \|\dot{u}\|_{L^p(\Omega)} &\leq \|\dot{u}\|_{L^p(\Omega)} + \|\partial \partial u\|_{L^{p}(\Omega)} \leq \|u\|_{W_p^{2,1}(\Omega)} 
   \leq C \left\{ \|u\|_{L^p(\Omega)} + \|A(u)\|_{L^p(\Omega)} \right\}\\
   &\leq C \|u\|_{L^{\infty}(\Omega)}. 
 \end{split}  
 \end{align}

Secondly, $\dot{u}$ also solves the heat-type equation $(\partial_t-\Delta_g)\dot{u}=A(\dot{u})$. It follows from the bound of $|A|$ (cf. (\ref{eqn:HB04_2})) and Lemma~\ref{lma:SL08_1} that
\begin{align*}
    \|\dot{u}\|_{W_p^{2,1}(\Omega)}\leq  C\|\dot{u}\|_{L^p(\Omega)}.
\end{align*}
Then the parabolic Sobolev embedding (cf. Lemma~\ref{lma:SL02_1}) yields that 
 \begin{align}
 \|\dot{u}\|_{L^\infty(\Omega)}\leq  C\|\dot{u}\|_{L^p(\Omega)}.
\label{eqn:HB04_5} 
\end{align}
Combining (\ref{eqn:HB04_4}) and (\ref{eqn:HB04_5}), we have
\begin{align}
    \|\dot{u}\|_{L^\infty(\Omega)}\leq  C\|u\|_{L^{\infty}(\Omega)}. 
    \label{eqn:SL14_12}
\end{align}

 We now apply (\ref{eqn:SL14_11}) for both $u$ and $\dot{u}$.  Recall that  
\begin{align}
    \|u\|_{W_p^{2,1}(\Omega)} \leq C\|u\|_{L^{\infty}(\Omega)}.
    \label{eqn:SL19_1}
\end{align}
 Then the parabolic Sobolev embedding again implies that
 \begin{align*}
        \sup_{s \in [-1,0]} \|\partial u(\cdot, s)\|_{C^{\alpha}(B')} 
        \leq C(n)  \|u\|_{L^{\infty}(\Omega)}.   
 \end{align*}
 Then by a similar estimate as in the proof of Lemma~\ref{lma:SL13_1}, we obtain
 \begin{align}
        \sup_{s \in [-1,0]} \|\nabla u(\cdot, s)\|_{C_g^{\alpha}(B')} \leq C(n)  \|u\|_{L^{\infty}(\Omega)}.    
  \label{eqn:SL14_13}        
 \end{align}
 Therefore, (\ref{eqn:SL13_6}) follows from the combination of (\ref{eqn:SL14_12}) and (\ref{eqn:SL14_13}).    
\end{proof}

\begin{theorem}
    Suppose $\Omega$ is equipped with the Ricci flow metric $g(t)$.
    Suppose $u=u_{ij}dx^i\otimes dx^j$ satisfies the heat equation (\ref{eqn:SL13_4}) or the Lichnerowicz heat equation (\ref{eqn:SL13_5}). 
    Then we have 
     \begin{align}
        \sup_{(x,t) \in \Omega'} \left\{ |\nabla u|_g + |\nabla \nabla u|_g
        +|\dot{u}|\right\} \leq
         C(n)  \|u\|_{L^{\infty}(\Omega)}. 
    \label{eqn:SL31_3}    
    \end{align}
\label{thm:HA03_2}    
\end{theorem}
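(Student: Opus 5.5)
The plan is to follow the Ricci flow case of Proposition~\ref{prn:HA04_1} rather than the $W_p$ bootstrap of Theorem~\ref{thm:HA03_3}. In the Ricci flow case, Lemma~\ref{lma:SL30_1} gives the strong regularity bound (\ref{eqn:SL30_2}), $\|g\|_{C^{2+\alpha,\frac{2+\alpha}{2}}(\Omega)}\leq C$. Hence the coordinate form of the equation has coefficients in $C^{\alpha,\frac{\alpha}{2}}$, and one can apply the Schauder estimate for systems (Lemma~\ref{lma:HA10_1}) directly.

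\textbf{Step 1 (coordinate form).} In the harmonic coordinates of $g(0)$, write the equation as a linear parabolic system
\begin{align*}
\partial_t u_{ij}-g^{kl}\partial_k\partial_l u_{ij}+B*\partial u+C*u=0 .
\end{align*}
This is the analogue of (\ref{eqn:SL31_1}), now with the Christoffel symbols of $g(t)$. The chart is harmonic only for $g(0)$, so a term $g^{kl}\Gamma_{kl}^a\partial_a u_{ij}$ appears. For the Lichnerowicz equation, $C$ also absorbs the curvature terms $A(u)$ written as in the proof of Theorem~\ref{thm:HA03_3}. By (\ref{eqn:SL30_2}) the coefficients satisfy:
\begin{itemize}
\item $g^{kl}\in C^{2+\alpha,\frac{2+\alpha}{2}}$ and is uniformly elliptic;
\item $B$ consists of $g^{-1}*\Gamma$, which lies in $C^{1+\alpha,\frac{1+\alpha}{2}}$;
\item $C$ consists of $g^{-1}*\partial\Gamma$, $\Gamma*\Gamma$ and $Rm$ terms, all in $C^{\alpha,\frac{\alpha}{2}}$.
\end{itemize}
Lemma~\ref{lma:HA10_1} is stated with zero boundary data, so I would apply it to $\eta u$ for a space-time cutoff $\eta$ supported in $\Omega$ with $\eta\equiv1$ on an intermediate domain. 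The resulting right-hand side contains $\partial\eta*\partial u$ and $(\partial_t\eta)u$ terms.

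\textbf{Step 2 (closing the cutoff error).} This is the main obstacle: the error terms must be controlled in $C^{\alpha,\frac{\alpha}{2}}$ by $\|u\|_{L^\infty(\Omega)}$. First, the $W_p^{2,1}$ estimate (\ref{eqn:SL14_11}) with $h=0$ gives $\|u\|_{W_p^{2,1}}\leq C\|u\|_{L^\infty(\Omega)}$ on an intermediate domain. For the Lichnerowicz case, move $A(u)$ to the right side as $h$, which is bounded in $L^p$ by $C\|u\|_{L^\infty}$. Then Lemma~\ref{lma:SL02_1} with $m=1$ embeds this into $C^{1+\alpha,\frac{1+\alpha}{2}}$, which controls $\partial u$ and $u$ in $C^{\alpha,\frac{\alpha}{2}}$. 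The price is shrinking the domain a few times: use nested cylinders between $\Omega'$ and $\Omega$, absorbing constants. Lemma~\ref{lma:HA10_1} then yields
\begin{align*}
\|u\|_{C^{2+\alpha,\frac{2+\alpha}{2}}(\Omega'')}\leq C\|u\|_{L^\infty(\Omega)}
\end{align*}
on a slightly larger domain $\Omega''\supset\Omega'$. In particular $|\dot u|$, $|\partial u|$ and $|\partial\partial u|$ are bounded on $\Omega'$. Alternatively, one can skip the cutoff and invoke the interior Schauder estimate (Lemma~\ref{lma:SL31_12} with $m=1$) componentwise. This works because the principal part $g^{kl}\partial_k\partial_l$ is diagonal in the indices $(ij)$, so the coupling enters only through lower-order terms, which Step 2 already controls in $C^{\alpha,\frac{\alpha}{2}}$.

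\textbf{Step 3 (covariant quantities).} Convert to covariant quantities as in Lemma~\ref{lma:SL13_1}. Write $\nabla u=\partial u+\Gamma*u$ and $\nabla\nabla u=\partial\partial u+\Gamma*\partial u+(\partial\Gamma+\Gamma*\Gamma)*u$. Since $|\Gamma|+|\partial\Gamma|\leq C$ by (\ref{eqn:SL30_5}), and $g(t)$ is uniformly equivalent to $\delta$, the norms $|\nabla u|_g$ and $|\nabla\nabla u|_g$ are pointwise bounded by $C\|u\|_{C^{2}}$. The time derivative $\dot u$ in (\ref{eqn:SL31_3}) is the coordinate time derivative of the components, which is already bounded. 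This gives (\ref{eqn:SL31_3}).
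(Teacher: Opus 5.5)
Your proposal is correct and is essentially the paper's route. The paper writes the system in the harmonic coordinates of $g(0)$ and uses Lemma~\ref{lma:SL30_1} to put all coefficients, including the curvature terms, in $C^{\alpha,\frac{\alpha}{2}}$. It then applies the interior Schauder estimate Lemma~\ref{lma:SL31_12} with $m=1$ to get $\|u\|_{C^{2+\alpha,\frac{2+\alpha}{2}}(\Omega')}\leq C\|u\|_{L^\infty(\Omega)}$, which is your ``alternative'' in Step 2.

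Your extra step is a justification the paper omits: Lemma~\ref{lma:SL31_12} is a scalar result, while $u$ solves a system coupled through $\Gamma*\partial u$ and zeroth-order terms. You control those coupling terms (or the cutoff errors) in $C^{\alpha,\frac{\alpha}{2}}$ via (\ref{eqn:SL14_11}) and Lemma~\ref{lma:SL02_1}, which fills that gap.
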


\begin{proof}
We define $h_{ij}=0$ in the heat equation case and define
\begin{align}
 h_{ij}=R_{ipqj}u_{kl}g^{pk}g^{ql}-R_{ip}u_{kj}g^{pk}-R_{jp}u_{ki}g^{pk}
\label{eqn:HB04_6} 
\end{align}
in the Lichnerowicz heat equation case. 

 As in (\ref{eqn:SL31_1}), we can write down the equation satisfied by $u$ as 
 \begin{align*}
      (\partial_t-g^{kl}\partial_l \partial_k) u_{ij} 
      +\left\{ \partial_l u_{ja} \Gamma_{ki}^a +\partial_l u_{ia} \Gamma_{kj}^a+\partial_k u_{ja} \Gamma_{li}^a +\partial_k u_{ia} \Gamma_{lj}^a\right\}g^{kl} 
     -{f_{ij}}-h_{ij}=0,    
 \end{align*}
 where
 \begin{align}
 \label{eqn:HB04_7}
 \begin{split}
     f_{ij}
     &=-u_{ja} \left(\partial_l \Gamma_{ki}^a - \Gamma_{kb}^a\Gamma_{li}^b \right)g^{kl}
    -u_{ia}\left(\partial_l \Gamma_{kj}^a - \Gamma_{kb}^a\Gamma_{lj}^b \right)g^{kl}\\
    &\quad + u_{ab} \left( \Gamma_{li}^a \Gamma_{kj}^b +\Gamma_{ki}^a \Gamma_{lj}^b\right)g^{kl}
      -\left(\partial_a u_{ij} -u_{jb}\Gamma_{ai}^b -u_{ib}\Gamma_{aj}^b\right) \Gamma_{lk}^a. 
 \end{split}   
 \end{align}
 
 By Lemma~\ref{lma:SL30_1}, in $\Omega$ we have 
    \begin{align}
        \|g\|_{C^{2+\alpha, \frac{2+\alpha}{2}}(\Omega)} + \|\Gamma \|_{C^{1+\alpha, \frac{1+\alpha}{2}}(\Omega)} 
        +\|\partial \Gamma \|_{C^{\alpha, \frac{\alpha}{2}}(\Omega)} < C. 
    \label{eqn:SH01_2}    
    \end{align}
 In light of the choice of $f_{ij}$ and $h_{ij}$ in (\ref{eqn:HB04_6}) and (\ref{eqn:HB04_7}), we can write 
 \begin{align*}
     -f_{ij}=(M*u)_{ij}, \quad -h_{ij}=(N*u)_{ij}, 
 \end{align*}
 for some matrix valued functions $M$ and $N$.
 Thus,  (\ref{eqn:SH01_2}) and the curvature derivative bounds imply that
 \begin{align*}
     \| M \|_{C^{\alpha, \frac{\alpha}{2}}(\Omega)} 
     +\| N \|_{C^{\alpha, \frac{\alpha}{2}}(\Omega)}<C. 
 \end{align*}
 The heat equation satisfied by $u$ can be written as
 \begin{align*}
     (\partial_t-g^{kl}\partial_l \partial_k) u + (\Gamma *\partial u) + M*u + N*u=0.  
 \end{align*}

    By setting $m=1$,  we apply Lemma~\ref{lma:SL31_12} to the above to obtain
    \begin{align}
			\|u\|_{C^{2+\alpha, \frac{2+\alpha}{2}}(\Omega')}
            <C  \|u\|_{L^\infty(\Omega)}, 
	\end{align}
    which implies (\ref{eqn:SL31_3}) directly. 

\end{proof}

Combining Theorem~\ref{thm:HA03_3} and Theorem~\ref{thm:HA03_2}, we immediately obtain the following corollary. 

\begin{corollary}
\label{cly:HA03_1}
   Suppose $\Omega$ is equipped with a static metric $g$ or Ricci flow metrics $g(t)$. 
   Suppose $u=u_{ij}dx^i\otimes dx^j$ satisfies the heat equation or the Lichnerowicz heat equation. 
   Then we have 
   \begin{align}
        \sup_{(x,t) \in \Omega'} \left\{ |\nabla u|_g + [\nabla u]_{g,\alpha}^{(\frac12)}
        +|\dot{u}|_g\right\} \leq
         C(n)  \|u\|_{L^{\infty}(\Omega)}. 
    \label{eqn:SL13_8}    
    \end{align}
\end{corollary}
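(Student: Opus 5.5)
The plan is to derive Corollary~\ref{cly:HA03_1} by splitting into the two cases allowed in the statement, and reading off each case from the two theorems that precede it. The only real issue is a mismatch in the quantities controlled: the static estimate gives the Hölder seminorm of $\nabla u$, while the Ricci flow estimate gives $\nabla\nabla u$.

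\emph{Static case.} When $g(t)\equiv g$, the estimate (\ref{eqn:SL13_6}) of Theorem~\ref{thm:HA03_3} is exactly (\ref{eqn:SL13_8}). The only remark needed is that $|\dot u|_g$ and the coordinate size $|\dot u|$ are comparable up to a factor $C(n)$, by (\ref{eqn:SL09_3}). Indeed, $|\dot u|_g^2=g^{ik}g^{jl}\dot u_{ij}\dot u_{kl}$, and $g^{-1}$ is pinched between $\frac12\delta$ and $2\delta$.

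\emph{Ricci flow case.} Theorem~\ref{thm:HA03_2} bounds $|\nabla u|_g$, $|\nabla\nabla u|_g$ and $|\dot u|$ on $\Omega'$ by $C\|u\|_{L^\infty(\Omega)}$, so only the term $[\nabla u]^{(\frac12)}_{g,\alpha}$ remains. I will reduce it to a Lipschitz bound along geodesics. Fix $(x,t)\in\Omega'$ and $y\in B_{\frac12}(x)$, and let $\gamma$ be a unit-speed minimal $g(t)$-geodesic from $x$ to $y$ of length $L<\frac12$. Using the small curvature bound $\xi^2$ and the metric equivalence, $\gamma$ stays in a region where the $C^{2+\alpha}$ bound is available. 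To arrange this, I apply Theorem~\ref{thm:HA03_2} on a slightly larger intermediate domain between $\Omega'$ and $\Omega$; the theorem's proof via Lemma~\ref{lma:SL31_12} works for any pair of nested domains with constants depending only on the gap.

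Now set $V(\tau)=\tau_\gamma(\gamma(\tau),x)^{-1}\nabla u(\gamma(\tau))$, which transports $\nabla u$ back to $x$. Since parallel transport is an isometry, $|\frac{d}{d\tau}V|=|\nabla_{\dot\gamma}\nabla u|\le|\nabla\nabla u|_g$. Integrating along $\gamma$ gives
\begin{align*}
|\nabla u(x)-\tau_\gamma(x,y)\nabla u(y)|\le L\sup|\nabla\nabla u|_g\le C\|u\|_{L^\infty(\Omega)}L^{\alpha},
\end{align*}
where the last step uses $L<1$ and $\alpha<1$. Alternatively, the split $I+II$ from the proof of Lemma~\ref{lma:SL13_1} works verbatim, using the $C^{2+\alpha}$ bound on $u$ from the proof of Theorem~\ref{thm:HA03_2}.

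The main obstacle is making sure the geodesic stays inside the region where the estimates hold, together with the bookkeeping of the domain shrinking. Both are routine given (\ref{eqn:SL09_3}) and $|Rm|\le\xi^2$. Summing the three bounds in each case yields (\ref{eqn:SL13_8}).
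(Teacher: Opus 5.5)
Your proposal is correct and follows the paper's route: the paper simply combines Theorem~\ref{thm:HA03_3} for the static case with Theorem~\ref{thm:HA03_2} for the Ricci flow case. The step the paper leaves implicit is that, in the Ricci flow case, the bound on $|\nabla\nabla u|_g$, applied on a slightly larger interior domain and integrated along a geodesic of length $L<\frac12$ with $\nabla u$ parallel-transported back, controls $[\nabla u]^{(\frac12)}_{g,\alpha}$; you supply exactly this step, and your argument for it is sound.
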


\section{Heat kernel estimate}
\label{sec4}

In this section, we estimate various Laplacian tensor-valued heat kernels on evolving Riemannian manifolds $\{(M^n, g(t)), -1 \leq t \leq 0\}$ with bounded curvature $|Rm| \leq \Lambda_0$.  The underlying space is either static, or evolving by a Ricci flow.  
    

 Note that the curvature condition itself cannot exclude the happening of collapsing. 
 In other words, the cut radius could be very small.   This phenomenon causes the major difficulty of this section. However, it can be overcome by the standard technique: we shall lift the metrics $g(t)$ locally to some tangent space $T_x M$.

    \begin{lemma}
    \label{lma:HA10_3}
     Suppose $\{(M^n, g(t)), -1 \leq t \leq 0\}$ is an evolving manifold satisfying $|Rm| \leq \Lambda_0$. 
     Let $H(x,t;y,s)$ be the heat kernel function. Then 
      \begin{align}
      \label{eqn:HA10_2}
            H(x,t;y,s)\leq CV_{x,t}(\sqrt{t-s})^{-1}e^{-\frac{d_t^2(x,y)}{C(t-s)}}.
        \end{align}
    \end{lemma}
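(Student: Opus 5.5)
The plan is to reduce to the classical Gaussian upper bounds of Li--Yau type, treating the static case and the Ricci flow case separately. In both cases the only geometric input is a lower Ricci bound, $Rc\geq -(n-1)\Lambda_0$, which follows from $|Rm|\leq \Lambda_0$. Throughout, we restrict to $0<t-s\leq 1$. This is exactly the range where the volume doubling constant of Theorem~\ref{thm1.1} is uniform, $\frac{|B_{r_2}(q)|}{|B_{r_1}(q)|}\leq C(r_2/r_1)^n$ for $r_1<r_2\leq 1$.

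\emph{Static case.} Apply the Li--Yau parabolic Harnack inequality with the Ricci lower bound, on a time interval of length at most $1$ so that the $\Lambda_0$-dependent constants stay uniform. This gives the standard Li--Yau heat kernel upper bound
\[
H(x,t;y,s)\leq C\,|B_{\sqrt{t-s}}(x)|^{-1/2}|B_{\sqrt{t-s}}(y)|^{-1/2}e^{-\frac{d^2(x,y)}{5(t-s)}}.
\]
We then replace the symmetric volume factor by $V_x(\sqrt{t-s})^{-1}$ using Bishop--Gromov. With $r=\sqrt{t-s}$ and $D=d(x,y)$, we have $B_r(x)\subset B_{r+D}(y)$, so
\[
|B_r(x)|\leq |B_{r+D}(y)|\leq C (1+D/r)^n e^{C D}|B_r(y)|,
\]
using (\ref{eqn:HA24_8}) for large radii. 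The polynomial factor $(1+D/r)^{n/2}$ and the factor $e^{CD}$ are absorbed into the Gaussian by enlarging the constant in the exponent, since $D\leq D^2/r^2+C r^2\leq D^2/r^2+C$ when $r\leq 1$. This yields (\ref{eqn:HA10_2}) with $V_{x,t}=V_x$.

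\emph{Ricci flow case.} Here I would invoke the known Gaussian upper bound for the conjugate/forward heat kernel under Ricci flow with bounded curvature. One option is the Li--Yau type Harnack inequality along Ricci flow, as in Hamilton or Zhang, whose error terms are controlled by $|Rm|$ and Shi's derivative bound $|\nabla Rm|\leq C\Lambda_0^{3/2}(t+1)^{-1/2}$ after a time shift. This gives
\[
H\leq C\,V_{y,s}(\sqrt{t-s})^{-1}e^{-d_s^2/(C(t-s))}.
\]
We then switch to the time-$t$ quantities using that $|Rc|\leq C\Lambda_0$ on an interval of length at most $1$. Under this bound the metrics $g(s)$ and $g(t)$ are uniformly bi-Lipschitz, $e^{-C}g(t)\leq g(s)\leq e^{C}g(t)$. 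So distances are comparable, volume forms are comparable, and $V_{y,s}(r)\approx V_{y,t}(r)$ up to a shift of radius by a constant factor, which is absorbed by doubling. The replacement of $y$ by $x$ is then identical to the static case.

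The main obstacle is the Ricci flow Harnack step. The classical Li--Yau gradient estimate $|\nabla f|^2-\alpha f_t\leq \dots$ for $f=\log u$ picks up extra terms from $\partial_t g$ in the time-dependent Laplacian. To control these I would follow Bailesteanu--Cao--Pulemotov: run the maximum principle computation for $F=t(|\nabla f|^2-\alpha f_t)$ on the time interval, bound the curvature terms using $|Rm|\leq\Lambda_0$ and Shi's estimate, and obtain
\[
|\nabla f|^2-\alpha f_t\leq \frac{C}{t}+C.
\]
Integrating this along space-time paths gives the Harnack inequality, which turns the on-diagonal bound $H\leq C/V$ (obtained from $\int H\,d\mu\leq C$ and the Harnack inequality) into the Gaussian bound in the standard way.
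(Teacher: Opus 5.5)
Your proposal matches the paper's argument, whose proof is the one-line remark that the bound follows from the Li--Yau estimate together with volume comparison. Your version fills in what the paper leaves out: the Bishop--Gromov step that trades $V_x^{-1/2}V_y^{-1/2}$ for $V_x^{-1}$ at the cost of a larger Gaussian constant, and, in the Ricci flow case, the uniform bi-Lipschitz equivalence of $g(s)$ and $g(t)$ on a unit time interval, with the Gaussian factor coming from the Harnack inequality combined with the usual Li--Yau weighted $L^2$ estimate.
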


    \begin{proof}
      This follows directly from the volume comparison and the Li-Yau estimate. 
    \end{proof}

    Note that (\ref{eqn:HA10_2}) can be rewritten as 
    \begin{align*}
        (t-s)^{\frac{n}{2}} H(x,t;y,s) \leq C  \left\{\frac{V_{x,t}(\sqrt{t-s})}{(\sqrt{t-s})^n}\right\}^{-1} e^{-\frac{d_t^2(x,y)}{C(t-s)}}, 
    \end{align*}
    which has the advantage that both sides are scaling invariant.  Without loss of generality, we can rescale the space by $\lambda=4\Lambda_0 \xi^{-2} (t-s)^{-1}$ and shift the time-slice $t$ to $0$.  Thus, we obtain an evolving manifold $\{(M, \tilde{g}(\theta)), -\Lambda_0 \xi^{-2} \leq \theta \leq 0\}$ such that
    \begin{align*}
      \sup_{M \times [-4, 0]} |Rm|_{\tilde{g}} \leq   \sup_{M \times [-\Lambda_0 \xi^{-2}, 0]} |Rm|_{\tilde{g}} \leq \xi^2,  
    \end{align*}
    where we used the fact that $\Lambda_0 \xi^{-2} >>4$. 
    Using the rescaling property and volume comparison, the heat kernel $\tilde{H}$ satisfies the estimate
    \begin{align*}
        \tilde{H}(x,0;y,-4) \leq C V_{x,0}(1)^{-1} e^{-\frac{d_0^2(x,y)}{C}}. 
    \end{align*}
    We claim that similar estimates hold in a neighborhood of $(x, 0)$: 
     \begin{align}
        \sup_{w \in B_{10}^{\tilde{g}(0)}(x), -1 \leq t \leq 0} 
        \tilde{H}(w,t;y,-4) \leq C V_{x,0}(1)^{-1} e^{-\frac{d_0^2(x,y)}{C}}  =: K.
        \label{eqn:HA10_3}
    \end{align}
    In fact, the triangle inequality implies that
   \begin{align*}
       d_0^2(w,y)\geq \frac{1}{2}d_0^2(x,y)-d_0^2(w,x)\geq \frac12 d_0^2(x,y)-100.
   \end{align*}
   Adjusting $C$, we have
   \begin{align*}
       e^{-\frac{d_0^2(w,y)}{Cs}}\leq e^{-\frac{\frac12 d_0^2(x,y)-100}{Cs}}\leq Ce^{-\frac{d_0^2(x,y)}{Ct}}, 
       \quad \forall \;   w \in B_{10}^{\tilde{g}(0)}(x), -1 \leq t \leq 0. 
   \end{align*}
   Thus, we finished the proof of (\ref{eqn:HA10_3}). 
    
    Fix $x$ and define $\varphi$ as the exponential map from $T_x M$ to $M$, with respect to the metric $\tilde{g}(0)$:
    \begin{align*}
        \varphi(v) := Exp_{x}^{\tilde{g}(0)}(v), \quad \; \forall\; v \in T_x M \simeq \mathbb{R}^n. 
    \end{align*}
    Using this smooth map $\varphi$, we can pull back the evolving metrics $\tilde{g}$ to $T_x M$ as
    \begin{align*}
        \hat{g}(t) :=\varphi^* \tilde{g}(t), 
    \end{align*}
    Since $\xi$ is chosen sufficiently small (cf. (\ref{eqn:HC07_3})),  $\log |d \varphi|$ is uniformly bounded. 
    Consider $(B_{r}(0), \hat{g}(t))$ with $r=\frac{1}{100n\pi \xi}>>100$. 
    We can also assume that the $W_p^{k}$-harmonic radius at $0$ is at least $100$.  
    Thus,  we have the harmonic diffeomorphism $\psi: B_{10}(0) \to \mathbb{R}^n$.  
Then 
    \begin{align*}
         \psi: B_{10}^{\varphi^* \tilde{g}(0)}(x) \mapsto \mathbb{R}^n. 
    \end{align*}
    According to the definition of harmonic radius, we have 
    \begin{align}
       B_{5\sqrt{2}}(0) \subset  \psi (B_{10}^{\varphi^* \tilde{g}(0)}(x)) \subset B_{10\sqrt{2}}(0). 
    \label{eqn:HA10_4}   
    \end{align}
   Then it is not hard to see that 
   \begin{align*}
       \bar{g}(t) := \psi_* \varphi^* \tilde{g}(t)
   \end{align*}
   is well-defined on $\Omega  =B_2(0) \times [-4, 0]$.  Furthermore, we have
   \begin{align}
      \label{cdn:HB07_3}
    (\Omega, \bar{g}) \; \textit{is a model space-time in the sense of Definition~\ref{dfn:HA10_1}}.   
   \end{align}

   Fix $y$ and define
   \begin{align*}
       \bar{H}(z,t) :=\psi_* \varphi^* \tilde{H}(z,t;y,-4)=\tilde{H}(\varphi \psi^{-1}(z),t;y,-4).
   \end{align*}
   Then $\bar{H}$ is a heat solution on $(\Omega, \bar{g})$.  It follows from (\ref{eqn:HA10_3}) and (\ref{eqn:HA10_4}) that 
   \begin{align}
       \sup_{\Omega} \bar{H} \leq K. 
   \label{eqn:HA10_5}    
   \end{align}
   Thus, we can apply Proposition~\ref{prn:HA04_1} to obtain
   \begin{align}
        \|\bar{H}\|_{C^{2+\alpha,\frac{2+\alpha}{2}}(\Omega')} \leq C(n) K, 
    \label{eqn:HA10_6}    
    \end{align}
    which then implies that (cf. Corollary~\ref{cly:HA03_6})
    \begin{align}
        \sup_{(x,t) \in \Omega'} \left\{ |\nabla \bar{H}|_{\bar{g}} + |\nabla \nabla \bar{H}|_{\bar{g}}
        +[\nabla \nabla \bar{H}]_{\bar{g},\alpha}^{(\frac12)}
        +|\dot{\bar{H}}|\right\} \leq
         C(n) K. 
    \label{eqn:HA10_7}    
    \end{align}
    In particular, we have 
    \begin{align}
         \left. \left\{ |\nabla \bar{H}|_{\bar{g}} + |\nabla \nabla \bar{H}|_{\bar{g}}
        +[\nabla \nabla \bar{H}]_{\bar{g},\alpha}^{(\frac12)}
        +|\dot{\bar{H}}|\right\} \right|_{(0,0)}\leq
         C(n) K. 
    \label{eqn:HA10_8}    
    \end{align}
    Recalling the definition of $K$ in (\ref{eqn:HA10_3}) and rescaling back to the original $g$, we obtain the following Theorem. 

\begin{theorem}\label{thm3.2}
        Let $\{(M,g(t)), -1 \leq t \leq 0\}$ be an evolving manifold that satisfies (\ref{eqn:HA02_1}).
        Let  $H=H(x,t;y,s)$ be the heat kernel of the Laplace–Beltrami operator.
        Define
        \begin{align}
        \label{eqn:HB14_2}
          \theta :=t-s, \quad   \rho_{\theta} :=\frac{\xi \sqrt{\theta}}{4\sqrt{\Lambda_0}}. 
        \end{align}
        Then there exists a positive constant $C=C(n, \alpha)$ such that 
        \begin{equation}
            H+\theta^{\frac{1}{2}}|\nabla_x H|+ \theta|\nabla^2_x H|+\theta^{1+\frac{\alpha}{2}}[\nabla^2_x H]_{\alpha}^{(\rho_{\theta})}+\theta^{2}|\partial_t\nabla^2_x H|\leq CV_{x,t}(\sqrt{\theta})^{-1}e^{-\frac{d_t^2(x,y)}{C\theta}}
        \end{equation}
        for any $x,y\in M$ and $-1\leq s<t\leq 0$.
\end{theorem}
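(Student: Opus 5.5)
The plan is to turn the localized computation that precedes the statement into a scale-invariant argument. Fix $x,y\in M$ and $-1\le s<t\le 0$, and set $\theta=t-s$. I would start from the Li--Yau type bound of Lemma~\ref{lma:HA10_3}, which already gives the $H$ term. Then I would parabolically rescale by the factor $\lambda=4\Lambda_0\xi^{-2}\theta^{-1}$, so $\tilde g=\lambda g$ with time scaled by $\lambda$. After also shifting $t$ to $0$, the slice $s$ becomes $\theta=-4\Lambda_0\xi^{-2}\cdot$(normalized), and we may regard the source time as $-4$ after shifting as in the discussion above. In the rescaled flow $|Rm|\le\xi^2$ holds on the relevant time interval.

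Next I would invoke the claim (\ref{eqn:HA10_3}): $\tilde H(\cdot,\cdot;y,-4)\le K$ on $B_{10}^{\tilde g(0)}(x)\times[-1,0]$. Here $K$ is $C\,V_{x,0}(1)^{-1}e^{-d_0^2(x,y)/C}$, with the volume computed in $\tilde g(0)$. I would lift via $\varphi=Exp_x^{\tilde g(0)}$ and pass to the harmonic chart $\psi$ of Lemma~\ref{lma:HB13_5} or Lemma~\ref{lma:HC07_1}. By (\ref{cdn:HB07_3}) the result is a model space-time $(\Omega,\bar g)$. The pulled-back kernel $\bar H$ is a heat solution there and satisfies $\sup_\Omega\bar H\le K$. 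Proposition~\ref{prn:HA04_1} and Corollary~\ref{cly:HA03_6} then give (\ref{eqn:HA10_8}) at the point $(0,0)$. For the time derivative of the Hessian I would use the $C^{2+\alpha}$ bound (\ref{eqn:HA10_6}) applied to $\dot{\bar H}$. This is legitimate because $\dot{\bar H}$ solves the heat equation in the static case, or a linear equation with $C^\alpha$ coefficients in the Ricci flow case, and its sup on a slightly smaller domain is already bounded by $CK$. This yields $|\partial_t\nabla^2\bar H|\le CK$ at $(0,0)$, after shrinking domains once more.

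The key transfer step is to show that these bounds are bounds for $\tilde H$ on $M$ itself. Since $\varphi\circ\psi^{-1}$ is a local isometry from $\bar g$ to $\tilde g$, the covariant quantities $|\nabla\bar H|$ and $|\nabla^2\bar H|$ at $0$ equal those of $\tilde H$ at $x$. For the Hölder seminorm, the radius must be small enough that minimizing geodesics of $\tilde g$ from $x$ of length less than $\tfrac12$ lift to geodesics in the chart. Under the lift, distances in $T_xM$ dominate distances in $M$. Because $[\cdot]^{(\rho)}$ takes the supremum over all shortest geodesics, each such segment starting at $x$ lifts to a geodesic from $0$ of the same length, which lies in $B_{1/2}$. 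The lifted difference quotient is then controlled by (\ref{eqn:HA10_7}), since the parallel transports agree under the local isometry. I expect this lifting of the seminorm (it is not a distance-ratio issue, only lengths of lifted segments) to be the main point needing care.

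Finally I would undo the scaling. The quantities $H$, $\nabla H$, $\nabla^2 H$, $[\nabla^2H]_\alpha$ and $\partial_t\nabla^2H$ pick up the factors $\lambda^{0},\lambda^{1/2},\lambda,\lambda^{1+\alpha/2},\lambda^{2}$ relative to $\tilde H$, and $\lambda\sim\theta^{-1}$ up to constants depending on $\Lambda_0$ and $\xi$. The $\tilde g(0)$-radius $\tfrac12$ corresponds to $g$-radius $\lambda^{-1/2}/2=\rho_\theta$. Also $V^{\tilde g}_{x,0}(1)=\lambda^{n/2}V_{x,t}(\lambda^{-1/2})$, and $\lambda^{n/2}$ cancels against the normalization $\tilde H=\lambda^{-n/2}H$. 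Volume comparison (Theorem~\ref{thm1.1}) then replaces $V_{x,t}(\lambda^{-1/2})$ by $V_{x,t}(\sqrt\theta)$, and the exponent $d_0^2/C$ becomes $d_t^2(x,y)/(C\theta)$. Multiplying through gives the stated inequality with constants depending on $n$, $\alpha$, and implicitly on $\Lambda_0$ through $\xi$.
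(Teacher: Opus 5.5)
Your proposal follows the paper's own argument: the Li--Yau bound, parabolic rescaling by $\lambda=4\Lambda_0\xi^{-2}\theta^{-1}$, lifting via $Exp_x$ and the harmonic chart to a model space-time, applying Proposition~\ref{prn:HA04_1} and Corollary~\ref{cly:HA03_6} at $(0,0)$, and scaling back. Your separate treatment of $\partial_t\nabla^2 H$ (via the Schauder estimate for $\dot{\bar H}$) and of the H\"older seminorm (lifting minimizing segments from $x$ through the local isometry) fills in steps the paper leaves implicit. One tidy-up: keep the source time at $-4\Lambda_0\xi^{-2}$, using $\Lambda_0\xi^{-2}\gg 4$ after enlarging $\Lambda_0$ if necessary, rather than ``regarding it as $-4$''. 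That margin is what makes $\bar H$ a heat solution bounded by $K$ on all of $\Omega=B_2\times[-4,0]$.
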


Suppose $\Psi(x,t;y,s)$ is the heat kernel of the Lichnerowicz Laplacian operator on symmetric $(0,2)$-tensor.  By definition, $\Psi(x,t;y,s)$ is an endomorphism from $Sym^2(T_y^*M)$ to $Sym^2(T_x^*M)$. 
Fixing $(y,s)$ and $E_y \in Sym^2(T_y^*M)$, we know that $\Psi(x,t;y,s) E_y$ is a symmetric function of $(0,2)$ that solves the Lichnerowicz heat equation.  Then
\begin{align*}
    \nabla_x^{g(t)} \left\{ \Psi(x,t;y,s) E_y \right\} 
    =\left\{ \nabla_x^{g(t)}  \Psi(x,t;y,s) \right\} E_y. 
\end{align*}
As usual, we use the canonical metric to identify $T^*M$ with $TM$.  For example, we use $g(t)$ at $(x,t)$ and use $g(s)$ at $(y,s)$.  Then we omit the metric we use if it is obvious.  
Thus, $\nabla_x \Psi(x,t;y,s)$ is an endomorphism from $Sym^2(T_y^*M)$ to $Sym^2(T_x^*M) \otimes T_x^* M$.  If we fix an element $F_x \in Sym^2(T_x^*M) \otimes T_x^* M$, then 
$F_x \left( \nabla_x \Psi(x,t;y,s) \right)$ is a symmetric $(0,2)$-tensor valued function with variable $(y,s)$. Then 
\begin{align*}
    \nabla_y \left\{ F_x \left( \nabla_x \Psi(x,t;y,s) \right) \right\}
    &=\nabla_y^{g(s)} \left\{ F_x \left( \nabla_x^{g(t)} \Psi(x,t;y,s) \right) \right\}\\
    &=F_x \left(\nabla_y^{g(s)} \nabla_x^{g(t)} \Psi(x,t;y,s) \right) 
     \in Sym^2(T_y^*M) \otimes T_y^*M. 
\end{align*}
Therefore,  we have an endomorphism: 
\begin{align*}
 Sym^2(T_y^*M) \otimes T_y^*M   \longright{\nabla_y \nabla_x \Psi(x,t;y,s)}
 Sym^2(T_x^*M) \otimes T_x^*M.    
\end{align*}
Fix $(y, s)$ and a unit element $F_y \in Sym^2(T_y^*M) \otimes T_y^*M$.
Note that 
$\nabla_y \nabla_x \Psi(x,t;y,s) F_y$ is a smooth section of $Sym^2(T^*M) \otimes T^*M$ with variable $(x,t)$.

\begin{theorem}
\label{thm3.3}
        Let $(M,g)$ be a complete $n$-dimensional Riemannian manifold satisfying (\ref{eqn:HA02_1}). 
        Let  $\Psi=\Psi(x,y,t-s)=\Psi(x,t;y,s)$ be the heat kernel of (Lichnerowicz or normal) Laplacian operator on symmetric $(0,2)$ tensor.
        Let $\theta=t-s$ and $\rho_{\theta}=\frac{\xi \sqrt{\theta}}{4\sqrt{\Lambda_0}}$.
        Then there exists a positive constant $C=C(n,\alpha)$ such that 
        \begin{align}
        \begin{split}
            &\quad |\Psi|+
            \theta^{\frac{1}{2}}|\nabla_x \Psi|+\theta|\nabla_x\nabla_y \Psi|+\theta^{1+\frac{\alpha}{2}}[\nabla_x\nabla_y \Psi]_\alpha^{(\rho_{\theta})}
            +\theta^{2}|\nabla_x\nabla_y \dot{\Psi}|\\
            &\leq C V_x(\sqrt{\theta})^{-1}e^{-\frac{d^2(x,y)}{C\theta}}
        \end{split}    
        \label{eqn:SL29_8}
        \end{align}
        for any $x,y\in M$ and $-1<s<t\leq 0$.
\end{theorem}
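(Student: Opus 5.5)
We follow the scheme used for Theorem~\ref{thm3.2}, but the mixed derivative $\nabla_x\nabla_y\Psi$ requires regularity in both variables. The plan has four steps: a $C^0$ Gaussian bound for $\Psi$, a localization that gives $x$-derivatives, a duality trick for the $y$-derivative, and a second localization at the midpoint time.

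\textbf{Step 1: $C^0$ bound.} We first establish $|\Psi(x,t;y,s)|\leq CV_x(\sqrt{\theta})^{-1}e^{-d^2(x,y)/(C\theta)}$. For the rough Laplacian on $Sym^2(T^*M)$, Kato's inequality gives $(\partial_t-\Delta)|\Psi E_y|\leq 0$ in the distributional sense. Hence $|\Psi E_y|$ is dominated by the scalar heat kernel applied to $|E_y|\delta_y$, and Lemma~\ref{lma:HA10_3} applies. For the Lichnerowicz Laplacian the zeroth-order term is bounded by $C\Lambda_0$, so the same argument gives $(\partial_t-\Delta)\big(e^{-C\Lambda_0(t-s)}|\Psi E_y|\big)\leq 0$. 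Since $t-s\leq 1$, this costs only a factor $e^{C\Lambda_0}$. In the Ricci flow case the extra terms coming from $\partial_t g$ are absorbed in the same way.

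\textbf{Step 2: estimates in $x$.} Fix $(y,s)$ and rescale by $\lambda=4\Lambda_0\xi^{-2}\theta^{-1}$. Then lift to $T_xM$ via $\varphi=Exp_x^{\tilde g(0)}$ and pass to harmonic coordinates $\psi$, exactly as in the derivation of (\ref{cdn:HB07_3}). The pulled-back section $\bar\Psi(z,t)=\psi_*\varphi^*\tilde\Psi(\cdot,t;y,-4)E_y$ solves the (Lichnerowicz) heat equation on a model space-time $\Omega$. By the neighbourhood argument proving (\ref{eqn:HA10_3}), its sup is bounded by $K$. Corollary~\ref{cly:HA03_1} then gives $|\nabla_x\Psi|$, $[\nabla_x\Psi]_\alpha$ and $|\dot\Psi|$ at the base point, bounded by $CK$. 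Rescaling back yields the terms $\theta^{1/2}|\nabla_x\Psi|$ in (\ref{eqn:SL29_8}).

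\textbf{Step 3: duality in $y$.} Derivatives in $y$ come from the adjoint equation. For the static metric, $\Psi(x,t;y,s)=\Psi(y,t;x,s)^*$ by self-adjointness of $\Delta_L$, so $y$-regularity of $\Psi$ is $x$-regularity of the transpose. In the Ricci flow case, $(y,s)\mapsto\Psi(x,t;y,s)^*$ solves the backward conjugate equation $(\partial_s+\Delta_L-R)$, which after time reversal is a system of the same form with coefficients bounded via Shi's estimates. Lemma~\ref{lma:HA10_2} and Lemma~\ref{lma:HA10_1} apply to such systems, so Step~2 carries over.

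\textbf{Step 4: the mixed derivative (main obstacle).} Corollary~\ref{cly:HA03_1} controls only $\nabla u$, $[\nabla u]_\alpha$ and $\dot u$ of a solution, not second derivatives. We therefore do not differentiate twice in one variable; instead we use the semigroup property at the midpoint $\tau=(t+s)/2$:
\[
\nabla_x\nabla_y\Psi(x,t;y,s)=\int_M \nabla_x\Psi(x,t;z,\tau)\,\nabla_y\Psi(z,\tau;y,s)\,dz .
\]
The key observation is that for fixed $(y,s)$ and a unit $F_y$, the section $(x,t)\mapsto\nabla_y\Psi(x,t;y,s)F_y$ is itself a solution of the (Lichnerowicz) heat equation in $(x,t)$. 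Its $C^0$ norm is controlled by Step~3 and the Gaussian convolution above, giving $\theta^{-1/2}V^{-1}e^{-d^2/C\theta}$ on a parabolic neighbourhood of size $\sqrt\theta$. This sup bound is what must be checked carefully: the volume factors $V_z(\sqrt\theta)$ must be converted to $V_x(\sqrt\theta)$ using Theorem~\ref{thm1.1}, and the Gaussians must be combined with the triangle inequality. Applying Step~2 to this solution then yields $\theta|\nabla_x\nabla_y\Psi|$, the Hölder seminorm $\theta^{1+\alpha/2}[\nabla_x\nabla_y\Psi]_\alpha^{(\rho_\theta)}$, and, via the $|\dot u|$ bound of Corollary~\ref{cly:HA03_1} with the extra factor $\theta$ from rescaling, the term $\theta^2|\nabla_x\nabla_y\dot\Psi|$.
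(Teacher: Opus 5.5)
Your proposal is correct and is essentially the paper's argument. The ingredients match: Kato's inequality plus Li--Yau for the $C^0$ bound; the lift to $T_xM$ together with Corollary~\ref{cly:HA03_1} for first derivatives; the swap of $x$ and $y$ (self-adjointness for the static metric) for $\nabla_y\Psi$; and finally the observation that a first derivative in one variable is again a heat solution in the other, so the first-order estimate applied once more gives $\theta|\nabla_x\nabla_y\Psi|$ and the H\"older term. In fact, for the H\"older seminorm the paper uses exactly your section $(x,t)\mapsto\nabla_y\Psi F_y$.

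Two minor points. First, the midpoint semigroup identity is superfluous: the pointwise bound from your Step~3, combined with the triangle inequality and volume comparison, already controls the sup on the neighbourhood. Second, $\theta^2|\nabla_x\nabla_y\dot\Psi|$ does not follow from the $|\dot u|$ bound itself, which only controls $\nabla_y\dot\Psi$. It comes from applying the corollary once more to $\dot u$, which is again a solution because the metric is static; this is what the paper means by ``the same argument on $\dot\Psi$''.
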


\begin{proof}
By time shifting, we may assume $s=0$. Then $0<\theta=t-s=t\leq 1$.
By Kato's inequality, we have $\partial_t|\Psi|\leq \Delta |\Psi|+C|\Psi|$.  Thus, the Li-Yau estimate implies that
\begin{align}
     |\Psi| \leq C V_x(\sqrt{t})^{-1}e^{-\frac{d^2(x,y)}{Ct}}.
\label{eqn:SL06_1}     
\end{align}
Furthermore, we have the following stronger estimate
 \begin{align}
     \|\Psi\|_{L^{\infty}(\Omega(x,t))} +\|\Psi\|_{L^{\infty}(\Omega(y,t))} 
     \leq CV_x(\sqrt{t})^{-1}e^{-\frac{d^2(x,y)}{Ct}}. 
 \label{eqn:SL29_6}    
 \end{align}
 Fix $y$ and regard $|\Psi|(x,y,t)$ as a function of $x$ and $t$.
 By Corollary~\ref{cly:HA03_1} and scaling, we have 
 \begin{align}
   t \|\dot{\Psi}\|_{L^{\infty}(\Omega(x,t))}  
   + t^{\frac12}\|\nabla_x \Psi\|_{L^{\infty}(\Omega(x,t))}  
   \leq   C \|\Psi\|_{L^{\infty}(\Omega(x,t))}. 
 \label{eqn:SL29_2}  
 \end{align}
 Swapping the role of $x$ and $y$, the same argument yields that
  \begin{align}
   t \|\dot{\Psi}\|_{L^{\infty}(\Omega(y,t))}  
   + t^{\frac12}\|\nabla_y \Psi\|_{L^{\infty}(\Omega(y,t))}  
   \leq   C \|\Psi\|_{L^{\infty}(\Omega(y,t))}. 
 \label{eqn:SL29_5}  
 \end{align}

 Then we fix $x$ and $V \in T_x M$ to be a unit vector.  Then $\Phi(y,t) :=\nabla_{x,V} \Psi(x,y,t)$ is a tensor valued function of $(y, t)$, satisfying the Lichnerowicz heat equation. So does $\dot{\Phi}$. By scaling, we know
 \begin{align*}
  t\|\dot{\Phi}\|_{L^{\infty}(\Omega(y,t))} 
     + t^{\frac12}\|\nabla_y \Phi\|_{L^{\infty}(\Omega(y,t))} 
   \leq   C \|\Phi\|_{L^{\infty}(\Omega(y,t))}
   \leq   C t^{-\frac12} \|\Psi\|_{L^{\infty}(\Omega(y,t))}, 
 \end{align*}
 where we apply (\ref{eqn:SL29_5}) in the last step.  By the arbitrary choice of $V$, we have
 \begin{align}
     t \|\nabla_x \nabla_y \Psi \|_{L^{\infty}(\Omega(y,t))} \leq C \|\Psi\|_{L^{\infty}(\Omega(y,t))}. 
 \label{eqn:SL29_3}    
 \end{align}
 Applying the same argument on $\dot{\Psi}=\partial_t \Psi$ and using (\ref{eqn:SL29_5}), we obtain 
 \begin{align*}
     t \|\nabla_x \nabla_y \dot{\Psi} \|_{L^{\infty}(\Omega(y,t))} 
     \leq C \|\dot{\Psi}\|_{L^{\infty}(\Omega(y,t))}
     \leq C t^{-1} \|\Psi\|_{L^{\infty}(\Omega(y,t))},   
 \end{align*}
 which is exactly
  \begin{align}
     t^2 \|\nabla_x \nabla_y \dot{\Psi} \|_{L^{\infty}(\Omega(y,t))} 
     \leq C \|\Psi\|_{L^{\infty}(\Omega(y,t))}.  
 \label{eqn:SL29_4}    
 \end{align}

 It follows from Definition~\ref{dfn:HB14_1} and Corollary~\ref{cly:HA03_1} that 
 \begin{align*}
     t^{\frac{1+\alpha}{2}}[\nabla_x\nabla_y \Psi]_\alpha^{(\frac12)} \leq C \|\nabla_y \Psi \|_{L^{\infty}(\Omega(x,t))}
     \leq C t^{-\frac12}\|\Psi\|_{L^{\infty}(\Omega(x,t))}, 
 \end{align*}
 which yields
 \begin{align}
     t^{1+\frac{\alpha}{2}}[\nabla_x\nabla_y \Psi]_\alpha^{(\frac12)} 
     \leq C \|\Psi\|_{L^{\infty}(\Omega(x,t))}. 
 \label{eqn:SL29_7}    
 \end{align}

 Combining (\ref{eqn:SL29_2}), (\ref{eqn:SL29_5}), (\ref{eqn:SL29_4}) and (\ref{eqn:SL29_7}), we arrive at
 \begin{align*}
            &\quad t^{\frac{1}{2}}|\nabla_x \Psi|+t|\nabla_x\nabla_y \Psi|+t^{1+\frac{\alpha}{2}}[\nabla_x\nabla_y \Psi]_\alpha^{(\frac12)}+t^{2}|\nabla_x\nabla_y \dot{\Psi}|\\
            &\leq C \left( \|\Psi\|_{L^{\infty}(\Omega(x,t))} +\|\Psi\|_{L^{\infty}(\Omega(y,t))} \right).
\end{align*}
Plugging (\ref{eqn:SL29_6}) into the above inequality, we obtain (\ref{eqn:SL29_8}). 
\end{proof}

If the underlying space-time is a Ricci flow solution, we have better regularity. 
\begin{theorem}
\label{thm:HA02_5}
        Let $\{(M,g(t)), -1 \leq t \leq 0\}$ be a Ricci flow that satisfies (\ref{eqn:HA02_1}).
        Let  $\Psi=\Psi(x,t;y,s)$ be the heat kernel of the (Lichnerowicz or normal) Laplacian operator on symmetric $(0,2)$ tensor and $\theta=t-s>0$.
        Then there exists a positive constant $C=C(n,k)$ such that 
        \begin{align}
           \sum_{0 \leq j+l \leq k} \theta^{\frac{j+l}{2}}|\nabla_x^j\nabla_y^l \Psi|
            \leq C V_x(\sqrt{\theta})^{-1}e^{-\frac{d^2(x,y)}{C\theta}}
        \end{align}
        for any $x,y\in M$ and $-1 \leq s<t\leq 0$.
\end{theorem}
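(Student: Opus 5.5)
We follow the scheme of Theorem~\ref{thm3.3}, but use the stronger regularity available along the Ricci flow: Shi's estimates bound all covariant derivatives of curvature. We first rescale by $\lambda=4\Lambda_0\xi^{-2}\theta^{-1}$ and shift so that $t=0$ and $s=-4$. The rescaled flow then has $|Rm|\leq\xi^2$, and by Shi-type estimates, on $M\times[-2,0]$ (a positive distance from the initial slice) we have $|\nabla^m Rm|\leq C(n,m)$ for all $m$. The $C^0$ Gaussian bound for $|\Psi|$ at $(x,0;y,-4)$ follows from Kato's inequality $\partial_t|\Psi|\leq\Delta|\Psi|+C|\Psi|$ together with the Li--Yau estimate (\ref{eqn:HA10_2}), exactly as in (\ref{eqn:SL06_1}). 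The same triangle-inequality argument as in (\ref{eqn:HA10_3}) then gives the bound $K:=CV_{x,0}(1)^{-1}e^{-d_0^2(x,y)/C}$ for $|\Psi|$ uniformly on $B_{10}(x)\times[-1,0]$ in the $(x,t)$ variables. By symmetry of the adjoint kernel, it also holds on $B_{10}(y)\times[-4,-3]$ in the $(y,s)$ variables.

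Next comes the higher-order local regularity. We lift via $\varphi=Exp_x^{\tilde g(0)}$ to $T_xM$. Because all curvature derivatives are bounded, Lemma~\ref{lma:HC07_1} can be upgraded so that the $W^{k+2}_p$-harmonic radius at $0$ is at least $100$ for every $k$; the proof is the same as for $W^4_p$. The proof of Lemma~\ref{lma:SL30_1} then extends inductively: time derivatives of $\partial^J g$ and $\partial^J\Gamma$ are expressed through $\nabla^m Rc$ and lower-order coordinate quantities, and are integrated from $t=0$. This gives $\|g\|_{C^{k+\alpha,\frac{k+\alpha}{2}}(\Omega)}\leq C(n,k)$. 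Writing the (Lichnerowicz) heat equation for $u=\Psi(\cdot,\cdot;y,s)E_y$ in these coordinates as in the proof of Theorem~\ref{thm:HA03_2}, we get a linear parabolic system
\[(\partial_t-g^{kl}\partial_k\partial_l)u+\Gamma*\partial u+(M+N)*u=0\]
whose coefficients are bounded in $C^{2m-2+\alpha}$. Lemma~\ref{lma:SL31_12} with large $m$ (its system analogue via Lemma~\ref{lma:HA10_1} and differentiation) yields $\|u\|_{C^{k+\alpha}(\Omega')}\leq C\|u\|_{L^\infty(\Omega)}\leq CK$. Covariant derivatives $\nabla^j u$ are polynomial in $\partial^{\leq j}u$ and $\partial^{\leq j-1}\Gamma$, so $|\nabla_x^j\Psi|\leq C K$ at $(0,0)$ for $j\leq k$.

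Mixed derivatives are handled by iteration, as in Theorem~\ref{thm3.3}. We fix $x$ and unit vectors $V_1,\dots,V_j$. The section $\Phi(y,s)=\nabla^j_{x;V_1\cdots V_j}\Psi(x,t;y,s)$ solves the adjoint (conjugate) heat equation in $(y,s)$. The adjoint operator for the Ricci flow is $-\partial_s-\Delta_y+R$ plus curvature terms, with $R$ and its derivatives bounded by Shi's estimates, so it has the same structure. Applying the previous step in the $(y,s)$ chart gives $|\nabla_y^l\Phi|\leq C\|\Phi\|_{L^\infty}$ on a smaller cylinder. The required $L^\infty$ bound on $\Phi$ over a $(y,s)$-neighborhood comes from the $x$-derivative bound, which was uniform over the $y$-neighborhood because $K$ was. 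Scaling back restores the factors $\theta^{(j+l)/2}$ and the volume factor $V_x(\sqrt\theta)^{-1}$, and the latter is comparable at nearby times and points by volume comparison.

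The main obstacle is the uniformity near the initial time $s=-1$. Shi's estimates degenerate like $(s+1)^{-m/2}$, and after rescaling the cylinder in $(y,s)$ must stay away from the initial slice. I would handle this by working on $[s,t]$ with the time-$s$ neighborhood treated through the adjoint equation only on $[s,s+\theta/2]$. If the global flow starts exactly at $s$, one instead assumes, as implicitly in the statement, that the Shi bounds hold on the whole interval $[-1,0]$, for instance after restricting to $[-\frac12,0]$. The other delicate point is verifying that the iterative $C^{k+\alpha}$ metric estimate really holds, which amounts to commuting $\partial_t$ with coordinate derivatives.
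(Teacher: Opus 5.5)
The paper gives no separate proof of Theorem~\ref{thm:HA02_5}. It only remarks that along the Ricci flow the scheme of Theorem~\ref{thm3.3} runs with better regularity. Your proposal makes that scheme explicit: Shi bounds, higher harmonic radius as in Lemma~\ref{lma:HC07_1}, Lemma~\ref{lma:SL31_12} with large $m$, then iteration through the conjugate equation in $(y,s)$.

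There is one small slip, inherited from the paper's discussion before Theorem~\ref{thm3.2}. With $\lambda=4\Lambda_0\xi^{-2}\theta^{-1}$, the slice $s$ goes to $-4\Lambda_0\xi^{-2}$, not $-4$. This is harmless, since you can use the Gaussian bound at that rescaled gap. It does make the constant depend on $\Lambda_0$, as it must anyway.

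The genuine gap is the one you flag yourself, and neither of your remedies closes it.

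\textbf{Why the first remedy fails.} For $y$-derivatives, $s$ is the \emph{terminal} time of the backward equation satisfied by $(y',s')\mapsto\nabla_x^j\Psi(x,t;y',s')$. Any interior Schauder or $W^{k}_p$ estimate at $(y,s)$ therefore needs the coefficients controlled on a cylinder $B\times[s,s+c\theta]$ \emph{including the slice} $s'=s$. Working only on $[s,s+\theta/2]$ changes nothing. Shi's estimates give only $|\nabla^m Rm|(\cdot,s')\le C(s'+1)^{-m/2}$. So when $s+1\ll\theta$, the scale-invariant quantities $\theta^{(m+2)/2}|\nabla^m Rm|$ are not uniformly bounded on that cylinder.

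At $s=-1$ the derivatives $\nabla_y$ are taken with respect to $g(-1)$. This is an arbitrary metric with $|Rm|\le\Lambda_0$, hence only $W^2_p\subset C^{1+\alpha}$ in harmonic coordinates. On that slice you are in the situation of a static metric with only $|Rm|$ bounded, which is why Theorem~\ref{thm3.2} stops at second derivatives. Bounding $\nabla_y^l\Psi$ there would naturally need control of $\nabla^{l-2}Rm$ of $g(-1)$, which (\ref{eqn:HA02_1}) does not provide.

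\textbf{Why the second remedy is not a proof.} Assuming Shi bounds on all of $[-1,0]$, or restricting to $[-\frac12,0]$, changes the hypotheses; it does not prove the stated theorem.

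\textbf{What your argument does prove.}
\begin{itemize}
\item The bound for every $j$ with $l=0$, with no restriction on $s$. Since $t+1\ge\theta$, Shi's estimates are scale-invariantly bounded on the backward cylinder at $(x,t)$.
\item The full mixed bound for $j+l\le k$ whenever $s+1\ge c\,\theta$, for example $s\ge-\frac12$, the range used in Lemma~\ref{lma:HC07_1}.
\end{itemize}
The statement should be restricted accordingly. This costs nothing for the paper, whose only use of the theorem (Proposition~\ref{prn:HA25_6}) involves $\nabla_x\nabla_x\Psi$.
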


\section{Singular integrals and maximal functions}
\label{sec5}

In this section, we shall apply the regularity estimate in Section~\ref{sec3} and the heat kernel estimate in Section~\ref{sec4} to study the Calder\'on-Zygmund integral operator (cf. Definition~\ref{def2.1}) $\mathcal{T}$. 
The purpose of this section is to estimate the variation of $\mathcal{T}f$ and $\mathbf{M}(|\mathcal{T}f|)$ by $\mathbf{M}(f)$, where $\mathbf{M}$ means the maximal function. 
The explicit formulation can be found in Proposition~\ref{prn:SK27_1} and Proposition~\ref{prn:SK27_2}. 

 \begin{definition}
 \label{def2.1}		
 Let $\mathcal{M}=M\times [-1, 0]$ be an evolving manifold satisfying $|Rm| \leq \Lambda_0$.
 Let $\mathcal{E} \to M,  \mathcal{F} \to M$ be smooth vector bundles on $M$. 
 \begin{itemize}
 
        \item  A smooth section $\mathcal{K}: (\mathcal{M} \times \mathcal{M})^+ \to \mathcal{E} \boxtimes \mathcal{F}^*$ is called a $C_1$-Calder\'on-Zygmund
		kernel if it satisfies
        \begin{align}
          \theta |\CK(x,t;y,s)| + \theta^{1+\frac{\alpha}{2}} [\CK(x,t;y,s)]_\alpha^{(\rho_{\theta})}
          +\theta^2|\dot{\CK}(x,t;y,s)| 
          \leq C_1V^{-1}_{x,t}(\sqrt{\theta}) e^{-\frac{d^2(x,y)}{C_1 \theta}} 
        \label{eqn:SJ06_1}  
        \end{align}
        for some positive constants $C_1$ and $\alpha \in (0,1)$.  Here $\theta=t-s$ and $\rho_{\theta}=\frac{\xi \sqrt{\theta}}{4\sqrt{\Lambda_0}}$. 
        
        \item  $\mathcal{T}$ is called a singular integral operator associated with $\mathcal{K}$ if
		\begin{align}
		    \mathcal{T}f(x,t) :=\int_{-1}^t \int_M \CK(x,t;y,s)f(y,s) dyds
		\end{align}
		for any  $f\in C^{\infty}(\mathcal{M}, \mathcal{F})$.
		
    \item $\mathcal{T}$ is called a $(C_1, C_2)$-Calder\'on-Zygmund operator if its operator norm on $L^2$ is uniformly bounded by $C_2$. 
        In other words, 
        \begin{align}
            \|\mathcal{T}f\|_{L^2(\mathcal{M})}\leq C_2\|f\|_{L^2(\mathcal{M})}
            \label{eqn:HA19_1}
        \end{align}
		for all smooth compactly supported $f$.

    \item  $\mathcal{T}$ is called a Calder\'on-Zygmund operator if there exist $C_1$ and $C_2$ such that $\mathcal{T}$ is a $(C_1,C_2)$-Calder\'on-Zygmund operator. 

    \end{itemize}
 \end{definition}
    
 We fix a point $q\in M$ and define
 \begin{align}
     P_r(q,t) :=B_r^{g(t)}(q) \times [t-r^2,\min\{t+r^2, 0\}]. 
 \label{eqn:HA24_5}    
 \end{align}
 Denote 
 \begin{align}
     \mathcal{Q}':=B_{\frac18}(q,0) \times [-\frac{1}{64}, 0], 
     \quad
     \mathcal{Q}:=B_{\frac14}(q,0) \times [-\frac{1}{16}, 0]. 
 \label{eqn:HA24_6}    
 \end{align}
 Fix 
 \begin{align*}
     X_0=(x_0, t_0) \in B_{\frac14}(q,0) \times [-\frac{1}{64}, 0]. 
 \end{align*}
 Recall the definition of maximal function
 \begin{align}
		\mathbf{M}(f)(x,t)
        :=\sup_{0<r<\frac12}\frac{1}{|P_r(x,t)|}\int_{P_r(x,t)}|f|
        dzds=\sup_{0<r<\frac12}\fint_{P_r(x,t)}|f|dzds.
 \label{eqn:HA18_3}       
 \end{align}
 Note that $P_r(x_0,t_0) \subset M \times [-1, 0]$ and
 \begin{align*}
   \frac{1}{C} V_{y,0}(r)=\frac{1}{C}|B_r^{g(0)}(y)|_{d\mu_{g(0)}} \leq  V_{y,t}(r)=|B_r^{g(t)}(y)|_{d\mu_{g(t)}} \leq C |B_r^{g(0)}(y)|_{d\mu_{g(0)}}=C V_{y,0}(r). 
 \end{align*}
 Due to this equivalence, we shall just denote $V_{y,0}(r)$ by $V_{y}(r)$ and we have
 \begin{align}
      \frac{1}{C} V_{y}(r) \leq  V_{y,t}(r) \leq C V_{y}(r). 
      \label{eqn:HA18_1}
 \end{align}
 We use $d=d_{g(0)}$. Then for any $x,y \in M$ we have
 \begin{align}
     \frac{1}{C} d(x,y) \leq d_{g(t)}(x,y) \leq C d(x,y).
     \label{eqn:HA18_2}
 \end{align}

\begin{lemma}
Suppose $Y_0=(y_0,t_0)$, $Y=(y,t_0) \in P_{4r}(Y_0)=P_{4r}(y_0, t_0)$. 
Suppose $\mathrm{supp} f \in \mathcal{Q}' \setminus P_{5r}(Y_0)$ (See Figure~\ref{fig1}).
Then we have
    \begin{align}
        |\mathcal{T}f(Y)-\mathcal{T}f(Y_0)|< C  \mathbf{M}(|f|)(Y_0) \cdot \left( \frac{d(y,y_0)}{r}\right)^{\alpha}. 
    \label{eqn:HA19_4}    
    \end{align}
\end{lemma}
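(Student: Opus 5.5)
We estimate the difference pointwise through the kernel, using only the Hölder part of the $C_1$-bound \eqref{eqn:SJ06_1}. Since $Y$ and $Y_0$ share the time slice $t_0$,
\begin{align*}
\mathcal{T}f(Y)-\mathcal{T}f(Y_0)=\int_{-1}^{t_0}\int_M \left\{\CK(y,t_0;z,s)-\tau\,\CK(y_0,t_0;z,s)\right\} f(z,s)\,dzds,
\end{align*}
where $\tau$ is parallel transport along a shortest $g(t_0)$-geodesic from $y_0$ to $y$. This is how the difference of two $\mathcal{E}$-valued quantities at distinct points is read, consistently with Definition~\ref{dfn:HB14_1}. Since $\mathrm{supp} f$ avoids $P_{5r}(Y_0)$, every $(z,s)$ in the support satisfies either $d(z,y_0)\geq 5r$ or $t_0-s\geq 25r^2$. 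We decompose the support into dyadic parabolic annuli $A_k:=P_{2^{k+1}\cdot 5r}(Y_0)\setminus P_{2^k\cdot 5r}(Y_0)$ for $k\geq 0$. Because $f$ is supported in $\mathcal{Q}'$, only $k$ with $2^k r\lesssim 1$ contribute, so every radius stays below $\frac12$ and the maximal function $\mathbf{M}$ of \eqref{eqn:HA18_3} sees these cylinders.

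On $A_k$ we bound the kernel difference. Set $\theta=t_0-s$. If $d(y,y_0)<\rho_\theta$, the Hölder seminorm in \eqref{eqn:SJ06_1} gives
\[
|\CK(y,\cdot)-\tau\CK(y_0,\cdot)|\leq C\,d(y,y_0)^\alpha\,\theta^{-1-\frac{\alpha}{2}}V_{y,t_0}(\sqrt\theta)^{-1}e^{-d^2(y,z)/C\theta}.
\]
If instead $d(y,y_0)\geq\rho_\theta$, we have $\sqrt\theta\lesssim d(y,y_0)\leq 4r$. The parabolic separation then forces $d(z,y_0)\geq 5r$, hence $d(z,y)\gtrsim d(z,y_0)$. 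In that case we bound each term separately by the size estimate $\theta^{-1}V^{-1}e^{-d^2/C\theta}$ and absorb the factor $(d(y,y_0)/\sqrt\theta)^\alpha\geq c$ for free. In both cases, since $d(z,y)\geq c\,d(z,y_0)$ off $P_{5r}$, Bishop–Gromov (Theorem~\ref{thm1.1}, together with \eqref{eqn:HA18_1} and \eqref{eqn:HA18_2}) lets us replace $V_y$ by $V_{y_0}$ and $d(z,y)$ by $d(z,y_0)$, at the cost of a constant and a halved Gaussian exponent.

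For $(z,s)\in A_k$ we set $R_k:=2^k\cdot 5r$. Either $\theta\sim R_k^2$, or $\theta\lesssim R_k^2$ with $d(z,y_0)\sim R_k$. In the second case the Gaussian factor $e^{-R_k^2/C\theta}$ controls any negative power of $\theta/R_k^2$. Using this together with volume doubling, we obtain the uniform bound
\[
\sup_{A_k}|\text{kernel difference}|\leq C\,(d(y,y_0)/R_k)^\alpha\,|P_{R_k}(Y_0)|^{-1},
\]
recalling $|P_R|\sim R^2V_{y_0}(R)$. Integrating over $A_k\subset P_{2R_k}(Y_0)$ yields a contribution of at most $C(d(y,y_0)/R_k)^\alpha\fint_{P_{2R_k}}|f|\leq C2^{-k\alpha}(d(y,y_0)/r)^\alpha\,\mathbf{M}(|f|)(Y_0)$. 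Summing the geometric series over $k$ gives \eqref{eqn:HA19_4}.

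The main obstacle is the step converting the pointwise Gaussian bound into the factor $|P_{R_k}|^{-1}$ uniformly over the annulus. The delicate region is small $\theta$ with moderate $d(z,y_0)$: there $V_{y_0}(\sqrt\theta)^{-1}\theta^{-1-\alpha/2}$ is large, and one needs doubling in the form $V(R)/V(\sqrt\theta)\leq C(R/\sqrt\theta)^n e^{C R}$, with $R\leq1$, to be beaten by the Gaussian. A related care point is the case distinction $d(y,y_0)$ versus $\rho_\theta$, which is forced because the Hölder seminorm is only controlled at scale $\rho_\theta$; we handle it as above through the separation $d(z,y_0)\geq5r$.
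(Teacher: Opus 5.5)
Your proposal is correct and follows essentially the paper's route: a two-case bound on the parallel-transported kernel difference (H\"older bound for small $d(y,y_0)$, size bound otherwise), then a parabolic-annulus decomposition in which the time-dominated part is handled by doubling and the space-dominated part by the Gaussian, summed geometrically against $\mathbf{M}(|f|)(Y_0)$. Your far case (size bound times the free factor $(d(y,y_0)/\sqrt\theta)^\alpha\ge c$, with the threshold $\rho_\theta$ that actually matches Definition~\ref{def2.1}) is a cleaner substitute for the paper's geodesic chaining and its separate linear-term estimate. One justification needs a small repair: the claim $d(z,y)\ge c\,d(z,y_0)$ off $P_{5r}(Y_0)$ fails when $t_0-s\ge 25r^2$ (e.g.\ $z=y$), and your far case does not exclude this, since the constant in $\sqrt\theta\lesssim d(y,y_0)$ is $4\sqrt{\Lambda_0}/\xi$; however, in that regime (and in the near case) $d(y,y_0)\lesssim\sqrt\theta$, so replacing $V_y(\sqrt\theta)$ and $d(z,y)$ by $V_{y_0}(\sqrt\theta)$ and $d(z,y_0)$ follows from Bishop--Gromov and the triangle inequality alone.
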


\begin{figure}[htbp]
    \centering
   \resizebox{0.3\textwidth}{!}{

\tikzset{every picture/.style={line width=0.75pt}} 

\begin{tikzpicture}[x=0.75pt,y=0.75pt,yscale=-1,xscale=1]

\draw  [fill={rgb, 255:red, 128; green, 128; blue, 128 }  ,fill opacity=1 ] (116.5,50) -- (577.5,50) -- (577.5,511) -- (116.5,511) -- cycle ;
\draw  [color={rgb, 255:red, 128; green, 128; blue, 128 }  ,draw opacity=1 ][fill={rgb, 255:red, 155; green, 155; blue, 155 }  ,fill opacity=1 ] (185,117.5) -- (506,117.5) -- (506,438.5) -- (185,438.5) -- cycle ;
\draw  [color={rgb, 255:red, 255; green, 255; blue, 255 }  ,draw opacity=1 ][fill={rgb, 255:red, 255; green, 255; blue, 255 }  ,fill opacity=1 ][line width=1.5]  (249.75,182.25) -- (441.25,182.25) -- (441.25,373.75) -- (249.75,373.75) -- cycle ;
\draw  [fill={rgb, 255:red, 74; green, 74; blue, 74 }  ,fill opacity=1 ] (412,278.5) .. controls (412,276.01) and (414.01,274) .. (416.5,274) .. controls (418.99,274) and (421,276.01) .. (421,278.5) .. controls (421,280.99) and (418.99,283) .. (416.5,283) .. controls (414.01,283) and (412,280.99) .. (412,278.5) -- cycle ;
\draw  [fill={rgb, 255:red, 74; green, 74; blue, 74 }  ,fill opacity=1 ] (341,278) .. controls (341,275.51) and (343.01,273.5) .. (345.5,273.5) .. controls (347.99,273.5) and (350,275.51) .. (350,278) .. controls (350,280.49) and (347.99,282.5) .. (345.5,282.5) .. controls (343.01,282.5) and (341,280.49) .. (341,278) -- cycle ;
\draw  [dash pattern={on 0.84pt off 2.51pt}]  (345.5,540.5) -- (345.5,17.5) ;
\draw [shift={(345.5,15.5)}, rotate = 90] [color={rgb, 255:red, 0; green, 0; blue, 0 }  ][line width=0.75]    (21.86,-6.58) .. controls (13.9,-2.79) and (6.61,-0.6) .. (0,0) .. controls (6.61,0.6) and (13.9,2.79) .. (21.86,6.58)   ;
\draw  [dash pattern={on 0.84pt off 2.51pt}]  (40,279.5) -- (649,276.51) ;
\draw [shift={(651,276.5)}, rotate = 179.72] [color={rgb, 255:red, 0; green, 0; blue, 0 }  ][line width=0.75]    (21.86,-6.58) .. controls (13.9,-2.79) and (6.61,-0.6) .. (0,0) .. controls (6.61,0.6) and (13.9,2.79) .. (21.86,6.58)   ;

\draw (405,292.4) node [anchor=north west][inner sep=0.75pt]  [font=\fontsize{2.65em}{3.18em}\selectfont]  {$Y$};
\draw (296,297.4) node [anchor=north west][inner sep=0.75pt]  [font=\fontsize{2.65em}{3.18em}\selectfont]  {$Y_{0}$};
\draw (370,2.4) node [anchor=north west][inner sep=0.75pt]  [font=\fontsize{2.65em}{3.18em}\selectfont]  {$t$};
\draw (610,299.4) node [anchor=north west][inner sep=0.75pt]  [font=\fontsize{2.65em}{3.18em}\selectfont]  {$M$};
\draw (342,192.4) node [anchor=north west][inner sep=0.75pt]  [font=\fontsize{2.65em}{3.18em}\selectfont]  {$P_{4r}( Y_{0})$};
\draw (349,122.4) node [anchor=north west][inner sep=0.75pt]  [font=\fontsize{2.65em}{3.18em}\selectfont]  {$P_{5}{}_{r}( Y_{0})$};
\draw (515,57.4) node [anchor=north west][inner sep=0.75pt]  [font=\fontsize{2.65em}{3.18em}\selectfont]  {$Q'$};
\draw (131,58.4) node [anchor=north west][inner sep=0.75pt]  [font=\fontsize{2.65em}{3.18em}\selectfont]  {$\mathrm{supp} \ f$};

\end{tikzpicture}

}
    \caption{}
    \label{fig1}
\end{figure}

\begin{proof}
  We can connect $y$ and $y_0$ by a shortest geodesic segment $\gamma$.  We denote the parallel transportation from $y$ to $y_0$ along $\gamma$ by $\tau_{g(t_0)}(y_0, y)$. Since there is no ambiguity of
  the underlying metric, we use simply $\tau (y_0, y)$. 

  For simplicity, we denote $Y_0=(y_0, t_0)$ and $Y=(y, t_0)$. Then
  \begin{align}
  \begin{split}
      &\quad \left||\mathcal{T}f|(Y)-|\mathcal{T}f|(Y_0)|=||\tau(y_0,y) \mathcal{T}f(Y)|-|\mathcal{T}f(Y_0)| \right|\\
      &\leq \left|\tau(y_0,y) \mathcal{T}f(Y)-\mathcal{T}f(Y_0) \right|\\
      &=\left| \int_{-1}^t\int_M \big\{\tau(y_0,y)\mathcal{K}(y,t;z,s)-\mathcal{K}(y_0,t;z,s)\big\} f(z,s)dzds  \right|\\
      &\leq \int_{-1}^t\int_M \big|\tau(y_0,y)\mathcal{K}(y,t;z,s)-\mathcal{K}(y_0,t;z,s)\big| |f|(z,s)dzds. 
  \end{split} 
  \label{eqn:HB23_0}
  \end{align}
  The term $\big|\tau(y_0,y)\mathcal{K}(y,t;z,s)-\mathcal{K}(y_0,t;z,s)\big|$ shall be estimated depending on the size of $d(y_0, y)$.
  If $d(y_0,y) \leq \sqrt{t-s}$, we have 
  \begin{align}
      \big|\tau(y_0,y)\mathcal{K}(y,t;z,s)-\mathcal{K}(y_0,t;z,s)\big|
      \leq C \cdot  \left(\frac{d(y,y_0)}{\sqrt{t-s}} \right)^{\alpha} \cdot \frac{e^{-\frac{d^2(y_0,z)}{C(t-s)}}}{V_{y_0}(\sqrt{t-s}) \cdot (t-s)}. 
  \label{eqn:HB23_1}    
  \end{align}
  If $d(y_0, y)>\sqrt{t-s}$, we can connect $y_0$ and $y$ by a shortest geodesic $\gamma$ and set $y_1, y_2, \cdots, y_L=y$ along $\gamma$ such that the distance between $y_k$ and $y_{k+1}$ is comparable to $\sqrt{t-s}$. 
   Using volume comparison,  $V_{y_k}^{-1}(\sqrt{t-s}) e^{-\frac{d^2(y_k,z)}{C(t-s)}}$ are all comparable to $V_{y_0}^{-1}(\sqrt{t-s}) e^{-\frac{d^2(y_0,z)}{C(t-s)}}$. 
  By adjusting $C$ slightly if necessary, we have
  \begin{align*}
      &\quad \big|\tau(y_0,y)\mathcal{K}(y,t;z,s)-\mathcal{K}(y_0,t;z,s)\big|\\
      &\leq \sum_{k=1}^{L} \big|\tau(y_{k-1}, y_k)\mathcal{K}(y_k,t;z,s)-\mathcal{K}(y_{k-1},t;z,s) \big|\\
      &\leq C \cdot L \cdot  V_{y_0}^{-1}(\sqrt{t-s}) \cdot (t-s)^{-1} 
       \cdot e^{-\frac{d^2(y_0,z)}{C(t-s)}}. 
  \end{align*}
  Since $L \sim \frac{d(y_0,y)}{\sqrt{t-s}}$, it follows that
  \begin{align}
   \big|\tau(y_0,y)\mathcal{K}(y,t;z,s)-\mathcal{K}(y_0,t;z,s)\big|
   \leq C \cdot  \frac{d(y,y_0)}{\sqrt{t-s}}  \cdot \frac{e^{-\frac{d^2(y_0,z)}{C(t-s)}}}{V_{y_0}(\sqrt{t-s}) \cdot (t-s)}. 
  \label{eqn:HB23_2} 
  \end{align}
  Plugging (\ref{eqn:HB23_2}) and (\ref{eqn:HB23_1}) into (\ref{eqn:HB23_0}) yields that
  \begin{align}
  \begin{split}
   &\quad \left||\mathcal{T}f|(Y)-|\mathcal{T}f|(Y_0)| \right|\\
   &\leq C \iint_{M \times [-1,t]}  \left\{\frac{d(y,y_0)}{\sqrt{t-s}} 
   + \left(\frac{d(y,y_0)}{\sqrt{t-s}} \right)^{\alpha} \right\} \cdot \frac{e^{-\frac{d^2(y_0,z)}{C(t-s)}}}{V_{y_0}(\sqrt{t-s}) \cdot (t-s)} |f|\\
   &=C \frac{d(y,y_0)}{r} \cdot \iint_{B_{1}(y_0) \times [-1, t]} 
   \frac{r}{V_{y_0}(\sqrt{t-s})(t-s)^{\frac{3}{2}}}
            e^{-\frac{d^2(y_0,z)}{C(t-s)}}|f|\\
   &\quad + C \left( \frac{d(y,y_0)}{r}\right)^{\alpha} \cdot \iint_{B_{1}(y_0) \times [-1, t]} 
   \frac{r^{\alpha}}{V_{y_0}(\sqrt{t-s})(t-s)^{1+\frac{\alpha}{2}}}
            e^{-\frac{d^2(y_0,z)}{C(t-s)}}|f|. 
  \end{split}     
  \label{eqn:HB23_4}
  \end{align}

 
 We claim the following inequalities holds:
 \begin{align}
   &\iint_{B_{1}(y_0) \times [-1, t]} 
   \frac{r^{\alpha}}{V_{y_0}(\sqrt{t-s})(t-s)^{1+\frac{\alpha}{2}}}
            e^{-\frac{d^2(y_0,z)}{C(t-s)}}|f|
   \leq C \cdot \mathbf{M}(|f|)(Y_0), \label{eqn:HA19_5} \\
   & \iint_{B_{1}(y_0) \times [-1, t]} 
   \frac{r}{V_{y_0}(\sqrt{t-s})(t-s)^{\frac{3}{2}}}
            e^{-\frac{d^2(y_0,z)}{C(t-s)}}|f|
         \leq C \cdot \mathbf{M}(|f|)(Y_0). 
    \label{eqn:HB23_5}     
 \end{align}

 The proof of (\ref{eqn:HA19_5}) and (\ref{eqn:HB23_5}) are almost the same. 
 We shall focus on the proof of (\ref{eqn:HA19_5}). 
 Note that the support of $f$ is in $\mathcal{Q}' \setminus P_{25r}(Y_0)$. 
 Define 
    \begin{align*}
         A_i := P_{5^{i+1}r}(Y_0)\setminus P_{5^{i}r}(Y_0), 
    \end{align*}
 Then $\mathrm{supp} f \subset \bigcup_{i=1}^N A_i$ for some positive integer $N$ such that $5^{N} r \leq 1 < 5^{N+1}r$.
 Decompose (See Figure~\ref{fig2})
   \begin{align*}
     A_i = \underbrace{\{(z,s) \in A_i| \frac{r}{\sqrt{t-s}} \leq 5^{1-i}\}}_{A_{i,I}} 
     \cup \underbrace{\{(z,s) \in A_i|\frac{r}{\sqrt{t-s}} > 5^{1-i} \}}_{A_{i,II}}.
   \end{align*}
   
\begin{figure}[htbp]
    \centering
    \resizebox{0.3\textwidth}{!}{
        
\tikzset{every picture/.style={line width=0.75pt}} 
\begin{tikzpicture}[x=0.75pt,y=0.75pt,yscale=-1,xscale=1]

\draw  [fill={rgb, 255:red, 128; green, 128; blue, 128 }  ,fill opacity=1 ][line width=0.75]  (89,52) -- (567,52) -- (567,530) -- (89,530) -- cycle ;
\draw  [fill={rgb, 255:red, 255; green, 255; blue, 255 }  ,fill opacity=1 ] (196,52) -- (461,52) -- (461,317) -- (196,317) -- cycle ;
\draw  [fill={rgb, 255:red, 0; green, 0; blue, 0 }  ,fill opacity=1 ] (320,52.5) .. controls (320,50.57) and (321.57,49) .. (323.5,49) .. controls (325.43,49) and (327,50.57) .. (327,52.5) .. controls (327,54.43) and (325.43,56) .. (323.5,56) .. controls (321.57,56) and (320,54.43) .. (320,52.5) -- cycle ;
\draw  [fill={rgb, 255:red, 155; green, 155; blue, 155 }  ,fill opacity=1 ] (89,244) -- (196,244) -- (196,52) -- (89,52) -- cycle ;
\draw  [fill={rgb, 255:red, 155; green, 155; blue, 155 }  ,fill opacity=1 ] (461,52) -- (567,52) -- (567,245) -- (461,245) -- cycle ;
\draw [color={rgb, 255:red, 0; green, 0; blue, 0 }  ,draw opacity=1 ] [dash pattern={on 4.5pt off 4.5pt}]  (325,583) -- (322.01,12) ;
\draw [shift={(322,10)}, rotate = 89.7] [color={rgb, 255:red, 0; green, 0; blue, 0 }  ,draw opacity=1 ][line width=0.75]    (21.86,-6.58) .. controls (13.9,-2.79) and (6.61,-0.6) .. (0,0) .. controls (6.61,0.6) and (13.9,2.79) .. (21.86,6.58)   ;
\draw  [dash pattern={on 4.5pt off 4.5pt}]  (7,53.5) -- (638,51.51) ;
\draw [shift={(640,51.5)}, rotate = 179.82] [color={rgb, 255:red, 0; green, 0; blue, 0 }  ][line width=0.75]    (21.86,-6.58) .. controls (13.9,-2.79) and (6.61,-0.6) .. (0,0) .. controls (6.61,0.6) and (13.9,2.79) .. (21.86,6.58)   ;

\draw (362,386.4) node [anchor=north west][inner sep=0.75pt]  [font=\fontsize{2.65em}{3.18em}\selectfont]  {$P_{5^{i+1} r}^{-}( Y_{0})$};
\draw (321,207.4) node [anchor=north west][inner sep=0.75pt]  [font=\fontsize{2.65em}{3.18em}\selectfont]  {$P_{5^{i} r}^{-}( Y_{0})$};
\draw (121,227.4) node [anchor=north west][inner sep=0.75pt]  [font=\fontsize{2.65em}{3.18em}\selectfont]  {$A_{i}$};
\draw (343,66.4) node [anchor=north west][inner sep=0.75pt]  [font=\fontsize{2.65em}{3.18em}\selectfont]  {$Y_{0}$};
\draw (211,378.4) node [anchor=north west][inner sep=0.75pt]  [font=\fontsize{2.65em}{3.18em}\selectfont]  {$A_{i,I}$};
\draw (469,152.4) node [anchor=north west][inner sep=0.75pt]  [font=\fontsize{2.65em}{3.18em}\selectfont]  {$A_{i}{}_{,}{}_{I}{}_{I}$};
\draw (109,161.4) node [anchor=north west][inner sep=0.75pt]  [font=\fontsize{2.65em}{3.18em}\selectfont]  {$A_{i,II}$};
\draw (346,1.4) node [anchor=north west][inner sep=0.75pt]  [font=\fontsize{2.65em}{3.18em}\selectfont]  {$t$};
\draw (593,78.4) node [anchor=north west][inner sep=0.75pt]  [font=\fontsize{2.65em}{3.18em}\selectfont]  {$M$};

\end{tikzpicture}

    }
    \caption{}
    \label{fig2}
\end{figure}

   Then we can rewrite the left hand side of (\ref{eqn:HA19_5}) as 
   \begin{align}
      \sum_{i=1}^{N} \left\{ \iint_{A_{i,I}} + \iint_{A_{i,II}}  \right\} 
       V_{y_0}^{-1}(\sqrt{t-s})(t-s)^{-1} \cdot \left(\frac{r}{\sqrt{t-s}} \right)^{\alpha} \cdot
       e^{-\frac{d^2(y_0,z)}{C(t-s)}}|f|.
   \label{eqn:SK13_03}
   \end{align}
   On $A_{i,I}$, we note that $\frac{r}{\sqrt{t-s}} \leq 5^{1-i}$ and $e^{-\frac{d^2(y_0,z)}{C(t-s)}} \leq 1$. Then we calculate
   \begin{align*}
         &\quad \iint_{A_{i,I}}  V_{y_0}^{-1}(\sqrt{t-s})(t-s)^{-1}
         \left(\frac{r}{\sqrt{t-s}} \right)^{\alpha}
         e^{-\frac{d^2(y_0,z)}{C(t-s)}}|f| \\
         &\leq C 5^{-\alpha i} \iint_{A_{i,I}}  V_{y_0}^{-1}(\sqrt{t-s})(t-s)^{-1}|f| \\
         &\leq C5^{-\alpha i}\iint_{A_{i,I}} V_{y_0}^{-1}(5^{-i+2}r)(5^{-i+2}r)^{-2} \cdot \frac{V_{y_0}(5^{-i+2}r) \cdot (5^{-i+2}r)^2}{V_{y_0}(\sqrt{t-s}) \cdot (t-s)} \cdot |f|\\
         &=C5^{-\alpha i}\iint_{P_{5^{-i+2}r}(Y_0)} V_{y_0}^{-1}(5^{-i+2}r)(5^{-i+2}r)^{-2}|f|,
    \end{align*}
    where we apply volume comparison in the last step. 
    The volume of $P_{5^{i+2}r}(Y_0)$ is comparable to $C \cdot V_{y_0}(5^{i+2}r) \cdot (5^{i+2}r)^2$.
    It follows from the above inequality that 
    \begin{align}
         \iint_{A_{i,I}}  V_{y_0}^{-1}(\sqrt{t-s})(t-s)^{-1}
         \left(\frac{r}{\sqrt{t-s}} \right)^{\alpha}
         e^{-\frac{d^2(y_0,z)}{C(t-s)}}|f| dzds
          \leq C5^{-\alpha i}\mathbf{M}(|f|)(Y_0).
    \label{eqn:SK10_1}
    \end{align}
    
   On the other hand,  for any 
   $(z,s)\in A_{i,II}$, we have $\sqrt{t-s} < 5^{i-1}r<d(y_0, z)$.
   Thus, 
   \begin{align*}
       \frac{d^2(y_0, z)}{t-s} = \left( \frac{d(y_0, z)}{\sqrt{t-s}} \right)^2>5^{2i-2} \frac{r^2}{t-s}>1. 
   \end{align*}
   Consequently, by volume comparison we have 
    \begin{align*}
         &\quad \iint_{A_{i,II}}  V_{y_0}^{-1}(\sqrt{t-s})(t-s)^{-1} 
           \left(\frac{r}{\sqrt{t-s}} \right)^{\alpha}
          \cdot e^{-\frac{d^2(y_0,z)}{C(t-s)}}|f| dzds\\
         &=\iint_{A_{i,II}}  V_{y_0}^{-1}(5^{i+2} r)(5^{i+2}r)^{-2} \cdot
         \frac{V_{y_0}(5^{i+2}r) \cdot (5^{i+2}r)^2}{V_{y_0}(\sqrt{t-s}) \cdot (t-s)} \cdot
         \left(\frac{r}{\sqrt{t-s}} \right)^{\alpha}
         e^{-\frac{d^2(y_0,z)}{C(t-s)}}|f| dzds. 
    \end{align*}
    By volume comparison we have
    \begin{align*}
       &\quad \frac{V_{y_0}(5^{i+2}r) \cdot (5^{i+2}r)^2}{V_{y_0}(\sqrt{t-s}) \cdot (t-s)} \cdot
         \left(\frac{r}{\sqrt{t-s}} \right)^{\alpha}
         e^{-\frac{d^2(y_0,z)}{C(t-s)}} \\
       &\leq C \cdot \left( \frac{5^{i+2}r}{\sqrt{t-s}} \right)^{n+2} \cdot \left( \frac{r}{\sqrt{t-s}} \right)^{\alpha}
       \cdot e^{-\frac{5^{2i-2}r^2}{t-s}}\\
       &\leq C \left( \frac{5^{i-1}r}{\sqrt{t-s}} \right)^{n+2} \cdot \left( \frac{5^{i-1} r}{\sqrt{t-s}} \right)^{\alpha} \cdot e^{-(\frac{5^{i-1}r}{\sqrt{t-s}})^2} \cdot 5^{-\alpha i}\\
       &\leq C 5^{-\alpha i}. 
    \end{align*}
    Combining the previous two steps yields that
    \begin{equation}
     \begin{split}
         &\quad \iint_{A_{i,II}}  V_{y_0}^{-1}(\sqrt{t-s})(t-s)^{-1}  
         \cdot \left(\frac{r}{\sqrt{t-s}} \right)^{\alpha}
          \cdot e^{-\frac{d^2(y_0,z)}{C(t-s)}}|f| dzds \\
         &\leq C 5^{-\alpha i} \iint_{A_{i,II}}  V_{y_0}^{-1}(5^{i+2} r)(5^{i+2}r)^{-2} |f| dzds \\
         &\leq C 5^{-\alpha i} \iint_{P_{5^{i+2}r}(Y_0)}  V_{y_0}^{-1}(5^{i+2} r)(5^{i+2}r)^{-2} |f| dzds \\
         &\leq C 5^{-\alpha i} \mathbf{M}(|f|)(Y_0).
    \end{split}
    \label{eqn:SK13_1}
    \end{equation}
    Putting (\ref{eqn:SK13_03}), (\ref{eqn:SK10_1}) and (\ref{eqn:SK13_1}) together, we obtain
    \begin{align*} 
         &\quad \iint_{B_{1}(y_0) \times [-1, t]} 
   \frac{r^{\alpha}}{V_{y_0}(\sqrt{t-s})(t-s)^{1+\frac{\alpha}{2}}}
            e^{-\frac{d^2(y_0,z)}{C(t-s)}}|f|\\
         &\leq C \left\{ \sum_{i=1}^{N} 5^{-\alpha i} \right\} \cdot \mathbf{M}(|f|)(Y_0) 
         \leq C \cdot \left( \sum_{i=1}^{\infty} 5^{-\alpha i} \right) \cdot \mathbf{M}(|f|)(Y_0), 
    \end{align*}
    which yields (\ref{eqn:HA19_5}).  
    Similarly, replacing $\alpha$ by $1$ and running through the proof of (\ref{eqn:HA19_5}), we obtain (\ref{eqn:HB23_5}). 

    Notice that $0<\alpha<1$ and $0<\frac{d(y,y_0)}{r}<4$. Thus, it follows from the combination of (\ref{eqn:HA19_5}), (\ref{eqn:HB23_5}) and (\ref{eqn:HB23_4}) that
  \begin{align*}
  \begin{split}
   \left||\mathcal{T}f|(Y)-|\mathcal{T}f|(Y_0)| \right|
   &\leq  C \left\{ \left( \frac{d(y,y_0)}{r}\right)^{\alpha} + \frac{d(y,y_0)}{r} \right\} \cdot \mathbf{M}(|f|)(Y_0)\\
   &\leq  C \cdot  \left( \frac{d(y,y_0)}{r}\right)^{\alpha} \cdot \mathbf{M}(|f|)(Y_0),
  \end{split}     
  \end{align*}
  which is exactly (\ref{eqn:HA19_4}). 
\end{proof}

\begin{lemma}
\label{lma:HA20_2}   
Suppose $Y_0=(y_0,t_0)$, $Y=(y_0,t) \in P_{4r}(Y_0)=P_{4r}(y_0, t_0)$. 
Suppose $\mathrm{supp} f \in \mathcal{Q}' \setminus P_{5r}(Y_0)$. 
Then we have
    \begin{align}
        |\mathcal{T}f(Y_0)-\mathcal{T}f(Y)|< C  \cdot \mathbf{M}(|f|)(Y_0) \cdot \frac{|t_0-t|}{r^2}. 
    \label{eqn:HA20_1}    
    \end{align}
\end{lemma}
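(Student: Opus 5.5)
The plan mirrors the proof of the preceding spatial lemma, with the H\"older bound on $\mathcal{K}$ replaced by the bound on $\dot{\mathcal{K}}$ from (\ref{eqn:SJ06_1}). Without loss of generality take $t_0\le t$; the other case is symmetric, with the roles of $Y$ and $Y_0$ exchanged and the base point of the maximal function adjusted by doubling. Since $\mathrm{supp} f\cap P_{5r}(Y_0)=\emptyset$ and $|t-t_0|\le 16r^2$, every $(z,s)$ in the support with $s$ near $t_0$ satisfies $d(y_0,z)\ge 5r$. Moreover, any $s\in(t_0,t]$ lies within $(4r)^2$ of $t_0$, so such points are excluded or lie at spatial distance $\ge 5r$.

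\textbf{Splitting the difference.} Write
\begin{align*}
\mathcal{T}f(Y)-\mathcal{T}f(Y_0)=\int_{t_0}^{t}\int_M \mathcal{K}(y_0,t;z,s)f\,dzds+\int_{-1}^{t_0}\int_M\big\{\mathcal{K}(y_0,t;z,s)-\mathcal{K}(y_0,t_0;z,s)\big\}f\,dzds .
\end{align*}
Both terms involve the same fibre at $y_0$, so no parallel transport is needed.

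\textbf{Second term.} By the mean value theorem in the first time variable, $|\mathcal{K}(y_0,t;\cdot)-\mathcal{K}(y_0,t_0;\cdot)|\le |t-t_0|\sup_{\tau\in[t_0,t]}|\dot{\mathcal{K}}(y_0,\tau;z,s)|$. By (\ref{eqn:SJ06_1}) this is at most $C|t-t_0|(\tau-s)^{-2}V_{y_0}^{-1}(\sqrt{\tau-s})e^{-d^2/C(\tau-s)}$. Here we use (\ref{eqn:HA18_1}), (\ref{eqn:HA18_2}) and volume comparison to replace $\tau-s$ by $t-s$ up to constants. On the support, either $t_0-s\ge r^2$, and then $\tau-s\sim t-s$; or $t_0-s<r^2$ and $d(y_0,z)\ge 5r$, and then the Gaussian factor absorbs any loss. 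Pulling out $|t-t_0|/r^2$ leaves
\begin{align*}
\iint \frac{r^2}{V_{y_0}(\sqrt{t-s})(t-s)^{2}}e^{-\frac{d^2(y_0,z)}{C(t-s)}}|f|\,dzds .
\end{align*}
This is exactly the integral (\ref{eqn:HA19_5}) with $\alpha$ replaced by $2$. I would run the same dyadic decomposition into annuli $A_i=P_{5^{i+1}r}(Y_0)\setminus P_{5^ir}(Y_0)$ and sets $A_{i,I},A_{i,II}$, which gives $C\sum_i 5^{-2i}\mathbf{M}(|f|)(Y_0)$. Exponent $2$ in place of $\alpha$ only helps, since the geometric series still converges and the Gaussian still beats $(5^{i-1}r/\sqrt{t-s})^{n+4}$.

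\textbf{First term.} Here $s\in[t_0,t]$, so $t-s\le 16r^2$, while $d(y_0,z)\ge 5r\ge\sqrt{t-s}$ on the support. Hence $|\mathcal{K}|\le C(t-s)^{-1}V^{-1}_{y_0}(\sqrt{t-s})e^{-d^2/C(t-s)}$. Using $e^{-x}\le C_k x^{-k}$ with large $k$, together with volume comparison, I will bound it by $C\,r^{-2}V_{y_0}^{-1}(r)(\sqrt{t-s}/r)^{m}$ for $m$ large. Integrating over a time interval of length $|t-t_0|$ and over $B_{5^{i+1}r}$-shells, with the shells controlled as before, gives $C\frac{|t-t_0|}{r^2}\mathbf{M}(|f|)(Y_0)$.

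\textbf{Main obstacle.} The delicate point is the second term when $s$ is close to $t_0$. There $t_0-s$ may be much smaller than $t-t_0$, so the mean-value bound on $\dot{\mathcal{K}}$ must be taken with the worst $\tau$. I expect to handle this by splitting $s$ into $t_0-s\ge |t-t_0|$, where the mean value theorem works cleanly, and $t_0-s<|t-t_0|$. In the latter range I would not take the difference and instead estimate the two kernels separately, exactly as in the first term, using $d(y_0,z)\ge5r$ and the Gaussian decay.
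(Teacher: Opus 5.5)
Your proposal is correct and is essentially the paper's argument: the difference is split into the slab of times between $t_0$ and $t$, controlled by the Gaussian bound on $\mathcal{K}$ together with $d(y_0,z)\ge 5r$ (which produces the factor $(t-s)/r^2$), and the kernel difference over the common past, controlled by $\dot{\mathcal{K}}$ and a dyadic reduction to $\mathbf{M}(|f|)(Y_0)$; the paper takes $t<t_0$, integrates $|\partial_\tau\mathcal{K}|$ in $\tau$ and decomposes into time bands times spatial annuli, whereas you take $t_0\le t$ and recycle the decomposition behind (\ref{eqn:HA19_5}) with $\alpha=2$, which goes through verbatim with kernel time $t$ because $t-s\ge t_0-s$. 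Your first case split ($t_0-s\ge r^2$ versus $t_0-s<r^2$, where $t-s\le 17r^2\lesssim d^2(y_0,z)$) already disposes of the ``main obstacle''; the ordering $t<t_0$ should be handled by rerunning the same direct argument with annuli still centered at $Y_0$, not by exchanging $Y$ and $Y_0$, since the support hypothesis only concerns $P_{5r}(Y_0)$.
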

\begin{proof}
    Without loss of generality, we assume $t_0>t$. Then we have
    \begin{align*}
 		\begin{split}
 			&\quad ||\mathcal{T}f|(Y)-|\mathcal{T}f|(Y_0)|\\
            &\leq \bigg|\int_{-1}^{t}\int_M |\CK(y,t_0;z,s)-\CK(y,t;z,s)||f(z,s)|dz ds\bigg|
             +\bigg|\int_t^{t_0}\int_M \CK(y,t_0;z,s)f(z,s)dz ds\bigg|\\
           &\leq \underbrace{\bigg|\int_{-1}^{t}\int_M \int_{t}^{t_0} |\partial_\tau \CK(y,\tau;z,s)||f(z,s)|d\tau dz ds\bigg|}_{III}
             +\underbrace{\bigg|\int_t^{t_0}\int_M | \CK(y,t_0;z,s)||f(z,s)|dz ds\bigg|}_{IV}. 
 		\end{split}
  	\end{align*}
    The part $III$ can be estimated as follows (See Figure~\ref{fig3}). 
    \begin{align}
 		\begin{split}
           \quad III=\int_t^{t_0} \Big\{ \underbrace{\sum_{i=0}^N \int_M  \int_{t-5^{2(i+1)}r^2}^{t-5^{2i}r^2} |\partial_{\tau} \CK||f| ds dz}_{III_{A}}
           +  \underbrace{\int_M \int_{t-r^2}^t |\partial_{\tau} \CK||f| ds dz}_{III_{B}} \Big\} d\tau. 
 		\end{split}
    \label{eqn:HA24_3}    
    \end{align}
    \begin{figure}[htbp]
    \centering
   \resizebox{0.3\textwidth}{!}{

\tikzset{every picture/.style={line width=0.75pt}} 

\begin{tikzpicture}[x=0.75pt,y=0.75pt,yscale=-1,xscale=1]

\draw  [color={rgb, 255:red, 0; green, 0; blue, 0 }  ,draw opacity=1 ][fill={rgb, 255:red, 74; green, 74; blue, 74 }  ,fill opacity=1 ][line width=1.5]  (58.5,41.03) -- (567,41.03) -- (567,549.53) -- (58.5,549.53) -- cycle ;
\draw  [dash pattern={on 0.84pt off 2.51pt}]  (58,189) -- (196,190) -- (567,189) ;
\draw  [fill={rgb, 255:red, 155; green, 155; blue, 155 }  ,fill opacity=1 ] (58.5,41.03) -- (156.5,41.03) -- (156.5,188.03) -- (58.5,188.03) -- cycle ;
\draw  [fill={rgb, 255:red, 155; green, 155; blue, 155 }  ,fill opacity=1 ] (466.97,41.03) -- (567,41.03) -- (567,189) -- (466.97,189) -- cycle ;
\draw  [fill={rgb, 255:red, 255; green, 255; blue, 255 }  ,fill opacity=1 ] (154.75,41.03) -- (466.97,41.03) -- (466.97,353.25) -- (154.75,353.25) -- cycle ;
\draw  [color={rgb, 255:red, 0; green, 0; blue, 0 }  ,draw opacity=1 ][fill={rgb, 255:red, 128; green, 128; blue, 128 }  ,fill opacity=1 ] (58.5,188.03) -- (154.5,188.03) -- (154.5,281.03) -- (58.5,281.03) -- cycle ;
\draw  [fill={rgb, 255:red, 128; green, 128; blue, 128 }  ,fill opacity=1 ] (466.97,189) -- (568,189) -- (568,283) -- (466.97,283) -- cycle ;
\draw   (222,40) -- (388,40) -- (388,206) -- (222,206) -- cycle ;
\draw  [fill={rgb, 255:red, 0; green, 0; blue, 0 }  ,fill opacity=1 ] (307,41) .. controls (307,38.79) and (308.79,37) .. (311,37) .. controls (313.21,37) and (315,38.79) .. (315,41) .. controls (315,43.21) and (313.21,45) .. (311,45) .. controls (308.79,45) and (307,43.21) .. (307,41) -- cycle ;
\draw  [fill={rgb, 255:red, 0; green, 0; blue, 0 }  ,fill opacity=1 ] (304.5,189) .. controls (304.5,186.79) and (306.29,185) .. (308.5,185) .. controls (310.71,185) and (312.5,186.79) .. (312.5,189) .. controls (312.5,191.21) and (310.71,193) .. (308.5,193) .. controls (306.29,193) and (304.5,191.21) .. (304.5,189) -- cycle ;
\draw  [dash pattern={on 0.84pt off 2.51pt}]  (59,189) -- (568,189) ;
\draw  [dash pattern={on 0.84pt off 2.51pt}]  (58,281) -- (568,283) ;
\draw  [dash pattern={on 0.84pt off 2.51pt}]  (314,583) -- (310.01,11) ;
\draw [shift={(310,9)}, rotate = 89.6] [color={rgb, 255:red, 0; green, 0; blue, 0 }  ][line width=0.75]    (10.93,-3.29) .. controls (6.95,-1.4) and (3.31,-0.3) .. (0,0) .. controls (3.31,0.3) and (6.95,1.4) .. (10.93,3.29)   ;
\draw  [dash pattern={on 0.84pt off 2.51pt}]  (2,41) -- (641,42) ;
\draw [shift={(643,42)}, rotate = 180.09] [color={rgb, 255:red, 0; green, 0; blue, 0 }  ][line width=0.75]    (10.93,-3.29) .. controls (6.95,-1.4) and (3.31,-0.3) .. (0,0) .. controls (3.31,0.3) and (6.95,1.4) .. (10.93,3.29)   ;

\draw (230,48.4) node [anchor=north west][inner sep=0.75pt]  [font=\fontsize{2.65em}{3.18em}\selectfont]  {$P_{r}^{-}$};
\draw (183,286.4) node [anchor=north west][inner sep=0.75pt]  [font=\fontsize{2.65em}{3.18em}\selectfont]  {$P_{5r}^{-}$};
\draw (313,48.4) node [anchor=north west][inner sep=0.75pt]  [font=\fontsize{2.65em}{3.18em}\selectfont]  {$Y_{0}$};
\draw (291,421.4) node [anchor=north west][inner sep=0.75pt]  [font=\fontsize{2.65em}{3.18em}\selectfont]  {$III_{A}$};
\draw (330,-7.6) node [anchor=north west][inner sep=0.75pt]  [font=\fontsize{2.65em}{3.18em}\selectfont]  {$t$};
\draw (591,68.4) node [anchor=north west][inner sep=0.75pt]  [font=\fontsize{2.65em}{3.18em}\selectfont]  {$M$};
\draw (331,154.4) node [anchor=north west][inner sep=0.75pt]  [font=\fontsize{2.65em}{3.18em}\selectfont]  {$Y$};
\draw (77,86.4) node [anchor=north west][inner sep=0.75pt]  [font=\fontsize{2.65em}{3.18em}\selectfont]  {$IV$};
\draw (488,89.4) node [anchor=north west][inner sep=0.75pt]  [font=\fontsize{2.65em}{3.18em}\selectfont]  {$IV$};
\draw (493,200.4) node [anchor=north west][inner sep=0.75pt]  [font=\fontsize{2.65em}{3.18em}\selectfont]  {$III_{B}$};
\draw (74,210.4) node [anchor=north west][inner sep=0.75pt]  [font=\fontsize{2.65em}{3.18em}\selectfont]  {$III_{B}$};

\end{tikzpicture}

}
    \caption{}
    \label{fig3}
\end{figure}
    Decompose $M$ into $\{d^2(y,z) \geq \tau-s\} \cup \{d^2(y,z)<\tau-s\}$. 
    If $s \in [t-5^{2(i+1)}r^2, t-5^{2i}r^2]$, then 
    $\tau-s \geq t-s \geq 5^{2i}r^2$.  Thus, by containment relationship, we have
     \begin{align}
        \begin{split}
             \quad \int_{d^2(y,z)\geq (\tau-s)} \int_{t-5^{2(i+1)}r^2}^{t-5^{2i}r^2} |\partial_{\tau} \CK||f|
             \leq \int_{t-5^{2(i+1)}r^2}^{t-5^{2i}r^2} \sum_{j=i}^N 
             \int_{5^jr\leq d(y,z)\leq 5^{j+1}r} |\partial_{\tau} \CK||f|.
        \end{split}
    \label{eqn:HA24_1}    
    \end{align}
    For $j \in \{i, i+1, \cdots, N\}$,  the volume comparison implies that
    \begin{align}
    \begin{split}
       &\quad V_y^{-1}(\sqrt{\tau-s})(\tau-s)^{-2}\\
       &\leq C \cdot V_y^{-1} (5^{j+1}r) \cdot \left(\frac{5^{j+1}r}{\sqrt{\tau-s}} \right)^{n}
       \cdot (\tau-s)^{-2}\\
       &=C \cdot V_y^{-1} (5^{j+1}r) \cdot (5^{j+1}r)^{-2} \cdot (5^{j+1}r)^{n+2} \cdot (\sqrt{\tau-s})^{-n-4}\\
       &\leq C \cdot V_y^{-1} (5^{j+1}r) \cdot (5^{j+1}r)^{-2} \cdot (5^{j+1}r)^{n+2} \cdot (5^i r)^{-n-4}\\
       &\leq C \cdot \frac{1}{|P_{5^{j+2}r}|} \cdot (5^i r)^{-2} \cdot (5^{j-i+1})^{n+2}. 
    \end{split}   
    \label{eqn:HA23_3}
    \end{align}
    Thus, we have
     \begin{equation*}
        \begin{split}
             &\quad \int_{t-5^{2(i+1)}r^2}^{t-5^{2i}r^2}
             \int_{5^jr\leq d(y,z)\leq 5^{j+1}r} |\partial_{\tau} \CK||f|dsdz\\
             &\leq \int_{t-5^{2(i+1)}r^2}^{t-5^{2i}r^2} \int_{5^jr\leq d(y,z)\leq 5^{j+1}r}V^{-1}_y(\sqrt{\tau-s})(\tau-s)^{-2}e^{-\frac{d^2(y,z)}{C_1(\tau-s)}}|f(z,s)| dz ds\\
             &\leq  C \cdot (5^i r)^{-2} \cdot (5^{j-i+1})^{n+2} \cdot \fint_{P_{5^{j+2}r}}   
                e^{-\frac{d^2(y,z)}{C_1(\tau-s)}}|f(z,s)| dz ds\\
             &\leq C(5^{i}r)^{-2}(5^{j-i})^{n+4}e^{-\frac{5^{2(j-i)}}{C}}\fint_{P_{5^{j+2}r}(Y_0)}|f(z,s)| dz ds.
        \end{split}
    \end{equation*}
    Summing them up by $j$ and using (\ref{eqn:HA24_1}), we obtain
      \begin{align}
      \label{eq315}
        \begin{split}
         &\quad      \int_{d^2(y,z)\geq (\tau-s)} \int_{t-5^{2(i+1)}r^2}^{t-5^{2i}r^2} 
         |\partial_{\tau} \CK||f|dsdz\\
         &\leq C(5^{i}r)^{-2} \left\{\sum_{j=i}^N  (5^{j-i})^{n+4}e^{-\frac{5^{2(j-i)}}{C}}   \right\} \cdot \mathbf{M}(|f|)(Y_0)
          \leq C5^{-2i} r^{-2}\mathbf{M}(|f|)(Y_0).
        \end{split}
    \end{align}
    On the other hand, applying (\ref{eqn:HA23_3}) for $j=i$, we have 
    \begin{equation}
    \label{eqn:SK13_3}
        \begin{split}
            &\quad \int_{d^2(x,y)\leq (\tau-s)} \int_{t-5^{2(i+1)}r^2}^{t-5^{2i}r^2} |\partial_{\tau} \CK||f|dsdz\\
            &\leq C (5^i r)^{-2} \fint_{P_{5^{i+2}r}}|f|(z,s) dzds
             \leq C 5^{-2i} r^{-2}\mathbf{M}(|f|)(Y_0).  
        \end{split}
    \end{equation}
    Combining (\ref{eq315}) and (\ref{eqn:SK13_3}), we obtain 
    \begin{align*}
        \int_{M} \int_{t-5^{2(i+1)}r^2}^{t-5^{2i}r^2} |\partial_{\tau} \CK||f|dsdz
        \leq C 5^{-2i}r^{-2}\mathbf{M}(|f|)(Y_0),    
    \end{align*}
    whose summation over $i$ yields that
    \begin{align}
       III_{A}
       =\int_{M} \sum_{i=1}^N \int_{t-5^{2(i+1)}r^2}^{t-5^{2i}r^2} |\partial_{\tau} \CK||f|dsdz
        \leq C r^{-2} \mathbf{M}(|f|)(Y_0).  
    \label{eqn:SK13_4}    
    \end{align}

    If $s \in [t-r^2, t]$ and $(z,s) \in \mathrm{supp}(f)$, we know $1 \geq d(y,z)\geq 5r$ and $\tau-s \geq r^2$ by the assumption of $f$.
    Using the deduction in (\ref{eqn:HA23_3}) again, we have
    \begin{align*}
        \begin{split}
             &\quad \int_{t-r^2}^{t}\int_M |\partial_{\tau} \CK||f| dzds =\int_{t-r^2}^{t}\sum_{j=0}^{N}\int_{B_{5^{j+1}r}(y)\setminus B_{5^jr}(y)} |\partial_{\tau} \CK||f| dzds\\
             &\leq C \int_{t-r^2}^{t}\sum_{j=0}^{N}\int_{B_{5^{j+1}r}(y)\setminus B_{5^jr}(y)}  V^{-1}_y(\sqrt{\tau-s})(\tau-s)^{-2}e^{-\frac{d^2(x,y)}{C_1(\tau-s)}}|f| dzds\\
             &\leq C r^{-2} \cdot \sum_{j=0}^N \int_{t-r^2}^{t} \int_{B_{5^{j+1}r}(y)\setminus B_{5^jr}(y)} \frac{1}{|P_{5^{j+2}r}(Y_0)|} \cdot (5^{j+1})^{n+2} e^{-\frac{5^{2j}}{C}} |f| dzds.
        \end{split}
    \end{align*}
    As $\{B_{5^{j+1}r}(y)\setminus B_{5^jr}(y)\} \times [t-r^2, t] \subset P_{5^{j+2}r}(Y_0)$, we have
    \begin{align}
        \begin{split}
             III_{B}
             &=\int_{t-r^2}^{t}\int_M |\partial_{\tau} \CK||f| dzds 
             \leq C r^{-2} \cdot \sum_{j=0}^N  \fint_{P_{5^{j+2}r}(Y_0)} (5^{j+1})^{n+2} e^{-\frac{5^{2j}}{C}} |f| dzds\\
             &\leq C r^{-2} \cdot \left\{ \sum_{j=0}^N (5^{j+1})^{n+2} e^{-\frac{5^{2j}}{C}} \right\} \cdot \mathbf{M}(|f|)(Y_0)
             \leq C r^{-2}\mathbf{M}(|f|)(Y_0). 
        \end{split}
    \label{eqn:HA24_2}    
    \end{align}
    Plugging (\ref{eqn:SK13_4}) and (\ref{eqn:HA24_2}) into (\ref{eqn:HA24_3}), we obtain 
    \begin{align}
        \begin{split}
             III= \int_{t}^{t_0} \int_{t-r^2}^{t}\int_M |\partial_{\tau} \CK||f| dzdsd\tau 
              \leq C  \cdot \mathbf{M}(|f|)(Y_0) \cdot \frac{|t_0-t|}{r^2}.
        \end{split}
    \label{eqn:HA23_1}    
    \end{align}

Then we estimate $IV$.  By the support set assumption, we have 
\begin{align}
\label{eqn:SK18_1}
\begin{split}
\bigg|\int_t^{t_0}\int_M |\CK||f|(z,s)dzds\bigg|
   =\sum_{i=1}^N  \int_t^{t_0}\int_{B_y(5^{i+1}r)\setminus B_y(5^ir)} |\CK||f|dzds, 
\end{split}
\end{align}
where $\mathcal{K}=\CK(y,t_0;z,s)$.   
In the annulus $\{B_y(5^{i+1}r)\setminus B_y(5^ir)\} \times [t, t_0]$, by heat kernel estimate, volume comparison and the fact $s \in [t, t_0]$, we have
\begin{align}
\begin{split}
  |\CK|(y,t_0;z,s) &\leq C V^{-1}_y(\sqrt{t_0-s})(t_0-s)^{-1}e^{-\frac{d^2(x,y)}{C_1(t_0-s)}}\\
 &\leq 
 C \cdot V^{-1}_y(5^{i+2}r)(5^{i+2}r)^{-2}\bigg(\frac{5^{i+2}r}{\sqrt{t_0-s}}\bigg)^{n+2}
 e^{-\frac{5^{2i}r^2}{C_1(t_0-s)}}\\
 &\leq C \cdot \frac{E_i}{|P_{5^{i+2}r}(Y_0)|}, 
\end{split} 
\label{eqn:SK18_2}
\end{align}
where  
\begin{equation*}
\begin{split}
    E_i:&=\bigg(\frac{5^{i+2}r}{\sqrt{t_0-s}}\bigg)^{n+2}e^{-\frac{5^{2i}r^2}{C_1(t_0-s)}}
    =5^{2n+4} \left(\frac{5^i r}{\sqrt{t_0-s}} \right)^{n+2} \cdot e^{-\frac{1}{C_1} \cdot (\frac{5^i r}{\sqrt{t_0-s}})^2}\\
    &\leq C \cdot \left( \frac{|t_0-s|}{5^{2i} r^2} \right)
    \cdot \left(\frac{5^i r}{\sqrt{t_0-s}} \right)^{n+4} \cdot e^{-\frac{1}{C_1} \cdot (\frac{5^i r}{\sqrt{t_0-s}})^2}. 
\end{split}
\end{equation*}
Note that $\rho^{n+4} \cdot e^{-\frac{\rho^2}{C_1}}$ is uniformly bounded.
We have
\begin{align}
    E_i \leq  C \cdot 5^{-2i} 
    \cdot  \frac{|t_0-s|}{r^2} \leq  C \cdot 5^{-2i} \cdot  \frac{|t_0-t|}{r^2}.  
\label{eqn:HA24_4}    
\end{align}
Plugging (\ref{eqn:SK18_2}) and (\ref{eqn:HA24_4}) into (\ref{eqn:SK18_1}), we have
\begin{align}
\begin{split}
 IV &\leq \bigg|\int_t^{t_0}\int_M |\CK(y,t_0;z,s)||f(z,s)|dz ds\bigg|
     \leq  C \sum_{i=1}^N E_i \cdot \fint_{P_{5^{i+2}r}(Y_0)}|f| dz ds\\
    &\leq  C \cdot \left( \sum_{i=1}^N E_i \right) \cdot  \mathbf{M}(|f|)(Y_0)
     \leq  C  \cdot \mathbf{M}(|f|)(Y_0) \cdot \frac{|t_0-t|}{r^2}.
\end{split} 
\label{eqn:HA23_2}
\end{align}
Combining (\ref{eqn:HA23_1}) and (\ref{eqn:HA23_2}), we obtain (\ref{eqn:HA20_1}). 
\end{proof}

\begin{lemma}
\label{lma:HA20_5}
There exists a constant $C=C(n,\Lambda_0, C_1)$ with the following property. 

Suppose $Y_0=(y_0,t_0)$, $Y=(y, t)\in P_{4r}(Y_0)$.
Suppose $\mathrm{supp}f\subset \mathcal{Q}' \setminus P_{5r}(Y_0)$ and $f\in L^2(\mathcal{Q}')$.   
Then we have
 \begin{align}
      \mathbf{M}(|f|)(Y) \leq C \mathbf{M}(|f|)(Y_0). 
 \label{eqn:HA20_6}    
 \end{align}
 \end{lemma}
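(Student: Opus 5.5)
The plan is to compare each averaging cylinder centered at $Y$ with a larger cylinder centered at $Y_0$, splitting the supremum in (\ref{eqn:HA18_3}) according to the radius $\rho\in(0,\frac12)$.

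\emph{Small radii, $\rho\le r$.} Since $Y\in P_{4r}(Y_0)$, we have $P_\rho(Y)\subset P_{5r}(Y_0)$. This uses $d(y,y_0)<4r$ together with the comparability of distances in (\ref{eqn:HA18_2}); if the constants there spoil the factor $5$, I would rescale $r$ by a fixed factor, which only changes constants. Because $f$ vanishes on $P_{5r}(Y_0)$, the average $\fint_{P_\rho(Y)}|f|$ is zero, so these radii contribute nothing.

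\emph{Large radii, $r<\rho<\frac12$.} Here $P_\rho(Y)\subset P_{5\rho}(Y_0)$: the spatial part follows from the triangle inequality, and the time part from $|t-t_0|\le 16r^2<16\rho^2\le 25\rho^2$. By the Bishop--Gromov estimate (\ref{eqn:HA24_9}), whose doubling constant depends only on $n,\Lambda_0$, and by (\ref{eqn:HA18_1}), we get
\[
|P_{5\rho}(Y_0)|\le C\,V_{y_0}(5\rho)\rho^2\le C\,V_{y}(\rho)\rho^2\le C\,|P_\rho(Y)|,
\]
using $d(y,y_0)\le C\rho$. One caveat is that when $P_\rho(Y)$ is truncated at time $0$, its time length is still comparable to $\rho^2$ because $t\le 0$ and the window extends $\rho^2$ into the past.

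Combining the two regimes gives $\fint_{P_\rho(Y)}|f|\le C\fint_{P_{5\rho}(Y_0)}|f|$ for every $\rho$. \textbf{The main obstacle} is that $5\rho$ may exceed the cutoff $\frac12$ in the definition of $\mathbf{M}$. I would handle this using $\mathrm{supp} f\subset\mathcal{Q}'$ and $Y_0\in B_{\frac14}(q,0)\times[-\frac1{64},0]$. For $\rho\ge\frac1{10}$, the cylinder $P_{\frac49}(Y_0)$, say, already contains $\mathcal{Q}'$ (the distance bounds again use (\ref{eqn:HA18_2}) with small-time distortion, adjusting constants if needed). Then
\[
\int_{P_\rho(Y)}|f|\le\int_{P_{\frac49}(Y_0)}|f|,
\]
while $|P_\rho(Y)|\ge c\,|P_{\frac49}(Y_0)|$ by volume comparison at unit scale. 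This bounds the average by $C\,\mathbf{M}(|f|)(Y_0)$, and taking the supremum over $\rho$ yields (\ref{eqn:HA20_6}).

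The hypothesis $f\in L^2$ only ensures that all these averages are finite.
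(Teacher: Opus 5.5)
Your proposal is correct and follows the paper's argument, with the same three regimes: for $\rho\le r$ the average over $P_\rho(Y)\subset P_{5r}(Y_0)$ vanishes; for $r<\rho<\frac1{10}$ you compare with $P_{5\rho}(Y_0)$ via volume doubling; and for $\rho\ge\frac1{10}$ you compare with a fixed cylinder about $Y_0$ that contains $\mathcal{Q}'$. The paper writes the proof out only for $Y=(y_0,t)$, which is the case used in Proposition~\ref{prn:SK27_1}, whereas you also handle the spatial displacement that the statement claims. One small correction: in the time containment you should write $|s-t_0|\le\rho^2+16r^2<25\rho^2$, not only $|t-t_0|\le 16r^2<25\rho^2$.
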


 \begin{proof}
By the triangle inequality, we know $P_{r}(y_0,t)\subset P_{5r}(Y_0)$,  where $f \equiv 0$ by the choice of $f$. Thus, we have 
\begin{align}
\begin{split}
        \mathbf{M}(|f|)(y_0,t)&=\sup_{0<s<\frac12}\frac{1}{|P_s(y_0,t)|}\int_{P_s(y_0,t)}|f| dzds\\
        &=\sup_{r<s<\frac12}\frac{1}{|P_s(y_0,t)|}\int_{P_s(y_0,t)}|f| dzds.
\end{split}        
\label{eqn:HA20_5}        
\end{align}
However, if $s \in (r, \frac12)$, we have
\begin{align*}
  P_s(y_0,t) \subset P_{5s}(y_0, t_0).  
\end{align*}
Volume comparison then implies that
\begin{align*}
    |P_s(y_0, t)|  \sim |B_s(y_0, t)| s^2 \geq \frac{1}{C} |B_{5s}(y_0,t)| \cdot (5s)^2
    \geq \frac{1}{C} |P_{5s}(y_0, t_0)|. 
\end{align*}
If $s \in (r, \frac{1}{10})$, then we have 
\begin{align}
\begin{split}
        &\quad \frac{1}{|P_s(y_0,t)|}\int_{P_s(y_0,t)}|f| dzds\\
        &\leq C \frac{1}{|P_{5s}(y_0,t_0)|}\int_{P_{5s}(y_0,t_0)}|f| dzds
         \leq C  \sup_{5 r<\rho <\frac12}\frac{1}{|P_{\rho}(y_0,t_0)|}\int_{P_{\rho}(y_0,t_0)}|f| dzds\\
        &\leq C \mathbf{M}(|f|)(y_0,t_0).
\end{split}  
\label{eqn:HA20_3}
\end{align}
If $s \in [\frac{1}{10}, \frac12]$, then $P_{\frac12}(y_0,t_0) \supset \mathcal{Q}' \supset \mathrm{supp} f$. Thus, we have
\begin{align}
\begin{split}
       &\quad \frac{1}{|P_s(y_0,t)|}\int_{P_s(y_0,t)}|f| dzds\\
       &\leq \frac{1}{|P_{1/10}(y_0,t)|}\int_{\mathcal{Q}'}|f| dzds\\
       &\leq \frac{C}{|P_{\frac12}(y_0,t_0)|}\int_{P_{\frac12}(y_0,t_0)}|f| dzds\\
       & \leq C \mathbf{M}(|f|)(y_0,t_0).
\end{split}    
\label{eqn:HA20_4}
\end{align}
Combining (\ref{eqn:HA20_3}) and (\ref{eqn:HA20_4}), we obtain 
\begin{align*}
   \sup_{r<s<\frac12}\frac{1}{|P_s(y_0,t)|}\int_{P_s(y_0,t)}|f| dzds 
   \leq C \mathbf{M}(|f|)(y_0,t_0).
\end{align*}
Plugging this into (\ref{eqn:HA20_5}), we obtain (\ref{eqn:HA20_6}). 
\end{proof}

\begin{proposition}
\label{prn:SK27_1}
There exists a constant $C=C(n,\Lambda_0, C_1)$ with the following property. 

Suppose $Y_0=(y_0,t_0)$. 
Suppose $\mathrm{supp}f\subset \mathcal{Q}' \setminus P_{5r}(Y_0)$ and $f\in L^2(\mathcal{Q}')$.   
Then we have
 \begin{align}
     ||\mathcal{T}f|(Y)-|\mathcal{T}f|(Y_0)| \leq C \mathbf{M}(|f|)(Y_0)
 \label{eqn:SJ26_2}    
 \end{align}
 for every $Y=(y, t)\in P_{4r}(Y_0)$.
 \end{proposition}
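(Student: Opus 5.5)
The plan is to connect $Y_0=(y_0,t_0)$ to $Y=(y,t)$ through the intermediate point $Y_1:=(y_0,t)$. We move first in time and then in space, and apply the two preceding one-directional lemmas. By the triangle inequality,
\begin{align*}
\big||\mathcal{T}f|(Y)-|\mathcal{T}f|(Y_0)\big|
\leq \big||\mathcal{T}f|(Y_0)-|\mathcal{T}f|(Y_1)\big| + \big||\mathcal{T}f|(Y_1)-|\mathcal{T}f|(Y)\big|.
\end{align*}
Since $Y\in P_{4r}(Y_0)$, we have $|t-t_0|\leq 16r^2$ and $t\in[-1,0]$, so $Y_1\in P_{4r}(Y_0)$. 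Lemma~\ref{lma:HA20_2} applies to the pair $(Y_0,Y_1)$ and bounds the first term by $C\,\mathbf{M}(|f|)(Y_0)\cdot |t-t_0|/r^2\leq 16C\,\mathbf{M}(|f|)(Y_0)$.

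The second term is the purely spatial one, at the common time $t$. Here I apply the spatial Hölder lemma (the one preceding Lemma~\ref{lma:HA20_2}) with base point $Y_1$ in place of $Y_0$. Its hypotheses must hold relative to $Y_1$. First, $Y=(y,t)\in P_{4r}(Y_1)$, because $d(y,y_0)<4r$ (up to the harmless metric equivalence (\ref{eqn:HA18_2}) between $g(t)$ and $g(0)$). Second, $\mathrm{supp}f\cap P_{5r}(Y_1)=\emptyset$. This second condition does \emph{not} follow directly from $\mathrm{supp} f\cap P_{5r}(Y_0)=\emptyset$, because $P_{5r}(Y_1)$ sticks out of $P_{5r}(Y_0)$ in time.

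I would handle this by rerunning the lemmas with slightly shrunk radii. The proofs only ever use that $f$ vanishes on a parabolic ball of radius comparable to $r$ around the base point, and all constants change by a factor depending only on the ratio. A cleaner option is to note that $P_{r}(Y_1)\subset P_{5r}(Y_0)$ (as in Lemma~\ref{lma:HA20_5}), and that $Y\in P_{4r}(Y_1)$ lies within a fixed multiple of $r$ of $Y_1$. Applying the spatial lemma at scale $r/5$ (with its chain of geodesic subdivisions if needed), we get
\begin{align*}
\big||\mathcal{T}f|(Y_1)-|\mathcal{T}f|(Y)\big|\leq C\,\mathbf{M}(|f|)(Y_1)\Big(\tfrac{d(y,y_0)}{r/5}\Big)^{\alpha}\leq C\,\mathbf{M}(|f|)(Y_1).
\end{align*}
Getting the geometric inclusions right so that one of the established lemmas genuinely applies is the main obstacle. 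I expect it to be bookkeeping rather than new analysis, since the annular decompositions in those proofs tolerate any fixed change of the ratio $5$.

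Finally, I convert $\mathbf{M}(|f|)(Y_1)$ back to $\mathbf{M}(|f|)(Y_0)$ using Lemma~\ref{lma:HA20_5}. Its hypotheses are exactly those of the proposition, with $Y_1\in P_{4r}(Y_0)$. This gives $\mathbf{M}(|f|)(Y_1)\leq C\,\mathbf{M}(|f|)(Y_0)$. Summing the two bounds yields (\ref{eqn:SJ26_2}) with $C=C(n,\Lambda_0,C_1)$. The kernel bounds (\ref{eqn:SJ06_1}) and volume comparison (Theorem~\ref{thm1.1}) are the only inputs, and the $L^2$ assumption on $f$ just guarantees that all integrals are finite.
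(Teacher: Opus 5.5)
Your proposal is correct and follows the paper's proof: you split through $(y_0,t)$, use Lemma~\ref{lma:HA20_2} for the time step and the spatial H\"older lemma at base point $(y_0,t)$ for the space step, and then convert $\mathbf{M}(|f|)(y_0,t)$ to $\mathbf{M}(|f|)(Y_0)$ via Lemma~\ref{lma:HA20_5}. You rightly flag that the support hypothesis at $(y_0,t)$ is not literally implied, a point the paper passes over. Your rescaling fix works once the chain uses intermediate base points $(z_k,t)$ along the geodesic, with $d(z_k,y_0)<4r$ and consecutive distances below $4r/5$. Then $P_r(z_k,t)\subset P_{5r}(Y_0)$, and each $\mathbf{M}(|f|)(z_k,t)$ is controlled by Lemma~\ref{lma:HA20_5}. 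The result is a sum of boundedly many such terms, rather than the single $\mathbf{M}(|f|)(Y_1)$ you wrote.
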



\begin{proof}

 For general $Y=(y,t)$, we have 
    \begin{equation}
    \label{eqn:SK18_5}
    \begin{split}
         ||\mathcal{T}f|(Y)-|\mathcal{T}f|(Y_0)|
        &\leq ||\mathcal{T}f|(y,t)-|\mathcal{T}f|(y_0,t)|+ ||\mathcal{T}f|(y_0,t)-|\mathcal{T}f|(y_0,t_0)|\\
        &\leq C \left\{\mathbf{M}(|f|)(y_0,t)+\mathbf{M}(|f|)(Y_0) \right\}.
    \end{split}
    \end{equation}
   Plugging (\ref{eqn:HA20_6}) into the above inequality, we arrive at (\ref{eqn:SJ26_2}). 
\end{proof}

\begin{proposition}
\label{prn:SK27_2}
 The same conditions as in Proposition~\ref{prn:SK27_1}. 
 Then we have 
 	\begin{align}
    \label{eqn:SK18_6}
 		\mathbf{M}(|\mathcal{T}f|^2)(Y)
        \leq C \left\{ \mathbf{M}(|\mathcal{T}f|^2)(Y_0) + \mathbf{M}(|f|^2)(Y_0) \right\} 
 	\end{align}
    for every $Y=(y,t) \in P_{3r}(Y_0)$. 
\end{proposition}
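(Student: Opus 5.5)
We compare $\mathbf{M}(|\mathcal{T}f|^2)(Y)$ with $\mathbf{M}(|\mathcal{T}f|^2)(Y_0)$ by splitting the supremum in the definition (\ref{eqn:HA18_3}) according to the radius $\rho$ of the parabolic ball $P_\rho(Y)$. The threshold is $\rho = r$.

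\emph{Large radii.} Suppose $\rho \ge r$ and $Y \in P_{3r}(Y_0)$. Then $P_\rho(Y) \subset P_{4\rho}(Y_0) \subset P_{5\rho}(Y_0)$. By the volume comparison (\ref{eqn:HA24_9}) together with (\ref{eqn:HA18_1}), we have $|P_\rho(Y)| \ge C^{-1}|P_{5\rho}(Y_0)|$. Hence
\[
\fint_{P_\rho(Y)}|\mathcal{T}f|^2 \le C\fint_{P_{5\rho}(Y_0)}|\mathcal{T}f|^2
\]
whenever $5\rho<\frac12$, and the right-hand side is at most $C\,\mathbf{M}(|\mathcal{T}f|^2)(Y_0)$. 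The remaining range $\rho\in[\frac1{10},\frac12)$ is handled as in (\ref{eqn:HA20_4}). Each $P_\rho(Y)$ there has volume comparable to $|P_{1/2}(Y_0)|$, and both sets lie in a fixed ball $P_1(Y_0)$. I expect to bound this case either by the same averaging comparison, accepting a constant from volume comparison, or by $\mathbf{M}(|\mathcal{T}f|^2)(Y_0)$ after enlarging the radius set slightly. This is a minor technical point.

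\emph{Small radii.} Suppose $\rho<r$. Then $P_\rho(Y)\subset P_{4r}(Y_0)$, so every point $Z\in P_\rho(Y)$ satisfies the hypothesis of Proposition~\ref{prn:SK27_1}. That proposition gives $|\mathcal{T}f|(Z)\le |\mathcal{T}f|(Y_0)+C\mathbf{M}(|f|)(Y_0)$, and therefore
\[
\fint_{P_\rho(Y)}|\mathcal{T}f|^2 \le 2|\mathcal{T}f|^2(Y_0)+C\,\mathbf{M}(|f|)(Y_0)^2 .
\]
Two further bounds finish this case. First, by Cauchy--Schwarz, $\mathbf{M}(|f|)^2\le \mathbf{M}(|f|^2)$. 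Second, we need $|\mathcal{T}f|^2(Y_0)\lesssim \mathbf{M}(|\mathcal{T}f|^2)(Y_0)+\mathbf{M}(|f|^2)(Y_0)$. To get this, apply Proposition~\ref{prn:SK27_1} in the reverse direction, $|\mathcal{T}f|(Y_0)\le |\mathcal{T}f|(Z)+C\mathbf{M}(|f|)(Y_0)$ for all $Z\in P_{r}(Y_0)$. Squaring and averaging over $P_r(Y_0)$ gives
\[
|\mathcal{T}f|^2(Y_0)\le 2\fint_{P_r(Y_0)}|\mathcal{T}f|^2+C\mathbf{M}(|f|)^2(Y_0)\le 2\mathbf{M}(|\mathcal{T}f|^2)(Y_0)+C\mathbf{M}(|f|^2)(Y_0).
\]
Here we need $r<\frac12$, which holds in the relevant regime; if $r$ is larger, the large-radius case already covers everything.

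Combining the two ranges of $\rho$ gives (\ref{eqn:SK18_6}). The only real input is Proposition~\ref{prn:SK27_1}, which carries the kernel estimates; everything else is volume doubling. The main obstacle is bookkeeping at the radius cap $\frac12$ and at time truncations near $t=0$. There $P_\rho(Y)$ is cut by $[-1,0]$, and I would check that the volume comparisons (\ref{eqn:HA18_1}) and (\ref{eqn:HA24_9}) still give comparable measures for the truncated cylinders. This holds because the truncation removes at most half of each cylinder when the center lies in $[-\frac1{64},0]$.
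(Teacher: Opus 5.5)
\textbf{There is a genuine gap in the top range of radii.} Your small-radius case is sound. Averaging the reverse inequality from Proposition~\ref{prn:SK27_1} over a small cylinder around $Y_0$ is even a little cleaner than the paper's appeal to $|\mathcal{T}f|(Y_0)\le \mathbf{M}(|\mathcal{T}f|)(Y_0)$. Your intermediate range is the paper's containment-plus-doubling step.

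The problem is the range of radii near the cap $\frac12$, which you call a minor technical point. For $Y\neq Y_0$ and $\rho$ close to $\frac12$, the cylinder $P_\rho(Y)$ is not contained in $P_{\rho'}(Y_0)$ for any $\rho'<\frac12$. So no volume-comparison constant lets you bound $\fint_{P_\rho(Y)}|\mathcal{T}f|^2$ by $\mathbf{M}(|\mathcal{T}f|^2)(Y_0)$: the values of $\mathcal{T}f$ on the part of $P_\rho(Y)$ outside $P_{1/2}(Y_0)$ are invisible to $\mathbf{M}(|\mathcal{T}f|^2)(Y_0)$. The trick in \eqref{eqn:HA20_4} does not transfer, because it uses that the integrand $f$ is supported in $\mathcal{Q}'\subset P_{1/2}(Y_0)$, and $\mathcal{T}f$ has no such support property. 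Enlarging the radius cap is not allowed either. It changes $\mathbf{M}$, and in Lemma~\ref{lm2.5} one only controls $\mathbf{M}(|\mathcal{T}f_2|^2)(X_2)$ with the fixed cap $\frac12$.

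So your claim that Proposition~\ref{prn:SK27_1} plus doubling is the only input is false; this range needs a global property of $\mathcal{T}$. The paper uses the $L^2$-bound \eqref{eqn:HA19_1}. For $\rho\in[\frac18,\frac12)$, which is exactly where $P_\rho(Y)\subset P_{4\rho}(Y_0)$ stops being useful, it gets
\[
\fint_{P_\rho(Y)}|\mathcal{T}f|^2\le \frac{\|\mathcal{T}f\|_{L^2(\mathcal{M})}^2}{|P_\rho(Y)|}\le \frac{C_2^2}{|P_\rho(Y)|}\int_{\mathcal{Q}'}|f|^2\le C\,\mathbf{M}(|f|^2)(Y_0).
\]
The last step holds because $\mathcal{Q}'\subset P_{\rho'}(Y_0)$ for some $\rho'<\frac12$ and $|P_{\rho'}(Y_0)|\le C|P_\rho(Y)|$. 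Alternatively, you could use the Gaussian factor in \eqref{eqn:SJ06_1}. At points of $P_\rho(Y)\setminus P_{1/2}(Y_0)$, either the time is below $-\frac1{64}$, so $\mathcal{T}f=0$, or the spatial distance to $\mathrm{supp}\, f$ is roughly at least $\frac18$. In the latter case $|\mathcal{T}f|\le C V_{y_0}^{-1}(1)\|f\|_{L^1(\mathcal{Q}')}\le C\,\mathbf{M}(|f|)(Y_0)$.

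One smaller slip: if $r\ge\frac12$, every admissible radius falls into your small-radius case, not the large one. You should then average the reverse inequality over $P_{\rho_0}(Y_0)$ with $\rho_0<\min\{r,\frac12\}$ rather than over $P_r(Y_0)$.
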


 \begin{proof}
 Denote 
 \begin{align}
    a^2:= \mathbf{M}(|\mathcal{T}f|^2)(Y_0), \quad  b^2 :=\mathbf{M}(|f|^2)(Y_0). 
 \label{eqn:SJ24_6}    
 \end{align} 
 It suffices to show
 \begin{align}
    \label{eq2.4}
 		\mathbf{M}(|\mathcal{T}f|^2)(Y)
        \leq C \left\{ a^2 + b^2 \right\}.  
 	\end{align}

    In light of Proposition~\ref{prn:SK27_1}, we can apply the triangle inequality to obtain 
 	\begin{equation*}
 		\begin{split}
 			|\mathcal{T}f|(Y)&\leq |\mathcal{T}f|(Y_0)+||\mathcal{T}f|(Y)-|\mathcal{T}f|(Y_0)|\\
 			&\leq \mathbf{M}(|\mathcal{T}f|)(Y_0)+||\mathcal{T}f|(Y)-|\mathcal{T}f|(Y_0)|\\
            &\leq C \left\{ \mathbf{M}(|\mathcal{T}f|)(Y_0) + \mathbf{M}(|f|)(Y_0)\right\}. 
 		\end{split}
 	\end{equation*}
 	The H\"older inequality implies that
 	\begin{equation*}
 		\begin{split}
 			&\mathbf{M}(|\mathcal{T}f|)(Y_0)\leq \sqrt{\mathbf{M}(|\mathcal{T}f|^2)(Y_0)}= a, \\
            &\mathbf{M}(|f|)(Y_0)\leq \sqrt{\mathbf{M}(|f|^2)(Y_0)}=b.
 		\end{split}
 	\end{equation*}
    Combining the previous two steps, we have
 	\begin{equation}
 		\begin{split}
 			|\mathcal{T}f|(Y)\leq C(a+b), \quad \forall \; Y=(y,t)\in P_{5r}(Y_0).
 		\end{split}
    \label{eqn:SJ27_2}    
 	\end{equation}

Now we prove (\ref{eq2.4}).  Fix $Y=(y,t) \in P_{3r}(Y_0)$. 

If $s \in (0, r)$, then $P_s(Y) \subset P_{4r}(Y_0)$ by the triangle inequality.
Thus, it follows from (\ref{eqn:SJ27_2}) that
\begin{align}
    \fint_{P_s(Y)}|\mathcal{T}f|^2 
    \leq  
    \sup_{P_{s}(Y)} |\mathcal{T}f|^2  
    \leq 
    \sup_{P_{5r}(Y_0)} |\mathcal{T}f|^2  
    \leq C(a+b)^2 \leq C(a^2+b^2).
\label{eq2.24}
\end{align}    

If $s\in (r, \frac18)$, then $r \leq  s$.  Note that $P_s(Y) \subset P_{4s}(Y_0)$. 
Consequently, we have
    \begin{equation}
    \label{eq2.25}
    \fint_{P_s(Y)}|\mathcal{T}f|^2\leq C\fint_{P_{4s}(Y_0)}|\mathcal{T}f|^2\leq C \mathbf{M}(|\mathcal{T}f|^2)(Y_0) \leq Ca^2 
    \leq C(a^2+b^2). 
\end{equation}

If $s \in [\frac18, \frac12]$, then $\mathcal{Q}' \subset  P_{\frac12}(Y_0)$ and $P_{\frac{1}{16}}(Y_0) \subset P_s(Y)$ 
Thus, by assumption (\ref{eqn:HA19_1}) and volume comparison,  we have 
\begin{align}
\begin{split}
    &\quad \fint_{P_s(Y)} |\mathcal{T}f|^2\\
    &\leq 
    \frac{1}{|P_s(Y)|} \int_{M \times [-1, 0]} |\mathcal{T}f|^2  \leq \frac{C_2}{|P_s(Y)|} \int_{\mathcal{Q}'} |f|^2
    \leq \frac{C_2}{|P_s(Y)|} \int_{P_{\frac12}(Y_0)} |f|^2\\
    &=C_2 \cdot \frac{|P_{\frac12}(Y_0)|}{|P_{\frac{1}{16}}(Y)|} \cdot \frac{1}{|P_{\frac12}(Y_0)|} \int_{P_{\frac12}(Y_0)} |f|^2 
    \leq C \mathbf{M}(|f|^2)(Y_0)=C b^2
    \leq C (a^2+b^2). 
\end{split}    
\label{eqn:HA19_2}
\end{align}

Therefore,~\eqref{eq2.4} follows from the combination of~\eqref{eq2.24},~\eqref{eq2.25} and (\ref{eqn:HA19_2}). 
 \end{proof}

\section{The Calder\'on-Zygmund inequalities}
\label{sec6}

Using the stability of maximal functions, we are able to show an iteration relationship, which is a key step for the Calder\'on-Zygmund inequality.  

\begin{proposition}
\label{prn:SK27_3}
There exist a constant $N=N(n, \Lambda_0, C_1, C_2)$ and a small constant $\varepsilon_0(n, \Lambda_0)$ with the following property. 

Suppose $\mathrm{supp}f\subset \mathcal{Q}', f\in L^2(\mathcal{Q}')$ and $\varepsilon \in (0,\varepsilon_0)$. 
For each $t > 1$, we have
	\begin{equation}
	\begin{split}
	&\quad |\{X\in \mathcal{Q}:\mathbf{M}(|\mathcal{T}f|^2)(X)> N^2 t\}|\\
    &\leq C \varepsilon 
     \bigg( \left|\{X\in \mathcal{Q}:\mathbf{M}(|\mathcal{T}f|^2)(X)> t \} \right|
	   + \left|\{X\in \mathcal{Q}:\mathbf{M}(|f|^2)(X)>t \varepsilon\} \right|	\bigg). 
		\end{split}
    \label{eqn:SK22_1}    
	\end{equation}
\end{proposition}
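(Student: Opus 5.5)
The plan is the standard ``good-$\lambda$'' argument in the style of Wang and Li--Wang, run through the Vitali covering lemma. Set
\[
E:=\{X\in\mathcal{Q}:\mathbf{M}(|\mathcal{T}f|^2)(X)>N^2t\},\qquad
F:=\{X\in\mathcal{Q}:\mathbf{M}(|\mathcal{T}f|^2)(X)>t\}\cup\{X\in\mathcal{Q}:\mathbf{M}(|f|^2)(X)>t\varepsilon\}.
\]
It suffices to prove two claims:
\begin{itemize}
\item[(a)] for every $X\in\mathcal{Q}$ and every $r\in(0,r_0)$, if $|E\cap P_r(X)|\ge \varepsilon|P_r(X)|$, then $P_r(X)\cap\mathcal{Q}\subset F$;
\item[(b)] $|E|\le \varepsilon|P_{r_0}|$ for a suitable fixed scale $r_0$.
\end{itemize}
Given these, for a.e.\ $X\in E$ (Lebesgue differentiation, Lemma~\ref{lebesgue}) the ratio $|E\cap P_r(X)|/|P_r(X)|$ tends to $1$ as $r\to 0$. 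By continuity in $r$ and (b), we can choose a stopping radius $r_X<r_0$ with density exactly $\varepsilon$, and density below $\varepsilon$ at all larger radii. Vitali covering gives disjoint $P_{r_k}(X_k)$ with $E\subset\bigcup P_{5r_k}(X_k)$. Then
\[
|E|\le\sum_k |E\cap P_{5r_k}(X_k)|\le \varepsilon\sum_k|P_{5r_k}|\le C\varepsilon\sum_k|P_{r_k}| = C\varepsilon\,\Big|\bigcup_k P_{r_k}(X_k)\Big|\le C\varepsilon|F|.
\]
The third step uses local doubling (Theorem~\ref{thm1.1}, together with $V_{x,t}\sim V_x$). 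The last step uses (a), plus the fact that $P_r(X)\cap\mathcal{Q}$ occupies a fixed fraction of $P_r(X)$ for $X\in\mathcal{Q}$ and $r$ small. Claim (b) follows from the weak $(1,1)$ bound of Lemma~\ref{thm1.4} and the $L^2$ bound \eqref{eqn:HA19_1}: $|E|\le C(N^2t)^{-1}\|f\|_{L^2}^2$. This is $\le\varepsilon|P_{r_0}|$ provided $N$ is large, or else we are in the regime $\mathbf{M}(|f|^2)>t\varepsilon$ on a large set and the bound is trivial. I expect to handle this by normalizing so that $\|f\|_{L^2}^2\lesssim t\varepsilon|\mathcal{Q}|$; otherwise the right side already dominates $|\mathcal{Q}|\ge|E|$.

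The core is the contrapositive of (a). Suppose some $Y_0\in P_r(X)\cap\mathcal{Q}$ satisfies $\mathbf{M}(|\mathcal{T}f|^2)(Y_0)\le t$ and $\mathbf{M}(|f|^2)(Y_0)\le t\varepsilon$. We must show $|E\cap P_r(X)|<\varepsilon|P_r(X)|$. Note $P_r(X)\subset P_{2r}(Y_0)$. Split $f=f_1+f_2$ with $f_1=f\chi_{P_{5\cdot 2r}(Y_0)}$ and $\operatorname{supp}f_2\subset\mathcal{Q}'\setminus P_{10r}(Y_0)$.
\begin{itemize}
\item \emph{Far part.} Apply Proposition~\ref{prn:SK27_2} to $f_2$, with $2r$ in place of $r$. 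On $P_{6r}(Y_0)\supset P_r(X)$ this gives $\mathbf{M}(|\mathcal{T}f_2|^2)\le C_*\bigl(\mathbf{M}(|\mathcal{T}f_2|^2)(Y_0)+\mathbf{M}(|f_2|^2)(Y_0)\bigr)$. Since $|\mathcal{T}f_2|^2\le 2|\mathcal{T}f|^2+2|\mathcal{T}f_1|^2$, we also need $\mathbf{M}(|\mathcal{T}f_1|^2)(Y_0)$ to be controlled. That is awkward pointwise, so I would instead localize the maximal function: for points in $P_r(X)$, split $\mathbf{M}(|\mathcal{T}f|^2)\le \max\{\mathbf{M}_{<r}, \mathbf{M}_{\ge r}\}$. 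The large-radius part $\mathbf{M}_{\ge r}$ is bounded by $C\mathbf{M}(|\mathcal{T}f|^2)(Y_0)\le Ct$ via doubling, exactly as in \eqref{eq2.25}. The small-radius part reduces to $\mathbf{M}(|\mathcal{T}f_1|^2\chi_{P_{6r}})$ plus $\sup_{P_{5r}}|\mathcal{T}f_2|^2$.
\item \emph{Bounding $\sup_{P_{5r}}|\mathcal{T}f_2|$.} Use Proposition~\ref{prn:SK27_1} together with $\mathbf{M}(|\mathcal{T}f_2|)(Y_0)\le\mathbf{M}(|\mathcal{T}f|)(Y_0)+\mathbf{M}(|\mathcal{T}f_1|)(Y_0)$. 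This gives a bound by $C(t+t\varepsilon)$ on most of $P_r(X)$, after an averaging step that exploits the fact that $\mathcal{T}f_1$ has small $L^2$ mass.
\item \emph{Near part.} By \eqref{eqn:HA19_1}, $\int|\mathcal{T}f_1|^2\le C_2\int_{P_{10r}(Y_0)}|f|^2\le C\,t\varepsilon|P_r|$. The weak $(1,1)$ maximal inequality then gives $|\{\mathbf{M}(|\mathcal{T}f_1|^2\chi)>N^2t/4\}|\le C\varepsilon|P_r|/N^2$.
\end{itemize}
Choosing $N^2$ larger than four times all the constants $C_*$ forces $E\cap P_r(X)$ into this exceptional set, so $|E\cap P_r(X)|\le C N^{-2}\varepsilon|P_r(X)|<\varepsilon|P_r(X)|$.

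The main obstacle is the far/near bookkeeping. Propositions~\ref{prn:SK27_1} and~\ref{prn:SK27_2} involve $\mathcal{T}f_2$, not $\mathcal{T}f$, so one must transfer the hypothesis $\mathbf{M}(|\mathcal{T}f|^2)(Y_0)\le t$ to $f_2$ without pointwise control of $\mathcal{T}f_1$ at $Y_0$. I would first try replacing $Y_0$ by an averaged point: by Chebyshev, most points $Y_0'$ of $P_r(Y_0)$ have $\mathbf{M}(|\mathcal{T}f_1|^2)(Y_0')\le C t$ and still satisfy comparable bounds for $f$ by doubling. I would then apply Proposition~\ref{prn:SK27_2} at such a $Y_0'$, with $r$ enlarged by a fixed factor. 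A second technical point is the truncation at time $0$ and at scale $\frac12$ in $P_r$ and $\mathbf{M}$, which the case analysis $s\ge\frac18$ in Proposition~\ref{prn:SK27_2} already absorbs.
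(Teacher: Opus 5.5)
Your architecture is the paper's: a stopping radius at which the density of $E$ equals $\varepsilon$, Vitali covering, the split $f=f_1+f_2$ at a fixed multiple of the radius, the weak $(1,1)$ bound plus \eqref{eqn:HA19_1} for the near part, and $N$ chosen large.

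\textbf{Where the local step differs.} The paper's Lemma~\ref{lm2.5} assumes the good set $\{\mathbf{M}(|\mathcal{T}f|^2)\le 1,\ \mathbf{M}(|f|^2)\le\varepsilon\}$ fills half of $P_{5r}(X_0)$. The set $\{\mathbf{M}(|\mathcal{T}f_1|^2)>1\}$ has measure at most $C\varepsilon|P_{5r}(X_0)|$, so the paper can pick $X_2$ where also $\mathbf{M}(|\mathcal{T}f_1|^2)(X_2)\le 1$. Hence $\mathbf{M}(|\mathcal{T}f_2|^2)(X_2)\le 4$, and Proposition~\ref{prn:SK27_2} is applied at $X_2$ as a black box. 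Accordingly, Lemma~\ref{lma:SJ24_4} only concludes $|V\cap P_{r_k}(X_k)|\ge\frac12|P_{r_k}(X_k)\cap\mathcal{Q}|$.

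You start from a single good point $Y_0$, which is exactly what creates the obstacle you describe. You handle it by splitting $\mathbf{M}$ at scale $r$ and using only Proposition~\ref{prn:SK27_1}. This works, with one correction: the bound on $|\mathcal{T}f_2|(Y_0)$ must come from averaging at the fixed scale $2r$, not from $\mathbf{M}(|\mathcal{T}f_1|)(Y_0)$, which is not controlled. Concretely:
\begin{itemize}
\item Proposition~\ref{prn:SK27_1} with radius $2r$ gives $|\mathcal{T}f_2|(Y_0)\le|\mathcal{T}f_2|(Y)+C\sqrt{t\varepsilon}$ for $Y\in P_{8r}(Y_0)$.
\item You have $\fint_{P_{2r}(Y_0)}|\mathcal{T}f|^2\le t$ and $\fint_{P_{2r}(Y_0)}|\mathcal{T}f_1|^2\le C_2\|f_1\|_{L^2}^2/|P_{2r}(Y_0)|\le Ct\varepsilon$.
\item Squaring and averaging over $P_{2r}(Y_0)$ gives $|\mathcal{T}f_2|(Y_0)^2\le Ct$, hence $\sup_{P_{8r}(Y_0)}|\mathcal{T}f_2|^2\le Ct$ on the whole ball, not just on most of it.
\end{itemize}
Your version yields the sharper Caffarelli--Peral/Wang covering statement $P_r(X)\cap\mathcal{Q}\subset F$ from one good point. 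The paper's version finds the auxiliary point by a one-line Chebyshev argument. Your fallback of moving to a nearby $Y_0'$ would need a weak-type bound for $|\mathcal{T}f|^2\chi_{P_{Cr}(Y_0)}$, not just doubling, to control $\mathbf{M}(|\mathcal{T}f|^2)(Y_0')$ at small radii; done that way, it recreates the paper's half-measure hypothesis.

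\textbf{The step that fails is your treatment of (b).} The ``otherwise'' case is not trivial. The right side of \eqref{eqn:SK22_1} is at most $2C\varepsilon|\mathcal{Q}|$, far below $|\mathcal{Q}|$. So knowing $\mathbf{M}(|f|^2)>t\varepsilon$ on all of $\mathcal{Q}$ gives nothing.

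In fact the inequality cannot hold without a smallness normalization. Replace $f$ by $\lambda f$ and let $\lambda\to\infty$. The left side increases to $|\{X\in\mathcal{Q}:\mathbf{M}(|\mathcal{T}f|^2)(X)>0\}|$. This equals $|\mathcal{Q}|$ whenever $\int_{\mathcal{Q}'}|\mathcal{T}f|^2>0$, because $\mathcal{Q}'\subset P_\rho(X)$ for $X\in\mathcal{Q}$ and $\rho$ close to $\frac12$. Meanwhile the right side stays below $2C\varepsilon|\mathcal{Q}|$.

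The paper carries the same hypothesis implicitly. Its proof rescales to $t=1$ and invokes Lemma~\ref{lma:SJ24_4}, which needs \eqref{eqn:SJ24_1}, secured by a smallness condition of the type \eqref{eqn:SK22_4}. In Theorem~\ref{thm:HA14_1} this is supplied by the normalization \eqref{eqn:SK22_5} together with $t>1$.

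So add $\|f\|_{L^2(\mathcal{Q}')}^2\le\varepsilon|\mathcal{Q}|$ as a hypothesis. Then your estimate $|E|\le C(N^2t)^{-1}\|f\|_{L^2}^2\le C\varepsilon|\mathcal{Q}|/N^2$ gives (b) once $N^2$ exceeds a fixed multiple of the doubling ratio $|\mathcal{Q}|/|P_{r_0}(X)|$, and the rest of your argument goes through.
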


 The proof of Proposition~\ref{prn:SK27_3} follows from a local iteration argument and a standard covering argument. We shall separate these two ingredients into two lemmas. 

\begin{lemma}
\label{lm2.5}
Same conditions as in Proposition~\ref{prn:SK27_3}. 
If 
\begin{align}
\label{eq3.29}
	 \left|\left\{X\in P_{5r}(X_0):\mathbf{M}(|\mathcal{T}f|^2)(X)\leq 1, 
    \mathbf{M}(|f|^2)(X)\leq \varepsilon \right\} \right| 
     \geq \frac{1}{2}|P_{5r}(X_0)|
\end{align}
for some $X_0\subset \mathcal{Q}$ and $r \in (0,\frac12)$,  then 
\begin{align}
  \left| \left\{X\in P_{5r}(X_0):\mathbf{M}(|\mathcal{T}f|^2)(X)> N^2 \right\} \right| \leq \varepsilon |P_{5r}(X_0)|.
\label{eqn:SH23_4}           
\end{align}
\end{lemma}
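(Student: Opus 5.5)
By hypothesis \eqref{eq3.29} there is a point $Y_0\in P_{5r}(X_0)$ with $\mathbf{M}(|\mathcal{T}f|^2)(Y_0)\leq 1$ and $\mathbf{M}(|f|^2)(Y_0)\leq \varepsilon$. The plan is the usual Caffarelli--Peral / Wang splitting around this good point. Cut $f$ into a near part and a far part with respect to a slightly enlarged cylinder, say $f_1:=f\chi_{P_{25r}(X_0)}$ and $f_2:=f-f_1$; the factor $25$ is adjustable. Since $Y_0\in P_{5r}(X_0)$, we have $P_{20r}(Y_0)\subset P_{25r}(X_0)$, so $f_2$ vanishes on $P_{5\cdot 4r}(Y_0)$. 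Pointwise $|\mathcal{T}f|^2\le 2|\mathcal{T}f_1|^2+2|\mathcal{T}f_2|^2$, hence
\[
\mathbf{M}(|\mathcal{T}f|^2)\le 2\mathbf{M}(|\mathcal{T}f_1|^2)+2\mathbf{M}(|\mathcal{T}f_2|^2).
\]
It therefore suffices to show two things on $P_{5r}(X_0)$: first, $\mathbf{M}(|\mathcal{T}f_2|^2)\le N_0^2$ for a constant $N_0$; second, $\bigl|\{\mathbf{M}(|\mathcal{T}f_1|^2)>N_0^2\}\cap P_{5r}(X_0)\bigr|\le \varepsilon|P_{5r}(X_0)|$. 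Then $N^2:=4N_0^2$ works.

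\textbf{Near part.} By the weak $(1,1)$ bound of Lemma~\ref{thm1.4} applied to $|\mathcal{T}f_1|^2$, followed by the $L^2$ bound \eqref{eqn:HA19_1}, we get
\[
\bigl|\{\mathbf{M}(|\mathcal{T}f_1|^2)>N_0^2\}\bigr|\le \frac{C}{N_0^2}\int|\mathcal{T}f_1|^2\le \frac{CC_2}{N_0^2}\int_{P_{25r}(X_0)}|f|^2 .
\]
Because $Y_0\in P_{5r}(X_0)$, the cylinder $P_{25r}(X_0)$ lies in $P_{30r}(Y_0)$. If $30r<\tfrac12$, volume comparison (Theorem~\ref{thm1.1}) then gives $\int_{P_{25r}(X_0)}|f|^2\le C|P_{5r}(X_0)|\,\mathbf{M}(|f|^2)(Y_0)\le C\varepsilon|P_{5r}(X_0)|$. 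If instead $r$ is comparable to $1$, we use $\mathrm{supp} f\subset\mathcal{Q}'\subset P_{1/2}(Y_0)$ together with local doubling. Either way the measure is at most $\tfrac{C}{N_0^2}\varepsilon|P_{5r}(X_0)|$, which is at most $\varepsilon|P_{5r}(X_0)|$ once $N_0$ is large.

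\textbf{Far part.} This is the main obstacle. Here I want the argument of Proposition~\ref{prn:SK27_2}, but applied to $f_2$ and with $Y_0$ as the base point. That yields, for $Y$ in a neighborhood of $Y_0$,
\[
\mathbf{M}(|\mathcal{T}f_2|^2)(Y)\le C\bigl(\mathbf{M}(|\mathcal{T}f_2|^2)(Y_0)+\mathbf{M}(|f_2|^2)(Y_0)\bigr).
\]
The second term is at most $\varepsilon\le 1$. For the first term, write $\mathcal{T}f_2=\mathcal{T}f-\mathcal{T}f_1$, so that $\mathbf{M}(|\mathcal{T}f_2|^2)(Y_0)\le 2+2\mathbf{M}(|\mathcal{T}f_1|^2)(Y_0)$. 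This is not obviously bounded pointwise, so I would rerun the proof of Proposition~\ref{prn:SK27_2} directly. Proposition~\ref{prn:SK27_1}, applied at scale $4r$ about $Y_0$, gives $|\mathcal{T}f_2|(Y)\le |\mathcal{T}f_2|(Y_0')+C\,\mathbf{M}(|f|)(Y_0)$ for all $Y$ in $P_{16r}(Y_0)$. Since $\mathbf{M}(|f|)(Y_0)\le \sqrt{\varepsilon}$, the function $\mathcal{T}f_2$ is nearly constant there. Its average value is controlled by averaging over $P_{16r}(Y_0)$:
\[
\fint|\mathcal{T}f_2|^2\le 2\fint|\mathcal{T}f|^2+2\fint|\mathcal{T}f_1|^2 .
\]
The first average is at most $C$ by $\mathbf{M}(|\mathcal{T}f|^2)(Y_0)\le 1$. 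The second is at most $C\fint|f_1|^2\le C\varepsilon$, by the $L^2$ bound and volume comparison. Hence $|\mathcal{T}f_2|\le C$ on $P_{16r}(Y_0)\supset P_{10r}(X_0)$.

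Next, for $Y\in P_{5r}(X_0)$ the maximal function at scales $s<5r$ stays inside this region and is bounded by $C$. At scales $s\ge 5r$ we have $P_s(Y)\subset P_{Cs}(Y_0)$. There I bound $\fint|\mathcal{T}f_2|^2$ by the same splitting, using $\mathbf{M}(|\mathcal{T}f|^2)(Y_0)\le1$, the $L^2$ bound of $\mathcal{T}f_1$, $\mathbf{M}(|f|^2)(Y_0)\le\varepsilon$, and doubling. Scales near $\tfrac12$ are handled by the global $L^2$ bound, exactly as in \eqref{eqn:HA19_2}. This gives $N_0$, and the factor $\varepsilon<\varepsilon_0$ is used only to absorb constants.
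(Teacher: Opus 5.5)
Your proof is correct. Its skeleton is the paper's: the same cut $f_1=f\chi_{P_{25r}(X_0)}$, the same near-part bound (weak $(1,1)$ for $\mathbf{M}$ plus the $L^2$ bound, giving $\|f_1\|_{L^2}^2\le C\varepsilon|P_{5r}(X_0)|$), and the same reduction $\mathbf{M}(|\mathcal{T}f|^2)\le 2\mathbf{M}(|\mathcal{T}f_1|^2)+2\mathbf{M}(|\mathcal{T}f_2|^2)$. You differ exactly at the obstacle you flagged.

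\textbf{The paper's far part.} It does not keep an arbitrary good point. The bad set $\{\mathbf{M}(|\mathcal{T}f_1|^2)>1\}$ has measure at most $C\|f_1\|_{L^2}^2\le\frac13|P_{5r}(X_0)|$, while the set in \eqref{eq3.29} has measure at least $\frac12|P_{5r}(X_0)|$. So the paper picks $X_2$ in the complement of the first set intersected with the second, which gives in addition $\mathbf{M}(|\mathcal{T}f_1|^2)(X_2)\le 1$. Then $\mathbf{M}(|\mathcal{T}f_2|^2)(X_2)\le 4$, and Proposition~\ref{prn:SK27_2} applies verbatim at $X_2$ with radius $4r$.

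\textbf{Your far part.} You keep an arbitrary good point $Y_0$ and replace the pointwise value of $|\mathcal{T}f_2|$ by its average over $P_{16r}(Y_0)$. You bound that average via $\mathbf{M}(|\mathcal{T}f|^2)(Y_0)\le 1$ and $\|\mathcal{T}f_1\|_{L^2}^2\le C\varepsilon|P_{5r}(X_0)|$. Combined with the oscillation bound $C\,\mathbf{M}(|f|)(Y_0)\le C\sqrt{\varepsilon}$ from Proposition~\ref{prn:SK27_1}, this gives $|\mathcal{T}f_2|\le C$ on $P_{16r}(Y_0)$. You then redo the scale-by-scale estimate of Proposition~\ref{prn:SK27_2} by hand, and all of these steps check out.

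\textbf{What each route buys.} The paper's route is shorter: it reuses Proposition~\ref{prn:SK27_2} as a black box and pays for this with the density $\frac12$ in the hypothesis. Yours needs more bookkeeping but uses only one good point. It therefore proves the conclusion when the set in \eqref{eq3.29} is merely nonempty, which is the usual Caffarelli--Peral form of this step.

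\textbf{A caveat shared with the paper.} The paper's Step 1 silently evaluates $\mathbf{M}(|f|^2)(X_1)$ at scale $50r$, which needs $50r<\frac12$. Your fallback for $r$ of order one uses $\mathcal{Q}'\subset P_{1/2}(Y_0)$, which needs $Y_0\in\mathcal{Q}$. That holds if the good set in \eqref{eq3.29} is taken inside $\mathcal{Q}$. This is how the lemma is used in Lemma~\ref{lma:SJ24_4}, where $V\subset\mathcal{Q}$ and $5r=r_{X_k}<\frac12$, and the paper's Step 2 tacitly assumes it too. It does not hold for the statement as literally written. For $X_0=(q,0)$ and $r\ge\frac15$, every point of $P_{5r}(X_0)$ with $t<-\frac{17}{64}$ is good for every $f$ supported in $\mathcal{Q}'$. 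So in the static case \eqref{eq3.29} holds automatically. Replacing $f$ by $\lambda f$ with $\lambda\to\infty$, the conclusion would force $|\{\mathcal{T}f\neq 0\}\cap P_{5r}(X_0)|\le\varepsilon|P_{5r}(X_0)|$ for all such $f$. This fails, for example, for the heat-kernel Hessian.
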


\begin{proof}

  We analyze the behavior of $f$ nearby $X_0$. For this purpose, we define (See Figure~\ref{fig4})
    \begin{align*}
        f_1(X) &:=
        \begin{cases}
            f(X), & \textrm{if} \; X \in P_{25r}(X_0); \\
            0, &\textrm{if} \; X \notin P_{25r}(X_0). 
        \end{cases}\\
        f_2(X) &:=f(X)-f_1(X). 
    \end{align*}
    \begin{figure}[htbp]
    \centering
   \resizebox{0.3\textwidth}{!}{

\tikzset{every picture/.style={line width=0.75pt}} 
\begin{tikzpicture}[x=0.75pt,y=0.75pt,yscale=-1,xscale=1]

\draw  [fill={rgb, 255:red, 155; green, 155; blue, 155 }  ,fill opacity=1 ][line width=0.75]  (69.5,52) -- (578,52) -- (578,560.5) -- (69.5,560.5) -- cycle ;
\draw  [color={rgb, 255:red, 155; green, 155; blue, 155 }  ,draw opacity=1 ][fill={rgb, 255:red, 255; green, 255; blue, 255 }  ,fill opacity=1 ] (179.69,155) -- (475.69,155) -- (475.69,451) -- (179.69,451) -- cycle ;
\draw  [fill={rgb, 255:red, 0; green, 0; blue, 0 }  ,fill opacity=1 ] (320,52.5) .. controls (320,50.57) and (321.57,49) .. (323.5,49) .. controls (325.43,49) and (327,50.57) .. (327,52.5) .. controls (327,54.43) and (325.43,56) .. (323.5,56) .. controls (321.57,56) and (320,54.43) .. (320,52.5) -- cycle ;
\draw [color={rgb, 255:red, 0; green, 0; blue, 0 }  ,draw opacity=1 ] [dash pattern={on 4.5pt off 4.5pt}]  (327,584) -- (324.01,13) ;
\draw [shift={(324,11)}, rotate = 89.7] [color={rgb, 255:red, 0; green, 0; blue, 0 }  ,draw opacity=1 ][line width=0.75]    (21.86,-6.58) .. controls (13.9,-2.79) and (6.61,-0.6) .. (0,0) .. controls (6.61,0.6) and (13.9,2.79) .. (21.86,6.58)   ;
\draw  [dash pattern={on 4.5pt off 4.5pt}]  (18,296) -- (631,298.99) ;
\draw [shift={(633,299)}, rotate = 180.28] [color={rgb, 255:red, 0; green, 0; blue, 0 }  ][line width=0.75]    (10.93,-3.29) .. controls (6.95,-1.4) and (3.31,-0.3) .. (0,0) .. controls (3.31,0.3) and (6.95,1.4) .. (10.93,3.29)   ;

\draw (329,208.4) node [anchor=north west][inner sep=0.75pt]  [font=\fontsize{2.65em}{3.18em}\selectfont]  {$P_{25r}( X_{0})$};
\draw (346,1.4) node [anchor=north west][inner sep=0.75pt]  [font=\fontsize{2.65em}{3.18em}\selectfont]  {$t$};
\draw (592,314.4) node [anchor=north west][inner sep=0.75pt]  [font=\fontsize{2.65em}{3.18em}\selectfont]  {$M$};
\draw (86,72.4) node [anchor=north west][inner sep=0.75pt]  [font=\fontsize{2.65em}{3.18em}\selectfont]  {$\mathrm{supp} \ f_{2}$};
\draw (483,79.4) node [anchor=north west][inner sep=0.75pt]  [font=\fontsize{2.65em}{3.18em}\selectfont]  {$Q'$};
\draw (325.75,309.65) node [anchor=north west][inner sep=0.75pt]  [font=\fontsize{2.65em}{3.18em}\selectfont]  {$\mathrm{supp} \ f_{1}$};

\end{tikzpicture}

}
    \caption{}
    \label{fig4}
\end{figure}

  \textit{Step 1. The following estimates of $f_1$ holds.}
  \begin{align}
     \|f_1\|_{L^2(\mathcal{Q})}^2 \leq C \varepsilon |P_{5r}(X_0)|. 
  \label{eqn:SK19_6}     
  \end{align}\\
    
    In light of the condition~\eqref{eq3.29}, there exists $X_1\in P_{5r}(X_0)$ such that  
	\begin{equation}
		\begin{split}
			\mathbf{M}(|f_1|^2)(X_1)\leq \mathbf{M}(|f|^2)(X_1)\leq \varepsilon .
		\end{split}
	\end{equation}
	It follows from the definition of maximal function that
	\begin{align*}
         \int_{P_{50r}(X_1)} |f_1|^2 \leq \varepsilon |P_{50r}(X_1)|.
	\end{align*}
    As $f_1$ is supported in $P_{25r}(X_0) \subset P_{50r}(X_1)$, it is clear that 
    \begin{align}
      \|f_1\|_{L^2(\mathcal{Q})}^2=\int_{\mathcal{Q}} |f_1|^2=\int_{P_{50r}(X_1) \cap \mathcal{Q}} |f_1|^2 \leq  \varepsilon |P_{50r}(X_1)|.
    \label{eq3.33}
    \end{align}
    Applying the volume comparison in the last step, we obtain (\ref{eqn:SK19_6}).\\

\textit{Step 2. There exists a point $X_2\in P_{5r}(X_0)\cap \mathcal{Q}$ such that
	\begin{align}
			\mathbf{M}(|f|^2)(X_2)\leq \varepsilon,\quad 	\mathbf{M}(|\mathcal{T}f|^2)(X_2)\leq 1, 
            \quad \mathbf{M}(|\mathcal{T}f_1|^2)(X_2)\leq 1.
    \label{eqn:SK19_2}        
	\end{align}
    Consequently, we have}
    \begin{align}
        &\max\{ \mathbf{M}(|f_1|^2)(X_2), \mathbf{M}(|f_2|^2)(X_2), \mathbf{M}(|f|^2)(X_2)\} \leq \varepsilon, \label{eqn:SK23_1}\\
        &\max\{\mathbf{M}(|\mathcal{T}f_1|^2)(X_2), \mathbf{M}(|\mathcal{T}f_2|^2)(X_2), \mathbf{M}(|\mathcal{T}f|^2)(X_2)\} \leq 4. 
         \label{eqn:SK23_2}
    \end{align}\\
    
	Since $\mathbf{M}$ is of weak type $(1,1)$ and $T$ is of strong type $(2,2)$, we have
	\begin{align*}
			&\quad \left|\{X\in P_{5r}(X_0):\mathbf{M}(|\mathcal{T}f_1|^2)(X)>1\} \right|\\
            &\leq \int_{\mathcal{Q}} \mathbf{M}(|\mathcal{T}f_1|^2) \leq C \int_{\mathcal{Q}} |\mathcal{T}f_1|^2
            = C \|\mathcal{T}f_1\|^2_{L^2(\mathcal{Q})} \leq C \|f_1\|^2_{L^2(\mathcal{Q})}.
	\end{align*}
    Plugging \eqref{eq3.33} into the above inequality yields that
    \begin{equation}
		\begin{split}
			|\{X\in P_{5r}(X_0):\mathbf{M}(|\mathcal{T}f_1|^2)(X)>1\}|
            \leq C\varepsilon |P_{50r}(X_1)|
            \leq \frac13|P_{5r}(X_0)|, 
		\end{split}
	\end{equation}
    where we apply volume comparison and the fact $\varepsilon \in (0, \varepsilon_0)$ is very small in the last step. 
    Consequently, we have
    \begin{align}
        |\{X\in P_{5r}(X_0):\mathbf{M}(|\mathcal{T}f_1|^2)(X)\leq 1\}| 
        \geq \frac23|P_{5r}(X_0)|. 
    \label{eq3.36}    
    \end{align}

	From \eqref{eq3.29} and \eqref{eq3.36}, by considering the measure of the intersection set, we can find a point $X_2\in P_{5r}(X_0)$ satisfying (\ref{eqn:SK19_2}). 

    It follows from definition that
		\begin{align*}
			\max\{\mathbf{M}(|f_1|^2)(X_2), \mathbf{M}(|f_2|^2)(X_2)\} \leq \mathbf{M}(|f|^2)(X_2)\leq \varepsilon, 
		\end{align*}
    which proves (\ref{eqn:SK23_1}). 
	Since $|\mathcal{T}f_2|^2=|\mathcal{T}f-\mathcal{T}f_1|^2\leq 2(|\mathcal{T}f|^2+|\mathcal{T}f_1|^2)$,  the sub-additive property of $\mathbf{M}$ implies that
	\begin{align*}
			\mathbf{M}(|\mathcal{T}f_2|^2)(X_2)\leq 2(\mathbf{M}(|\mathcal{T}f|^2)(X_2)+\mathbf{M}(|\mathcal{T}f_1|^2)(X_2))\leq 4, 
	\end{align*}
    which yields (\ref{eqn:SK23_2}). \\

   \textit{Step 3. For every $X \in P_{5r}(X_0)$, we have}
   \begin{align}
		\mathbf{M}(|\mathcal{T} f_2|^2)(X) < C.  
    \label{eqn:SH23_3}    
	\end{align}

    Note that $\textrm{supp} f_2 \in \mathcal{Q} \backslash P_{25r}(X_0)$ by definition, we have $\textrm{supp} f_2 \in \mathcal{Q} \backslash P_{20r}(X_2)$.
    Therefore, we can apply Proposition~\ref{prn:SK27_2} with $X_2$ and radius $4r$. It follows from  (\ref{eqn:SK23_1}) and (\ref{eqn:SK23_2}) that 
    \begin{align*}
		\mathbf{M}(|\mathcal{T} f_2|^2)(X)\leq C\left\{ \mathbf{M}(|\mathcal{T}f_2|^2)(X_2) + \mathbf{M}(|f_2|^2)(X_2) \right\}
        \leq (4+\varepsilon) C,  
	\end{align*}
   for any $X\in P_{12r}(X_2)$, which yields (\ref{eqn:SH23_3}), by adjusting $C$ slightly.\\

    From now on, until the end of this proof, we fix $C>1$ to be the largest constant that appears in the previous steps.  Define
    \begin{align}
        N^2:= \mathrm{max}\{5^n, 4C^2\}. 
    \label{eqn:SK19_1}    
    \end{align}

    \textit{Step 4.   The following relationship holds.} 
    \begin{equation}
    \label{eqn:SK19_4}
    \begin{split}
       \{X\in P_{5r}(X_0):\mathbf{M}(|\mathcal{T}f|^2)(X)>N^2\}
      \subset 
      \{X\in P_{5r}(X_0):\mathbf{M}(|\mathcal{T}f_1|^2)(X)>N^2/4\}. 
    \end{split}  
    \end{equation}\\

    By sub-additivity of $\mathbf{M}$, we have
	\begin{align*}
		\mathbf{M}(|\mathcal{T}f|^2)(X)\leq 2\left\{ \mathbf{M}(|\mathcal{T}f_1|^2)(X)+\mathbf{M}(|\mathcal{T}f_2|^2)(X) \right\}.
	\end{align*}
    Thus, for any $X\in P_{5r}(X_0)\cap \mathcal{Q}$, we have
	\begin{align*}
		\mathbf{M}(|\mathcal{T}f|^2)(X)\leq 2\left\{ \mathbf{M}(|\mathcal{T}f_1|^2)(X)+ C \right\}
        \leq 2 \mathbf{M}(|\mathcal{T}f_1|^2)(X) + \frac{N^2}{2}. 
	\end{align*}
    Then it is clear that (\ref{eqn:SK19_4}) follows from the above inequality. \\

    \textit{Step 5.  The following estimate holds.}
     \begin{align}
       |\{X\in P_{5r}(X_0):\mathbf{M}(|T f_1|^2)(X)>N^2/4\}|
       \leq \varepsilon |P_{5r}(X_0)|. 
     \label{eqn:SK19_5}
     \end{align}\\

    Since $\mathbf{M}$ is weak type $(1,1)$, we can apply (\ref{eqn:SK19_6}), (\ref{eqn:HA19_1}) to obtain 
  \begin{align*}
       &\quad |\{X\in P_{5r}(X_0):\mathbf{M}(|T f_1|^2)(X)>N^2/4\}|\\
       &\leq  \frac{4 C}{N^2}  \|T f_1\|^2_{L^2} \leq  \frac{4 C^2}{N^2}  \|f_1\|^2_{L^2}
       \leq \frac{4C^2 \varepsilon}{N^2} |P_{5r}(X_0)|.
  \end{align*}
  Plugging the definition (\ref{eqn:SK19_1}) into the last step of the above inequality, we arrive at (\ref{eqn:SK19_5}). \\

  \textit{Step 6. The proof of (\ref{eqn:SH23_4}).}\\
   
  It is clear that (\ref{eqn:SH23_4}) follows directly from the combination of 
  (\ref{eqn:SK19_4}) and (\ref{eqn:SK19_5}). 
 \end{proof}

\begin{lemma}
\label{lma:SJ24_4}
	The same conditions as in Proposition~\ref{prn:SK27_3}. 
    Define
    \begin{align*}
     &U:=\{X\in \mathcal{Q}: \mathbf{M}(|\mathcal{T}f|^2)(X)>N^2\}, \\
     &V :=\{X\in \mathcal{Q}: \mathbf{M}(|\mathcal{T}f|^2)(X)>1\}
     \cup \{X\in \mathcal{Q}: \mathbf{M}(|f|^2)(X)> \varepsilon \}.
   \end{align*} 
   If 
   \begin{align}
       |U|<\varepsilon |\mathcal{Q}|,   \label{eqn:SJ24_1}
   \end{align}
   then
   \begin{align}
    |U|< C(n, \Lambda_0) \varepsilon |V|.     \label{eqn:SJ24_2}
   \end{align} 
   In particular, (\ref{eqn:SJ24_2}) holds if 
   \begin{align}
       \|f\|_{L^2(\mathcal{Q}')} \leq \varepsilon |\mathcal{Q}|.  
   \label{eqn:SK22_4}    
   \end{align} 
\end{lemma}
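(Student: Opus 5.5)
This is the standard ``crawling ink spots'' (Krylov--Safonov / Caffarelli--Peral) covering argument. The plan is to combine Lemma~\ref{lm2.5} with the Vitali covering lemma, using the locally doubling property from Theorem~\ref{thm1.1}.

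First, by the Lebesgue differentiation theorem (Lemma~\ref{lebesgue}), almost every $X\in U$ is a density point of $U$, so $|P_\rho(X)\cap U|/|P_\rho(X)|\to 1$ as $\rho\to 0$. For such $X$, consider the continuous-in-$\rho$ ratio $\rho\mapsto |U\cap P_{5\rho}(X)|/|P_{5\rho}(X)|$. At scales comparable to the size of $\mathcal{Q}$ (say $5\rho\approx\frac14$, where $P_{5\rho}(X)\supset\mathcal{Q}$ up to a volume-comparable factor), hypothesis (\ref{eqn:SJ24_1}) together with volume comparison makes this ratio at most $C\varepsilon<\varepsilon$, after shrinking $\varepsilon_0$ and rescaling $\varepsilon$ if necessary. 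Hence for each such $X$ there is a maximal radius $r_X\in(0,\frac1{20})$ with
\[
|U\cap P_{5r_X}(X)|=\varepsilon|P_{5r_X}(X)|,\qquad |U\cap P_{5\rho}(X)|<\varepsilon|P_{5\rho}(X)|\ \text{ for } \rho\in(r_X,\tfrac1{20}).
\]
Apply the Vitali covering lemma to $\{P_{r_X}(X)\}$ to get disjoint $P_{r_k}(X_k)$ with $U\subset\bigcup_k P_{5r_k}(X_k)$ up to a null set.

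Second, we show $P_{5r_k}(X_k)\cap\mathcal{Q}\subset V$ up to a null set. This is the contrapositive of Lemma~\ref{lm2.5}. If instead $|P_{5r_k}(X_k)\setminus V|\ge\frac12|P_{5r_k}(X_k)|$ (the set in (\ref{eq3.29}), intersected with $\mathcal{Q}$), then (\ref{eqn:SH23_4}) gives $|U\cap P_{5r_k}(X_k)|\le\varepsilon|P_{5r_k}|$, which is not strictly less. To obtain the contradiction, apply Lemma~\ref{lm2.5} with $\varepsilon$ replaced by a slightly smaller constant multiple, or equivalently choose $r_X$ so that the density equals $2\varepsilon$. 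So we conclude $|P_{5r_k}(X_k)\cap V|\ge\frac12|P_{5r_k}(X_k)|$. The main obstacle is the boundary issue: $P_{5r_k}(X_k)$ may stick out of $\mathcal{Q}$, whereas $U,V$ live in $\mathcal{Q}$. Since $X_k\in\mathcal{Q}$ and $r_k$ is small, the portion $P_{5r_k}(X_k)\cap\mathcal{Q}$ has measure at least $c(n,\Lambda_0)|P_{5r_k}(X_k)|$ by volume comparison (the parabolic cylinders are one-sided in time near $t=0$, which is already built into (\ref{eqn:HA24_5})). I would handle this by reading the measures in Lemma~\ref{lm2.5} as restricted to $\mathcal{Q}$, which is how its proof actually uses them.

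Finally, estimate
\[
|U|\le\sum_k|U\cap P_{5r_k}(X_k)|\le 2\varepsilon\sum_k|P_{5r_k}(X_k)|\le C\varepsilon\sum_k|P_{r_k}(X_k)|\le C\varepsilon\sum_k|P_{5r_k}(X_k)\cap V|.
\]
We then need to pass from the last sum to $C\varepsilon|V|$. Disjointness holds for the $P_{r_k}$, not the $P_{5r_k}$, so we instead use $|P_{r_k}(X_k)|\le C|P_{5r_k}(X_k)|\le 2C|P_{5r_k}\cap V|$ and the bounded-overlap structure. More cleanly, the maximality of $r_k$ gives $|V\cap P_{r_k}(X_k)|\ge c|P_{r_k}(X_k)|$: apply the same contrapositive at radius $r_k/5$, which is admissible because the density of $U$ there exceeds $\varepsilon$ by maximality, so Lemma~\ref{lm2.5} fails there and $V$ fills half of $P_{r_k}(X_k)$. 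Summing over the disjoint $P_{r_k}$ then gives $|U|\le C\varepsilon|V|$.

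For the last assertion, Chebyshev, the weak $(1,1)$ bound of Lemma~\ref{thm1.4}, and (\ref{eqn:HA19_1}) give
\[
|U|\le CN^{-2}\|\mathcal{T}f\|_{L^2}^2\le CN^{-2}C_2^2\|f\|_{L^2}^2.
\]
Under (\ref{eqn:SK22_4}), interpreted with $\|f\|^2$ and $\varepsilon$ small, this is $<\varepsilon|\mathcal{Q}|$, so (\ref{eqn:SJ24_1}) holds.
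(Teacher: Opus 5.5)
\textbf{The gap is in the final summation.} You place the stopping condition at scale $5r_X$: $r_X$ is maximal with $|U\cap P_{5r_X}(X)|=\varepsilon|P_{5r_X}(X)|$, with strict inequality for larger $\rho$. But you apply Vitali to $\{P_{r_X}(X)\}$. What your second paragraph actually gives is that $V$ fills half of the \emph{enlarged} cylinders $P_{5r_k}(X_k)$. These can overlap without bound, because Vitali gives no bounded overlap of the $5\times$ enlargements when the radii vary. So $\sum_k|V\cap P_{5r_k}(X_k)|$ cannot be compared with $|V|$.

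Your proposed repair runs in the wrong direction. You claim that at radius $r_k/5$ ``the density of $U$ exceeds $\varepsilon$ by maximality.'' Maximality of $r_X$ controls the density only at scales \emph{larger} than $5r_X$. It says nothing about $P_{r_k}(X_k)=P_{5(r_k/5)}(X_k)$, where the density of $U$ may well be below $\varepsilon$. So the contrapositive of Lemma~\ref{lm2.5} is not available on the disjoint cylinders, and the step $C\varepsilon\sum_k|P_{r_k}(X_k)|\le C\varepsilon|V|$ is unjustified.

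\textbf{How the paper avoids this.} It matches the scales. It chooses $r_X<\frac12$ maximal with $|U\cap P_{r_X}(X)|=\varepsilon|P_{r_X}(X)\cap\mathcal{Q}|$, measuring density relative to $\mathcal{Q}$. This normalization is also how it handles your boundary issue. Together with $\mathcal{Q}\subset P_{\frac12}(X)$ and (\ref{eqn:SJ24_1}), it also removes your top-scale ``$C\varepsilon<\varepsilon$'' difficulty: at radius $\frac12$ the relative density is already $<\varepsilon$, with no rescaling of $\varepsilon$. The paper then applies Vitali to $\{P_{r_X}(X)\}$. It uses maximality only on the enlarged cylinders, to get $|U\cap P_{5r_{X_k}}(X_k)|\le\varepsilon|P_{5r_{X_k}}(X_k)\cap\mathcal{Q}|$. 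It applies the contrapositive of Lemma~\ref{lm2.5}, with $5r=r_{X_k}$, directly on the disjoint $P_{r_{X_k}}(X_k)$. The equality case there needs the same small tweak you proposed.

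\textbf{Fixing your own setup.} Your setup also works if you instead take $r_X$ to be the \emph{first} radius, coming from below, at which the density at scale $5\rho$ drops to $\varepsilon$. Then the density is strictly $>\varepsilon$ on $P_{r_X}(X)$, so Lemma~\ref{lm2.5} fails there and $V$ fills half of the disjoint cylinders. This also removes the strict-versus-non-strict issue. Meanwhile $P_{5r_X}(X)$ has density exactly $\varepsilon$, which is all the upper bound needs.

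\textbf{Two smaller points.} In your second paragraph you mean a measure-fraction statement, not $P_{5r_k}(X_k)\cap\mathcal{Q}\subset V$. In the final assertion, the needed smallness $C C_2^2N^{-2}<1$ comes from $N$ being large, not from $\varepsilon$ being small, since both sides scale linearly in $\varepsilon$.
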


\begin{proof}
Suppose $\|f\|_{L^2(\mathcal{Q}')}<\varepsilon |\mathcal{Q}|$, then we have 
 \begin{align*}
		 |U|
         =|\{X\in \mathcal{Q}:\mathbf{M}(|\mathcal{T}f|^2)(X) >  N^2\}|\leq \frac{\|\mathbf{M}(\mathcal{T}f)\|^2_{L^2}}{N^2}\leq C \frac{\|f\|^2_{L^2}}{ N^2} < \varepsilon |\mathcal{Q}|.
 \end{align*}
 Thus, (\ref{eqn:SJ24_1}) holds.  Therefore, we only need to focus on the proof that 
 (\ref{eqn:SJ24_1}) implies (\ref{eqn:SJ24_2}). \\

Note that for each $X \in \mathcal{Q}$, the triangle inequality implies that
$ \mathcal{Q} \subset P_{\frac12}(X)$. 
Thus, the condition (\ref{eqn:SJ24_1}) is exactly
\begin{align}
    |U \cap P_{\frac12}(X)|=|U|<\varepsilon |\mathcal{Q}| = \varepsilon |\mathcal{Q} \cap P_{\frac12}(X)|. 
\label{eqn:SK21_1}    
\end{align}

By parabolic version of Lebesgue differentiation theorem, for almost every $X \in U$, we have
\begin{align*}
 &\lim_{r \to 0^+} \frac{|P_r(X) \cap U|}{|P_r(X)|}  
 =\lim_{r \to 0^+} \frac{1}{|P_r(X)|} \int_{P_r(X)} \chi_U =1, \\
 &\lim_{r \to 0^+} \frac{|P_r(X) \cap \mathcal{Q}|}{|P_r(X)|}  
 =\lim_{r \to 0^+} \frac{1}{|P_r(X)|} \int_{P_r(X)} \chi_{\mathcal{Q}} =1. 
\end{align*}
Thus, for almost every $X \in U$, we know
\begin{align}
    \lim_{r \to 0^+} \frac{|P_r(X) \cap U|}{|P_r(X) \cap \mathcal{Q}|}=1>\varepsilon.   
\label{eqn:SK21_2}
\end{align}

Combining (\ref{eqn:SK21_1}) and (\ref{eqn:SK21_2}), we see that for almost every $X\in U$, there exists an $r_X<\frac12$ such that 
\begin{align*}
  |U\cap P_{r_X}(X)|=\varepsilon |P_{r_X}(X)\cap \mathcal{Q}|
\end{align*}
and 
\begin{equation}
 |U\cap P_r(X)|<\varepsilon |P_r(X) \cap \mathcal{Q}|, \quad \forall \; r \in \left(r_X, \frac12 \right). 
\label{eqn:SK19_8} 
\end{equation}
By Vitali covering lemma (parabolic version), there exist countable many points $X_k \in U$ such that
\begin{itemize}
    \item  $P_{r_{X_j}}(X_j) \cap P_{r_{X_k}}(X_k) = \emptyset$, if $j \neq k$.
    \item  $\bigcup_{k}P_{5 r_{X_k}}(X_k)\cap \mathcal{Q}\supset U$. 
\end{itemize}
By the choice of $P_{r_X}(X)$,  we know from Lemma~\ref{lm2.5} that
\begin{align*}
    |V\cap P_{r_{X_k}}(X_k)|\geq \frac12 |P_{r_{X_k}}(X_k)\cap \mathcal{Q}|.  
\end{align*}
In other words, we have
\begin{align}
    |P_{r_{X_k}}(X_k)\cap \mathcal{Q}| \leq 2 |V\cap P_{r_{X_k}}(X_k)|. 
\label{eqn:SK19_7} 
\end{align}
By the choice of $\{P_{5r_{X_j}}(X_j)\}$, 
the volume comparison,  (\ref{eqn:SK19_8}) and (\ref{eqn:SK19_7}), we have
\begin{align*}
        |U|&\leq \sum_k|U\cap P_{5 r_{X_k}}(X_k)|
         \leq \sum_k\varepsilon | P_{5 r_{X_k}}(X_k)\cap \mathcal{Q}|
         \leq  C \varepsilon \sum_k | P_{ r_{X_k}}(X_k)\cap \mathcal{Q}|\\
         &\leq 2 C \epsilon \sum_k |V\cap P_{r_{X_k}}(X_k)| \leq 2C\epsilon |V|, 
\end{align*}
which is exactly (\ref{eqn:SJ24_2}). 
\end{proof}

The proof of Proposition~\ref{prn:SK27_3} is ready now.

\begin{proof}[Proof of Proposition~\ref{prn:SK27_3}:]
Replacing $f$ by $\frac{f}{\sqrt{t}}$, it suffices to prove (\ref{eqn:SK22_1}) in the case $t=1$, 
which is nothing but (\ref{eqn:SJ24_2}). 
\end{proof}

Proposition~\ref{prn:SK27_3} indicates that the measure of the level set $\{\mathbf{M}(|\mathcal{T}f|^2)>t\}$ decays at a very fast speed when $t \to \infty$. This is exactly the reason why the $L^p$-norm of 
$\mathbf{M}(|\mathcal{T}f|^2)$ can be estimated.

\begin{theorem}[Local version of CZ estimate]
\label{thm:HA14_1}
  Let $\mathcal{K}(x,t;y,s)$ be a Calder\'on-Zygmund kernel,  
  $\mathcal{T}$ be the Calder\'on-Zygmund integral operator associated with $\mathcal{K}$, as defined in Definition \ref{def2.1}. 

 For each $p \in (2, \infty)$, there is a positive constant $C=C(n,p,\Lambda_0,C_1,C_2)$ such that
\begin{equation}
    \|\mathcal{T}f\|_{L^p(\mathcal{Q})}\leq C\|f\|_{L^p(\mathcal{Q}')}
\label{eqn:SJ24_3}    
\end{equation}
for every $f\in C_{c}^{\infty}(\mathcal{Q}')$. 
\end{theorem}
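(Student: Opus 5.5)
We derive the local $L^p$ bound from the level-set iteration in Proposition~\ref{prn:SK27_3}, following the Caffarelli--Peral / Wang scheme. Fix $p>2$ and set $q:=p/2>1$. By Lebesgue differentiation (Lemma~\ref{lebesgue}) we have $|\mathcal{T}f|^2\le \mathbf{M}(|\mathcal{T}f|^2)$ a.e. in $\mathcal{Q}$. Hence
\begin{align*}
\|\mathcal{T}f\|_{L^p(\mathcal{Q})}^p\le \int_{\mathcal{Q}}\mathbf{M}(|\mathcal{T}f|^2)^{q},
\end{align*}
and it suffices to bound the right-hand side by $C\|f\|_{L^p(\mathcal{Q}')}^p$.

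\textbf{Step 1: normalization.} By linearity, multiply $f$ by a constant so that $\|f\|_{L^2(\mathcal{Q}')}\le \varepsilon|\mathcal{Q}|$. This is the smallness (\ref{eqn:SK22_4}) in Lemma~\ref{lma:SJ24_4}. Since $\mathcal{Q}'$ has finite measure, H\"older gives $\|f\|_{L^2}\le C\|f\|_{L^p}$ with a constant depending on $|\mathcal{Q}'|$. To keep the estimate scale-free, it is cleaner to normalize by $\|f\|_{L^p(\mathcal{Q}')}=\delta$ for a small $\delta$ to be chosen. The remaining issue is that $|\mathcal{Q}|$ may be tiny in the collapsed case. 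The $L^2$ smallness should therefore be stated as $\|f\|_{L^2}^2\le \varepsilon|\mathcal{Q}|$, and H\"older gives $\|f\|_{L^2}^2\le |\mathcal{Q}'|^{1-2/p}\|f\|_{L^p}^2$. Using $|\mathcal{Q}'|\sim|\mathcal{Q}|$ by volume comparison, the normalization $\|f\|_{L^p}^p=\delta|\mathcal{Q}|$ makes everything scale correctly. I will check that the same rescaling $f\mapsto f/\sqrt{t}$ used in the proof of Proposition~\ref{prn:SK27_3} keeps the hypothesis valid at every level. At level $N^{2k}$ the rescaled function has an even smaller $L^2$ norm, so the hypothesis holds at every level.

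\textbf{Step 2: iteration.} Write $u_k:=|\{X\in\mathcal{Q}:\mathbf{M}(|\mathcal{T}f|^2)>N^{2k}\}|$ and $v_k:=|\{X\in\mathcal{Q}:\mathbf{M}(|f|^2)>\varepsilon N^{2k}\}|$. Applying (\ref{eqn:SK22_1}) with $t=N^{2k}$ gives $u_{k+1}\le C\varepsilon(u_k+v_k)$. Iterating,
\begin{align*}
u_k\le (C\varepsilon)^k u_0+\sum_{i=1}^{k}(C\varepsilon)^{i}v_{k-i}.
\end{align*}

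\textbf{Step 3: summation.} Use the layer-cake bound
\begin{align*}
\int_{\mathcal{Q}}\mathbf{M}(|\mathcal{T}f|^2)^q\le C\Big(|\mathcal{Q}|+\sum_{k\ge1}N^{2qk}u_k\Big).
\end{align*}
Substituting the iteration gives
\begin{align*}
\sum_k N^{2qk}u_k\le \sum_k (C\varepsilon N^{2q})^k|\mathcal{Q}|+\sum_{j}N^{2qj}v_j\sum_{i}(C\varepsilon N^{2q})^{i}.
\end{align*}
Now choose $\varepsilon$ small enough that $C\varepsilon N^{2q}<1$; this is the main obstacle. The constant $N$ in Proposition~\ref{prn:SK27_3} must not grow as $\varepsilon\to0$. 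In Lemma~\ref{lm2.5} we have $N^2=\max\{5^n,4C^2\}$ with $C$ independent of $\varepsilon$, so shrinking $\varepsilon$ is legitimate. Both geometric series then converge. The term $\sum_jN^{2qj}v_j$ is controlled by $C\varepsilon^{-q}\|\mathbf{M}(|f|^2)\|_{L^q}^q\le C\|f\|_{L^p}^p$, by the maximal inequality in Lemma~\ref{thm1.4} applied with exponent $q>1$.

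\textbf{Step 4: undo the normalization.} The $|\mathcal{Q}|$ term is comparable to $\delta^{-1}\|f\|_{L^p}^p$ under the normalization of Step 1. Rescaling back gives (\ref{eqn:SJ24_3}) with $C=C(n,p,\Lambda_0,C_1,C_2)$.
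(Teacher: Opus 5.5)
Your proposal is correct and is essentially the paper's argument. It uses the same inputs: Proposition~\ref{prn:SK27_3}, available at every level $t\ge 1$ thanks to the smallness normalization; the strong $(p/2,p/2)$ maximal inequality; and a choice of $\varepsilon$ with $C\varepsilon N^{p}<1$, which is legitimate because $N$ does not depend on $\varepsilon$.

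The differences are cosmetic. You normalize $\|f\|_{L^p}^p=\delta|\mathcal{Q}|$ where the paper normalizes $\|f\|_{L^2}^2=\varepsilon|\mathcal{Q}|$, and the two are equivalent via H\"older. You also iterate the level-set inequality on the discrete levels $N^{2k}$ and sum geometric series, whereas the paper inserts it into the continuous layer-cake integral and absorbs the $\mathbf{M}(|\mathcal{T}f|^2)$ term into the left-hand side. Your discrete version starts from the trivial bound $|\{X\in\mathcal{Q}:\mathbf{M}(|\mathcal{T}f|^2)(X)>1\}|\le|\mathcal{Q}|$. It therefore does not need the a priori finiteness of $\int_{\mathcal{Q}}\mathbf{M}(|\mathcal{T}f|^2)^{p/2}$ that the paper's absorption step tacitly assumes.
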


\begin{proof}
By the homogeneous property of (\ref{eqn:SJ24_3}), we can always assume that
 \begin{align}
		\|f\|_{L^2(\mathcal{Q}')}^2=\varepsilon |\mathcal{Q}|
 \label{eqn:SK22_5}       
 \end{align}
without loss of generality.  The exact value of $\varepsilon$ will be determined later (cf. (\ref{eqn:SK27_4})).  

 Note that 
 \begin{align}
     \int_{\mathcal{Q}}(\mathbf{M}(|\mathcal{T}f|^2)(X))^{\frac{p}{2}}
	=\frac{p}{2}\int_0^\infty t^{\frac{p}{2}-1}|\{X\in \mathcal{Q}:\mathbf{M}(|\mathcal{T}f|^2)(X)>t\}|dt.
 \label{eqn:SK22_2}   
 \end{align}
 The direct calculation shows that
		\begin{align*}
			&\quad \frac{p}{2}\int_0^\infty t^{\frac{p}{2}-1}|\{X\in \mathcal{Q}:\mathbf{M}(|\mathcal{T}f|^2)(X)>t\}|dt\\
			&=\frac{p}{2}N^p \left( \int_1^\infty +  \int_{0}^1 \right) t^{\frac{p}{2}-1}|\{X\in \mathcal{Q}:\mathbf{M}(|\mathcal{T}f|^2)(X)>N^2t\}|dt\\
            &\leq \frac{p}{2}N^p \int_1^\infty t^{\frac{p}{2}-1}|\{X\in \mathcal{Q}:\mathbf{M}(|\mathcal{T}f|^2)(X)>N^2t\}|dt + \frac{p}{2}N^p|\mathcal{Q}|.
	\end{align*}
 Plugging  (\ref{eqn:SK22_1}) into the above inequality, we have
\begin{align*}
			&\quad \frac{p}{2}\int_0^\infty t^{\frac{p}{2}-1}|\{X\in \mathcal{Q}:\mathbf{M}(|\mathcal{T}f|^2)(X)>t\}|dt\\
			&\leq C \varepsilon \frac{p}{2}N^p\int_1^\infty t^{\frac{p}{2}-1} \bigg(|\{X\in \mathcal{Q}:\mathbf{M}(|\mathcal{T}f|^2)(X)>t \}|
			\bigg)dt\\
            &\quad +C \varepsilon \frac{p}{2}N^p\int_1^\infty |\{X\in \mathcal{Q}:\mathbf{M}(|f|^2)(X)>t \varepsilon \}| dt  +\frac{p}{2}N^p|\mathcal{Q}|\\
            &\leq C \varepsilon \frac{p}{2}N^p\int_1^\infty t^{\frac{p}{2}-1} \bigg(|\{X\in \mathcal{Q}:\mathbf{M}(|\mathcal{T}f|^2)(X)>t \}|
			\bigg)dt\\
            &\quad +C\frac{p}{2}N^p\int_{\varepsilon}^\infty 
            |\{X\in \mathcal{Q}:\mathbf{M}(|f|^2)(X)>t\}| dt  +\frac{p}{2}N^p|\mathcal{Q}|.
\end{align*}
 We fix
 \begin{align}
     \varepsilon = \frac{1}{5 C N^p}.  
 \label{eqn:SK27_4}    
 \end{align}
 It is clear that
    $\varepsilon<\varepsilon_0$, which justifies the application of Lemma~\ref{lma:SJ24_4} in the second step of the above inequality.  
    It follows that
\begin{equation*}
		\begin{split}
		&\quad \frac{p}{2}\int_0^\infty t^{\frac{p}{2}-1}|\{X\in \mathcal{Q}:\mathbf{M}(|\mathcal{T}f|^2)(X)>t\}|dt\\
		&\leq \frac12 \cdot \frac{p}{2} \int_1^\infty t^{\frac{p}{2}-1} 
        |\{X\in \mathcal{Q}:\mathbf{M}(|\mathcal{T}f|^2)(X)>t \}| dt\\
        &\quad +\frac12 \cdot \frac{p}{2\varepsilon}  \int_{0}^\infty 
            |\{X\in \mathcal{Q}:\mathbf{M}(|f|^2)(X)>t\}| dt +\frac{p}{2}N^p|\mathcal{Q}|.
		\end{split}
\end{equation*}
 Consequently, we have
 \begin{equation*}
		\begin{split}
		&\quad \frac{p}{2}\int_0^\infty t^{\frac{p}{2}-1}|\{X\in \mathcal{Q}:\mathbf{M}(|\mathcal{T}f|^2)(X)>t\}|dt\\
		&\leq \frac{p}{2\varepsilon}\int_0^\infty t^{\frac{p}{2}-1}	|\{X\in \mathcal{Q}:\mathbf{M}(|f|^2)(X)>t \}| dt + pN^p|\mathcal{Q}|\\
        &=\varepsilon^{-1} \int_{\mathcal{Q}} \mathbf{M}(|f|^2)^{\frac{p}{2}} +pN^p \cdot \varepsilon^{-1} \|f\|_{L^2}^2\\
        &\leq \varepsilon^{-1} \left\{ \int_{\mathcal{Q}} \mathbf{M}(|f|^2)^{\frac{p}{2}} 
         + p N^p \|f\|_{L^2}^2 \right\}. 
		\end{split}     
 \end{equation*}
 Replacing the first term by (\ref{eqn:SK22_2}) and 
 applying the strong type $(\frac{p}{2},\frac{p}{2})$ inequality of $\mathbf{M}$ in the last step, we obtain 
 \begin{align}
    \int_{\mathcal{Q}}\mathbf{M}(|\mathcal{T}f|^2)^{\frac{p}{2}}
    \leq \varepsilon^{-1} \left\{ \int_{\mathcal{Q}} |f|^p  + p N^p \|f\|_{L^2}^2 \right\}. 
 \label{eqn:SK22_3}   
 \end{align}
 Since $p>2$, the H\"older inequality implies that
 \begin{align*}
     \|f\|_{L^2}^2&=\int_{\mathcal{Q}'} |f|^2 \leq \left(\int_{\mathcal{Q}'} |f|^p \right)^{\frac{2}{p}} 
     \cdot \left(\int_{\mathcal{Q}'} 1 \right)^{1-\frac{2}{p}} 
     \leq \|f\|_{L^p}^2 \cdot |\mathcal{Q}'|^{1-\frac{2}{p}}
     \leq \|f\|_{L^p}^2 \cdot |\mathcal{Q}|^{1-\frac{2}{p}}\\
     &=\|f\|_{L^p}^2 \cdot \left( \varepsilon^{-1} \|f\|_{L^2}^2\right)^{1-\frac{2}{p}}.
 \end{align*}
 Thus, 
 \begin{align}
     \|f\|_{L^2}^2 \leq \varepsilon^{1-\frac{p}{2}} \|f\|_{L^p}^p. 
     \label{eqn:SK22_6}
 \end{align}
 Plugging (\ref{eqn:SK22_6}) into (\ref{eqn:SK22_3}), and noting that $|\mathcal{T}f|^2 \leq \mathbf{M}(|\mathcal{T}f|^2)$ automatically, we obtain
 \begin{align*}
   \int_{\mathcal{Q}} |\mathcal{T}f|^p &\leq  \int_{\mathcal{Q}}\mathbf{M}(|\mathcal{T}f|^2)^{\frac{p}{2}}
   \leq \varepsilon^{-1} \cdot (1+p N^p \varepsilon^{1-\frac{p}{2}}) \cdot \int_{\mathcal{Q}} |f|^p\\
   &=(\varepsilon^{-1} + pN^p \varepsilon^{-\frac{p}{2}}) \int_{\mathcal{Q}} |f|^p
   \leq \left(5CN^p +p(5CN^{p+2})^{\frac{p}{2}} \right) \int_{\mathcal{Q}} |f|^p, 
 \end{align*}
 where we apply the choice of $\varepsilon=\frac{1}{5CN^p}$ in the last step.
 By redefining $C$ to absorb the powers of $N$, we arrive at (\ref{eqn:SJ24_3}). 
\end{proof}

We are now ready to prove the main theorem, which we copy here for the convenience of the readers.

\begin{theorem}[Global version of CZ estimate]
\label{thm:HA12_1}
  Suppose $\{(M,g(t)), -1 \leq t \leq 0\}$ is an evolving manifold satisfying (\ref{eqn:HA02_1}).
  Suppose $\mathcal{T}$ is a $(C_1,C_2)$-Calder\'on-Zygmund integral operator, as defined in Definition~\ref{def2.1}. 

  For each $p \in [2, \infty)$, there is a positive constant $C=C(n,p,\Lambda_0,C_1,C_2)$ such that
      \begin{align}
      \|\mathcal{T}f\|_{L^p(\mathcal{M}')}\leq C\|f\|_{L^p(\mathcal{M})} 
      \label{eqn:HA12_2}    
       \end{align}
   for every $f \in C^{\infty}(\mathcal{M}, \mathcal{F})$.     In particular, if $f$ is supported in $\mathcal{Q}'$, then 
    \begin{align}
      \|\mathcal{T}f\|_{L^p(\mathcal{M})}\leq C\|f\|_{L^p(\mathcal{Q}')}. 
      \label{eqn:HB14_8}    
    \end{align}
\end{theorem}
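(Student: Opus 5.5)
The plan is to globalize the local estimate Theorem~\ref{thm:HA14_1} by localizing $f$ with a covering of $M$ and controlling the far-away pieces through the Gaussian decay of the kernel. The case $p=2$ is just (\ref{eqn:HA19_1}), so assume $p>2$. The local theorem is stated for the fixed cylinders $\mathcal{Q}'\subset\mathcal{Q}$ at a base point $q$ and final time $0$. Its proof used only (\ref{eqn:HA02_1}), the kernel bound (\ref{eqn:SJ06_1}) and the $L^2$ bound, and all of these are invariant under the choice of $q$ and under time translation within $[-1,0]$. The first step is therefore to record that, for every $x\in M$ and every $t_0\in[-\frac14,0]$, with cylinders $\mathcal{Q}_{x,t_0}':=B_{1/8}(x)\times[t_0-\frac1{64},t_0]$ and $\mathcal{Q}_{x,t_0}:=B_{1/4}(x)\times[t_0-\frac1{16},t_0]$, one has
\begin{align*}
\|\mathcal{T}h\|_{L^p(\mathcal{Q}_{x,t_0})}\leq C\|h\|_{L^p(\mathcal{Q}_{x,t_0}')}
\end{align*}
whenever $h$ is supported in $\mathcal{Q}_{x,t_0}'$. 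The time shift is harmless because $\mathcal{T}$ only integrates over $s<t$.

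Next I fix a covering. Apply Lemma~\ref{lm7.1} at scale $\frac1{16}$ (after rescaling the metric by a bounded factor) to obtain points $x_i$ such that the balls $B_{1/16}(x_i)$ cover $M$ and the counting bound $\#\{j: m-1\le 16\,d(x_i,x_j)\le m\}\le e^{Cm}$ holds. Cover $[-\frac14,0]$ by boundedly many slabs $[t_k-\frac1{256},t_k]$. Take a partition of unity $\{\chi_{j}\}$ subordinate to the sets $B_{1/8}(x_j)$, constant in time, and write $f=\sum_j f_j$ with $f_j=\chi_j f$. On each target cylinder $\mathcal{Q}_{i,k}:=B_{1/16}(x_i)\times[t_k-\frac1{256},t_k]$, split $f_j$ into the part with $s\ge t_k-\frac1{64}$ and the part with $s<t_k-\frac1{64}$.

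\emph{Near terms}: $d(x_i,x_j)\le 1$ and $s\ge t_k-\frac1{64}$. Here the local estimate from the first step applies. Each $i$ has only $C$ such $j$, and each $j$ appears for only boundedly many $(i,k)$. So summing the $p$-th powers over $(i,k)$ gives $C\|f\|_{L^p(\mathcal{M})}^p$, using $\sum_j|f_j|^p\le|f|^p$ together with the bounded overlap of the supports.

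\emph{Far terms}: either $d(x_i,x_j)=m\ge1$, or the time gap satisfies $t-s\ge c$. In both cases the integration avoids the diagonal, and (\ref{eqn:SJ06_1}) gives the pointwise bound
\begin{align*}
|\mathcal{K}|\le C\,\theta^{-1}V_x(\sqrt\theta)^{-1}e^{-d^2/(C\theta)}.
\end{align*}
For $d\ge m$ this is at most $C V_x(1)^{-1}e^{-m^2/C}$, after absorbing the powers of $\theta^{-1}$ and the volume-ratio growth $(1/\sqrt\theta)^{n}$ into the Gaussian (Theorem~\ref{thm1.1}). In the time-separated case with $m\le1$ the bound is simply $C V_x(1)^{-1}$. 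Hölder's inequality then gives
\begin{align*}
\sup_{\mathcal{Q}_{i,k}}|\mathcal{T}f_j|\le C e^{-m^2/C}V_{x_j}(1)^{-1/p}\|f_j\|_{L^p},
\end{align*}
so $\|\mathcal{T}f_j\|_{L^p(\mathcal{Q}_{i,k})}\le Ce^{-m^2/C}\bigl(V_{x_i}(1)/V_{x_j}(1)\bigr)^{1/p}\|f_j\|_{L^p}$. Volume comparison gives $V_{x_i}(1)/V_{x_j}(1)\le e^{Cm}$.

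The main obstacle is then summing these contributions. I will use Minkowski's inequality in $j$ for fixed $i$ and then a Schur-test/Young's inequality over the index set, with weight $a_{ij}=e^{-m_{ij}^2/C+Cm_{ij}}$. The counting bound $e^{Cm}$ from Lemma~\ref{lm7.1} makes $\sup_i\sum_j a_{ij}$ and $\sup_j\sum_i a_{ij}$ finite, since Gaussian decay beats exponential growth. Schur's test then yields $\sum_i\bigl(\sum_j a_{ij}\|f_j\|_{L^p}\bigr)^p\le C\sum_j\|f_j\|_{L^p}^p\le C\|f\|_{L^p(\mathcal{M})}^p$. Adding the near and far parts proves (\ref{eqn:HA12_2}).

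For (\ref{eqn:HB14_8}), the region $\mathcal{M}'$ is already covered, and for $t<-\frac14$ one has $\mathcal{T}f=0$ because $\mathcal{Q}'$ lies in the later times $[-\frac1{64},0]$.
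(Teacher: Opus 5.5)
Your argument is correct in substance. It rests on the same three ingredients as the paper's proof: the local estimate of Theorem~\ref{thm:HA14_1} near the diagonal, the Gaussian bound (\ref{eqn:SJ06_1}) combined with Bishop--Gromov away from it, and the exponential counting bound of Lemma~\ref{lm7.1}.

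Where you differ is in the assembly. The paper localizes only the source. For $f$ supported in one $B_1(x_0)\times[-1,0]$, it splits $f=\eta f+(1-\eta)f$ with a single smooth time cutoff. It applies the (adjusted) local theorem to $\eta f$ on $B_2(x_0)$, bounds $\mathcal{T}((1-\eta)f)$ pointwise using the time gap, and handles the far balls $B_1(x_i)$ through (\ref{eqn:HA14_5}). It then passes to general $f$ ``by partition of unity and linearity''. Taken literally, that last step only gives $\|\mathcal{T}f\|_{L^p}\le C\sum_j\|f_j\|_{L^p}$. This is not bounded by $C\|f\|_{L^p}$ uniformly in the number of pieces; it can be larger by a factor $N^{1-1/p}$ for $N$ pieces. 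What actually closes the argument is your device. Piece by piece, the computation leading to (\ref{eqn:HA12_5}) gives the off-diagonal decay $\|\mathcal{T}f_j\|_{L^p(B_1(x_i)\times[-1,0])}\le Ce^{-d(x_i,x_j)^2/C}\|f_j\|_{L^p}$. The Schur test with weights $a_{ij}=e^{-m_{ij}^2/C+Cm_{ij}}$, whose row and column sums are finite by the counting bound, turns this into an $\ell^p$ estimate. Your double decomposition (source partition, target cylinders, time slabs, time-shifted local estimates) costs more bookkeeping than the paper's single cutoff, but it is what makes the globalization complete.

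One step fails as written, though it is easily repaired. Your near terms allow $d(x_i,x_j)\le 1$. But the local estimate from your first step controls $\mathcal{T}h$ only on $\mathcal{Q}_{x_j,t_k}$, whose spatial base is $B_{1/4}(x_j)$, and $B_{1/16}(x_i)\not\subset B_{1/4}(x_j)$ once $d(x_i,x_j)>3/16$.

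Lowering the threshold to $3/16$ does not help. With target radius $1/16$ and source radius $1/8$, pairs with $d(x_i,x_j)$ just above $3/16$ have no positive lower bound on $d(x,y)$. Since $\theta^{-1}V_x(\sqrt{\theta})^{-1}$ is not integrable near the diagonal, your H\"older bound for far terms is unavailable there.

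The fix is a split. For each near pair and each slab, divide the target $\mathcal{Q}_{i,k}$ into its part over $B_{1/4}(x_j)$ and the rest. On the first part the local estimate applies. On the rest, $d(x,y)\ge\frac14-\frac18=\frac18$ for all $y\in B_{1/8}(x_j)$, so $|\mathcal{K}|\le CV_x(1)^{-1}$ and your far-term H\"older argument applies with a bounded weight. (The paper sidesteps the same issue by asserting a local estimate whose output region $B_4(x_0)$ is much larger than its input region $B_1(x_0)$.) With this change your near-term counting goes through unchanged.
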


\begin{proof}

By the definition of $\mathcal{Q}'$, it is clear that the support of $\mathcal{T}f$ is contained in $\mathcal{M}'=M\times [-\frac12, 0]$. Thus, if $f$ is supported in $\mathcal{Q}'$, it follows directly from (\ref{eqn:HA12_2}) that
\begin{align*}
    \|\mathcal{T}f\|_{L^p(\mathcal{M})}=\|\mathcal{T}f\|_{L^p(\mathcal{M}')}
    \leq C\|f\|_{L^p(\mathcal{M})}=C\|f\|_{L^p(\mathcal{Q}')}.   
\end{align*}
Therefore, (\ref{eqn:HB14_8}) is a direct consequence of (\ref{eqn:HA12_2}).

We focus on the proof of (\ref{eqn:HA12_2}). 
In light of Theorem~\ref{thm:HA14_1}, by adjusting some parameters if necessary, we have already obtained
\begin{align*}
    \|\mathcal{T}f\|_{L^p(B_4(x_0) \times [-1, 0])}\leq C\|f\|_{L^p(B_1(x_0) \times [-\frac34, 0])}
\end{align*}
when $\mathrm{supp} f \subset B_1(x_0) \times [-\frac34, 0]$.  
This is a local estimate. We next want to globalize it and try to show
\begin{align}
    \|\mathcal{T}f\|_{L^p( M  \times [-\frac14, 0])}\leq C\|f\|_{L^p(B_1(x_0) \times [-1, 0])}
\label{eqn:HA12_4}    
\end{align}
if $\mathrm{supp} f \subset B_1(x_0) \times [-1, 0]$.  

Fix $x_0$. 
We cover $M \backslash B_2(x_0)$ by unit balls $\{B_1(x_i)\}_{i=1}^{\infty}$ such that $B_{\frac15}(x_i)$ are disjoint to each other.  
Thus, $B_2(x_0) \cup \{\cup_{i=1}^{\infty} B_1(x_i)\}$ is a covering of $M$ with uniformly finite intersection property. 


Recall the expression
 \begin{align*}
     \mathcal{T}f(x,t) = \int_{-1}^t \int_M \mathcal{K}(x,t;y,s) f(y,s)dyds. 
 \end{align*}
For each $i \geq 1$, we denote $d_i :=d(x_i, x_0)$ under the metric $g(0)$.
It is clear that $d_i \geq 1$ by the triangle inequality. 
If $x \in B_1(x_i)$,  then we have
 \begin{align*}
     |\mathcal{T}f|(x,t) &\leq \int_{-1}^t \int_M |\mathcal{K}|(x,t;y,s) |f|(y,s)dyds\\
     &=\int_{-1}^t \int_{B_1(x_0)} |\mathcal{K}|(x,t;y,s) |f|(y,s)dyds\\
     &\leq C\int_{-1}^t\int_{B_1(x_0)} (t-s)^{-\frac{n+2}{2}}V^{-1}_{x_i}(1)e^{-\frac{(d_i-1)^2}{C(t-s)}}|f|(y,s) dyds. 
 \end{align*}
 Note that $\theta^{-\frac{n+2}{2}}e^{-\frac{d^2}{C\theta}}\leq Ce^{-\frac{d^2}{C}}$ if $\theta \in (0,1)$ and $d \geq 1$. 
 Applying the H\"older inequality,  the above inequality can be simplified as 
  \begin{align*}
          \sup_{B_1(x_i) \times [-1, 0]}|\mathcal{T}f|(x,t)
         \leq Ce^{-\frac{d_i^2}{C}} V^{-1}_{x_i}(1)\int_{-1}^t\int_{B_1(x_0)} |f|(y,s)dyds.  
  \end{align*}
  Thus,
  \begin{align*}
      &\quad \|\mathcal{T}f\|_{L^p(B_1(x_i) \times [-1,0])}^p\\
      &\leq Ce^{-\frac{d_i^2}{C}}V_{x_i}^{1-p}(1) \|f\|_{L^1(B_1(x_0)\times [-1,0])}^p
       \leq Ce^{-\frac{d_i^2}{C}} \left\{ \frac{V_{x_0}(1)}{V_{x_0}(1)} \right\}^{p-1} \|f\|_{L^p(B_1(x_0)\times [-1,0])}^p\\
      &=Ce^{-\frac{d_i^2}{C}} \|f\|_{L^p(M \times [-1,0])}^p.
  \end{align*}
Summing the above inequalities over $i$, we have
\begin{align}
  \sum_{i=1}^{\infty} \|\mathcal{T}f\|_{L^p(B_1(x_i) \times [-1,0])}^p
  \leq C \left\{\sum_{i=1}^{\infty}  e^{-\frac{d_i^2}{C}} \right\} \|f\|_{L^p(M
  \times [-1,0])}^p.
\label{eqn:HA12_5}
\end{align}
By the covering property in Lemma~\ref{lm7.1}, we have
\begin{align*}
    \sum_{i=1}^{\infty}  e^{-\frac{d_i^2}{C}}=\sum_{m=1}^{\infty} \sum_{m-1 \leq d_i \leq m} e^{-\frac{d_i^2}{C}}
    \leq \sum_{m=1}^{\infty} e^{-\frac{(m-1)^2}{C}+Cm} < C. 
\end{align*}
Plugging it into (\ref{eqn:HA12_5}), we arrive at 
\begin{align}
  \sum_{i=1}^{\infty} \|\mathcal{T}f\|_{L^p(B_1(x_i) \times [-1,0])}^p \leq C \|f\|_{L^p(M\times [-1,0])}^p. 
\label{eqn:HA14_5}  
\end{align}

Now we estimate $\|\mathcal{T}f\|_{L^p(B_2(x_0) \times [-1,0])}$.  We write
\begin{align*}
    f=\eta f + (1-\eta) f = f_1+f_2. 
\end{align*}

Since $\mathrm{supp} f_1 \subset B_1(x_0) \times [-\frac34, 0]$, Theorem~\ref{thm:HA14_1} implies that
\begin{align}
 \|\mathcal{T}f_1\|_{L^p(B_2(x_0) \times [-1,0])}^p \leq C \|f_1\|_{L^p(B_1(x_0) \times [-\frac34,0])}^p
 \leq C \|f\|_{L^p(M\times [-1,0])}^p. 
\label{eqn:HA14_2} 
\end{align}
As $\mathrm{supp} f_2 \subset B_1(x_0) \times [-1, -\frac12]$, it is clear $t-s \geq \frac14$ if 
$(x,t) \in M \times [-\frac14, 0]$. Thus, it follows from the heat kernel upper bound that
\begin{align*}
     &\quad |\mathcal{T}f_2|(x,t)\\
     &\leq \int_{-1}^t \int_M |\mathcal{K}|(x,t;y,s) |f_2|(y,s)dyds
      =\int_{-1}^{-\frac12} \int_{B_1(x_0)} |\mathcal{K}|(x,t;y,s) |f_2|(y,s)dyds\\
     &\leq C  V^{-1}_{x_0}(1) \int_{-1}^{-\frac12}\int_{B_1(x_0)}|f_2|(y,s) dyds
      \leq C \|f_2\|_{L^p(M \times [-1,0])}. 
 \end{align*}
 Integrating the $p$-power of the above inequality yields
 \begin{align}
      \|\mathcal{T}f_2\|_{L^p(B_2(x_0) \times [-\frac14,0])}^p 
      \leq C \|f_2\|_{L^p(M\times [-1,0])}^p
      \leq C \|f\|_{L^p(M\times [-1,0])}^p. 
 \label{eqn:HA14_3}     
 \end{align}
 Thus, combining (\ref{eqn:HA14_2}) and (\ref{eqn:HA14_3}) yields that 
 \begin{align}
 \begin{split}
     \|\mathcal{T}f\|_{L^p(B_2(x_0) \times [-\frac14,0])}
     &\leq C \left\{ \|\mathcal{T}f_1\|_{L^p(B_2(x_0) \times [-1,0])} + \|\mathcal{T}f_2\|_{L^p(B_2(x_0) \times [-1,0])} \right\}\\
     &\leq C \|f\|_{L^p(M\times [-1,0])}. 
 \end{split}  
 \label{eqn:HA14_4}
 \end{align}
As $B_2(x_0) \cup \{\cup_{i=1}^{\infty} B_1(x_i)\}$ is a covering of $M$ with a uniformly finite intersection property, it follows from (\ref{eqn:HA14_5}) and (\ref{eqn:HA14_4}) that
\begin{align*}
     \|\mathcal{T}f\|_{L^p(M \times [-\frac14,0])}^p &\leq C \left\{ \|\mathcal{T}f\|_{L^p(B_2(x_0) \times [-\frac14,0])}^p
     +\sum_{i=1}^{\infty} \|\mathcal{T}f\|_{L^p(B_1(x_i) \times [-\frac14,0])}^p \right\}\\
     &\leq C \|f\|_{L^p(M\times [-1,0])}^p,
\end{align*}
which is exactly (\ref{eqn:HA12_4}).  By partition of unity and the linear property of $T$, we can replace $f$ by any smooth function which is compactly supported in $M$. 

\end{proof}

\section{Applications}
\label{sec7}

In this section, as applications of our main theorem,  
we prove Theorem~\ref{thm:HA02_1}, Theorem~\ref{thm:HA02_3}, and Theorem~\ref{thm:HA02_4}.   
      	
    \begin{theorem}
    \label{thm:HA02_1}
		Let $\{(M,g(t)), -1 \leq t \leq 0\}$ be an evolving manifold that satisfies (\ref{eqn:HA02_1}).
        Let $u$ be a smooth function such that
        \begin{align}
            (\partial_t -\Delta)u=f. 
        \end{align}
        Then for any $p\in [2,\infty)$, there exists a positive constant $C=C(n,p,\Lambda_0)$ such that 
        \begin{align}
           \label{eqn:SJ27_1}
            \begin{split}
                \|\dot{u}\|_{L^p(\mathcal{M}')}+\|\mathrm{Hess}(u)\|_{L^p(\mathcal{M}')}
                \leq C \left\{ \|u\|_{L^p(\mathcal{M})}+\|f\|_{L^p(\mathcal{M})} \right\}
            \end{split}
	\end{align}
	for any  $u\in C^\infty_c(\mathcal{M})$.
	\end{theorem}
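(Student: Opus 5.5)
The plan is to write $u$ through the Duhamel formula and read the Hessian term as the Calder\'on-Zygmund operator of Theorem~\ref{thm:HA12_1} with kernel $\mathcal{K}=\nabla_x^2 H$. Since $u\in C^\infty_c(\mathcal{M})$, hence compactly supported in $M\times(-1,0]$ and vanishing near $t=-1$, I would write
\begin{align*}
u(x,t)=\int_{-1}^t\int_M H(x,t;y,s) f(y,s)\,dyds .
\end{align*}
Formally, $\mathrm{Hess}(u)=\mathcal{T}f$ with $\mathcal{K}(x,t;y,s)=\nabla_x^2 H(x,t;y,s)$, a section of $\mathrm{Sym}^2(T^*M)\boxtimes\mathbb{R}$. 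Because the kernel is singular, I would justify this in principal value form. Since $\int_M H\,dy$ is controlled (equal to $1$ in the static case, and bounded by curvature terms in the Ricci flow case), the approximations $\int_{-1}^{t-\delta}$ converge, and the differentiation can be handled by splitting $f=f(y,s)-\tau f(x,s)$ locally. After that, $\dot u=\Delta u+f$ gives $\|\dot u\|_{L^p}\le C(\|\mathrm{Hess}\, u\|_{L^p}+\|f\|_{L^p})$, so it suffices to bound the Hessian.

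Step 1 is to verify the $C_1$-kernel bound (\ref{eqn:SJ06_1}). This is exactly Theorem~\ref{thm3.2}. It supplies $\theta|\nabla^2_xH|$, the H\"older seminorm term at scale $\rho_\theta$, and $\theta^2|\partial_t\nabla_x^2H|$, all bounded by $CV_{x,t}(\sqrt\theta)^{-1}e^{-d^2/C\theta}$ with $C=C(n,\Lambda_0)$.

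Step 2 is the $L^2$ bound (\ref{eqn:HA19_1}), and I expect it to be the main obstacle. The plan is a Bochner/integration-by-parts argument. Multiply $(\partial_t-\Delta)u=f$ by $\Delta u$ and integrate over $M\times[-1,t]$. Then use
\begin{align*}
\int_M|\Delta u|^2=\int_M|\mathrm{Hess}\, u|^2+\int_M \mathrm{Rc}(\nabla u,\nabla u),
\end{align*}
which holds for compactly supported $u$ on each time slice. This gives
\begin{align*}
\tfrac12\int_M|\nabla u|^2(t)+\int|\Delta u|^2\le \int|f||\Delta u|+C\Lambda_0\int|\nabla u|^2 .
\end{align*}
In the Ricci flow case there is an extra term $\partial_t g^{-1}=2\mathrm{Rc}$ when differentiating $|\nabla u|^2$, also bounded by $\Lambda_0|\nabla u|^2$. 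Gronwall then controls $\sup_t\|\nabla u\|_{L^2}$ by $\|f\|_{L^2}$, through the energy identity $\int|\nabla u|^2\le C\int(|u|^2+|f|^2)$ and $\|u\|_{L^2}\le C\|f\|_{L^2}$ from Duhamel. The result is $\|\mathrm{Hess}\,u\|_{L^2}\le C_2\|f\|_{L^2}$ for $u=\mathcal{T}_H f$, i.e.\ $\mathcal{T}$ is bounded on $L^2$. The delicate points are the noncompact support of $\mathcal{T}_Hf$ and the absence of $\nabla\mathrm{Rc}$ bounds. The first I would handle with cutoffs and the Gaussian decay of Lemma~\ref{lma:HA10_3}. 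The second is harmless because only $\mathrm{Rc}$, never $\nabla\mathrm{Rc}$, appears in the static Bochner identity.

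Step 3 concludes. Theorem~\ref{thm:HA12_1} gives $\|\mathrm{Hess}\,\mathcal{T}_Hf\|_{L^p(\mathcal{M}')}\le C\|f\|_{L^p(\mathcal{M})}$ for $p\ge2$. The term $\|u\|_{L^p}$ on the right-hand side of (\ref{eqn:SJ27_1}) gives room for localization: if one prefers to apply the theorem on shifted time intervals, multiply $u$ by a time cutoff $\eta$. Then $(\partial_t-\Delta)(\eta u)=\eta f+\dot\eta u$, and applying the above to $\eta u$ yields (\ref{eqn:SJ27_1}).
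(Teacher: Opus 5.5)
Your proposal is correct and follows the paper's proof essentially step for step. The paper uses the kernel $\nabla_x^2 H$ with the bounds of Theorem~\ref{thm3.2}, proves the $L^2$ bound by an energy estimate, a $\int|\Delta u|^2$ estimate and the integrated Bochner identity (Proposition~\ref{prn:HA25_3}), applies Theorem~\ref{thm:HA12_1} to $\eta u$, and recovers $\dot u$ from $\dot u=\Delta u+f$. One correction: the time cutoff $\eta$ is not optional, because $u\in C^\infty_c(\mathcal{M})$ need not vanish at $t=-1$, and the source term $\dot\eta u$ is exactly where $\|u\|_{L^p(\mathcal{M})}$ enters, as in the paper.
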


    Theorem \ref{thm:HA02_1} is valid only if $u$ is a function.
    If $u$ is a tensor field, we either need a stronger regularity assumption of the underlying space-time or we need to weaken the estimate.

    \begin{theorem}
    \label{thm:HA02_3}
		Let $\{(M,g(t)), -1 \leq t \leq 0\}$ be a Ricci flow solution satisfying (\ref{eqn:HA02_1}).
        Let $u,f$ be smooth sections of $\mathrm{Sym}^2(T^*M)$ satisfying
		\begin{align}
			(\partial_t -\Delta) u=f. 
		\end{align}
		Then for each $p \in [2, \infty)$,  there exists a positive constant 
        $C=C(n,p,\Lambda_0)$ such that  
         \begin{align}
           \label{eqn:SJ27_1A}
            \begin{split}
                \|\dot{u}\|_{L^p(\mathcal{M}')}+\|\mathrm{Hess}(u)\|_{L^p(\mathcal{M}')}
                \leq C \left\{ \|u\|_{L^p(\mathcal{M})}+\|f\|_{L^p(\mathcal{M})} \right\}.
            \end{split}
	\end{align}
    \end{theorem}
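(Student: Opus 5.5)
The plan is to reduce Theorem~\ref{thm:HA02_3} to Theorem~\ref{thm:HA12_1}. We take $\mathcal{E}=\mathcal{F}=\mathrm{Sym}^2(T^*M)$ and the kernel $\mathcal{K}=\nabla_x^2\Psi(x,t;y,s)$, where $\Psi$ is the heat kernel of the rough Laplacian $\Delta$ acting on symmetric $(0,2)$-tensors along the Ricci flow.

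\textbf{Localization.} We first localize in time with a cutoff $\eta(t)$, equal to $1$ on $[-\frac14,0]$ and vanishing near $t=-1$, with $|\eta'|\le C$. Set $v=\eta u$. Then
\begin{align*}
(\partial_t-\Delta)v=\eta f+\eta' u=:F, \qquad v(\cdot,-1)=0.
\end{align*}
Duhamel's formula then gives $v=\int_{-1}^t\int_M\Psi(x,t;y,s)F(y,s)\,dyds$. Uniqueness holds in the bounded-curvature class because $u$ is smooth, and we may take $u$ compactly supported or approximate. Hence $\mathrm{Hess}(v)=\mathcal{T}F$ and $\|F\|_{L^p}\le C(\|u\|_{L^p}+\|f\|_{L^p})$. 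Once $\mathrm{Hess}(v)$ is controlled, we bound $\dot u=\Delta u+f$ on $\mathcal{M}'$ by $|\Delta u|\le\sqrt n|\mathrm{Hess}\,u|$.

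\textbf{The $C_1$-kernel bound.} We must verify (\ref{eqn:SJ06_1}) for $\mathcal{K}=\nabla_x^2\Psi$. In the Ricci flow case, Theorem~\ref{thm:HA02_5} gives Gaussian bounds for $\theta^{(j+l)/2}|\nabla_x^j\nabla_y^l\Psi|$ of every order. Taking $j=2,3$ yields the bound on $\theta|\mathcal{K}|$, and the H\"older seminorm over radius $\rho_\theta$ follows from $\nabla^3_x\Psi$ via the argument of Lemma~\ref{lma:SL13_1}, with parallel transport handled by Lemma~\ref{lma:HB05_2}. For $\theta^2|\dot{\mathcal K}|$, we differentiate the equation: $\partial_t\nabla^2\Psi=\nabla^2\Delta\Psi+(\partial_t\Gamma)*\nabla\Psi$. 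Shi's estimates bound $\partial_t\Gamma\sim\nabla Rc$. Commuting covariant derivatives produces $Rm*\nabla^2\Psi$ and $\nabla Rm*\nabla\Psi$ terms, so $|\dot{\mathcal K}|\lesssim|\nabla^4\Psi|+|\nabla^2\Psi|+|\nabla\Psi|$. Theorem~\ref{thm:HA02_5} with $k=4$ then gives the required $\theta^{-2}$ Gaussian decay; the lower-order terms are harmless since $\theta\le1$.

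\textbf{The $L^2$ bound (main obstacle).} We need $\|\mathrm{Hess}\,v\|_{L^2}\le C_2\|F\|_{L^2}$ when $v$ solves the heat equation with zero initial data. The plan is an energy argument.
\begin{itemize}
\item Pairing the equation with $\Delta v$ and integrating over $M\times[-1,t]$ gives
\begin{align*}
\int|\Delta v|^2+\tfrac12\frac{d}{dt}\int|\nabla v|^2=-\int\langle F,\Delta v\rangle+\text{(error)}.
\end{align*}
The error term comes from $\partial_t g=-2Rc$, which affects both $\partial_t|\nabla v|^2$ and the volume form. It is bounded by $C\int(|\nabla v|^2+|Rm||v||\nabla^2v|)$.
\item A Bochner identity converts $\int|\Delta v|^2$ into $\int|\nabla^2 v|^2$ modulo $Rm*\nabla v*\nabla v$ and $\nabla Rm*v*\nabla v$ terms. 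These are controlled using $|Rm|\le\Lambda_0$ and Shi's bound on $\nabla Rm$; integrating by parts moves the derivative off $Rm$ when needed.
\item Testing the equation with $v$ gives $\sup_t\int|v|^2+\int|\nabla v|^2\le C\int|F|^2$.
\item Gronwall and absorption then close the estimate.
\end{itemize}
The delicate points are the time-dependence of the metric, which the Shi estimates handle, and the justification of the integrations by parts on a noncompact $M$. For the latter we use a compact time-space support of $v$ (or cutoffs with bounded gradients, available by bounded curvature) and pass to the limit. With the $C_1$ bound, this $L^2$ bound, and the Duhamel representation in hand, Theorem~\ref{thm:HA12_1} yields (\ref{eqn:SJ27_1A}).
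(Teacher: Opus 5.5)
Your proposal follows the paper's proof. You cut off in time, write $\mathrm{Hess}(\eta u)$ as the Duhamel convolution with $\nabla_x^2\Psi$, obtain the kernel bounds from Theorem~\ref{thm:HA02_5} and the $L^2$ bound by the energy and Bochner argument of Proposition~\ref{prn:HA25_6}, apply Theorem~\ref{thm:HA12_1}, and recover $\dot u$ from the equation. The only variation is in the $L^2$ step, where your Gronwall argument (with the curvature-derivative terms integrated by parts) replaces the paper's rescaling to $|Rm|\le\xi^2$ and its induction over unit time intervals, and this works; note only that $\partial_t\nabla^2\Psi$ also contains a harmless $\nabla(\partial_t\Gamma)*\Psi$ term that your formula omits.
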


    \begin{theorem}
    \label{thm:HA02_4}
    Let $\{(M,g(t)), -1 \leq t \leq 0\}$ be an evolving manifold that satisfies (\ref{eqn:HA02_1}). 
    Let $u \in C^{\infty}(\mathrm{Sym}^2(T^*M))$ be a solution of 
		\begin{align}
			(\partial_t -\Delta_L) u=\nabla^*f
		\end{align}
    where $f\in C^{\infty} (\mathrm{Sym}^2(T^*M) \otimes T^*M)$,  
    $\Delta_L$ is the Lichnerowicz Laplacian, and $\nabla^*$ is the formal adjoint operator of $\nabla$. Then for each $p \in [2, \infty)$,  there exists a positive constant 
        $C=C(n,p,\Lambda_0)$ such that  
		\begin{align}
            \label{eqn:HA11_1}
			\|\nabla u\|_{L^p(\mathcal{M}')}
            \leq C \left\{ \|u\|_{L^p(\mathcal{M})}+\|f\|_{L^p(\mathcal{M})} \right\}. 
		\end{align}
    \end{theorem}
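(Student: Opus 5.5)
We would obtain Theorem~\ref{thm:HA02_4} from Theorem~\ref{thm:HA12_1}, applied with $\mathcal{E}=\mathcal{F}=\mathrm{Sym}^2(T^*M)\otimes T^*M$ and kernel $\mathcal{K}(x,t;y,s)=\nabla_x\nabla_y\Psi(x,t;y,s)$, where $\Psi$ is the Lichnerowicz heat kernel.

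\textbf{Localization and Duhamel representation.} Fix a cutoff $\eta=\eta(t)$ with $\eta\equiv 0$ near $t=-1$, $\eta\equiv 1$ on $[-\frac14,0]$ and $|\eta'|\le C$. Put $v=\eta u$. Then
\begin{align*}
(\partial_t-\Delta_L)v=\nabla^*(\eta f)+\eta' u, \qquad v(\cdot,-1)=0,
\end{align*}
so Duhamel's principle gives
\begin{align*}
v(x,t)=\int_{-1}^t\int_M \Psi(x,t;y,s)\big(\nabla^*(\eta f)+\eta' u\big)(y,s)\,dyds .
\end{align*}
We differentiate in $x$ and integrate by parts in $y$, moving $\nabla^*$ onto $\Psi$. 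This yields
\begin{align*}
\nabla v=\mathcal{T}(\eta f)+\mathcal{S}(\eta' u), \qquad \mathcal{S}h(x,t):=\int_{-1}^t\int_M\nabla_x\Psi(x,t;y,s)\,h(y,s)\,dyds .
\end{align*}
The boundary terms vanish: we first take $f,u$ compactly supported in space and use the Gaussian decay of $\Psi$, then remove this assumption by approximation. Since $\nabla v=\nabla u$ on $\mathcal{M}'$, it suffices to bound $\mathcal{T}(\eta f)$ and $\mathcal{S}(\eta'u)$ in $L^p$.

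\textbf{The term $\mathcal{S}$.} Theorem~\ref{thm3.3} gives $|\nabla_x\Psi|\le C\theta^{-1/2}V_x(\sqrt\theta)^{-1}e^{-d^2/(C\theta)}$. Combined with the volume comparison (\ref{eqn:HA24_9}), this shows that $\sup_x\iint|\nabla_x\Psi|\,dyds\le C\int_0^1\theta^{-1/2}d\theta$ is finite, and likewise with the roles of $x$ and $y$ exchanged. The exchanged version uses (\ref{eqn:HA18_1}) and the fact that $V_x(\sqrt\theta)$ and $V_y(\sqrt\theta)$ are comparable up to a Gaussian factor. Schur's test then gives $\|\mathcal{S}(\eta'u)\|_{L^p}\le C\|u\|_{L^p(\mathcal{M})}$ for every $p$.

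\textbf{Checking that $\mathcal{T}$ is a Calder\'on--Zygmund operator.} The $C_1$-kernel bound (\ref{eqn:SJ06_1}) for $\mathcal{K}=\nabla_x\nabla_y\Psi$ is exactly the content of (\ref{eqn:SL29_8}), covering the sup bound, the H\"older seminorm and the time derivative. What remains is the $L^2$-bound (\ref{eqn:HA19_1}), which we expect to be the main obstacle. Given compactly supported $F$, let $w=\mathcal{T}$-potential, i.e. the solution of $(\partial_t-\Delta_L)w=\nabla^*F$ with $w(-1)=0$, so that $\nabla w=\mathcal{T}F$. The energy identity reads
\begin{align*}
\frac{d}{dt}\int_M|w|^2d\mu=-2\int_M|\nabla w|^2+2\int_M\langle \mathrm{Rm}*w,w\rangle+2\int_M\langle F,\nabla w\rangle+\int_M \mathrm{Rc}*w*w ,
\end{align*}
where the last term appears only in the Ricci flow case, coming from the evolution of $g$ and $d\mu$. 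Every curvature term is bounded by $C\Lambda_0\int|w|^2$. Absorbing $2\langle F,\nabla w\rangle\le|\nabla w|^2+|F|^2$ gives
\begin{align*}
\frac{d}{dt}\int|w|^2+\int|\nabla w|^2\le C\int|w|^2+\int|F|^2 .
\end{align*}
Gronwall's inequality then yields $\sup_t\int|w|^2\le C\|F\|_{L^2}^2$, and integrating in time gives $\|\nabla w\|_{L^2(\mathcal{M})}\le C_2\|F\|_{L^2}$.

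The delicate points are two. First, the integrations by parts on the noncompact $M$ must be justified; we would do this via the Gaussian bounds of Theorem~\ref{thm3.3} and a spatial cutoff whose gradient error tends to zero. Second, $\nabla^*$ is taken with respect to $g(s)$ along the flow; this is consistent with the Duhamel formula because $\Psi$ is the fundamental solution of the time-dependent operator.

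\textbf{Conclusion.} Theorem~\ref{thm:HA12_1} gives $\|\mathcal{T}(\eta f)\|_{L^p(\mathcal{M}')}\le C\|f\|_{L^p(\mathcal{M})}$. Adding the bound for $\mathcal{S}(\eta'u)$ yields (\ref{eqn:HA11_1}).
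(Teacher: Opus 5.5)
Your proposal is correct and follows the paper's route: write $\nabla u$ through the Calder\'on--Zygmund operator with kernel $\nabla_x\nabla_y\Psi$, take the kernel bounds from Theorem~\ref{thm3.3}, obtain the $L^2$ bound from an energy estimate, and invoke Theorem~\ref{thm:HA12_1}. For the $L^2$ bound you use Gronwall directly, whereas the paper in Proposition~\ref{prn:HA25_9} rescales to $|\mathrm{Rm}|\le\xi^2$ and iterates over unit time intervals.

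Your explicit treatment of the cutoff error $\eta' u$ is a genuine improvement. You handle it with the auxiliary operator $\mathcal{S}$, whose kernel $\nabla_x\Psi$ is integrable in time, and Schur's test. This step is actually needed, because $\eta' u$ is not in divergence form. The paper's claim that the argument of Theorem~\ref{thm:HA02_3} carries over ``verbatim'' skips over this term.
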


Basically, we need to show that for each theorem we can find a corresponding Calder\'on-Zygmund kernel and operator.

\begin{proposition}
\label{prn:HA25_3}
Let $\{(M,g(t)), -1 \leq t \leq 0\}$ be an evolving manifold that satisfies (\ref{eqn:HA02_1}).
Let $H$ be the heat kernel of $\Delta$. 
Let $\mathcal{K}(x,t;y,s) := \nabla_x \nabla_x H(x,t;y,s)$ and
$\mathcal{T}$ be the convolution operator with $\mathcal{K}$:
\begin{align*}
    \mathcal{T} f(x,t) :=\int_{-1}^t \int_M \mathcal{K}(x,t;y,s)f(y,s) dyds.  
\end{align*}
Then $\mathcal{T}$ is a Calder\'{o}n-Zygmund kernel in the sense of Definition~\ref{def2.1}.
\end{proposition}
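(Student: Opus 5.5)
Two things must be checked for $\mathcal{K}=\nabla_x\nabla_x H$: the kernel bound (\ref{eqn:SJ06_1}) with some $C_1=C_1(n,\Lambda_0)$, and the $L^2$ bound (\ref{eqn:HA19_1}) with some $C_2=C_2(n,\Lambda_0)$.

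\textbf{Kernel bound.} Theorem~\ref{thm3.2} already supplies
\[
\theta|\nabla_x^2H|+\theta^{1+\frac{\alpha}{2}}[\nabla_x^2H]_\alpha^{(\rho_\theta)}+\theta^2|\partial_t\nabla_x^2H|\le CV_{x,t}(\sqrt\theta)^{-1}e^{-\frac{d_t^2(x,y)}{C\theta}},
\]
which matches the three terms of (\ref{eqn:SJ06_1}) exactly. Only two notational points remain. First, (\ref{eqn:HA18_1}) and (\ref{eqn:HA18_2}) let us replace $d_t$ and $V_{x,t}$ by $d=d_{g(0)}$ and $V_x$ after enlarging $C$. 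Second, in the Ricci flow case $\dot{\mathcal{K}}$ is the time derivative of the tensor $\nabla^2_{g(t)}H$. It therefore differs from $\nabla^2\dot H$ by terms $\partial_t\Gamma*\nabla H$ and $Rc*\nabla^2 H$. These terms are controlled by the Shi bound $|\nabla Rc|\le C$, which holds on $[-\tfrac12,0]$ after the parabolic rescaling already used in Section~\ref{sec4}, together with the bounds on $|\nabla H|$ and $|\nabla^2H|$. They only improve the powers of $\theta$, since $\theta\le1$.

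\textbf{$L^2$ bound, reduction.} I would prove (\ref{eqn:HA19_1}) by an energy argument rather than through the kernel. For $f\in C_c^\infty(\mathcal{M})$ set $u(x,t)=\int_{-1}^t\int_M H(x,t;y,s)f(y,s)\,dyds$. This is the Duhamel solution of $(\partial_t-\Delta)u=f$ with $u(\cdot,-1)=0$. Differentiating under the integral gives $\mathcal{T}f=\nabla^2u$. The singularity at $s=t$ is harmless for smooth compactly supported $f$, since $\nabla^2 H$ acting on a Hölder $f$ is integrable; I would justify this by splitting off a small time interval.

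\textbf{$L^2$ bound, energy estimates.} First, multiplying the equation by $u$ and using Gronwall, with the volume-form derivative $\partial_t d\mu=-R\,d\mu$ bounded by $C\Lambda_0$, gives
\[
\sup_t\|u\|_{L^2}^2+\int\!\!\int|\nabla u|^2\le C\|f\|_{L^2}^2.
\]
Next, testing against $-\Delta u$ (equivalently differentiating $\int|\nabla u|^2$ in time) gives
\[
\frac{d}{dt}\int|\nabla u|^2+2\int|\Delta u|^2\le 2\int|f||\Delta u|+C\int|\nabla u|^2,
\]
where the $C\int|\nabla u|^2$ term comes from $\partial_t g^{-1}=2Rc$ and $\partial_t d\mu$. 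Hence $\int\!\!\int|\Delta u|^2\le C\|f\|_{L^2}^2$. Finally, Bochner's formula integrated on the complete manifold,
\[
\int|\nabla^2u|^2=\int|\Delta u|^2-\int Rc(\nabla u,\nabla u)\le\int|\Delta u|^2+(n-1)\Lambda_0\int|\nabla u|^2,
\]
yields $\|\mathcal{T}f\|_{L^2(\mathcal{M})}\le C_2\|f\|_{L^2}$.

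\textbf{Main obstacle.} The delicate step is justifying the integrations by parts on a noncompact, possibly collapsing $M$. We need $u(\cdot,t)$, $\nabla u$ and $\nabla^2u$ to lie in $L^2$ with enough decay to discard boundary terms. I would obtain this from the Gaussian bounds of Theorem~\ref{thm3.2}, the compact support of $f$, and the exponential volume growth from Theorem~\ref{thm1.1}, which together give exponential spatial decay of $u$ and its derivatives. Then I would use cutoff functions $\phi_R$ with $|\nabla\phi_R|\le C/R$ and let $R\to\infty$. A cleaner alternative is to use the density of $C_c^\infty$ in $W^{2,2}$, which holds under bounded Ricci curvature.
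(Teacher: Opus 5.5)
Your proposal is correct and follows the paper's proof. The kernel bound is read off from Theorem~\ref{thm3.2}, and the $L^2$ bound comes from the same three energy steps: testing with $u$ for the $W^{1,2}$ estimate, using $-2\int\dot u\,\Delta u=\frac{d}{dt}\int|\nabla u|^2$ (up to curvature terms) for the $\int\!\!\int|\Delta u|^2$ bound, and the integrated Bochner formula, with cutoffs removed via the Gaussian decay of $\nabla u$ and $\nabla^2u$. Your Gronwall handling of the $\partial_t g^{-1}$ and $\partial_t d\mu$ terms also covers the Ricci flow case, which the paper leaves to the reader by analogy with Proposition~\ref{prn:HA25_6}.
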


\begin{proof}
By Theorem~\ref{thm3.2}, we have the proper heat kernel estimate. 
Therefore, in order to show that $\mathcal{T}$ is a Calder\'{o}n-Zygmund kernel, 
it suffices to show the following estimate:
 \begin{align}
 \label{eqn:HA25_2}
	\begin{split}
	 \int_{-1}^0\int_{M} |\nabla \nabla u|^2 d\mu dt \leq C(n, \Lambda_0) \cdot \int_{-1}^0\int_{M}|f|^2  d\mu dt,
	\end{split}
 \end{align}
 where $f$ is compactly supported and 
 \begin{align}
 \label{eqn:HA28_4}
     u(x,t) :=\int_{-1}^t \int_M H(x,t;y,s)f(y,s) dyds.  
 \end{align}
 We shall prove (\ref{eqn:HA25_2}) in the case of static metric, i.e. $g(t) \equiv g$. The Ricci flow case is almost the same, and its proof is easier than the corresponding estimate for the $(0,2)$ -tensor field in Proposition~\ref{prn:HA25_6}. 
 So we leave it to interested readers.

   It follows from (\ref{eqn:HA25_2}) that
   \begin{align}
            (\partial_t -\Delta)u=f, \quad u(\cdot, -1) \equiv 0, \quad \int_{-1}^0\int_M |u|^2<\infty. 
   \end{align} 

 The proof is basically divided into 3 steps.  We estimate the following integral in turn:
 \begin{itemize}
     \item $\int_{-1}^0\int_M (|\nabla u|^2+|u|^2)$;
     \item $\int_{-1}^0\int_M  (\Delta u)^2$;
     \item $\int_{-1}^0\int_M |\nabla \nabla u|^2$. 
 \end{itemize}
 The key ingredients for these estimates are the integration by parts and the Bochner formula. \\

    \textit{Step 1.  The $W_2^1$ estimate holds:}
    \begin{align}
        \int_{-1}^0 \int_M \{ |u|^2 + |\nabla u|^2\}
        \leq 8 \int_{-1}^0 \int_M |f|^2. 
    \label{eqn:HA26_5}    
    \end{align}
 
    We choose a cut-off function $\varphi$ such that $\varphi=1$ in $B_{r}(q)$ and $\varphi=0$ out of $B_{r+1}(q)$.  Furthermore, $|\nabla \varphi| \leq 2$. 
    The direct calculation shows that
   \begin{align}
   \label{eqn:HA25_11}
       \begin{split}
        &\quad \int_{-1}^t\int_M \varphi^2\langle f,u\rangle
         =\int_{-1}^t\int_M \varphi^2\langle \dot{u},u \rangle -\varphi^2\langle \Delta u, u\rangle\\
        &= \frac12\int_M \varphi^2|u(\cdot,t)|^2 +\int_{-1}^t\int_M \varphi^2|\nabla u|^2+2\varphi u  \langle \nabla \varphi, \nabla u \rangle\\
        &\geq \frac{1}{2}\int_M \varphi^2|u(\cdot,t)|^2+\frac12 \int_{-1}^t\int_M \varphi^2|\nabla u|^2-\int_{-1}^t\int_M 2|\nabla \varphi|^2|u|^2.
\end{split}
   \end{align}
 By the Cauchy-Schwarz inequality, we have
    \begin{align}
        \int_{-1}^t\int_M \varphi^2\langle f,u\rangle\leq \int_{-1}^t\int_M \varphi^2|f|^2+\frac{1}{4}\int_{-1}^t\int_M \varphi^2|u|^2.
    \label{eqn:HA25_12}    
    \end{align}
    Combining (\ref{eqn:HA25_11}) and (\ref{eqn:HA25_12}) yields that
    \begin{equation*}
       \begin{split}
         \int_{-1}^t\int_M \bigg(\varphi^2|f|^2+\frac{1}{4}\varphi^2|u|^2+  2|\nabla \varphi|^2|u|^2\bigg)\geq \frac{1}{2}\int_M \varphi^2|u(\cdot,t)|^2+\frac12
         \int_{-1}^t\int_M \varphi^2|\nabla u|^2.
       \end{split}
   \end{equation*}
 Letting $r\to \infty$ and noting that $t \leq 0$,  we obtain
\begin{align}
\label{eqn:HA25_13}
       \begin{split}
         \int_{-1}^t\int_M \bigg(|f|^2+\frac{1}{4}|u|^2\bigg)
         \geq 
         \frac12\int_{-1}^t\int_M |\nabla u|^2, 
\end{split}
\end{align}
and
 \begin{equation}\label{eq2-31}
       \begin{split}
         \int_{-1}^0\int_M \bigg(|f|^2+\frac{1}{4}|u|^2\bigg)\geq \frac{1}{2}\int_M |u(\cdot,t)|^2.
\end{split}
\end{equation}
Recall that $\int_{-1}^0\int_M |u|^2<\infty$. 
Integrating~\eqref{eq2-31} from $-1$ to $0$, we have
   \begin{equation}
   \label{eq1.22}
       \begin{split}
         \int_{-1}^0\int_M |f|^2  
           \geq \frac{1}{4}\int_{-1}^0\int_M |u|^2.
       \end{split}
   \end{equation}
Letting $t=0$ in~\eqref{eqn:HA25_13}, we obtain
   \begin{equation}
   \label{eq1.21}
       \begin{split}
         \int_{-1}^0\int_M \bigg(|f|^2+\frac{1}{4}|u|^2\bigg)\geq \frac12
     \int_{-1}^0\int_M |\nabla u|^2.
    \end{split}
   \end{equation}
Thus, (\ref{eqn:HA26_5}) follows from the combination of (\ref{eq1.22}) and (\ref{eq1.21}). \\

\textit{Step 2. The integral of Laplacian square is bounded:}
   \begin{equation}
   \label{eqn:HA26_6}
     \int_{-1}^0\int_M |\Delta u|^2\leq  \int_{-1}^0\int_M |f|^2.
   \end{equation}

   Note that
    \begin{equation}
    \label{eq1.18}
    \begin{split}
    &\quad \int_{-1}^0\int_M \varphi^2|f|^2
      =\int_{-1}^0\int_M \varphi^2( |\dot{u}|^2+|\Delta u|^2-2\dot{u}\Delta u) \\
    &=\int_{-1}^0\int_M  \big\{\varphi^2(|\dot{u}|^2+|\Delta u|^2)
     +4\varphi\dot{u} \langle \nabla u,  \nabla \varphi \rangle \big\}+2\int_{-1}^0\int_M \varphi^2 \langle \nabla \dot{u}, \nabla u \rangle\\
    &\geq \int_{-1}^0\int_M 
    \left\{\varphi^2 |\Delta u|^2-4|\nabla \varphi|^2|\nabla u|^2 \right\}+2\int_{-1}^0\int_M \varphi^2 \langle \nabla\dot{u}, \nabla u \rangle.
   \end{split}
   \end{equation}
   The last term can be simplified as
    \begin{equation*}
    \begin{split}
     2\int_{-1}^0\int_M \varphi^2 \langle \nabla\dot{u}, \nabla u \rangle 
     =\int_{-1}^0 \left\{\partial_t \int_M \varphi^2 |\nabla u|^2 \right\}
     =\int_M \varphi^2|\nabla u(\cdot,0)|^2\geq 0.
   \end{split}
    \end{equation*}
    Putting it into (\ref{eq1.18}) implies
    \begin{align}
    \int_{-1}^0\int_M \varphi^2 |\Delta u|^2
    \leq \int_{-1}^0\int_M \left\{ \varphi^2|f|^2 + 4|\nabla \varphi|^2|\nabla u|^2 \right\}.
    \label{eqn:HA26_7}
    \end{align}
    In light of the finiteness of $\int_{-1}^0\int_M |\nabla u|^2$, the integral $\int_{-1}^0\int_M |\nabla \varphi|^2|\nabla u|^2 \to 0$ as $r \to \infty$. 
    Letting $r\to \infty$ in (\ref{eqn:HA26_7}), 
    we arrive at (\ref{eqn:HA26_6}). \\

   \textit{Step 3. The integral of Hessian square is bounded:}
   \begin{align}
        \int_{-1}^0 \int_M |\nabla^2 u|^2
        \leq \int_{-1}^0 \int_M  \left\{ |\Delta u|^2 + (n-1)\Lambda_0 |\nabla u|^2 \right\}.   
   \label{eqn:HA26_9}     
   \end{align}

 The standard Bochner formula implies
   \begin{align*}
     |\nabla^2 u|^2= -\langle \nabla u, \nabla \Delta u\rangle -Rc(\nabla u, \nabla u) + \frac12 \Delta |\nabla u|^2.   
   \end{align*}
   Multiplying both sides by $\varphi^2$ and integrating on $M \times [-1, 0]$ yields
   \begin{align}
   \label{eqn:HA26_8}
   \begin{split}
     &\quad \int_{-1}^0 \int_M \varphi^2|\nabla^2 u|^2\\
     &=\int_{-1}^0 \int_M \left\{ \varphi^2 |\Delta u|^2 + 2\varphi  \Delta u \langle \nabla \varphi, \nabla u\rangle -\varphi^2 Rc(\nabla u, \nabla u) - \varphi \langle \nabla \varphi, \nabla |\nabla u|^2\rangle \right\}\\
     &\leq \int_{-1}^0 \int_M  \varphi^2\left\{ |\Delta u|^2 +(n-1)\Lambda_0|\nabla u|^2 \right\}
      +4 \int_{-1}^0 \int_M \varphi  |\nabla u|\left\{ |\Delta u| + |\nabla^2 u| \right\}\\
     &\leq \int_{-1}^0 \int_M  \varphi^2\left\{ |\Delta u|^2 + (n-1)\Lambda_0|\nabla u|^2 \right\}
      + C(n) \int_{-1}^0 \int_M \varphi  |\nabla u| \cdot |\nabla^2 u|. 
   \end{split}   
   \end{align}
   
 By Theorem~\ref{thm3.2}, we have
 \begin{equation}
     \begin{split}
         &\quad |\nabla^2 u|(x,t)=\bigg|\int_{-1}^t\int_M\nabla^2_x H(x,t;y,s)f(y,s)dyds\bigg|\\
         &\leq C\int_{-1}^t\int_{B_1(q)}(t-s)^{-1}V_y(\sqrt{t-s})e^{-\frac{d^2(x,y)}{C(t-s)}}|f|(y,s)dyds.
     \end{split}
 \end{equation}
For any $y\in B_1(q)$, the volume comparison implies that
 \begin{equation}
     V_y(\sqrt{t-s})\leq CV^{-1}_y(2)(t-s)^{-n/2}\leq CV^{-1}_q(1)(t-s)^{-n/2}.
 \end{equation}
 Thus, by adjusting $C$ and absorbing extra terms into $e^{-\frac{d^2}{C}}$, we have
 \begin{equation}
 \label{eq2-43}
     \begin{split}
         &\quad |\nabla^2 u|(x,t)=\bigg|\int_{-1}^t\int_M\nabla^2_x H(x,t,y,s)f(y,s)dyds\bigg|\\
         &\leq C\int_{-1}^t\int_{B_1(q)} V_q^{-1}(1)(t-s)^{-(n+2)/2}e^{-\frac{(d(x,q)-1)^2}{C(t-s)}}|f|(y,s)dyds\\
         &\leq Ce^{\frac{-d^2(x,p)}{C}}\int_{0}^t\int_{B_1(q)}CV^{-1}_q(1)|f|(y,s)dyds.\\
         &\leq Ce^{-\frac{d^2(x,p)}{C}}.
     \end{split}
 \end{equation}
 Using the same trick, we have
 \begin{equation}\label{eq2-44}
     \begin{split}
         |\nabla u|(x,t)\leq Ce^{-\frac{d^2(x,p)}{C}}. 
     \end{split}
 \end{equation}
 Inserting \eqref{eq2-43} and \eqref{eq2-44} into (\ref{eqn:HA26_8}) and letting $r \to \infty$, we arrive at (\ref{eqn:HA26_9}). \\
 
It follows from the combination of
 (\ref{eqn:HA26_5}), (\ref{eqn:HA26_6}) and (\ref{eqn:HA26_9}) that 
 \begin{align}
 \label{eqn:HA29_5}
	\begin{split}
      \int_{-1}^0\int_{M} |\nabla \nabla u|^2 d\mu dt 
     &\leq \left\{ 4(n-1) \Lambda_0 +1\right\} \int_{-1}^0\int_{M}|f|^2  d\mu dt\\
     &\leq 4n \Lambda_0 \int_{-1}^0\int_{M}|f|^2  d\mu dt, 
	\end{split}
 \end{align}
 which yields (\ref{eqn:HA25_2}) by setting $C(n, \Lambda_0)=4n\Lambda_0$.  
\end{proof}

From the proof of Proposition~\ref{prn:HA25_3}, it is clear that the choice of the cutoff function $\varphi$ is technical.
In each step, we use $\varphi$ to guaranty the application of integration by parts and then let $r \to \infty$ push the support of $\nabla \varphi$ to infinity. Then we obtain the inequality without $\varphi$, which is the same as the one on the closed manifold.  For simplicity of argument, we shall ignore the application of $\varphi$ in the following proof.

\begin{proposition}
\label{prn:HA25_6}
Let $\{(M,g(t)), -1 \leq t \leq 0\}$ be a Ricci flow solution satisfying (\ref{eqn:HA02_1}).
Let $\Psi$ be the heat kernel of $\Delta$ and $\mathcal{K}(x,t;y,s) := \nabla_x \nabla_x \Psi(x,t;y,s)$ and
$\mathcal{T}$ be the convolution operator with $\mathcal{K}$. Then $\mathcal{T}$ is a Calder\'{o}n-Zygmund kernel in the sense of Definition~\ref{def2.1}.
\end{proposition}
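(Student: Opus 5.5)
The proof has the same two parts as Proposition~\ref{prn:HA25_3}. First, the kernel $\mathcal{K}=\nabla_x\nabla_x\Psi$ must satisfy the $C_1$-bound (\ref{eqn:SJ06_1}). Second, the operator $\mathcal{T}$ must be bounded on $L^2(\mathcal{M})$.

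\textbf{Kernel bound.} On a Ricci flow, Theorem~\ref{thm:HA02_5} already gives $\theta^{j/2}|\nabla_x^j\Psi|\le CV_x(\sqrt\theta)^{-1}e^{-d^2/(C\theta)}$ for every $j$. This covers the $\theta|\mathcal{K}|$ term. The H\"older seminorm $[\nabla_x^2\Psi]_\alpha^{(\rho_\theta)}$ is bounded by $\theta^{1/2}\sup|\nabla^3_x\Psi|$ over a $\rho_\theta$-ball, and the mean value inequality along geodesics (as in Lemma~\ref{lma:SL13_1}, using parallel transport and Lemma~\ref{lma:HB05_2}) gives this. Volume comparison replaces $V_{x'}$ by $V_x$ at scale $\rho_\theta\lesssim\sqrt\theta$. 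For the time derivative, write $\partial_t\nabla^2\Psi=\nabla^2\Delta\Psi+\mathrm{Rm}*\nabla^2\Psi+\nabla\mathrm{Rm}*\nabla\Psi$. This is controlled by $\nabla^4\Psi$ together with Shi's bounds on $\nabla^k\mathrm{Rm}$, which gives the $\theta^{-2}$ scaling.

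\textbf{$L^2$ bound.} Set $u=\int\Psi f$, so that $(\partial_t-\Delta)u=f$ with $u(-1)=0$, and estimate in three steps as in Proposition~\ref{prn:HA25_3}. The time derivative of the volume form, $\partial_t d\mu=-R\,d\mu$, and the time derivative of the metric in $|u|^2$ and $|\nabla u|^2$ are lower-order terms bounded by $\Lambda_0$. Throughout, the cutoff $\varphi$ is ignored; it is handled as in Proposition~\ref{prn:HA25_3}.

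Step 1 pairs the equation with $u$ and uses Gronwall to get $\iint(|u|^2+|\nabla u|^2)\le C\iint|f|^2$. The extra term $2\mathrm{Rc}(u,u)$ coming from $\partial_t|u|^2_{g(t)}$ is bounded by $C|u|^2$.

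Step 2 expands $|f|^2=|\dot u-\Delta u|^2$. The cross term is $-2\iint\langle\dot u,\Delta u\rangle=2\iint\langle\nabla\dot u,\nabla u\rangle$. On a Ricci flow this equals $\iint\partial_t|\nabla u|^2$ plus terms of the form $\mathrm{Rm}*u*\nabla u$, $\nabla\mathrm{Rm}*u*\nabla u$ and $\mathrm{Rc}*\nabla u*\nabla u$. These come from $\partial_t\Gamma=\nabla\mathrm{Rc}$ and $\partial_t g^{-1}$, and are bounded using Shi's estimate and Step 1. This yields $\iint|\Delta u|^2\le C\iint|f|^2$.

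Step 3 controls the Hessian by the tensor Bochner formula
\[
\int|\nabla^2u|^2=\int|\Delta u|^2+\int\mathrm{Rm}*\nabla u*\nabla u+\int\nabla\mathrm{Rm}*u*\nabla u .
\]
It is obtained by commuting $\nabla_i\nabla_j\nabla_j u$ and integrating by parts twice. For $(0,2)$-tensors the commutators give both $\mathrm{Rm}*\nabla u*\nabla u$ and, after moving a derivative, $\nabla\mathrm{Rm}*u*\nabla u$. The latter is fine here because Shi's estimate bounds $|\nabla\mathrm{Rm}|$. Combining the three steps gives $\|\mathcal{T}f\|_{L^2}\le C_2\|f\|_{L^2}$ with $C_2=C(n,\Lambda_0)$.

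\textbf{Main obstacle.} I expect the main difficulty to be the bookkeeping of time-derivative commutators in Step 2 for tensors on a Ricci flow, where $\partial_t$ does not commute with $\nabla$ or with the norms. The plan is to handle these by writing every error term as $\nabla^k\mathrm{Rm}*(u\text{ or }\nabla u)*\nabla u$ with $k\le1$, then absorbing them by Cauchy--Schwarz and Step 1. The Gaussian decay from Theorem~\ref{thm:HA02_5}, as in (\ref{eq2-43}), justifies discarding the cutoff terms.
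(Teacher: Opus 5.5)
Your proposal is correct and has the same skeleton as the paper's proof: the kernel bound is read off from Theorem~\ref{thm:HA02_5}, and the $L^2$ bound comes from three energy estimates ending with the tensor Bochner identity. The $L^2$ part is organized differently, though.

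The paper first rescales so that $|\mathrm{Rm}|\le\xi^2$ on $[-\Lambda_0\xi^{-2},0]$. It then proves each step on a unit interval, with explicit small constants and an initial-data term $\int_M(|u|^2+|\nabla u|^2)(\cdot,t_k)$. Finally it iterates over $N\approx\Lambda_0\xi^{-2}$ intervals to get $C_2=16^{\Lambda_0\xi^{-2}+2}$, which is in effect a discrete Gronwall. You use Gronwall on $[-1,0]$ directly and skip both the rescaling and the induction.

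In Step 2 the paper differentiates $\int_M\langle u,\Delta u\rangle\,d\mu$ in time. This produces $\dot\Delta u$, which contains $\nabla^2\mathrm{Rc}*u$, and a $|u||\Delta u|$ term that must be absorbed. You integrate by parts first and differentiate $\int_M|\nabla u|^2\,d\mu$, so only $\partial_t\Gamma=\nabla\mathrm{Rc}$ enters.

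That economy is worth something. Both arguments treat Shi's estimate as a uniform bound on $[-1,0]$. From $|\mathrm{Rm}|\le\Lambda_0$ alone, however, it only gives $|\nabla^k\mathrm{Rm}|(t)\le C(t+1)^{-k/2}$ near $t=-1$. Your Steps 2--3 use only one derivative of curvature, so the integrable factor $(t+1)^{-1/2}$ can be absorbed using $\sup_t\|u(t)\|_{L^2}$ and $\sup_t\|\nabla u(t)\|_{L^2}$, which Steps 1--2 control. The paper's $\nabla^2\mathrm{Rc}$ term would instead carry the non-integrable $(t+1)^{-1}$ unless one integrates by parts once more.

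On the kernel side, you make explicit what the paper leaves implicit: the H\"older and $\partial_t$ parts of (\ref{eqn:SJ06_1}) follow from the $\nabla_x^3$ and $\nabla_x^4$ bounds. Two small corrections there:
\begin{itemize}
\item The H\"older bound is $\rho_\theta^{1-\alpha}\sup|\nabla_x^3\Psi|$, not $\theta^{1/2}\sup|\nabla_x^3\Psi|$.
\item $\partial_t\nabla^2\Psi$ also contains a $\nabla^2\mathrm{Rc}*\Psi$ term. It is harmless, because $\theta=t-s\le t+1$ gives $\theta^2|\nabla^2\mathrm{Rc}|\lesssim\theta$.
\end{itemize}
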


\begin{proof}
    Using Theorem~\ref{thm:HA02_5}, we have the desired heat kernel estimate. 
    Setting
    \begin{align}
    \label{eqn:HA28_5}
        u(x,t):=\int_{-1}^t \int_M \Psi(x,t;y,s)f(y,s)dyds.
    \end{align}
    In order for $\mathcal{T}$ to be a Calder\'{o}n-Zygmund kernel, it suffices to show the following $L^2$-estimate:
    \begin{align}
        \label{eqn:HA30_6}
			\begin{split}
				\int_{-1}^0\int_{M} |\mathrm{Hess}\; u|^2 d\mu dt \leq C(n, \Lambda_0) \cdot \int_{-1}^0\int_{M}|f|^2  d\mu dt.
			\end{split}
	\end{align}
    From (\ref{eqn:HA28_5}), we know
    \begin{align*}
        (\partial_t -\Delta)u=f, \quad u(\cdot, -1) \equiv 0, \quad \int_{-1}^0 \int_M |u|^2 <\infty. 
    \end{align*} 
    Note that under assumption $(\partial_t-\Delta)u=f$, the quotient is
    \begin{align*}
        \frac{\int_{-1}^0\int_{M} |\mathrm{Hess}\; u|^2 d\mu dt}{\int_{-1}^0\int_{M}|f|^2  d\mu dt}
    \end{align*}
    is scaling invariant. Therefore, after proper parabolic rescaling, we assume 
    \begin{itemize}
        \item The Ricci flow exists on $[-\Lambda_0 \xi^{-2}, 0]$. 
        \item $\displaystyle \sup_{t\in [-\Lambda_0 \xi^{-2},0]}|Rm| \leq \xi^{2}$.
        \item $(\partial_t-\Delta u)=f$ where $f$ is compactly supported.
        \item $u(\cdot, -\Lambda_0 \xi^{-2}) \equiv 0$ and $\int_{-\Lambda_0 \xi^{-2}}^0\int_M |u|^2<\infty$. 
    \end{itemize}
    Therefore, it suffices to show the following inequality under the above assumptions: 
    \begin{align}
    \label{eqn:HA25_5}
    \begin{split}
     \int_{-\Lambda_0 \xi^{-2}}^0\int_{M} |\mathrm{Hess}\; u|^2 d\mu dt 
     \leq 
     C(n, \Lambda_0) 
     \cdot \int_{-\Lambda_0 \xi^{-2}}^0\int_{M}|f|^2  d\mu dt.
    \end{split}
    \end{align}
    We divide the proof into four steps.\\

\textit{Step 1. We have}
 \begin{align}
   \label{eqn:HA27_5}    
    \int_{-1}^0 \int_M \left\{ |u|^2+|\nabla u|^2 \right\}
    \leq 8 \left\{\int_{-1}^0\int_M |f|^2 + \int_M |u(\cdot,-1)|^2 \right\}
   \end{align}
   \textit{and}
  \begin{align}
  \label{eqn:HA29_11}
      \int_M |u(\cdot,t)|^2
    \leq 8 \left\{\int_{-1}^0\int_M |f|^2 +\int_M |u(\cdot,-1)|^2\right\}, \quad \forall \; t \in [-1, 0]. 
  \end{align} 

Note that $u=u_{ij}dx^idx^j$ is a $(0,2)$-tensor. Thus,
\begin{align*}
    \partial_t \left\{ |u|^2 d\mu \right\}&=\partial_t \left\{u_{ij}u_{kl}g^{ik}g^{jl} \sqrt{\det g} dx^1 \wedge dx^2 \cdots \wedge dx^n \right\}\\
     &=\{2\dot{u}_{ij}u_{ij}+4u_{ij}u_{jk}R_{ik} -R|u|^2\} d\mu,
\end{align*}
whose integration yields
\begin{align*}
    \int_M |u(\cdot, t)|^2 d\mu-\int_M |u(\cdot,-1)|^2 
    =\int_{-1}^0 \int_M \{2 \langle \dot{u}, u\rangle+4u_{ij}u_{jk}R_{ik} -R|u|^2\} d\mu dt.
\end{align*}
Therefore, we have
\begin{align}
\label{eqn:HB02_1}
\begin{split}
   &\quad \int_{-1}^t \int_M  \langle \dot{u}, u\rangle d\mu dt\\ 
   &=\frac12\int_M |u(\cdot,t)|^2-\frac12\int_M |u(\cdot,-1)|^2+\int_{-1}^t \int_M \{-2 u_{ij}u_{jk}R_{ik} 
    +\frac12 R|u|^2\} d\mu dt\\
   &\geq \frac12\int_M |u(\cdot,t)|^2-\frac12\int_M |u(\cdot,-1)|^2 -\frac{1}{16} \int_{-1}^t\int_M |u|^2. 
\end{split}   
\end{align}
Consequently, we obtain 
 \begin{equation}
 \label{eq2-57}
       \begin{split}
        &\quad \int_{-1}^t\int_M \langle f,u\rangle=\int_{-1}^t\int_M \langle \dot{u},u \rangle -\langle \Delta u, u\rangle\\
        &\geq \frac12\int_M |u(\cdot,t)|^2 -\frac12\int_M |u(\cdot,-1)|^2 -\frac{1}{16}\int_{-1}^t\int_M |u|^2+\int_{-1}^t\int_M |\nabla u|^2.   
       \end{split}
   \end{equation}
 On the other hand, the Cauchy-Schwarz inequality implies
    \begin{equation}
        \int_{-1}^t\int_M \langle f,u\rangle\leq \int_{-1}^t\int_M |f|^2+\frac{1}{4}\int_{-1}^t\int_M |u|^2.
    \end{equation}
 Thus, we have
    \begin{equation*}
       \begin{split}
       \frac12\int_M |u(\cdot,-1)|^2+
        \int_{-1}^t\int_M \bigg(|f|^2+\frac{5}{16}|u|^2\bigg)
        \geq 
        \frac{1}{2}\int_M |u(\cdot,t)|^2
        +\int_{-1}^t\int_M |\nabla u|^2.
       \end{split}
   \end{equation*}
  Since $t \leq 0$, the $t$ on the left hand side can be replaced by $0$. Thus, we have
  \begin{equation}
  \label{eqn:HA27_1}
       \begin{split}
        \frac12\int_M |u(\cdot,-1)|^2+
        \int_{-1}^0\int_M \bigg(|f|^2+\frac{5}{16}|u|^2\bigg)
        \geq 
        \frac{1}{2}\int_M |u(\cdot,t)|^2, 
       \end{split}
   \end{equation}
   and
  \begin{equation}
  \label{eqn:HA27_2}
       \begin{split}
        \frac12\int_M |u(\cdot,-1)|^2+
        \int_{-1}^0\int_M \bigg(|f|^2+\frac{5}{16}|u|^2\bigg)
        \geq 
        \int_{-1}^0 \int_M |\nabla u|^2.
       \end{split}
   \end{equation}

   Integrating (\ref{eqn:HA27_1}) on $[-1, 0]$, we obtain
   \begin{align*}
       \begin{split}
         \frac12\int_M |u(\cdot,-1)|^2+
         \int_{-1}^0\int_M |f|^2  
           \geq \frac{3}{16} \int_{-1}^0\int_M |u|^2, 
       \end{split}
   \end{align*}
   which we rewrite as
   \begin{align}
   \label{eqn:HA27_3}
       \int_{-1}^0\int_M |u|^2 \leq \frac{16}{3} \int_{-1}^0\int_M |f|^2 + \frac{8}{3} \int_M |u(\cdot,-1)|^2.
   \end{align}
   Plugging (\ref{eqn:HA27_3}) into (\ref{eqn:HA27_2}) and (\ref{eqn:HA27_1})  yields
   \begin{align}
   \label{eqn:HA27_4}
       \int_{-1}^0 \int_M |\nabla u|^2
       \leq \frac{8}{3} \int_{-1}^0\int_M |f|^2 + \frac43 \int_M |u(\cdot,-1)|^2,  
   \end{align}
   and 
   \begin{align}
    \frac12 \int_M |u(\cdot,t)|^2
    \leq  \frac{8}{3}\int_{-1}^0\int_M |f|^2 +\frac{4}{3} \int_M |u(\cdot,-1)|^2.
   \label{eqn:HA31_1} 
   \end{align}
   Combining (\ref{eqn:HA27_3}) and (\ref{eqn:HA27_4}), we arrive at
   \begin{align}
       \int_{-1}^0 \int_M \left\{|u|^2+|\nabla u|^2 \right\}
       \leq 8 \int_{-1}^0\int_M |f|^2 + 4 \int_M |u(\cdot,-1)|^2. 
   \label{eqn:HA31_2}    
   \end{align}
   Therefore, (\ref{eqn:HA27_5}) follows from (\ref{eqn:HA31_2}), and (\ref{eqn:HA29_11}) follows from (\ref{eqn:HA31_1}). \\

 \textit{Step 2. We have}
 \begin{align}
 \label{eqn:HA28_1}
      \int_{-1}^0\int_M |\Delta u|^2 \leq 5 \left\{ \int_{-1}^0 \int_M |f|^2 +\int_M \left\{|u(\cdot,-1)|^2 +|\nabla u(\cdot, -1)|^2 \right\} \right\}, 
 \end{align}
 \textit{and for each $t \in [-1,0]$ that} 
 \begin{align}
 \label{eqn:HA28_1A}
 \begin{split}
   &\quad \int_M \left\{|u(\cdot,t)|^2 +|\nabla u(\cdot, t)|^2 \right\}\\
   &\leq 10 \left\{ \int_{-1}^t\int_M |f|^2 + \int_M \left\{|u(\cdot,-1)|^2 +|\nabla u(\cdot, -1)|^2 \right\} \right\}. 
 \end{split}  
 \end{align}

    It follows directly from the equation $(\partial_t-\Delta)u=f$ that
     \begin{equation}
     \label{eqn:HA26_0}
     \begin{split}
   \int_{-1}^0\int_M |f|^2=\int_{-1}^0\int_M |\dot{u}-\Delta u|^2
   =\int_{-1}^0\int_M  \left\{|\dot{u}|^2+|\Delta u|^2-2 \langle \dot{u}, \Delta u \rangle \right\}.
   \end{split}
    \end{equation}
  Note that 
  \begin{align}
  \label{eqn:HA27_6}
  \begin{split}
      &\quad \frac{d}{dt} \left\{ \langle u, \Delta u\rangle d\mu \right\}\\
      &=\left\{-R \langle u, \Delta u \rangle + 4 u_{ik} (\Delta u)_{jl} R_{kl} 
      +\langle \dot{u}, \Delta u\rangle + \langle u, (\dot{\Delta} u+ \Delta \dot{u})\rangle \right\} d\mu\\
      &=\left\{-R \langle u, \Delta u \rangle + 4 u_{ik} (\Delta u)_{jl} R_{kl} 
      +2\langle \dot{u}, \Delta u\rangle + \langle u, \dot{\Delta} u\rangle \right\} d\mu.
  \end{split}    
  \end{align}
  Along the Ricci flow, the direct calculation shows that 
  \begin{align*}
    \dot{\Delta} u_{ij}
    &=2u_{ja,l}(R_{ak,i}+R_{ai,k}-R_{ki,a})g^{kl}+2u_{ia,l}(R_{ak,j}+R_{aj,k}-R_{kj,a})g^{kl}\\
    &\quad +u_{ja}(R_{ak,il}+R_{ai,kl}-R_{ki,al})g^{kl}
     +u_{ia}(R_{ak,jl}+R_{aj,kl}-R_{kj,al})g^{kl}.
 \end{align*}
 As $|Rm|+|\nabla Rm|+|\nabla \nabla Rm| \leq \xi_n$ is very small,  the above inequality implies 
 \begin{align}
     |\dot{\Delta} u| \leq \frac18 (|u| + |\nabla u|). 
 \label{eqn:HA27_8}    
 \end{align}
 It follows from (\ref{eqn:HA27_6}) and (\ref{eqn:HA27_8}) that 
 \begin{align*}
 \begin{split}
     &\quad -2\int_{-1}^t\int_M \langle \dot{u}, \Delta u \rangle\\
     &=-\left.\int_M \langle u, \Delta u \rangle \right|_{-1}^t 
      +\int_{-1}^t \int_M \left\{ -R \langle u, \Delta u\rangle + 4 u_{ik}(\Delta u)_{jl}R_{kl} 
      +\langle u, \dot{\Delta} u\rangle \right\}\\
     &\geq \left.\int_M |\nabla u|^2 \right|_{-1}^t -\int_{-1}^t \int_M \frac18 |u| (|u|+|\nabla u| + |\Delta u|). 
 \end{split}    
 \end{align*}
 Using $\frac18 xy \geq -\frac18 x^2-\frac{1}{32}y^2$ in the last step, we obtain
 \begin{align}
 \label{eqn:HA27_9}
 \begin{split}
    &\quad -2\int_{-1}^t\int_M \langle \dot{u}, \Delta u \rangle\\
    &\geq -\frac38 \int_{-1}^t \int_M |u|^2 -\frac{1}{32} \int_{-1}^t \int_M (|\nabla u|^2 + |\Delta u|^2)
     +\left.\int_M |\nabla u|^2 \right|_{-1}^t.
 \end{split}   
 \end{align}
 Plugging (\ref{eqn:HA27_9}) into (\ref{eqn:HA26_0}) and using (\ref{eqn:HA27_5}), we obtain
 \begin{align*}
   &\quad \int_{-1}^t\int_M \{ |\dot{u}|^2 +|\Delta u|^2\} +\int_M |\nabla u(\cdot, t)|^2\\
   &\leq \int_{-1}^t\int_M |f|^2 +\frac38 \int_{-1}^t \int_M |u|^2 +\frac{1}{32} \int_{-1}^t \int_M (|\nabla u|^2 + |\Delta u|^2) +\int_M |\nabla u(\cdot, -1)|^2\\
   &\leq 4 \int_{-1}^t\int_M |f|^2 +3 \int_M \left\{|u(\cdot,-1)|^2 +|\nabla u(\cdot, -1)|^2 \right\} + \frac{1}{32} \int_{-1}^t \int_M  |\Delta u|^2.  
 \end{align*}
 Consequently, we have
 \begin{align}
 \label{eqn:HA31_5}
 \begin{split}
   &\quad \frac{31}{32}\int_{-1}^t\int_M \{ |\dot{u}|^2 +|\Delta u|^2\} +\int_M |\nabla u(\cdot, t)|^2\\
   &\leq 4 \left\{ \int_{-1}^t\int_M |f|^2 + \int_M \left\{|u(\cdot,-1)|^2 +|\nabla u(\cdot, -1)|^2 \right\} \right\}.
 \end{split}  
 \end{align}
 It follows that
 \begin{align}
     \int_{-1}^t\int_M \{ |\dot{u}|^2 +|\Delta u|^2\} 
     \leq \frac{128}{31} \left\{ \int_{-1}^t\int_M |f|^2 + \int_M \left\{|u(\cdot,-1)|^2 +|\nabla u(\cdot, -1)|^2 \right\} \right\},
 \label{eqn:HA31_4}    
 \end{align}
 which implies (\ref{eqn:HA28_1}). 
 Inequality (\ref{eqn:HA31_5}) also implies
 \begin{align}
   \int_M |\nabla u(\cdot, t)|^2
   \leq 4 \left\{ \int_{-1}^t\int_M |f|^2 + \int_M \left\{|u(\cdot,-1)|^2 +|\nabla u(\cdot, -1)|^2 \right\} \right\}.
 \label{eqn:HA31_3}  
 \end{align}
 Recall that we have proved in (\ref{eqn:HA31_1}) that
 \begin{align*}
      \int_M |u(\cdot,t)|^2
    \leq \frac{16}{3} \int_{-1}^0\int_M |f|^2 + \frac{8}{3}\int_M |u(\cdot,-1)|^2. 
 \end{align*} 
 Adding it to (\ref{eqn:HA31_3}), we arrive at (\ref{eqn:HA28_1A}). \\
 
\textit{Step 3. We have}
\begin{align}
 \label{eqn:HA29_2}   
 \begin{split}
   \int_{-1}^0 \int_M |\nabla \nabla u|^2  
   \leq 8 \left\{ \int_{-1}^0 \int_M |f|^2 + \int_M \left\{|u(\cdot,-1)|^2 +|\nabla u(\cdot, -1)|^2 \right\} \right\}.   
 \end{split}  
 \end{align}

 Since $u=u_{ij}dx^idx^j$, the direct calculation shows the following Bochner-type identity:
 \begin{align*}
     \frac12\Delta |\nabla u|^2&=|\nabla \nabla u|^2 + \langle \nabla \Delta u, \nabla u\rangle\\ 
    &\quad +(2R_{klip}u_{pj,l} +2R_{kljp}u_{ip,l}+R_{kp}u_{ij,p})u_{ij,k}
    + 2(R_{kp,i}-R_{ki,p})u_{pj}u_{ij,k}.  
 \end{align*}
 Integrating this on $M$ implies
 \begin{align*}
        \int_M |\nabla \nabla u|^2
         &\leq \int_M \left\{|\Delta u|^2 + \frac{1}{16}|\nabla u|^2 + \frac{1}{16}|u||\nabla u| \right\}
          \leq \int_M \left\{|\Delta u|^2 + \frac{3}{32}|\nabla u|^2 + \frac{1}{32}|u|^2 \right\}\\
         &\leq \int_M |\Delta u|^2 + \frac{1}{10} \int_M \left\{ |u|^2+|\nabla u|^2\right\}. 
 \end{align*}
 Plugging (\ref{eqn:HA27_5}) and (\ref{eqn:HA28_1}) into the above inequality, we obtain (\ref{eqn:HA29_2}). \\

\textit{Step 4.  Set up the induction relationship.}

 Choose a positive integer $N$ such that $\Lambda_0 \xi^{-2} \in [N-1, N]$. Let $t_k=(-1+\frac{k}{N})\Lambda_0 \xi^{-2}$.  Then $t_0=-\Lambda_0 \xi^{-2}$ and $t_N=0$.

From (\ref{eqn:HA28_1A}), we obtain an induction estimate. 
\begin{align*}
\begin{split}
    &\quad \int_M \left\{ |u(\cdot,t_{k+1})|^2 + |\nabla u(\cdot,t_{k+1})|^2\right\}\\
    &\leq 16 \left\{\int_{t_k}^{t_{k+1}} \int_M |f|^2 +\int_M \left\{|u(\cdot,t_k)|^2+ |\nabla u(\cdot,t_k)|^2 \right\}\right\}\\
    &\leq 16 \int_{t_k}^{t_{k+1}} \int_M |f|^2 + 16 \left\{16 \left\{\int_{t_{k-1}}^{t_{k}} \int_M |f|^2 
    +\int_M \left\{|u(\cdot,t_{k-1})|^2+ |\nabla u(\cdot,t_{k-1})|^2 \right\}\right\} \right\}\\
    &\leq 16^2 \left\{ \int_{t_{k-1}}^{t_{k+1}} \int_M |f|^2 
      +\int_M \left\{|u(\cdot,t_{k-1})|^2+ |\nabla u(\cdot,t_{k-1})|^2 \right\}\right\}\\
    &\leq 16^{k+1} \left\{ \int_{t_0}^{t_{k+1}} \int_M |f|^2 +\int_M \left\{|u(\cdot,t_0)|^2
    + |\nabla u(\cdot,t_0)|^2 \right\}\right\}. 
\end{split}    
\end{align*} 
Note that $u(\cdot, t_0) \equiv 0$.  It follows that
\begin{align}
\label{eqn:HA30_7}
\begin{split}
    \int_M \left\{ |u(\cdot,t_{k+1})|^2 + |\nabla u(\cdot,t_{k+1})|^2\right\}
    \leq 16^{k+1}\int_{t_0}^{t_{k+1}} \int_M |f|^2.  
\end{split}    
\end{align} 
As (\ref{eqn:HA28_1A}) holds for each $t \in [-1,0]$, the above deduction actually implies 
\begin{align*}
    \int_M \left\{|u(\cdot,t)|^2+ |\nabla u(\cdot,t)|^2 \right\}
    \leq 16^{k+1}\int_{t_0}^{t_{k+1}} \int_M |f|^2, \quad \forall\; t \in [t_k, t_{k+1}].  
\end{align*}
As $|t_{k+1}-t_k| \leq 1$, the integration of the above inequality yields
\begin{align}
\label{eqn:HA30_1}
   \int_{t_k}^{t_{k+1}} \int_M \left\{|u(\cdot,t)|^2+ |\nabla u(\cdot,t)|^2 \right\}
   \leq 16^{k+1}\int_{t_0}^{t_{k+1}} \int_M |f|^2
   \leq 16^{k+1}\int_{t_0}^{t_{N}} \int_M |f|^2. 
\end{align}
Summing (\ref{eqn:HA30_1}) over $k \in \{0, 1, \cdots, N-1\}$, we obtain
\begin{align*}
     \int_{t_0}^{t_{N}} \int_M \left\{|u(\cdot,t)|^2+ |\nabla u(\cdot,t)|^2 \right\}
     \leq \left\{\sum_{k=0}^{N-1} 16^{k+1}\right\} \cdot \int_{t_0}^{t_{N}} \int_M |f|^2
     \leq 16^{N+1} \int_{t_0}^{t_{N}} \int_M |f|^2. 
\end{align*}

It follows from (\ref{eqn:HA29_2}) that
\begin{align*} 
\begin{split}
 \int_{t_k}^{t_{k+1}} \int_M  |\nabla \nabla u|^2 
    &\leq 16 \left\{\int_{t_k}^{t_{k+1}}\int_M |f|^2 + \int_M \left\{|u(\cdot,t_k)|^2 +|\nabla u(\cdot, t_k)|^2 \right\} \right\}.
\end{split}    
\end{align*}
By the same as the deduction in (\ref{eqn:HA30_7}), we obtain 
\begin{align*}
    \int_{t_k}^{t_{k+1}} \int_M  |\nabla \nabla u|^2 
    \leq 
    16^{k+1}\int_{t_0}^{t_{k+1}} \int_M |f|^2, 
\end{align*}
whose sum yields
\begin{align} 
\label{eqn:HA30_3}
\begin{split}
     \int_{t_0}^{t_{N}} \int_M |\nabla \nabla u|^2 
     \leq 16^{N+1} \cdot \int_{t_0}^{t_{N}}\int_M |f|^2. 
\end{split}    
\end{align} 
Recall that $N \leq 1+\Lambda_0\xi^{-2}$, we have
\begin{align*}
    16^{N+1} \leq 16^{\Lambda_0 \xi^{-2}+2} 
    =:C(n, \Lambda_0). 
\end{align*}
Plugging it into (\ref{eqn:HA30_3}) and noting that $t_0=-\Lambda_0 \xi^{-2}$ and $t_N=0$, we arrive at (\ref{eqn:HA25_5}). 
The proof of Proposition~\ref{prn:HA25_6} is complete. 
\end{proof}

\begin{proposition}
\label{prn:HA25_9}
Let $\{(M,g(t)), -1 \leq t \leq 0\}$ be an evolving manifold that satisfies (\ref{eqn:HA02_1}). 
Let $\Psi$ be the heat kernel of $\Delta_L$ and $\mathcal{K}(x,t;y,s) := \nabla_x \nabla_y \Psi(x,t;y,s)$ and
$\mathcal{T}$ be the convolution operator with $\mathcal{K}$. Then $\mathcal{T}$ is a Calder\'{o}n-Zygmund kernel in the sense of Definition~\ref{def2.1}.
\end{proposition}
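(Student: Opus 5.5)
Following the pattern of Propositions~\ref{prn:HA25_3} and \ref{prn:HA25_6}, there are two things to check for $\mathcal{T}$ to be a Calder\'on-Zygmund operator: the kernel bound (\ref{eqn:SJ06_1}) and the $L^2$ bound (\ref{eqn:HA19_1}).

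\textbf{Kernel bound.} This is already contained in Theorem~\ref{thm3.3}. It bounds $\theta|\nabla_x\nabla_y\Psi|$, the H\"older seminorm $\theta^{1+\frac{\alpha}{2}}[\nabla_x\nabla_y\Psi]_\alpha^{(\rho_\theta)}$ and $\theta^2|\nabla_x\nabla_y\dot\Psi|$ by $CV_x(\sqrt\theta)^{-1}e^{-d^2/(C\theta)}$. Combining this with the equivalences (\ref{eqn:HA18_1}) and (\ref{eqn:HA18_2}) between time slices gives (\ref{eqn:SJ06_1}) with $C_1=C(n,\Lambda_0)$.

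\textbf{Reduction of the $L^2$ bound.} For compactly supported $f\in C^\infty(\mathrm{Sym}^2(T^*M)\otimes T^*M)$, set
\[
u(x,t):=\int_{-1}^t\int_M \nabla_y^*\big(\Psi(x,t;y,s)\big)f(y,s)\,dyds=\int_{-1}^t\int_M \Psi(x,t;y,s)(\nabla^*f)(y,s)\,dyds,
\]
using integration by parts in $y$. Then $(\partial_t-\Delta_L)u=\nabla^*f$, $u(\cdot,-1)=0$, and $\mathcal{T}f=\nabla u$ up to the identification of $\nabla_y$ with the adjoint pairing. So it suffices to prove the energy estimate
\[
\int_{-1}^0\int_M|\nabla u|^2\le C(n,\Lambda_0)\int_{-1}^0\int_M|f|^2 .
\]

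\textbf{Energy estimate.} Pair the equation with $u$ and integrate by parts, as in Step 1 of Proposition~\ref{prn:HA25_6}. First, $\int\langle\nabla^*f,u\rangle=\int\langle f,\nabla u\rangle\le\frac12\int|f|^2+\frac12\int|\nabla u|^2$. Second, $-\int\langle\Delta_L u,u\rangle=\int|\nabla u|^2+\int Rm*u*u$, whose curvature term is bounded by $C\Lambda_0\int|u|^2$. Third, in the Ricci flow case the evolution of $|u|^2d\mu$ adds another $C\Lambda_0|u|^2$ term. Collecting these gives
\[
\frac{d}{dt}\int_M|u|^2+\int_M|\nabla u|^2\le\int_M|f|^2+C\Lambda_0\int_M|u|^2 .
\]
Gronwall on $[-1,0]$ with $u(\cdot,-1)=0$ bounds $\sup_t\int_M|u|^2$ by $e^{C\Lambda_0}\int\!\!\int|f|^2$. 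Integrating in time then bounds $\int\!\!\int|\nabla u|^2$ by the same quantity. This simplifies Proposition~\ref{prn:HA25_6}: because $\nabla^*f$ is in divergence form, only one derivative of $u$ is needed, and no Bochner formula or curvature derivatives appear. The argument is therefore valid for both static and Ricci flow backgrounds under (\ref{eqn:HA02_1}) alone.

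\textbf{Main obstacle.} I expect the difficulty to be in justifying the integrations by parts on a noncompact $M$, together with the identity $\mathcal{T}f=\nabla u$ (the convention for $\nabla_y$ acting on $\Psi$ versus $\nabla^*$). I would handle these with the cutoff $\varphi$ of Proposition~\ref{prn:HA25_3}. Its error terms $\int|\nabla\varphi|^2|u|^2$ and $\int|\nabla\varphi||u||f|$ vanish as $r\to\infty$, because the Gaussian decay of $\Psi$ and $\nabla_y\Psi$ from Theorem~\ref{thm3.3}, with $f$ compactly supported, gives $|u|+|\nabla u|\le Ce^{-d^2(x,q)/C}$, as in (\ref{eq2-43}).
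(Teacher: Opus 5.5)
Your proposal is correct and follows the paper's proof. You take the kernel bounds from Theorem~\ref{thm3.3}, reduce via $u=\int\!\!\int\Psi\,\nabla^*f$ to an $L^2$ energy estimate for $\nabla u$, and pair the equation with $u$, moving $\nabla^*$ onto $\nabla u$. The only difference is that you close the estimate by Gronwall at curvature scale $\Lambda_0$, whereas the paper rescales to $|Rm|\le\xi^2$ on $[-\Lambda_0\xi^{-2},0]$ and iterates a unit-interval estimate over $N\approx\Lambda_0\xi^{-2}$ subintervals; that iteration is just a discrete form of the same Gronwall argument, and your version also treats the static case that the paper leaves to the reader.
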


\begin{proof}

   We shall only deal with the Ricci flow case. The static metric case is easier and left to the interested readers. 

    The heat kernel estimate is guaranteed by Theorem~\ref{thm3.3}.
    Define
    \begin{align}
        u(x,t) :=\int_{-1}^t \int_M \Psi(x,t;y,s) \nabla_y^* f(y,s)dyds
         =\int_{-1}^t \int_M \nabla_y \Psi(x,t;y,s) f(y,s)dyds.  
    \end{align}
    It suffices to show the $L^2$-estimate:
    	\begin{align}
        \label{eqn:HA25_8}
			\begin{split}
				\int_{-1}^0\int_{M}|\nabla u|^2 d\mu dt \leq C(n, \Lambda_0) \cdot \int_{-1}^0\int_{M}|f|^2  d\mu dt.
			\end{split}
		\end{align}
     Notice that
     \begin{align*}
	(\partial_t -\Delta_L) u=\nabla^*f, \quad u(\cdot, -1) \equiv 0, \quad \int_{-1}^0 \int_M |u|^2 d\mu dt<\infty. 
     \end{align*}
 Using the same method as in the proof of Proposition~\ref{prn:HA25_6}, we may use parabolic rescaling and assume $|Rm| \leq \xi^2$. Then we need to show
 \begin{align}
 \label{eqn:HB02_2}
 \begin{split}
	\int_{-\Lambda_0 \xi^{-2}}^0\int_{M}|\nabla u|^2 d\mu dt \leq C(n, \Lambda_0) \cdot \int_{-\Lambda_0 \xi^{-2}}^0\int_{M}|f|^2  d\mu dt.
 \end{split}    
 \end{align}
 under the assumption 
 \begin{align}
 \label{eqn:HB02_3}
	(\partial_t -\Delta_L) u=\nabla^*f, \quad u(\cdot, -\Lambda_0 \xi^{-2}) \equiv 0, \quad \int_{-\Lambda_0 \xi^{-2}}^0 \int_M |u|^2 d\mu dt<\infty. 
 \end{align}
 We divide the proof into two steps. \\

 \textit{Step 1. We have}
 \begin{align}
   \label{eqn:HB02_9}
       \int_M |u|^2(\cdot, t) 
        \leq 2 \left\{ \int_{-1}^0\int_M  |f|^2 + \int_M |u|^2(\cdot, -1)\right\}, \quad \forall \; t \in [-1,0],  
 \end{align}
 \textit{and} 
 \begin{align}
    \label{eqn:HB02_4}
       \begin{split}
     \int_{-1}^0\int_M  |\nabla u|^2
     \leq 2 \left\{ \int_{-1}^0\int_M  |f|^2 + \int_M |u|^2(\cdot, -1)\right\}. 
       \end{split}
 \end{align}

 On the one hand, using the definition of the Lichnerowicz Laplacian and (\ref{eqn:HB02_1}), 
we have
   \begin{equation}
   \label{eqn:HA29_6}
       \begin{split}
        &\quad \int_{-1}^t\int_M \langle \nabla^* f,u\rangle\\ 
        &=\int_{-1}^t\int_M \langle (\partial_t -\Delta)u,  u \rangle +\int_{-1}^t\int_M (-2R_{iklj}u_{kl}+R_{ik}u_{kj}+R_{jk}u_{ki})u_{ij}\\
        &=\int_M |u|^2(\cdot, t)-\int_M |u|^2(\cdot, -1)\\
        &\quad + \int_{-1}^t\int_M \left\{ R|u|^2 + |\nabla u|^2
        -(2R_{iklj}u_{kl}+R_{ik}u_{kj}+R_{jk}u_{ki})u_{ij}\right\}\\
        &\geq \int_M |u|^2(\cdot, t)-\int_M |u|^2(\cdot, -1)+\int_{-1}^t \int_M \left\{ |\nabla u|^2 -\frac14 |u|^2\right\}.
       \end{split}
   \end{equation}
   On the other hand, we have
   \begin{equation}
   \label{eqn:HA28_3}
       \begin{split}
        \int_{-1}^t\int_M \langle \nabla^* f,u\rangle  =\int_{-1}^t\int_M \langle f,\nabla u\rangle 
        \leq \frac14\int_{-1}^t \int_M  |\nabla u|^2+\int_{-1}^t\int_M |f|^2.
       \end{split}
   \end{equation}
  Combining (\ref{eqn:HA29_6}) and (\ref{eqn:HA28_3}), we have 
  \begin{equation}
  \label{eqn:HA29_7}
       \begin{split}
      \int_M |u|^2(\cdot, t)+ 
      \frac34\int_{-1}^t\int_M |\nabla u|^2
      \leq \int_M |u|^2(\cdot, -1)
       +\int_{-1}^t\int_M  |f|^2 +\frac14 \int_{-1}^t\int_M |u|^2. 
       \end{split}
   \end{equation}
   Forgetting the first term on the left hand side and setting $t=0$, we have
    \begin{equation}
    \label{eqn:HA29_10}
       \begin{split}
     \int_{-1}^0\int_M  |\nabla u|^2
     \leq 
     \frac43\int_M |u|^2(\cdot, -1)+
     \frac43 \int_{-1}^0\int_M  |f|^2 +\frac13 \int_{-1}^0\int_M |u|^2.
       \end{split}
   \end{equation}
   By ignoring the second term on the left hand side of (\ref{eqn:HA29_7}) and noting that $t \leq 0$, we have
   \begin{align}
   \label{eqn:HA29_8}
       \int_M |u|^2(\cdot, t) 
       \leq \int_M |u|^2(\cdot, -1)
       +\int_{-1}^0\int_M  |f|^2 +\frac14 \int_{-1}^0\int_M |u|^2. 
   \end{align}
   Integrating (\ref{eqn:HA29_8}) over $[-1,0]$, we obtain 
   \begin{align}
   \label{eqn:HA29_9}
        \int_{-1}^0 \int_M |u|^2 
        \leq \frac43 \left\{ \int_{-1}^0\int_M  |f|^2 + \int_M |u|^2(\cdot, -1)\right\}. 
   \end{align}
   Plugging (\ref{eqn:HA29_9}) into (\ref{eqn:HA29_8}), we have
   \begin{align*}
       \int_M |u|^2(\cdot, t)
       \leq \frac43 \left\{ \int_{-1}^0\int_M  |f|^2 + \int_M |u|^2(\cdot, -1)\right\}, \quad \forall \; t\in [-1, 0].  
   \end{align*}
   Putting (\ref{eqn:HA29_9}) into (\ref{eqn:HA29_10}), we arrive at (\ref{eqn:HB02_4}).\\

   \textit{Step 2. Induction.}

 Similarly as in the proof of Proposition~\ref{prn:HA25_6}, we choose a positive integer $N$ such that $\Lambda_0 \xi^{-2} \in [N-1, N]$. 
   Let $t_k=(-1+\frac{k}{N})\Lambda_0 \xi^{-2}$.  Then $t_0=-\Lambda_0 \xi^{-2}$ and $t_N=0$.

    By time shifting,  (\ref{eqn:HB02_9}) implies that
 \begin{align*}
      \int_M |u|^2(\cdot, t)
      \leq  2 \left\{ \int_{t_k}^{t_{k+1}}\int_M  |f|^2 + \int_M |u|^2(\cdot, t_k)\right\}
 \end{align*}
 for every $t\in [t_k, t_{k+1}]$.  In particular, we have
 \begin{align*}
      \int_M |u|^2(\cdot, t_{k+1})
      \leq  2 \left\{ \int_{t_k}^{t_{k+1}}\int_M  |f|^2 + \int_M |u|^2(\cdot, t_k)\right\}.
 \end{align*}
 Similarly to the deduction from (\ref{eqn:HA28_1A}) to (\ref{eqn:HA30_7}), we have
 \begin{align}
\label{eqn:HB02_6}
\begin{split}
    \int_M  |u(\cdot,t_{k+1})|^2 
    \leq 2^{k+1}\int_{t_0}^{t_{k+1}} \int_M |f|^2,  
\end{split}    
\end{align} 
and 
\begin{align*}
\begin{split}
    \int_M  |u(\cdot,t)|^2 
    \leq 2^{k+1}\int_{t_0}^{t_{k+1}} \int_M |f|^2,  \quad \forall \; t \in [t_k, t_{k+1}]. 
\end{split}    
\end{align*}    
The integration of the above inequality implies
\begin{align*}
\begin{split}
    \int_{t_k}^{t_{k+1}} \int_M  |u(\cdot,t)|^2 
    \leq 2^{k+1}\int_{t_0}^{t_{k+1}} \int_M |f|^2,  \quad \forall \; t \in [t_k, t_{k+1}], 
\end{split}    
\end{align*}    
whose summation over $k$ yields
\begin{align}
     \int_{t_0}^{t_{N}} \int_M  |u(\cdot,t)|^2 
    \leq 2^{N+1}\int_{t_0}^{t_{N}} \int_M |f|^2. 
\end{align}

 By time shifting again, it follows from (\ref{eqn:HB02_4}) that 
 \begin{align*}
       \begin{split}
     \int_{t_k}^{t_{k+1}}\int_M  |\nabla u|^2
     \leq 2 \left\{  \int_{t_k}^{t_{k+1}}\int_M  |f|^2 + \int_M |u|^2(\cdot, t_k)\right\}. 
       \end{split}
   \end{align*}
 Plugging (\ref{eqn:HB02_6}) into the above inequality implies 
 \begin{align}
       \begin{split}
     \int_{t_k}^{t_{k+1}}\int_M  |\nabla u|^2
     \leq 2^{k+1}\int_{t_0}^{t_{k+1}} \int_M |f|^2.
       \end{split}
   \end{align}
 Taking the sum of the above inequality yields
   \begin{align}
   \label{eqn:HB02_8}
       \int_{t_0}^{t_{N}}\int_M  |\nabla u|^2
       \leq 2^{N+1} \int_{t_0}^{t_N} \int_M |f|^2. 
   \end{align}
   Recall that $t_0=-\Lambda_0 \xi^{-2}$ and $t_N=0$. It is clear that (\ref{eqn:HB02_2}) follows from (\ref{eqn:HB02_8}) by setting
   \begin{align*}
       C(n, \Lambda_0) := 2^{\Lambda_0 \xi^{-2}+2}. 
   \end{align*}
\end{proof}

Now we are ready to prove the main theorems.

\begin{proof}[Proof of Theorem~\ref{thm:HA02_1}:]
We choose $\eta$ as before. That is, $\eta$ is a non-decreasing function defined on $[-1, 0]$ such that $\eta \equiv 1$ on $[-\frac12, 0]$ and $\eta \equiv 0$ on $[-1, -\frac34]$. Furthermore, $0 \leq \dot{\eta} \leq 10$. 
Setting $u_1=\eta u$ and $u_2=(1-\eta)u$.  Then we have
\begin{align*}
    u=u_1+u_2, 
\end{align*}
and the function $u_1$ satisfies 
\begin{align*}
    (\partial_t-\Delta)u_1= (\partial_t-\Delta) (\eta u)
    =\dot{\eta} u +\eta (\partial_t-\Delta)u=\dot{\eta} u + \eta f. 
\end{align*}
 Note that $u_1(\cdot,-1)=0$. Thus, for each $(x,t) \in \mathcal{M}'=M \times [-\frac14, 0]$, we have
  \begin{align*}
        \mathrm{Hess}(u)(x,t)&=\mathrm{Hess}(u_1)(x,t)=\int_{-1}^t\int_M \nabla_x \nabla_x H(x,t;y,s) (\dot{\eta} u +\eta f) dyds
  \end{align*}
  Let $\mathcal{K}(x,t;y,s) :=\nabla_x \nabla_x H(x,t;y,s)$ and $T$ be the convolution operator with $\mathcal{K}$.  
  By Proposition~\ref{prn:HA25_3}, we know $\mathcal{T}$ is a Calder\'{o}n-Zygmund operator in the sense of Definition~\ref{def2.1}. 
  Then we have
  \begin{align}
  \begin{split}
      \|\mathrm{Hess}(u)\|_{L^p(\mathcal{M}')}
      &=\|\mathrm{Hess}(u_1)\|_{L^p(\mathcal{M}')}
      =\|T(\dot{\eta} u +\eta f)\|_{L^p(\mathcal{M}')}\\
      &\leq C \|\dot{\eta} u +\eta f\|_{L^p(\mathcal{M})}
      \leq C \{\|u\|_{L^p(\mathcal{M})} + \|f\|_{L^p(\mathcal{M})}\}. 
  \end{split}    
  \label{eqn:HA14_6}
  \end{align}
  By the evolution equation of $u$, we have
  \begin{align}
      |\dot{u}| =|\Delta u +f| \leq |\Delta u| +|f|
      \leq C(n) |\mathrm{Hess} u| + |f|.
  \label{eqn:HA14_7}    
  \end{align}
  Consequently,  the combination of (\ref{eqn:HA14_6}) and (\ref{eqn:HA14_7}) yields 
  \begin{align}
     \|\dot{u}\|_{L^p(\mathcal{M}')}
     \leq C(n) \|\mathrm{Hess}(u)\|_{L^p(\mathcal{M}')} + \|f\|_{L^p(\mathcal{M}')}
     \leq C \{\|u\|_{L^p(\mathcal{M})} + \|f\|_{L^p(\mathcal{M})}\}. 
  \label{eqn:HA14_8}   
  \end{align}
  Therefore, the inequality (\ref{eqn:SJ27_1}) follows directly from (\ref{eqn:HA14_6}) and (\ref{eqn:HA14_8}). 
  The proof of Theorem~\ref{thm:HA02_1} is complete. 
  \end{proof}

 \begin{proof}[Proof of Corollary~\ref{cly:HA02_2}:]
 The Corollary follows by taking  $u(x,t)=\eta(t)u(x)$ and applying Theorem~\ref{thm:HA02_1}.
 \end{proof}

\begin{proof}[Proof of Theorem~\ref{thm:HA02_3}:] 
 The proof is almost the same as that of Theorem \ref{thm:HA02_1}.
 By standard cut-off technique, it suffices to prove (\ref{eqn:SJ27_1A}) for (0,2)-tensor valued smooth section $u$ satisfying $u(\cdot, -1) \equiv 0$. 
 Note that
 \begin{align*}
     \nabla_x \nabla_x u
     =\int_{-1}^t\int_M \nabla_x \nabla_x \Psi(x,t;y,s) f(y,s) dyds. 
 \end{align*}
 By Proposition~\ref{prn:HA25_6}, we know $\mathcal{K}(x,t;y,s) :=\nabla_x \nabla_x \Psi(x,t;y,s)$ is a Calder\'{o}n-Zygmund kernel in the sense of Definition~\ref{def2.1}. It follows directly from Theorem~\ref{thm:HA12_1} that 
  \begin{align*}
      \|\mathrm{Hess}(u)\|_{L^p(\mathcal{M}')}
      \leq C \{\|u\|_{L^p(\mathcal{M})} + \|f\|_{L^p(\mathcal{M})}\}.   
  \end{align*}
 Since 
 \begin{align*}
    \dot{u}=\Delta_L u + f=\Delta u + Rm*u +f,  
 \end{align*}
 it is clear that
 \begin{align*}
     |\dot{u}| \leq |\Delta u| +C_n |Rm| |u| +|f| 
     \leq C (|\mathrm{Hess} u| + |f|),
 \end{align*}
 which yields that
 \begin{align*}
     \|\dot{u}\|_{L^p(\mathcal{M}')}
     \leq C \left\{ \|\mathrm{Hess}(u)\|_{L^p(\mathcal{M}')}  +\|f\|_{L^p(\mathcal{M}')} \right\}
     \leq C \left\{ \|u\|_{L^p(\mathcal{M})}  +\|f\|_{L^p(\mathcal{M})} \right\}. 
 \end{align*}
 Combing the previous estimate of $\|\dot{u}\|_{L^p(\mathcal{M}')}$ and
 $\|\mathrm{Hess}(u)\|_{L^p(\mathcal{M}')}$, we obtain (\ref{eqn:SJ27_1A}). 
\end{proof}

\begin{proof}[Proof of Theorem~\ref{thm:HA02_4}:]
If $u(\cdot, -1)=0$, we have
\begin{equation*}
    \begin{split}
          \nabla u(x,t)
         =\int_{-1}^t\int_M \nabla_x\Psi(x,y,t-s)\nabla_y^*f(y,s)
         =\int_{-1}^t\int_M \nabla_x\nabla_y\Psi(x,y,t-s)  f(y,s). 
    \end{split}
\end{equation*}
Let $\mathcal{K}(x,t;y,s) :=\nabla_x\nabla_y\Psi(x,t;y,s)$. 
By Proposition~\ref{prn:HA25_9},  we know the convolution with $\mathcal{K}$ provides a Calder\'{o}n-Zygmund operator in the sense of Definition~\ref{def2.1}.  Then the remaining argument follows verbatim as that in the proof of Theorem~\ref{thm:HA02_3}. 
\end{proof}

\end{document}